\documentclass[11pt]{article}
\usepackage{latexsym}
\usepackage{amsmath}
\usepackage{amssymb}
\usepackage{amsthm}
\usepackage{epsfig}
\usepackage{xcolor}
\usepackage{graphicx}
\usepackage{bm}
\usepackage{enumitem}
\usepackage{mathtools}

\usepackage{graphicx}
\usepackage[toc,page]{appendix}
\usepackage{graphicx}
\usepackage{bbm}
\usepackage{ytableau}
\usepackage[linesnumbered, ruled, vlined]{algorithm2e}
\usepackage[top=1in, bottom=1in, left=1in, right=1in]{geometry}

\usepackage{array}
\usepackage{multirow}

\usepackage{cite}

\usepackage{hyperref}
\hypersetup{
    colorlinks,
    linkcolor={blue!80!black},
    citecolor={green!50!black},
}
\colorlet{linkequation}{blue}

\usepackage{mathtools}
\mathtoolsset{showonlyrefs}

\usepackage{authblk}

\renewcommand{\P}{\mathbb{P}}
\newcommand{\E}{\mathbb{E}}
\newcommand{\Var}{{\rm Var}}
\newcommand{\Cov}{\text{Cov}}
\newcommand{\cN}{\mathcal{N}}

\newcommand{\R}{\mathbb{R}}

\newcommand{\N}{\mathbb{N}}
\renewcommand{\S}{\mathbb{S}}

\newcommand{\Mat}{\mathrm{Mat}}
\newcommand{\eps}{\varepsilon} 

\def\id{{\rm id}}

\renewcommand{\d}{\textup{d}}

\newcommand{\<}{\langle}
\renewcommand{\>}{\rangle}
\newcommand{\sign}{\text{sign}}

\newcommand{\op}{{\rm op}}

\def\sT{{\mathsf T}}
\def\bzero{{\boldsymbol 0}}

\DeclareMathOperator*{\argmin}{arg\,min}

\newtheoremstyle{customplain}
  {\topsep}        % space above
  {\topsep}        % space below
  {\itshape}       % body font -> italic
  {}               % indent
  {\bfseries}      % header font -> bold
  {.}              % punctuation after header
  {.5em}           % space after header
  {}   

\theoremstyle{customplain}
\newtheorem{theorem}{Theorem}
\newtheorem*{theorem*}{Theorem}
\newtheorem{lemma}{Lemma}

\newtheorem{assumption}{Assumption}
\newtheorem{definition}{Definition}

\newtheorem{proposition}{Proposition}

\newtheorem{example}{Example}

\newtheoremstyle{uprightremark}      % Custom style
  {\topsep}                          % Space above
  {\topsep}                          % Space below
  {\normalfont}                      % Body font: upright
  {}                                 % Indent amount
  {\bfseries}                        % Theorem head font
  {.}                                % Punctuation after head
  {.5em}                             % Space after head
  {}                                 % Theorem head spec

\theoremstyle{uprightremark}
\newtheorem{remark}{Remark}[section]

\DeclareSymbolFont{rsfs}{U}{rsfs}{m}{n}
\DeclareSymbolFontAlphabet{\mathscrsfs}{rsfs}

\def\bA{{\boldsymbol A}}
\def\bB{{\boldsymbol B}}
\def\bC{{\boldsymbol C}}
\def\bD{{\boldsymbol D}}
\def\bE{{\boldsymbol E}}

\def\bG{{\boldsymbol G}}

\def\bI{\mathbf{I}}

\def\bM{{\boldsymbol M}}

\def\bP{{\boldsymbol P}}
\def\bQ{{\boldsymbol Q}}

\def\bS{{\boldsymbol S}}
\def\bT{{\boldsymbol T}}
\def\bU{{\boldsymbol U}}
\def\bV{{\boldsymbol V}}
\def\bW{{\boldsymbol W}}
\def\bX{{\boldsymbol X}}
\def\bY{{\boldsymbol Y}}
\def\bZ{{\boldsymbol Z}}

\def\ba{{\boldsymbol a}}
\def\bb{{\boldsymbol b}}
\def\be{{\boldsymbol e}}

\def\bh{{\boldsymbol h}}

\def\bt{{\boldsymbol t}}
\def\bu{{\boldsymbol u}}
\def\bv{{\boldsymbol v}}
\def\bw{{\boldsymbol w}}
\def\bx{{\boldsymbol x}}
\def\by{{\boldsymbol y}}
\def\bz{{\boldsymbol z}}

\def\blambda{{\boldsymbol \lambda}}
\def\bpsi{{\boldsymbol \psi}}
\def\bphi{{\boldsymbol \phi}}
\def\btheta{{\boldsymbol \theta}}

\def\bxi{{\boldsymbol \xi}}

\def\bDelta{{\boldsymbol \Delta}}

\def\bTheta{{\boldsymbol \Theta}}

\def\bPi{{\boldsymbol \Pi}}

\def\cR{\mathcal{R}}

\def\de{{\rm d}}
\def\Tr{{\rm Tr}}

\def\de{{\rm d}}
\def\Unif{{\rm Unif}}

\def\cV{{\mathcal V}}

\def\cO{{\mathcal O}}
\def\cP{{\mathcal P}}

\def\cT{{\mathcal T}}

\def\cL{{\mathcal L}}

\def\cE{{\mathcal E}}
\def\cS{{\mathcal S}}

\def\cV{{\mathcal V}}

\def\cO{{\mathcal O}}
\def\cP{{\mathcal P}}

\def\cT{{\mathcal T}}
\def\cH{{\mathcal H}}
\def\cA{{\mathcal A}}

\def\Unif{{\sf Unif}}

\def\proj{{\mathsf P}}

\def\naturals{{\mathbb N}}

\def\proj{{\mathsf P}}

\def\Unif{{\sf Unif}}

\def\proj{{\mathsf P}}

\def\naturals{{\mathbb N}}
\def\proj{{\mathsf P}}

\def\cE{{\mathcal E}}
\def\cD{{\mathcal D}}

\def\cS{{\mathcal S}}

\def\He{{\rm He}}

\def\de{{\rm d}}
\def\Unif{{\rm Unif}}

\def\cE{{\mathcal E}}

\def\bt{{\boldsymbol t}}

\def\bDelta{{\boldsymbol \Delta}}

\def\bA{{\boldsymbol A}}
\def\btheta{{\boldsymbol \theta}}
\def\bTheta{{\boldsymbol \Theta}}

\def\blambda{{\boldsymbol \lambda}}

\def\cM{{\mathcal M}}

\def\cT{{\mathcal T}}
\def\cV{{\mathcal V}}
\def\bP{{\boldsymbol P}}

\def\bS{{\boldsymbol S}}

\def\bD{{\boldsymbol D}}

\def\bb{{\boldsymbol b}}

\def\bpsi{{\boldsymbol \psi}}

\def\bC{{\boldsymbol C}}

\def\bzeta{{\boldsymbol \zeta}}

\def\ind{\mathbbm{1}}

\def\balpha{\boldsymbol{\alpha}}

\def\cY{\mathcal{Y}}
\def\cZ{\mathcal{Z}}

\def\br{{\boldsymbol r}}
\def\bGamma{{\boldsymbol \Gamma}}

\def\balpha{\boldsymbol{\alpha}}

\def\Sym{\mathfrak{S}}

\def\polylog{\text{polylog}}

\def\sA{{\sf A}}

\def\SQ{{\sf SQ}}

\def\Cov{{\sf Cov}}

\def\dist{{\rm dist}}

\def\sm{\mathsf{m}}

\def\Stf{{\rm Stf}}
\def\s{\mathsf{s}}

\def\cSO{\cS\cO}
\def\Lop{\mathbbm{L}}

\def\qLeap{\mathsf{qLeap}}
\def\sLeap{\mathsf{sLeap}}

\def\qAlign{\mathsf{qAlign}}
\def\sAlign{\mathsf{sAlign}}

\def\bUpsilon{{\boldsymbol \Upsilon}}

\newcommand{\frob}{\mathsf{F}}

\newcommand{\perm}{\mathfrak{S}}
\let\Sym\relax
\DeclareMathOperator{\Sym}{Sym} % e.g. \Sym_\ell(\R^d) or \Sym_\ell^W(\R^d)
\DeclareMathOperator{\TSym}{TSym} % e.g. \TSym_\ell(\R^d) or \TSym_\ell^W(\R^d)

\newcommand{\psym}{\proj_{\mathsf{sym}}}
\newcommand{\ptf}{\proj_{\mathsf{tf}}}

\usepackage{mathrsfs}
\newcommand{\sh}{\mathscr{S}} % Spherical Harmonics, e.g. \sh_{d,\ell} for degree \ell in d dimensions or maybe \sh_\ell(\R^d)
\newcommand{\Span}{\mathrm{span}}

\newcommand{\rnk}{\mathsf{r}}

\newcommand{\m}{\mathsf{m}}

\newcommand{\thr}{\mathsf{t}}

\DeclareMathOperator{\rank}{\mathrm{rank}}

\newcommand{\rt}{\texttt{T}}

\newcommand{\tol}{\tau_{\mathrm{SQ}}}

\title{Statistical-Computational Trade-offs in Learning Multi-Index Models via Harmonic Analysis}

\author{Hugo Latourelle-Vigeant}
\author{Theodor Misiakiewicz}

\affil{\small Department of Statistics and Data Science, Yale University}

\date{\today}

\allowdisplaybreaks

\begin{document}

\maketitle

\begin{abstract}
    We study the problem of learning \textit{multi-index models} (MIMs), where the label depends on the input $\boldsymbol{x} \in \mathbb{R}^d$ only through an unknown $\mathsf{s}$-dimensional projection $\boldsymbol{W}_*^\mathsf{T} \boldsymbol{x} \in \mathbb{R}^\mathsf{s}$. Exploiting the equivariance of this problem under the orthogonal group $\mathcal{O}_d$, we obtain a sharp harmonic-analytic characterization of the learning complexity for MIMs with spherically symmetric inputs---which refines and generalizes previous Gaussian-specific analyses. Specifically, we derive statistical and computational complexity lower bounds within the  Statistical Query (SQ) and Low-Degree Polynomial (LDP) frameworks. These bounds decompose naturally across spherical harmonic subspaces. Guided by this decomposition, we construct a family of spectral algorithms based on \textit{harmonic tensor unfolding} that sequentially recover the latent directions and (nearly) achieve these SQ and LDP lower bounds. Depending on the choice of harmonic degree sequence, these estimators can realize a broad range of  trade-offs between sample and runtime complexity. 
    %Our lower bounds suggest that, in general, no algorithm can achieve both optimal sample and runtime complexity (among polytime algorithms) simultaneously when learning spherical MIMs.
    From a technical standpoint, our results build on the semisimple decomposition of the $\mathcal{O}_d$-action on $L^2 (\mathbb{S}^{d-1})$ and the intertwining isomorphism between spherical harmonics and traceless symmetric tensors. 
    
    %This symmetry-based analysis extends naturally to a broader class of equivariant learning problems, a direction we develop in a companion paper.
\end{abstract}

\tableofcontents

\clearpage

\section{Introduction}

Over the past decades, a major focus in statistics and learning theory has been to understand \textit{computational bottlenecks} in high-dimensional learning---that is, when a task can be solved computationally efficiently, not just statistically efficiently. Indeed, in many settings, computational tractability is dramatically more restrictive than statistical feasibility: there exist broad parameter regimes where learning is information-theoretically possible, but no polynomial-time algorithm is known to succeed---a so-called \textit{computational-statistical gap} \cite{kunisky2019notes,bandeira2022franz,wein2025computational}. More generally, one observes computational-statistical trade-offs, where additional computational power can compensate for fewer samples, and vice versa.  Understanding when such trade-offs arise and how to quantify them have become major goals in the learning theory community.

Two main lines of work have approached these questions: (1) \textit{In high-dimensional inference}, including planted clique \cite{alon1998finding,barak2019nearly}, sparse PCA \cite{berthet2013complexity,brennan2019optimal}, tensor PCA \cite{montanari2014statistical,hopkins2017power} and community detection \cite{hajek2015computational,abbe2018community}.  These works identify signal-to-noise thresholds separating regimes where no polynomial-time algorithms succeed (conjecturally) from regimes where efficient methods exist. (2) \textit{In learning theory}, with an emphasis on how structural assumptions enable computational tractability of learning function classes---e.g., parities, sparse functions, juntas, or decision trees \cite{mansour1994learning,kearns1998efficient,feldman2007attribute,shalev2014understanding,feldman2017statistical,daniely2021local}. In particular, recent works \cite{abbe2021staircase,abbe2022merged,abbe2023sgd,joshi2024complexity,bietti2023learning,troiani2024fundamental} have highlighted the role of staircase-type structure in the function spectral decomposition (e.g., Fourier or Hermite) that algorithms can exploit to efficiently learn components in increasing order of difficulty.
%with simpler components facilitating the recovery of more complex ones.

The goal of this paper is to bring these two approaches together. We focus on the classical problem of learning \textit{multi-index models} in high dimensions, which has seen a resurgence of interest in recent years, in part due to its connections to neural networks; see, e.g., \cite{dudeja2018learning,chen2020learning,arous2021online,chen2022learning,damian2022neural,abbe2023sgd,gollakota2023agnostically,troiani2024fundamental,diakonikolas2025robust,diakonikolas2025algorithms,damian2025generative} and references therein. We characterize the computational-statistical trade-offs in these models, and show that the Pareto frontier (achievable trade-offs) can be  highly non-uniform and discontinuous.
% and study computational-statistical trade-offs in heterogeneous, multi-component models. Prior work suggests that in such settings, the landscape of achievable trade-offs (Pareto frontier) can be highly non-uniform and discontinuous: different components may exhibit very different computational-statistical trade-offs, and the overall complexity depends on the particular recovery sequence.
 Our main contributions are two-fold: 
\begin{itemize}
    \item[(1)]  We establish \textit{query complexity} lower bound (proxy for runtime) within the Statistical Query (SQ) framework and \textit{sample complexity} lower bound (smallest sample size below which no polynomial-time algorithms succeed) within the Low-Degree Polynomial (LDP) framework, for multi-index models with \textit{arbitrary spherically symmetric} input distribution. Prior work has largely focused on Gaussian inputs, with analyses relying heavily on Gaussian-specific properties.

    \item[(2)] We introduce a family of \textit{iterative harmonic tensor unfolding} algorithms that sequentially recover the support and generalize recent algorithms from \cite{joshi2025learning,damian2025generative}.
\end{itemize}
 We show that these estimators match the optimal sample complexity and (nearly) the optimal query complexity within LDP and SQ. Moreover, by choosing different sequence of harmonic degrees, these estimators can realize intermediate trade-offs between sample complexity and runtime. 
 
 %Our lower bounds suggest that, in general, \textit{no algorithm can achieve both optimal sample and query complexity simultaneously when learning  general MIMs}.

\subsection{Learning Multi-Index Models}

A \textit{multi-index model} (MIM) is a joint distribution on $(y,\bx) \in \cY \times \R^d$ of the form
\begin{equation}\label{eq:MIMs_intro}
   (y,\bx) \sim \P^{\bW_*}_{\rho} :\qquad \bx \sim \mu, \qquad \quad y | \bx \sim \rho ( \cdot | \bW_*^\sT \bx ),
\end{equation}
where $\bW_* \in \R^{d \times \s}$ is the (unknown) rank-$\s$ support and $\rho (\cdot | \bt) \in \cP(\cY)$, $\bt \in \R^{\s}$, is the link function. Thus the response $y$ depends on covariate $\bx$ only through its $\s$-dimensional projection $\bW_*^\sT \bx$. The dimension $\s$ of the hidden subspace is assumed fixed, much smaller than the ambient dimension $d$. The case $\s = 1$ is also known as \textit{single-index models} (SIMs) or \textit{generalized linear models}. 

Due to their simplicity and flexibility, multi-index models have played a central role in statistics and learning theory for several decades \cite{Nelder_Wedderburn_1972,McCullagh_1984,hardle1989investigating,li1991sliced,dalalyan2008new,kalai2009isotron,kakade2011efficient,bruna2025survey}.  
The problem of learning \eqref{eq:MIMs_intro} from samples has a long and rich history, and we refer to the recent survey \cite{bruna2025survey} for an overview of this literature. Below, we briefly summarize some key prior work relevant to the present paper (see also Section \ref{sec:related-work}). 

%, encompassing many influential models that include intersection of half-spaces \cite{baum1990learning,vempala1997random,klivans2009cryptographic}, sum of ridge functions \cite{friedman1981projection}, or narrow multi-layer neural networks.
%\begin{itemize}
%     \item Intersection of half-spaces \cite{baum1990learning,vempala1997random,klivans2009cryptographic}: $y|\bx = \prod_{j = 1}^\s \ind [ \<\bw_j ,\bx \> > a_j]$. 

%     \item Sum of ridge functions \cite{friedman1981projection}: $y| \bx = \sum_{j = 1}^{\s} f_j ( \<\bw_j ,\bx\>)$.

%     \item Narrow multi-layer neural networks: 
%     \[
%     y | \bx = \bw_L^\sT \sigma ( \bW_{L-1} \sigma ( \cdots \sigma ( \bW_1 \bx) \cdots),
%     \]
%     with $\bw_L \in \R^\s$, $\bW_1 \in \R^{\s \times d}$, and $\bW_\ell \in \R^{\s \times \s}$, $2\leq \ell \leq L-1$.
% \end{itemize}

 At a high-level, learning MIMs can be viewed as a two-step task: (i) recover the subspace $\bW_*$ (a high-dimensional problem), and (ii) estimate the link function $\rho$ on this low-dimensional subspace. Under mild regularity conditions, this problem is \emph{information-theoretically easy}: one can recover $\bW_*$ to accuracy $\eps$ using $O_d (d/\eps^2)$ samples via exhaustive search over an $\eps$-net \cite{damian2024computational,bruna2025survey}. Such a procedure is of course computationally intractable in high dimensions. Several efficient procedures have been proposed over the years, including linear \cite{brillinger1982generalized} and moment-based estimators \cite{dennis2000save,li2007directional,klock2021estimating}, principal Hessian directions \cite{li1992principal,lu2020phase,mondelli2018fundamental}, and gradient outer-product span \cite{samarov1993exploring,hristache2001direct,mukherjee2006learning,trivedi2014consistent}. A prototypical example (e.g., see \cite{chen2020learning}) estimates the support by taking the span of the top eigenvectors of 
\begin{equation}\label{eq:chen-meka}
  \widehat{\bM} :=  \frac{1}{n} \sum_{i = 1}^n \cT(y_i) (\bx_i \bx_i^\sT - \bI_d).
\end{equation}
However, this estimator---and others of similar flavor---succeeds only under restrictive conditions on the link function, and can fail dramatically when these assumptions are violated.

A recent line of work has sought to characterize the precise limits of learning MIMs with polynomial-time algorithms under Gaussian inputs $\mu = \cN(0,\bI_d)$ \cite{barbier2019optimal,lu2020phase,mondelli2018fundamental,damian2024computational,troiani2024fundamental,kovavcevic2025spectral,diakonikolas2025robust,diakonikolas2025algorithms,damian2025generative}. Two key insights have emerged from these studies: 
\begin{itemize}
    \item[(a)] \emph{Complexity is governed by the Hermite expansion of the link function} \cite{dudeja2018learning,barbier2019optimal,mondelli2018fundamental,arous2021online,damian2024computational}: Polynomial-time algorithms require $\Theta_d(d^{\max(1,k_*/2)})$ samples to succeed (under SQ and LDP frameworks), where $k_*$ is the order of the first non-zero Hermite coefficient, the so-called \textit{generative (leap) exponent} \cite{damian2024computational,damian2025generative,diakonikolas2025algorithms,diakonikolas2025robust}.

    \item[(b)] \emph{Optimal recovery requires a multi-step procedure}: One-step estimators---such as \eqref{eq:chen-meka}---may be provably suboptimal (e.g., see \cite{diakonikolas2025robust}). A simple illustrative example is
\begin{equation}\label{eq:multi-step-process-intro}
y = z_1 +z_1z_2z_3, \qquad z_i := \sign (\< \bw_{*,i}, \bx\> ), \qquad \<\bw_{*,i},\bw_{*,j} \> = \delta_{ij}.
\end{equation}
A one-step method must fit all three directions simultaneously using the cubic term, and require $\Theta_d(d^{3/2})$ samples.  
In contrast, a two-step procedure first estimates $z_1$ from the linear term, and then $z_2 z_3$ from the cubic term, requiring only $\Theta_d(d)$ samples overall.  
The complexity of such procedures is captured by the \emph{leap complexity}: the cost of the hardest stage in the optimal multi-step recovery process \cite{abbe2022merged,abbe2023sgd,bietti2023learning,joshi2024complexity,damian2025generative,diakonikolas2025algorithms}.
\end{itemize}

In this paper, we revisit the problem of learning multi-index models in high dimensions, and consider a \textit{general spherically-invariant} \textit{input distribution}\footnote{By a Hunt-Stein type argument, such distributions are \textit{least favorable} for equivariant estimation.} $\mu$. In this setting, the model \eqref{eq:MIMs_intro} is equivariant with respect to the orthogonal group $\cO_d$, i.e., $(y,\bx) \sim \P_{\rho}^{\bW_*}$ implies $(y,g^{-1} \cdot \bx) \sim \P_{\rho}^{g \cdot \bW_*}$ for all $g \in \cO_d$, %and optimal estimators are $\cO_d$-equivariant.
% \begin{equation}
%  \widehat \bW \big( \{ y_i,g^{-1} \cdot \bx_i \}_{i \leq n} \big)  = g \cdot\widehat \bW \big( \{ y_i, \bx_i \}_{i \leq n} \big), \qquad \forall g \in \cO_d.
% \end{equation}
and the difficulty in recovering $\bW_*$ arises from the need to \textit{break this rotational symmetry}. Only those components of the link function that transform non-trivially under $\cO_d$ carry information about the latent subspace. 

This perspective naturally leads to analyzing the action of $\cO_d$ on the model---specifically, its decomposition into \textit{spherical harmonics}, which arise as irreducible representations of $\cO_d$. Intuitively, the complexity of the estimation problem will be governed by the first symmetry-breaking harmonic components. We argue that this equivariant viewpoint provides a particularly natural approach to studying the complexity of learning MIMs, even in the Gaussian setting. A recent paper by the second author \cite{joshi2025learning} developed this perspective for single-index models ($\s = 1$), showing that expanding in the spherical harmonic basis, rather than the Hermite basis, leads to a more principled derivation of optimal algorithms, while clarifying and revealing a number of new phenomena (see additional discussion in Section \ref{sec:related-work}).

We develop an harmonic-analytic characterization of the learning complexity for MIMs under spherically symmetric inputs. In particular, our work extends recent work on learning Gaussian MIMs \cite{troiani2024fundamental,diakonikolas2025algorithms,diakonikolas2025robust,damian2025generative} in two directions: 
\begin{description}
    \item[Arbitrary spherically-invariant input distribution.] Our analysis leverages group-theoretic properties of $\cO_d$, which lead to natural derivations of upper and lower bounds. Our results specialize cleanly to the Gaussian case, while capturing behavior that arises beyond the Gaussian setting (see also discussions in \cite{joshi2025learning}).
    
    \item[Dissociate statistical and runtime complexity.] Prior works have focused on either the optimal \emph{sample complexity} for polynomial-time algorithms (via LDP) \cite{damian2024computational,diakonikolas2025robust,damian2025generative} or the optimal \emph{query complexity} (via SQ) \cite{abbe2023sgd,joshi2024complexity,bietti2023learning}, resulting in different definitions of leap complexity across papers.  We show that this discrepancy is intrinsic: the two frameworks capture fundamentally different barriers and lead to two distinct quantities---a \emph{sample-leap} and a \emph{query-leap} complexity---which describe two procedures, optimal in their respective resource. In general these do not coincide, and our lower bounds suggest that \textit{no algorithm can achieve both optimal sample complexity} (among polynomial time algorithms) \emph{and optimal runtime complexity simultaneously when learning MIMs}. 
    %Prior work in the Gaussian setting only focused on matching the sample complexity lower bound \cite{diakonikolas2025robust,damian2025generative}.
\end{description}

From a technical standpoint, our results exploit the semisimple decomposition of the $\cO_d$-action on $L^2 (\S^{d-1})$ into irreducible subspaces of spherical harmonics, and the intertwining isomorphism between spherical harmonics and traceless symmetric tensors. 
This symmetry-based analysis naturally extends beyond MIMs and $\cO_d$ to more general equivariant learning problems under the action of compact groups. We develop this direction further in a follow-up paper \cite{joshi2026equivariance}. 

%A brief summary of this abstract setting is provided in Section \ref{sec:abstract-equivariance-problem}.

\subsection{Summary of main results}
\label{sec:summary_main_results}

Let $\S^{d-1}$ denote the unit sphere in $\R^d$ and $\tau_d := \Unif(\S^{d-1})$ the uniform measure on the sphere.  
For $\s \in [d]$, we write $\tilde \tau_{d,\s}$ for the marginal distribution of the first $\s$ coordinates of $\bz \sim \tau_d$,  
and   $\Stf_\s (\R^d) = \{ \bM \in \R^{d \times \s} : \bM^\sT \bM = \bI_\s \}$ for the Stiefel manifold of orthonormal $\s$-frames in $\R^d$. 

Throughout the paper, we consider the following class of \emph{spherical multi-index models}.

\begin{definition}[Spherical Multi-Index Models]\label{def:spherical_MIM}
    A \emph{spherical multi-index model} of index \(\s\in \N\) is a joint distribution \(\P_{\nu_d}^{\bW_*}\) on \((\by,\bz) \in \cY \times \S^{d-1}\) specified by a Markov kernel \(\nu_d(\cdot | \bt) \in \cP(\cY)\), \(\bt \in \R^{\s}\), and an orthonormal \(\s\)-frame \(\bW_* \in \Stf_{\s}(\R^d)\) such that
    \begin{equation}
        (\by,\bz) \sim \P_{\nu_d}^{\bW_*}: \;\;\;\;\;\; \bz \sim \tau_{d} \quad \text{ and }\quad \by | \bz \sim \nu_d ( \cdot | \bW_*^\sT \bz).
    \end{equation}
\end{definition}

We allow the response $\by$ to take values in an arbitrary measurable space $\cY$, and write $\nu_d^Y$ for the marginal distribution of $\by$ under $\P_{\nu_d}^{\bW_*}$ (independent of $\bW_*$). Note that $(\by,g^{-1} \cdot \bz) \sim \P^{g \cdot \bW_*}_{\nu_d}$ for all $g \in \cO_d$ and, for simplicity, we will often suppress the superscript and write $\P_{\nu_d}$.

%The distribution $\P^{\bW_*}_{\nu_d}$ is equivariant under rotations, i.e., $(\by,g^{-1} \cdot \bz) \sim \P^{g \cdot \bW_*}_{\nu_d}$ for all $g \in \cO_d$, and  

Definition~\ref{def:spherical_MIM} covers the classical MIM formulation~\eqref{eq:MIMs_intro} with spherically-invariant inputs:

\begin{example}[Spherically-invariant input distribution]\label{ex:spherically_invariant_MIM}
Let $\mu \in \cP(\R^d)$ be invariant under orthogonal transformations, i.e., $g_\# \mu = \mu$ for all $g \in \cO_d$. Such distributions admit the polar decomposition $\bx= r \bz$, where $r = \| \bx\|_2 \sim \mu_r$ is independent of $\bz = \bx/\|\bx\|_2 \sim \tau_d$. Then the MIM \eqref{eq:MIMs_intro} can be rewritten as a spherical MIM by defining $\by  := (y,r)$ and
\[
\bz \sim \tau_d, \qquad \by= (y,r) | \bz \sim \nu_d (\de \by | \bW_*^\sT \bz) := \rho (\de y | r \bW_*^\sT \bz ) \mu_r (\de r). 
\]
Gaussian MIMs correspond to setting $\mu_r := \chi_d$.
\end{example}

Given i.i.d.\ samples \(\{(\by_i,\bz_i)\}_{i\le n}\) drawn from a spherical multi-index model \(\P_{\nu_d}^{\bW_*}\), we consider the problem of recovering the latent subspace $\Span (\bW_*)$.  We focus on the high-dimensional regime where the ambient dimension $d$ is large while the index $\s$ remains fixed (or grows slowly\footnote{Our guarantees will hold non-asymptotically, for fixed $\nu_d$, which is allowed itself to depend on $d$.} with $d$). Our aim is to characterize both the sample size $n$ and runtime $\rt$ required by algorithms for this recovery task. Specifically, we study:
\begin{itemize}
    \item[(i)] The optimal \textit{sample complexity} achievable by polynomial-time algorithms (in the sense of the conjectured LDP lower bounds). With a slight abuse of terminology, we refer to this as the \emph{sample-optimal} complexity.  
    By contrast, the purely \textit{information-theoretic} sample-optimal complexity---for unrestricted algorithms---is typically \(\Theta_d(d)\).

    \item[(ii)] The optimal \textit{runtime complexity}, which we heuristically capture through the query complexity within the SQ framework.
\end{itemize}
While these lower bounds are (necessarily) conjectural (as is standard in the SQ/LDP literature), we present a family of iterative algorithms that achieve matching upper bounds.

\begin{remark}[Do we know $\nu_d$?] For simplicity, we assume that $\nu_d$ is known and fixed; this corresponds to a fully Bayesian setting in which the model is fixed and $\bW_*$ is drawn uniformly from $\Stf_\s(\R^d)$.  
Our lower bounds evidently hold when $\nu_d$ is unknown, and our algorithms---which depend on general transformations of the data---could be extended to that setting. Because of space constraints, we do not pursue this direction here.  
After recovering $\bW_*$, one can fit $\nu_d$ by a piecewise linear function or other nonparametric procedures (e.g., see \cite{diakonikolas2025algorithms,diakonikolas2025robust}).
\end{remark}

\subsubsection{Lower bounds on learning MIMs: the Leap complexity}

We begin by establishing lower bounds on the sample and runtime complexity for learning spherical MIMs with polynomial time algorithms, within the low-degree polynomial \cite{kunisky2019notes,wein2025computational} and statistical query framework \cite{kearns1998efficient,reyzin2020statistical} (see Section \ref{sec:LDP-SQ-background} for a brief overview).

\paragraph*{Harmonic decomposition.} Our lower bounds are expressed in terms of the $\cO_d$-semisimple decomposition of $L^2(\S^{d-1})$ into spherical harmonic subspaces:
\begin{equation}
    L^2 ( \S^{d-1} ) = \bigoplus_{\ell = 0}^\infty \sh_{d,\ell}, \qquad\quad N_{d,\ell} := \dim (\sh_{d,\ell})= \Theta_d (d^\ell),
\end{equation}
where $\sh_{d,\ell}$ denotes the irreducible subspace of degree-$\ell$ spherical harmonics. For each $\ell \geq 0$, the subspace $\sh_{d,\ell}$ can be identified (by unitary equivalence) to the space of traceless symmetric tensors of order $\ell$, denoted $\TSym_\ell(\R^d)$. More precisely, there exists a degree-$\ell$ \textit{harmonic tensor} $\cH_{d,\ell} : \S^{d-1} \to \TSym_{\ell} (\R^d)$, whose entries are degree-$\ell$ spherical harmonics, such that the mapping
\begin{equation}
   \Phi_{d,\ell} : \TSym_\ell (\R^d) \to \sh_{d,\ell}, \qquad \Phi_{d,\ell} (\bA) (\bz) = \< \bA, \cH_{d,\ell} (\bz)\>_\frob 
\end{equation}
is an intertwining isometric isomorphism: for all $\bA,\bB \in \TSym_\ell (\R^d)$, $\bz \in \S^{d-1}$, and $g \in \cO_d$,
\begin{equation}
    \< \Phi_{d,\ell} (\bA),\Phi_{d,\ell} (\bB)\>_{L^2} = \<\bA,\bB\>_\frob , \qquad \Phi_{d,\ell} (\bA) (g^{-1} \cdot \bz) =  \Phi_{d,\ell} (g \cdot \bA) (\bz).
\end{equation}

\paragraph*{Lower bounds on weak recovery.} We first consider the task of \emph{weak recovery} of the signal subspace $\bW_*$, namely, achieving better  performance than random guessing. For each $\ell \geq 1$, define the $\ell$-th harmonic coefficient of $\nu_d$ by
\begin{equation}
    \bxi_{\emptyset,\ell} (\by) := \E_{\P_{\nu_d}} \left[ \cH_{d,\ell} (\bz) \big| \by\right] \in \TSym_\ell (\R^d),
\end{equation}
and write $\|  \bxi_{\emptyset,\ell}  \|_{L^2}^2 := \E_{\by \sim \nu_d^Y} [ \|  \bxi_{\emptyset,\ell} (\by)  \|_\frob^2 ]$. We establish the following lower bounds on the sample complexity $n$ (within LDP) and runtime $\rt$ (within SQ) required for weak recovery (Theorem \ref{thm:LB-weak-recovery}):
\begin{equation}\label{eq:intro_LB_weak_recovery}
    n \;\gtrsim \;\inf_{\ell \geq 1} \; \frac{d^{\ell/2}}{\| \bxi_{\emptyset,\ell} \|_{L^2}^2}, \qquad\qquad \rt\; \gtrsim \;\inf_{\ell \geq 1} \; \frac{d^{\ell}}{\| \bxi_{\emptyset,\ell} \|_{L^2}^2}.
\end{equation}
These bounds decompose naturally across irreducible subspaces. Each term in the infinum represents a lower bound for algorithms that are limited to using degree-$\ell$ spherical harmonics, and these bounds are essentially tight: we design spectral estimators, based on tensor unfolding of $\cH_{d,\ell}(\bz)$, that nearly achieve these lower bounds for every $\ell$. Note that $\| \bxi_{\emptyset,\ell} \|_{L^2}^2 \lesssim 1$ and can vanish with $d$, so these lower bounds capture the competition between the dimension $N_{d,\ell} = \Theta_d (d^\ell)$ of the harmonic subspace and the signal strength $\| \bxi_{\emptyset,\ell} \|_{L^2}^2 $ it carries about $\P_{\nu_d}$. 

From \eqref{eq:intro_LB_weak_recovery}, a natural weak-learning strategy is to choose the degree that minimizes the associated lower bound: identify the sample- or runtime-optimal degree
\begin{equation}\label{eq:intro_sample_runtime_optimal}
    \ell_\star^{(s)}
    \in \argmin_{\ell\ge 1}
        \frac{d^{\ell/2}}{\|\bxi_{\emptyset,\ell}\|_{L^2}^2},
    \qquad\qquad
    \ell_\star^{(q)}
    \in \argmin_{\ell\ge 1}
        \frac{d^{\ell}}{\|\bxi_{\emptyset,\ell}\|_{L^2}^2},
\end{equation}
and apply the associated tensor unfolding algorithm. We always have $\ell_\star^{(s)} \ge \ell_\star^{(q)}$. If $\ell_\star^{(s)}=\ell_\star^{(q)}$, then the tensor unfolding estimator attains both the conjectured sample-optimal and (almost) runtime-optimal complexity. In contrast, when $\ell_\star^{(s)} > \ell_\star^{(q)}$, our lower bounds suggest that no single algorithm can simultaneously be optimal in terms of both sample complexity and runtime: \textit{one must decide which of these resources to prioritize}. Finally, one might select intermediate degrees $  \ell \not\in \{ \ell_\star^{(s)},\ell_\star^{(q)}\}$ to achieve intermediate trade-offs between sample complexity and runtime.

\paragraph*{Lower bounds on strong recovery.} We now turn to \emph{strong recovery} of $\bW_*$, namely, recovering the whole span of $\bW_*$ with arbitrarily good accuracy. As illustrated in \eqref{eq:multi-step-process-intro}, a single step of the tensor unfolding estimator may only recover a subspace of $\bW_*$, and one need to iterate the procedure.

Suppose that at some stage we have recovered a strict subset of directions  
$\bU \subsetneq \bW_*$, with $\bU \in \Stf_r (\R^d)$. Then, by conditioning on $\bU^\sT \bz$, we can reparametrize the model $(\by,\bz) \sim \P_{\nu_d}$ as a \textit{reduced} spherical MIM $(\by_\bU,\bz_{\bU}) \sim \P_{\nu_{d,\bU}}$, now in dimension $d - r$ with $\s-r$ indices. Specifically, let $\bU_\perp \in \R^{d \times (d-r)}$ be an orthonormal complement of $\bU$ and decompose the input $\bz \in \S^{d-1}$ as
\begin{equation}
\bz = \bU \br_\bU + \sqrt{1 - \| \br_\bU\|_2^2} \bU_{\perp} \bz_\bU, \qquad \br_\bU := \bU^\sT \bz \in \R^r, \qquad \bz_\bU = \frac{\bU_\perp^\sT \bz}{\|\bU_\perp^\sT \bz \|_2} \in \S^{d-r-1},
\end{equation}
so that $\br_\bU \sim \tilde \tau_{d,r}$ is independent of $\bz_\bU \sim \tau_{d-r}$. Set $\by_\bU := (\by,\br_\bU)$. Then, under $(\by,\bz) \sim \P^{\bW_*}_{\nu_d}$, the pair $(\by_\bU,\bz_\bU)$ is also a spherical MIM of ambient dimension \(d-r\), with \((\s-r)\)-dimensional signal subspace spanned by $\bU_\perp^\sT \bW_* $ and link function $\nu_{d,\bU}$ given by
\begin{equation}\label{eq:reduced-MIM-intro}
\by_{\bU} | \bz_\bU \sim \nu_{d,\bU} \big(\de \by_\bU \big| \bW_*^\sT \bU_\perp \bz_\bU\big) := \nu_d \big( \de \by \big| \bW_*^\sT \bz \big) \tilde \tau_{d,r} (\de \br_\bU).
\end{equation}

Thus, we can reduce strong recovery to weak recovery of a sequence of such reduced spherical MIMs. For each $\ell\ge 1$, define the corresponding harmonic coefficient
\begin{equation}
    \bxi_{\bU,\ell} (\by_\bU) := \E_{\P_{\nu_d}} \left[ \cH_{d - r,\ell} (\bz_\bU) \big| \by_\bU\right] \in \TSym_\ell (\R^{d - r}).
\end{equation}
Applying the weak-recovery lower bounds \eqref{eq:intro_LB_weak_recovery} to the reduced model yields bounds depending on $d-r$ and $\bxi_{\bU,\ell}$ (associated to the subgroup $\cO_{d-r}$ acting on the subspace $\bU_\perp$ of the input).  

Repeating this argument along any sequence of intermediate subspaces produces the following worst-case complexity measures, which we call \emph{sample-leap} and \emph{query-leap} complexities:\footnote{Sample-leap and query-leap complexities are well-defined only if the intrinsic dimension is exactly $\s$. If the residual $\bz_\bU$ is independent of $y$ for any strict subframe $\bU \subsetneq \bW_*$, the leap complexity becomes infinite, implying that learning stalls after recovering $\bU$.}
\begin{equation}\label{eq:q-and-s-leap-complexities}
      \sLeap (\nu_d) =  \sup_{\bU \in {\rm Sub} (\bW_*)} \;\inf_{\ell \geq 1}\; \frac{d^{\ell/2}}{\| \bxi_{\bU,\ell} \|_{L^2}^2}, \qquad \quad  \qLeap (\nu_d) = \sup_{\bU \in {\rm Sub} (\bW_*)} \;\inf_{\ell \geq 1}\; \frac{d^{\ell}}{\| \bxi_{\bU,\ell} \|_{L^2}^2},
\end{equation}
where ${\rm Sub}(\bW_*)$ denote the set of all strict subframes of $\bW_*$ (here, $\bW_*$ is fixed arbitrarily). 
These capture the \emph{hardest} intermediate subproblem one must solve in order to recover all of $\bW_*$. We then obtain the strong-recovery lower bounds (Theorem \ref{thm:LB-strong-recovery}):
\begin{equation}\label{eq:LB-strong-recovery-intro}
    n \;\gtrsim  \;\sLeap (\nu_d) , \qquad \qquad \rt \;\gtrsim \;\qLeap (\nu_d),
\end{equation}
within the LDP (sample) and SQ (runtime) frameworks respectively. %Informally, the leap complexity is the cost of making progress on the \emph{most difficult} remaining component of the signal.

\paragraph*{Multi-step recovery algorithm.}
The leap complexity naturally suggests a family of multi-step algorithms.  
Fix a sequence of harmonic degrees $\ell_1,\ell_2,\ldots \geq 1$.  
After $t$ steps, suppose we have recovered $\bU_{\le t} \subsetneq \bW_*$.  
At step $t+1$, apply the degree-$\ell_{t+1}$ harmonic tensor unfolding algorithm to the reduced MIM $\nu_{d,\bU_{\le t}}$ to extract new directions  
$\bU_{t+1} \subseteq \bU_{\le t,\perp}^\sT \bW_*$. Then, set\footnote{Here and in what follows, the notation \(\bU_{\le t}\oplus\bU_{t+1}\) is used in an extended sense: it denotes an orthonormal frame whose span equals \(\Span(\bU_{\le t})\oplus \Span(\bU_{\le t,\perp}\bU_{t+1})\),
i.e., we view \(\bU_{t+1}\in\Stf(\R^{d-\s_{\leq t}})\) as a frame in \(\R^d\) via the identification with \(\Span(\bU_{\le t})^\perp\).}
\[
\bU_{\le t+1} := \bU_{\le t} \oplus \bU_{t+1},\]
and repeat until $\bU_{\le T} = \bW_*$.

To achieve sample-optimal (resp.\ runtime-optimal) performance, we choose at each step the sample-optimal (resp.\ runtime-optimal) harmonic degree \eqref{eq:intro_sample_runtime_optimal}.  If the two degree sequences coincide, then the same procedure is simultaneously sample- and runtime-optimal. However, in general, the harmonic degree sequence may differ significantly, with very different subspace recovery sequence $\{ \bU_{\leq t}\}$ (see examples in Section \ref{sec:examples} below). In this case, \textit{one must choose whether to be data or compute efficient.} Finally, selecting intermediate degree sequences allows to achieve intermediate trade-offs between the two resources.

\subsubsection{Learning MIMs via iterative harmonic tensor unfolding}

For each harmonic degree \(\ell \geq 1\), we propose a polynomial-time algorithm that recovers a subset of the signal directions \(\bW_*\) and (nearly) matches the lower bounds in~\eqref{eq:intro_LB_weak_recovery}.
The algorithm exploits the isomorphism between spherical harmonics and traceless symmetric tensors, together with a tensor unfolding operation that maps higher-order tensors to matrices, in the spirit of the seminal work of~\cite{montanari2014statistical} on Tensor PCA.

The algorithm depends on a choice of unfolding shape parameters \((a,b) \in \N^2\).
For \(\ell=1\), we take \((a,b)=(1,0)\), while for \(\ell \geq 2\) we choose integers \(1 \leq a \leq b\) satisfying \(a+b=\ell\).
For the degree-\(\ell\) harmonic tensor \(\cH_{d,\ell}(\bz) \in (\R^d)^{\otimes \ell}\),
we denote by \(\Mat_{a,b}(\cH_{d,\ell}(\bz))\) its unfolding into a \(d^a \times d^b\) matrix.
Given \(n\) samples \(\{(\by_i,\bz_i)\}_{i \in [n]}\) from a spherical MIM and a positive semidefinite kernel
\(K : \cY \times \cY \to \R\),
the algorithm computes the leading eigenvectors of the empirical matrix
\begin{equation}\label{eq:hatM-intro-def}
\widehat{\bM} = \frac{1}{n^2} \sum_{i,j= 1}^n (1 -\delta_{a\neq b} \delta_{i=j}) K(\by_i,\by_j) \Mat_{a,b} (\cH_{d,\ell} (\bz_i)) \Mat_{a,b} (\cH_{d,\ell} (\bz_j))^\sT \in \R^{d^a \times d^a},
\end{equation}
The resulting eigenvectors (which lie in \(\R^{d^a}\)) are then contracted with themselves to form a \(d \times d\) matrix, and the top eigenvectors of this matrix define an estimate \(\widehat{\bU}_0 \in \Stf_{\s_0} (\R^d)\).
A complete description of the procedure is given in Algorithm~\ref{alg:tensor_unfold_one_step} (Section~\ref{sec:one-step-HTU}).

While the sample complexity of the method is independent of the unfolding shape for \(a \leq b\), the runtime depends on this choice:
to minimize the computational cost, the unfolding should be as close to square as possible. We take \(a=b=\ell/2\) when \(\ell\) is even, and \((a,b)=((\ell-1)/2,(\ell+1)/2)\) when \(\ell\geq 3\) is odd.
With this choice, we obtain the following guarantees for a suitable choice of kernel (Theorem~\ref{thm:tensor_unfolding_one_step} and Proposition~\ref{prop:runtime_tensor_unfolding_one_step}).
For \(\ell \geq 2\), the estimated subspace \(\widehat{\bU}_0\) is a good approximation of a subspace of \(\bW_*\) as soon as
\begin{equation}\label{eq:intro-guarantees-intro-greater-2}
n \asymp  \frac{d^{\ell/2}}{\| \bxi_{\emptyset,\ell} \|_{L^2}^2} , \qquad\qquad \rt \asymp  \frac{d^{\ell+ \frac{1}{2}\delta_{\ell}}}{\| \bxi_{\emptyset,\ell} \|_{L^2}^2} \log(d), %\begin{cases}
    % \frac{d^{\ell}}{\| \bxi_{\emptyset,\ell} \|_{L^2}^2} \log(d) & \text{if $\ell$ is even}, \\
    % \frac{d^{\ell+ 1/2}}{\| \bxi_{\emptyset,\ell} \|_{L^2}^2} \log(d) & \text{if $\ell$ is odd}.
% \end{cases} 
\end{equation}
where $\delta_\ell = 0$ for even $\ell$ and $\delta_\ell = 1$ for odd $\ell$.
Thus, for even \(\ell\), the algorithm matches the conjectured LDP and SQ lower bounds (up to a logarithmic factor in runtime). For odd \(\ell\), the runtime is worse by an additional factor of \(\sqrt{d}\), and we leave open whether this factor can be removed.
In the special case of single-index models (\(\s=1\)),~\cite{joshi2025learning} showed that a simple online SGD algorithm achieves the optimal runtime
\(\rt \asymp d^\ell / \| \bxi_{\emptyset,\ell} \|_{L^2}^2\) (without logarithmic factors) for all \(\ell \geq 3\), albeit at the cost of a substantially worse sample complexity
\(n \asymp d^{\ell-1} / \| \bxi_{\emptyset,\ell} \|_{L^2}^2\).

The case \(\ell=1\) is more delicate.
Here, the lower bound \eqref{eq:intro_LB_weak_recovery} applies to detection and is not always tight for recovery: a detection--recovery gap can appear in this setting.
Our tensor unfolding estimator recovers a subset of signal directions with
\begin{equation}\label{eq:intro-guarantees-intro-1}
n \asymp \frac{d}{\| \bxi_{\emptyset, 1} \|_{L^2}^2}, \qquad\qquad \rt \asymp \frac{d^2}{\| \bxi_{\emptyset, 1} \|_{L^2}^2} \log (d).
\end{equation}
When \(\| \bxi_{\emptyset,1} \|_{L^2}^2 \asymp 1\), the sample complexity is tight and matches the information-theoretic lower bound for recovery.
When \(\| \bxi_{\emptyset,1} \|_{L^2}^2 \ll 1\), the above bounds can be improved using a modified algorithm under additional assumptions on \(\nu_d\); see the discussion in Section~\ref{sec:harmonic-one-step-guarantees}.

To recover the entire support $\bW_*$, we extend this approach to a multi-step procedure. Fixing a sequence of degrees \(\{\ell_t\}_{t \geq 1}\), we iteratively apply harmonic tensor unfolding to reduced MIMs obtained by conditioning on the previously recovered subspace \(\widehat{\bU}_{\leq t}^\sT\bz\). Under a stability assumption on the conditional distributions of the MIM, we show that the complexity of each step matches the corresponding single-step guarantees in~\eqref{eq:intro-guarantees-intro-greater-2} and~\eqref{eq:intro-guarantees-intro-1}.
As a result, when choosing the sample-optimal or runtime-optimal degree sequence (i.e., choosing the optimal degree \eqref{eq:intro_sample_runtime_optimal} at each step), this multi-step procedure recovers the whole support with complexity matching the leap complexity lower bounds \eqref{eq:LB-strong-recovery-intro} up to an \(O(\sqrt{d})\) factor in sample complexity (when the hardest step occurs at \(\ell=1\)), and up to a \(\widetilde{O}(\sqrt{d})\) factor in runtime (when the hardest step occurs at odd \(\ell\)).

\subsubsection{Examples}
\label{sec:examples}

We next present several examples to illustrate how iterative harmonic tensor unfolding learns multi-index models. Further discussions can be found in Section \ref{sec:applications}.

\paragraph*{Example 1: Gaussian single-index models.} This example was studied extensively in~\cite{joshi2025learning}, and we briefly recall some of its properties.
Consider a Gaussian SIM (\(\s=1\)) with
\begin{equation}\label{eq:Gaussian-SIM-example}
\bx \sim \cN(0,\bI_d), \qquad\qquad  y | \bx \sim \rho (\cdot | \<\bw_*,\bx\>),
\end{equation}
where the link function \(\rho\) has generative exponent \(k_* \geq 1\)~\cite{damian2024computational}
(see Section~\ref{sec:applications-gaussian-mims}).
By Example~\ref{ex:spherically_invariant_MIM}, this model can be rewritten as a spherical SIM and we can apply our results.

For every degree \(\ell \leq k_*\) such that \(\ell\) and \(k_*\) have the same parity, the harmonic coefficients satisfy $\| \bxi_{\emptyset, \ell} \|_{L^2}^2 \asymp d^{-(k_* - \ell)/2}$. Substituting these estimates into~\eqref{eq:intro_LB_weak_recovery}, we find that the sample complexity lower bound is \(n \asymp d^{k_*/2}\), and this bound is attained at every degree \(\ell \leq k_*\) with $\ell \equiv k_* [2]$.
In contrast, the runtime scales as \(d^{(k_*+\ell)/2}\) at degree \(\ell\).
Consequently, to achieve both sample-optimal and runtime-optimal performance, one should choose \(\ell \in \{1,2\}\) with the same parity as \(k_*\).
Harmonic tensor unfolding at this degree (with a modification for \(\ell=1\); see~\cite{joshi2025learning}) achieves
\[
n \asymp d^{k_*/2},
\qquad
\rt \asymp d^{k_*/2+1}\log d,
\]
which are both optimal. In Section~\ref{sec:applications-gaussian-mims}, we extend this analysis to Gaussian MIMs with \(\s>1\), where  no single algorithm can be simultaneously sample- and runtime-optimal in general.

An interesting variation to \eqref{eq:Gaussian-SIM-example} discussed in~\cite{joshi2025learning} arises when one observes only the normalized inputs
\(\bz := \bx/\|\bx\|_2\), a common preprocessing step in statistics and machine learning.
The resulting model is no longer a Gaussian SIM but remains a spherical SIM, so our theory still applies.
In this case, the harmonic coefficients satisfy $\| \bxi_{\emptyset,\ell} \|_{L^2}^2 \asymp d^{-(k_*-\ell)}$.  The runtime is now \(d^{k_*}\) for all $\ell \leq k_*$ with same parity, while the sample complexity scales as \(d^{k_*-\ell/2}\).
Thus, both the sample-optimal and runtime-optimal choice becomes \(\ell=k_*\), and tensor unfolding at $k_*$ achieves optimal $n \asymp d^{k_*/2}$ and $\rt \asymp d^{k_* + \frac{1}{2}\delta_{k_*}} \log d$. In particular, normalizing the inputs does not change the sample complexity but the runtime becomes quadratically worse.
This phenomenon has important implications for gradient-based algorithms; see~\cite{joshi2025learning}.

\paragraph*{Example 2: Parity functions.} 
Let \(\bz \sim \tau_d\), and let
\(\bW_* = [\bw_{*,1},\ldots,\bw_{*,\s}] \in \Stf_\s(\R^d)\) denote the unknown signal subspace. Write \(z_i := \langle \bw_{*,i}, \bz \rangle\), \(i \in [\s]\)
for the corresponding projections onto these signal directions. Consider the noisy parity model with $\s \geq 2$, defined by
\begin{equation}\label{eq:parity_function_ex_intro}
y = \sign ( z_1 z_2 \cdots z_\s  )  + \eps,
\end{equation}
with $\eps \sim \cN (0, \omega^2)$ independent additive noise\footnote{This additive-noise model is chosen for simplicity; the same conclusions hold for any link function of the form $y | \bz \sim \rho (y |  \sign ( z_1 z_2 \cdots z_\s  )).$}. This is a spherical MIM, and one can verify\footnote{For example, by adapting the proof of~\cite[Proposition~4]{damian2025generative} from Gaussian to spherical data.} that $\| \bxi_{\emptyset,\ell} \|_{L^2}^2 = 0$ for all $\ell < \s$, and $\| \bxi_{\emptyset,\s} \|_{L^2}^2 \asymp 1$. Therefore, both the sample-optimal and runtime-optimal degrees coincide at $\ell^{(s)}_\star = \ell^{(q)}_\star = \s$. Applying harmonic tensor unfolding at degree \(\ell=\s\) recovers the entire support \(\bW_*\) in a single step, with
\begin{equation}\label{eq:complexity-learning-staircases}
n \asymp d^{\s /2}, \qquad \qquad \rt \asymp d^{\s + \frac{1}{2} \delta_\s} \log(d).
\end{equation}
This achieves both optimal sample and near-optimal runtime complexity within LDP and SQ.  

\paragraph*{Example 3: Mixture of parities.} Let \(p \in (0,1/2)\), and let \(k_0 < k_1 < k_2\) be integers which we take even for simplicity.
Consider the following response model:
\[
y = \eta \cdot \sign (z_1 z_2 \cdots z_{k_1} ) + (1 - \eta) \cdot \sign ( z_{k_1 - k_0 +1} z_{k_1 - a+2} \cdots z_{k_2 + k_1 - k_0} ) + \eps,
\]
where \(\eta \sim \mathrm{Ber}(p)\) and \(\eps \sim \cN(0,\omega^2)\).
Equivalently, \(y\) is a noisy mixture of two parity functions: one of size \(k_1\) (with probability \(p\)) and one of size \(k_2\) (with probability \(1-p\)), sharing \(k_0\) common signal directions.

One can verify that $\| \bxi_{\emptyset,\ell} \|_{L^2}^2 = 0$ for $\ell < k_1$, $\| \bxi_{\emptyset,\ell} \|_{L^2}^2 \asymp p^2$ if $k_1 \leq \ell < k_2 $, and $\| \bxi_{\emptyset,\ell} \|_{L^2}^2 \asymp 1$ if $\ell \geq k_2$ ($\ell$ even). Consequently, the optimal iterative procedure is achieved at one of the following two degree sequences
\((\ell_1,\ell_2)=(k_1,k_2-k_0)\) or \((k_2,k_1-k_0)\):
\begin{itemize}
    \item \textbf{Degree sequence $(k_1,k_2-k_0)$:} Applying tensor unfolding at degree \(k_1\) first recovers the \(k_1\) directions corresponding to the smaller parity.
Conditioning on these directions, the reduced model becomes a parity function over the remaining \(k_2-k_0\) directions, which can then be recovered by tensor unfolding at degree \(k_2-k_0\).
The total complexity of these two steps is
    \[
    n \asymp d^{k_1/2}/p^2 + d^{(k_2 - k_0)/2}, \qquad \rt \asymp (d^{k_1}/p^2 + d^{k_2 - k_0}) \log (d).
    \]

    \item \textbf{Degree sequence $(k_2,k_1 - k_0)$:} We first recover the \(k_2\) directions associated with the larger parity by applying tensor unfolding at degree \(k_2\). Conditioning on these directions, we recover the remaining \(k_1-k_0\) directions via the degree \(k_1-k_0\). This yields
    \[
    n \asymp d^{k_2/2} + d^{(k_1 - k_0)/2}/p^2, \qquad \rt \asymp (d^{k_2} + d^{k_1 - k_0}/p^2) \log (d).
    \]
\end{itemize}

As a concrete illustration, fix \(q \in \naturals\) and set
\(k_0=2q\), \(k_1=4q\), \(k_2=8q\), and \(p=d^{-3q/2}\).
Then the two strategies yield
\[
\begin{aligned}
\textbf{Sample-optimal algorithm at $(k_2,k_1 - k_0)$:}& \qquad  n \asymp d^{4q}, \qquad  \rt \asymp d^{8q}\log (d), \\
\textbf{Runtime-optimal algorithm at $(k_1,k_2 - k_0)$:}& \qquad  n \asymp d^{5q}, \qquad  \rt \asymp d^{7q}\log (d).
\end{aligned}
\]
These complexities match the sample-leap and query-leap lower bounds
in~\eqref{eq:q-and-s-leap-complexities}, respectively. In particular, the runtime-optimal strategy reduces the runtime by a factor of \(d^q\) at the cost of a factor \(d^q\) increase in sample size.
Our lower bounds indicate that no single algorithm can simultaneously achieve optimal scaling for both sample complexity and runtime in this model.

\subsubsection{Organization of the paper}

The remainder of the paper is organized as follows. In Section~\ref{sec:tech-bg}, we introduce the technical background used throughout the paper, including standard tools from tensor algebra, spherical harmonics, relevant group actions, and the intertwining operators that relate these objects. 
In Section~\ref{sec:lower-bounds-leap}, we briefly review the SQ and LDP frameworks and then establish the lower bounds for weak recovery (alignment complexity) and strong recovery (leap complexity) for spherical MIMs.
Section~\ref{sec:tensor-unfolding} presents the family of iterative harmonic tensor unfolding algorithms. We first describe a one-step unfolding procedure and provide guarantees on its sample and runtime complexity. We then extend this approach to a multi-step algorithm by iterating the one-step procedure and establish learning guarantees for strong recovery of spherical MIMs. Section~\ref{sec:applications} applies our general results to more concrete model classes, including Gaussian and directional multi-index models. We conclude in Section~\ref{sec:discussion} and discuss directions for future work.

\subsection{Additional related work}\label{sec:related-work}

Below, we discuss further related work on multi-index models, beyond the references already cited in the introduction. Many classical problems, such as learning halfspaces~\cite{vempala1997random,baum1990learning,klivans2009cryptographic,klivans2024learning,vempala2010learning}, one-bit compressed sensing~\cite{Plan_Vershynin_2013,ai2014one,gopi2013one}, phase retrieval~\cite{candes2015phase,candes2013phaselift,netrapalli2013phase,mondelli2018fundamental}, and learning fixed width neural networks~\cite{ge2017learning,bakshi2019learning,chen2023learning}, arise as special cases of MIMs with particular choices of the link function. These connections have motivated a broad and diverse literature, which we do not attempt to survey exhaustively here.

In the case of single index, early work showed that SIMs with monotone link functions admit efficient learning with linear sample complexity~\cite{kalai2009isotron,kakade2011efficient}. However, statistical-computational gaps emerge even for simple non-monotonic examples like noisy phase retrieval~\cite{barbier2019optimal,maillard2020phase,mondelli2018fundamental}. For general Gaussian SIMs, \cite{damian2024computational} provided a sharp characterization (in terms of the polynomial exponent in $d$) of the statistical and computational complexity in terms of a \textit{generative exponent}, with optimal algorithm based on partial trace of an Hermite tensor.  
 This sharp characterization was extended to spherical SIMs (case $\s = 1$ in Definition \ref{def:spherical_MIM}) in \cite{joshi2025learning}, which derived SQ and low-degree lower bounds (analogous to Theorem \ref{thm:LB-weak-recovery}), and presented a harmonic tensor-unfolding\footnote{For rank-one kernels, the estimator \eqref{eq:hatM-intro-def} recovers the tensor unfolding estimator in~\cite{joshi2025learning} for spherical SIMs.} and online SGD algorithm that achieve optimal sample complexity and runtime, respectively. \cite{joshi2025learning} further illustrates the advantage of this harmonic-analytic perspective by clarifying a number of phenomena: why landscape smoothing achieves optimal complexity, why SGD is suboptimal, how exploiting the input norm is necessary to achieve optimal runtime complexity, and  how additional statistical-computational trade-offs arise beyond the Gaussian setting.

The landscape of learning MIMs is considerably richer and requires adaptive, multi-phase procedures~\cite{abbe2022merged,abbe2023sgd,bietti2023learning,damian2025generative,diakonikolas2025algorithms,diakonikolas2025robust}. Here, the complexity of learning Gaussian MIMs is captured by a \emph{(generative) leap complexity} \cite{abbe2023sgd,bietti2023learning,joshi2024complexity,damian2025generative,diakonikolas2025robust}. The weak recovery of Gaussian MIMs with leap exponent two was studied in the (optimal) proportional scaling regime $n\asymp d$ using spectral methods and approximate message passing (AMP) \cite{troiani2024fundamental,defilippis2025optimal,kovavcevic2025spectral}. Beyond the proportional regime,~\cite{damian2025generative} characterized sample complexity via the \emph{generative leap exponent}, proposing a Hermite tensor unfolding estimator with shape \((a,b)=(1,\ell-1)\) that iteratively conditions on the recovered subspace to attain the low-degree lower bound. Our estimator recovers these guarantees when instantiated with this configuration. Crucially, however, we show that adopting a ``squarer'' unfolding shape \((a,b)=(\lfloor \ell/2 \rfloor, \lceil\ell/2 \rceil)\) and selecting a different degree sequence may improves runtime complexity. This modification yields improved runtime (and (nearly) match the SQ lower bounds) while preserving optimal sample complexity for Gaussian MIMs, though our lower bounds suggest that in broader regimes, a sample-runtime trade-off is unavoidable. More broadly, our results extend the analysis of~\cite{damian2025generative} to spherical MIMs, provide explicit query complexity (runtime) lower bounds, and characterize sample-runtime trade-offs not addressed there. In another contemporary work,~\cite{diakonikolas2025algorithms,diakonikolas2025robust} developed a related learning procedure based on iterative subspace conditioning for Gaussian MIMs. While these works do not characterize the sharp polynomial dependency in \(d\), they address agnostic learning of MIMs and track the dependence on additional problem parameters. We leave extending our analysis to the agnostic setting, and obtaining similarly refined bounds to future work.

 Beyond these works, alternative estimation algorithms were studied, including semiparametric maximum likelihood~\cite{Horowitz_2009,dudeja2018learning}, sliced inverse regression~\cite{li1991sliced,Babichev_Bach_2018}, and gradient-based methods \cite{arous2021online,arnaboldi2023high,zweig2023single,arnaboldi2024online,damian2024smoothing,dandi2024benefits,lee2024neural,arnaboldi2024repetita,montanari2026phase}; see~\cite{chen2020learning,dudeja2018learning,bruna2025survey} for overviews. Recent work also investigates robust learning~\cite{diakonikolas2022learning,li2024learning,zarifis2024robustly,wang2024sample} and complementary hardness results, including cryptographic lower bounds for agnostic learning~\cite{diakonikolas2023near} and distribution-free PAC hardness~\cite{diakonikolas2022hardness}.

\subsection{Notation}

We use boldface letters (e.g., $\bx$, $\bA$) to denote vectors, matrices, and higher-order tensors, while regular letters (e.g., $x$, $A$) denote scalars. For a positive integer $d$, we write $[d] = \{1,2,\ldots,d\}$. The unit sphere in $\R^d$ is denoted by \(\S^{d-1}\). For a finite set $S$, $|S|$ denotes its cardinality. We denote by $\langle \cdot, \cdot \rangle$ and $\|\cdot\|_2$ the Euclidean inner product and norm on $\R^d$, and by $\langle \cdot, \cdot \rangle_{\frob}$ and $\|\cdot\|_{\frob}$ the Frobenius inner-product and norm for higher-order tensors $(\R^d)^{\otimes \ell}$.  For a measurable space \(\cY\), \(\cP(\cY)\) denote the collection of probability measures on \(\cY\). For a measurable function $f$, $\|f\|_{L^p(\mu)}$ denotes the $L^p$ norm with respect to a measure $\mu$. When $\mu$ is clear from context, we simply write $\|f\|_{L^p}$. In particular, for a tensor-valued random variable $\bxi$, we write \(\|\bxi\|_{L^2} = \|\|\bxi\|_{\frob}\|_{L^2}\). We will further denote $\| \cdot \|_\op$ the standard operator norm for linear operators.

For a matrix $\bA$,  $\Span(\bA)$ denotes the linear subspace spanned by its columns (more generally, $\Span(\{\bv_i\})$ denotes the linear span of a collection of vectors). For $1 \leq s \leq d$, $\Stf_{s}(\R^d)$ denotes the Stiefel manifold of orthonormal $s$-frames in $\R^d$.
For \(s, k\in [d]\) with \(s+k\leq d\), \(\bU\in \Stf_s(\R^d)\) and \(\bV \in \Stf_k(\R^d)\), we write \(\bU \oplus \bV\) for the orthonormal \((s+k)\)-frame obtained by concatenating the columns of \(\bU\) and \(\bV\) whenever \(\Span(\bU) \cap \Span(\bV) = \{0\}\).
Also, we say that \(\bU\) is a subframe of \(\bV\), denoted \(\bU\subseteq\bV\), if \(\Span(\bU)\subseteq\Span(\bV)\); it is strict when the inclusion is proper.
%, and we write \({\rm Sub}(\bV)\) for the set of all strict subframes of \(\bV\).
Given two orthonormal \(s\)-frames $\bU,\bU' \in \Stf_{s}(\R^d)$, we denote by $\dist(\bU,\bU') = \|\bU\bU^\sT - \bU'(\bU')^\sT\|_{\op}$ the distance, in operator norm, between the orthogonal projections onto their column subspaces.

Unless otherwise specified, $c',c,C',C$ denote positive constants that are independent of the dimension parameters $(n,d)$ but may depend on fixed model parameters (e.g., $\s, \ell$, and constants in the assumptions). Typically, $c,c'$ denote sufficiently small constants and $C,C'$ sufficiently large constants. Their values may change from line to line.
For functions $f,g : \N \to \R_{\ge 0}$, we write $f(d)=O(g(d))$ (resp.\ $f(d)=\Omega(g(d))$) if there exists a constant $C>0$ such that $f(d)\le C g(d)$ (resp.\ $f(d)\ge C g(d)$) for all sufficiently large $d$. We write $f(d)=\Theta(g(d))$ if both $f(d)=O(g(d))$ and $g(d)=O(f(d))$. The notation $f(d)=\widetilde{O}(g(d))$ indicates that $f(d)\le C g(d)\log^C(d)$ for some constant $C>0$ and all sufficiently large $d$. Unless otherwise specified, the constant \(C\) may depend on fixed model parameters and constants in the assumptions, but not on the dimension parameter \((n,d)\).  We use $f(d)\lesssim g(d)$, $f(d)\gtrsim g(d)$, and $f(d)\asymp g(d)$ as shorthands for $f(d)=O(g(d))$, $f(d)=\Omega(g(d))$, and $f(d)=\Theta(g(d))$, respectively. $f(d)=o(g(d))$ (or $f(d)\ll g(d)$) means that $\lim_{d\to\infty} f(d)/g(d)=0$, while $f(d)=\omega(g(d))$ (or $f(d)\gg g(g)$) means that $\lim_{d\to\infty} f(d)/g(d)=\infty$. When \(f,g\) are functions of multiple variables, we sometimes write \(f = O_d(g)\) to indicate that the asymptotic notation applies only to the variable \(d\), with other variables held fixed, and similarly for \(\Omega, \Theta, \widetilde{O},\omega, o\).

\section{Technical background}\label{sec:tech-bg}

In this section, we review technical background on traceless symmetric tensors, spherical harmonics, and their relation to representations of the orthogonal group $\cO_d$.  Most of these properties are classical, though a few are less standard and included here for completeness. For a comprehensive treatment of spherical harmonics, we refer to \cite{szeg1939orthogonal,chihara2011introduction}, and for the representation theory of $\cO_d$ and its relation to spaces of traceless symmetric tensors, see \cite{goodman2000representations,fulton2013representation}. We defer detailed statements, proofs, and additional properties to Appendix~\ref{app:tech-bg}.

\subsection{Traceless symmetric tensors, function spaces, and group action}

\paragraph*{Traceless symmetric tensors.}
Let \((\R^d)^{\otimes \ell}\) be the space of order-\(\ell\) tensors over \(\R^d\), equipped with the Frobenius inner product
\[
    \<\bA,\bB\>_\frob  = \sum_{i_1,\ldots, i_\ell=1}^{d} A_{i_1,\ldots,i_\ell} B_{i_1,\ldots,i_\ell}, \qquad \bA,\bB\in (\R^d)^{\otimes \ell}.
\]
For integers $p,q,r \geq 0$ with $r \leq \min(p,q)$, the tensor contraction $\otimes_r : (\R^d)^{\otimes p} \times (\R^d)^{\otimes q} \to (\R^d)^{\otimes p+q-2r}$ is defined by
\[
(\bA \otimes_{r} \bB)_{i_1,\ldots,i_{p-r},j_{1},\ldots,j_{q-r}} = \sum_{s_1, \ldots , s_r =1}^d A_{i_1,\ldots,i_{p-r},s_1,\ldots,s_r} B_{s_1,\ldots,s_r,j_1,\ldots,j_{q-r}}.
\]
In particular $\otimes := \otimes_0$ corresponds to the standard tensor product. For $\bM \in \R^{m\times d}$ and $\bA \in (\R^d)^{\otimes \ell}$, we write $\bM^{\otimes \ell}\bA \in (\R^m)^{\otimes \ell}$ for the tensor obtained by applying $\bM$ to each index:
\[
    \big(\bM^{\otimes \ell}\bA\big)_{i_1,\ldots, i_\ell} = \sum_{j_1,\ldots,j_\ell=1}^d M_{i_1,j_1}\cdots M_{i_\ell,j_\ell} A_{j_1,\ldots,j_\ell}.
\]

We denote by \(\Sym_\ell(\R^d)\subseteq (\R^d)^{\otimes \ell}\) the subspace of \textit{symmetric tensors}, that is tensors \(\bA\) such that for any permutation \(\sigma\in \perm_\ell\), \(A_{i_1,\ldots,i_\ell} = A_{i_{\sigma(1)},\ldots,i_{\sigma(\ell)}}\). We define the orthogonal projection onto this subspace by
\begin{equation}\label{eq:projection_symmetric_tensors}
    \psym(\bA) = \frac{1}{\ell!}\sum_{\sigma\in \perm_\ell} \bA^\sigma,
\end{equation}
where \(\bA^\sigma\) is the tensor defined by \(A^\sigma_{i_1,\ldots,i_\ell} = A_{i_{\sigma(1)},\ldots,i_{\sigma(\ell)}}\). 

For \(\ell\geq 2\), we define the partial trace operator \(\tau: \Sym_\ell(\R^d) \to \Sym_{\ell-2}(\R^d)\) by
\begin{equation}\label{eq:partial_trace_operator}
    \tau(\bA)_{i_1,\ldots,i_{\ell-2}} = \sum_{j=1}^d A_{j,j,i_1,\ldots,i_{\ell-2}}.
\end{equation}
For symmetric tensors, this definition does not depend on the choice of indices to trace out. The subspace of \textit{traceless symmetric tensors} is then
\[
    \TSym_\ell(\R^d) = \{\bA\in \Sym_\ell(\R^d): \tau(\bA) = 0\}.
\]
For \(\ell=0,1\), we set \(\TSym_0(\R^d) = \Sym_0(\R^d) = \R\) and \(\TSym_1(\R^d) = \Sym_1(\R^d) = \R^d\). The orthogonal projection onto $\TSym_\ell(\R^d)$ is denoted $\ptf:\Sym_\ell(\R^d)\to \TSym_\ell(\R^d)$; an explicit expression is given in Appendix~\ref{app:symmetric-traceless-tensor}. Finally, denote by $\Lop_{d,\ell}$ the space of linear operators on $\TSym_\ell(\R^d)$, which can be identified with $\TSym_\ell(\R^d)\otimes \TSym_\ell(\R^d)$. The operator norm, trace, and Frobenius norm on $\Lop_{d,\ell}$ are defined in the usual way.

\paragraph*{Function spaces on the sphere.} Let $\S^{d-1} = \{\bx \in \R^d : \|\bx\|_2=1\}$ denote the unit sphere and $\tau_d$ the uniform probability measure on $\S^{d-1}$. We write $L^2(\S^{d-1}) := L^2(\S^{d-1},\tau_d)$ for the space of square-integrable functions on the sphere, endowed with inner product and norm
\begin{equation}\label{eq:inner-prod-L2-sphere}
    \langle f,g \rangle_{L^2(\tau_d)} = \int_{\S^{d-1}} f(\bx) g(\bx) \tau_d(\d \bx),
\qquad \|f\|_{L^2(\tau_d)}^2 = \langle f,f\rangle_{L^2(\tau_d)}.
\end{equation}
We will also consider product spaces
\begin{equation}\label{eq:product-spaces-background}
L^2(\cV\times\S^{d-1},\nu\otimes\tau_d)
= L^2(\cV,\nu)\otimes L^2(\S^{d-1},\tau_d),
\end{equation}
with the corresponding inner product denoted $\langle \cdot,\cdot\rangle_{L^2(\nu\otimes\tau_d)}$. For simplicity, we often write $L^2(\cV)=L^2(\cV,\nu)$ and $\langle \cdot,\cdot\rangle_{L^2}$ when the measure is clear from context.

\paragraph*{Orthogonal group.} Let $\cO_d$ denote the orthogonal group in $\R^d$, identified with the set of orthogonal matrices $\bQ\in\R^{d\times d}$ satisfying $\bQ\bQ^\sT=\bI_d$. We denote by $\pi_d$ the Haar probability measure on $\cO_d$. The group $\cO_d$ acts naturally on $\R^d$ and on $(\R^d)^{\otimes\ell}$ via
\[
g\cdot \bx = \bQ\bx, \qquad g\cdot \bA = \bQ^{\otimes\ell}\bA.
\]

For any $0 \le \s \le d$, let $\Stf_\s(\R^d) = \{\bW\in\R^{d\times\s}:\bW^\sT\bW=\bI_\s\}$ be the Stiefel manifold of orthonormal $\s$-frames in $\R^d$, endowed with its uniform measure $\pi_{d,\s}$. For $\bW\in\Stf_\s(\R^d)$, the stabilizer subgroup of $\bW$ under the left action of $\cO_d$ is
\begin{equation}\label{eq:stabilizer_subgroup}
    \cO_d^{\bW} = \{ g\in\cO_d : g\cdot \bW = \bW \}.
\end{equation}
The group $\cO_d^{\bW}$ acts on $\R^d$ and on tensors as the restriction of the natural $\cO_d$-action.

Finally, we consider the unitary representations of $\cO_d$ acting on $L^2(\S^{d-1})$ and $L^2(\cV \times \S^{d-1})$ via
\begin{equation}\label{eq:left-action-fcts}
[\rho(g)\cdot f](\bz) = f(g^{-1}\cdot\bz),
\qquad
[\rho(g)\cdot f](\bv,\bz) = f(\bv,g^{-1}\cdot\bz),
\end{equation}
which define the standard left action representations of $\cO_d$ on these $L^2$ spaces.

\subsection{Spherical harmonics and intertwining operator}

\paragraph*{Spherical harmonics.}  Spherical harmonics of degree $\ell$ are defined as degree-$\ell$ homogeneous harmonic polynomials restricted to $\S^{d-1}$, that is, polynomials $P$ such that $P(t\bz) = t^\ell P(\bz)$ and $\Delta P=0$, where $\Delta$ is the Laplace operator. We denote by $\sh_{d,\ell}$ the space of such functions and $N_{d,\ell} := \dim (\sh_{d,\ell})$, where
\begin{equation*}
N_{d,0} = 1, \qquad N_{d,1} = d, \qquad N_{d,\ell} = \frac{d+2\ell-2}{\ell} \binom{d+\ell-3}{\ell-1} \quad \text{for $\ell \geq 2$.}
\end{equation*}
In particular, $|N_{d,\ell}/\binom{d}{\ell} - 1| \leq C_\ell /d$, where $C_\ell>0$ is some constant that only depend on $\ell$. 

The family $\{\sh_{d,\ell}\}_{\ell \geq 0}$ forms a collection of mutually orthogonal subspaces in $L^2(\S^{d-1})$ with respect to the inner-product \eqref{eq:inner-prod-L2-sphere}. This yields the orthogonal decomposition 
\begin{equation}\label{eq:l2-decomposition-sphere}
    L^2(\S^{d-1})
    = \bigoplus_{\ell=0}^{\infty} \sh_{d,\ell}.
\end{equation}
Each subspace $\sh_{d,\ell}$ is an irreducible subspace under the left action \eqref{eq:left-action-fcts} of the orthogonal group $\cO_d$. Hence, \eqref{eq:l2-decomposition-sphere} is exactly the semisimple (Peter-Weyl) decomposition of $L^2 (\S^{d-1})$ into irreducible representations.

\paragraph*{Isomorphism with traceless symmetric tensors.} There exists an $\cO_d$-equivariant isometry between $\TSym_{\ell}(\R^d)$  and $\sh_{d,\ell}$ with their respective inner product. For any $\ell,d \in \naturals$, define the \textit{harmonic tensor} $\cH_{d,\ell} : \S^{d-1} \to \TSym_{\ell} (\R^d)$ as\footnote{Note that this is a different but more natural normalization than \cite{joshi2025learning} where $ \cH_{d,\ell} ( \bz) = \kappa_{d,\ell}^2 \sqrt{N_{d,\ell}} \ptf (\bz^{\otimes \ell})$.} 
\begin{equation}\label{eq:harmonic_tensor_definition}
    \cH_{d,\ell} ( \bz) = \kappa_{d,\ell} \sqrt{N_{d,\ell}} \ptf (\bz^{\otimes \ell}), \qquad  \kappa_{d,\ell} = \sqrt{2^\ell \frac{(d/2 - 1)_\ell}{(d-2)_\ell}}
\end{equation}
where \(\ptf\) is the projection onto traceless symmetric tensors (see the explicit expression~\eqref{eq:projection_traceless_symmetric_tensors} in Appendix \ref{app:symmetric-traceless-tensor}) and $(a)_\ell = a (a+1) \cdots (a+\ell-1)$ is the (rising) Pochhammer symbol. The constant $\kappa_{d,\ell}$ is chosen such that $\kappa_{d,\ell} \| \ptf (\bw^{\otimes \ell}) \|_\frob = 1$ for all $\| \bw \|_2 = 1$ and satisfy $\kappa_{d,\ell} = 1 + \Theta_d(d^{-1/2})$. Then, 
\begin{equation}\label{eq:isometry_spherical_harmonics}
    \Phi_{d,\ell} : \TSym_{\ell} (\R^d) \to \sh_{d,\ell}, \qquad  \Phi_{d,\ell} (\bA) (\bz) = \< \bA, \cH_{d,\ell} (\bz) \>_\frob 
\end{equation}
is an isometry (see Lemma \ref{lem:isometry_spherical_harmonics} in Appendix \ref{app:isomorphism}), that is,
\begin{equation}\label{eq:Phi-d-ell-isometry}
    \< \Phi_{d,\ell} (\bA) , \Phi_{d,\ell} (\bB) \>_{L^2}  = \< \bA,\bB\>_\frob , \qquad \text{for all }\bA,\bB \in \TSym_{\ell} (\R^d).
\end{equation}
Furthermore, this mapping intertwines the tensor and function representations of $\cO_d$
\begin{equation}\label{eq:equiv-Phi-d-ell}
    \Phi_{d,\ell}(g\cdot\bA)=\rho(g)\cdot\Phi_{d,\ell}(\bA), \qquad \text{for all }\bA \in \TSym_\ell (\R^d) \text{ and }g \in \cO_d.
\end{equation}
Note that all the entries of $\cH_{d,\ell}(\bz)$ are in $\sh_{d,\ell}$ and \eqref{eq:Phi-d-ell-isometry} implies that 
\[
\E_{\tau_d}[\cH_{d,\ell} (\bz) \otimes \cH_{d,\ell} (\bz)] =\id_{\TSym_{\ell}(\R^d)} \in \Lop_{d,\ell}.
\]

\paragraph*{Harmonic decomposition.} Combining \eqref{eq:l2-decomposition-sphere} and the fact that \(\Phi_{d,\ell}\) is an isometric isomorphism (see Lemma \ref{lem:isometry_spherical_harmonics}), any $f \in L^2 (\S^{d-1})$ admits an expansion in terms of harmonic tensors (equality in $L^2$)
\begin{equation}\label{eq:harmonic_expansion_invariant_functions}
    f(\bz) = \sum_{\ell=0}^{\infty} \<\bA_{d,\ell}, \cH_{d,\ell}(\bz)\>_\frob  ,\quad \text{ where }\quad \bA_{d,\ell} = \E_{\bz\sim \tau_d}[f(\bz)\cH_{d,\ell}(\bz)] \in \TSym_\ell(\R^d).
\end{equation}
If, in addition, $f$ is $\cO_d^\bW$-invariant for some $\bW \in \Stf_\s (\R^d)$, that is $f$ only depends on $\bW^\sT \bz$, then one may write $\bA_{d,\ell} = \ptf (\bW^{\otimes \ell} \bB_{\s,\ell})$ for some $\bB_{\s,\ell} \in \Sym_\ell (\R^s)$ (see Lemma \ref{lem:harmonic_expansion_invariant_functions} in Appendix \ref{app:isomorphism}). An important example corresponds to $\s = 1$ and \textit{zonal harmonics}
\[
\< \bA_{d,\ell}, \cH_{d,\ell} (\bz) \>_\frob  =  b_{1,\ell} \< \ptf (\bw^{\otimes \ell} ), \cH_{d,\ell} (\bz) \>_\frob  = b_{1,\ell} \kappa_{d,\ell} Q_{\ell}^{(d)} (\<\bw,\bz\>),
\]
where $Q_\ell^{(d)}:[-1,1]\to 1$ are the (normalized) Gegenbauer polynomials with $\E_{\tilde{\tau}_{d,1}}[ Q^{(d)}_{\ell} (z_1) Q^{(d)}_{\ell} (z_1) ] = \delta_{\ell \ell'}$, where $z_1 \sim \tilde{\tau}_{d,1}$ is the marginal distribution of $\< \be_1,\bz\>$ under $\bz \sim \tau_d$.

In the case of $f \in L^2 (\cV \otimes \S^{d-1}) = \bigoplus_{\ell \geq 0} L^2 (\cV) \otimes \sh_{d,\ell}$, we decompose
\begin{equation}\label{eq:L2-decompo-product-space}
    f(\bv,\bz) = \sum_{\ell  =0}^\infty 
    \<\bA_{d,\ell} (\bv) , \cH_{d,\ell} (\bz)\>_\frob , \qquad \bA_{d,\ell} (\bv) = \E_{\bz \sim \tau_d} [ f(\bv,\bz) \cH_{d,\ell} (\bz) | \bv] \in \TSym_{\ell} (\R^d),
\end{equation}
where $\E_{\bv \sim \nu} [ \| \bA_{d,\ell} (\bv) \|_\frob^2] < \infty$.

\paragraph*{Tensor product representation.} We will also consider the tensor representation $\TSym_{p} (\R^d) \otimes \TSym_{q} (\R^d)$ with action $g\cdot (\bA\otimes \bB) = (g \cdot \bA)\otimes (g\cdot \bB)$, which admits the semisimple decomposition
\begin{equation}\label{eq:semisimple-decomposition-tensor-representation}
    \TSym_{p} (\R^d) \otimes \TSym_{q} (\R^d) \cong \bigoplus_{j = 0}^{\min(p,q)} \TSym_{p+q - 2j} (\R^d).
\end{equation}
In particular, we have the explicit harmonic decomposition of $\sh_{d,p}\otimes \sh_{d,q}$: for any $\bC \in   \TSym_{p} (\R^d) \otimes \TSym_{q} (\R^d)$,
\[
    \< \bC, \cH_{d,p} (\bz) \otimes \cH_{d,q} (\bz)\>_\frob  = \sum_{j = 0}^{\min(p,q)} b^{(d)}_{p,q,j}\< \proj_{\mathsf{tf},j}^{(p,q)} (\bC),\cH_{d,p+q - 2j} (\bz)\>_\frob , 
\]
where $b^{(d)}_{p,q,j} = \Theta_d (1)$ are related to the Clebsh-Gordan coefficients and $\proj_{\mathsf{tf},j}^{(p,q)}: \TSym_{p}(\R^d) \otimes \TSym_{q}(\R^d) \to \TSym_{p+q-2j} (\R^d)$ is the linear operator defined for all $\bA\in \TSym_{p} (\R^d)$, $\bB\in \TSym_{q} (\R^d)$ by
\begin{equation}\label{eq:definition_diamond_operator}
    \proj_{\mathsf{tf},j}^{(p,q)} (\bA \otimes \bB) = \bA \diamond_j \bB := \ptf ( \psym( \bA \otimes_j \bB))
\end{equation}
for $0 \leq j \leq \min(p,q)$ and \(\proj_{\mathsf{tf},j}^{(p,q)} (\bA \otimes \bB) = 0\) otherwise.

\paragraph*{Schur orthogonality relations and hypercontractivity.} Each $\sh_{d,\ell}$ (equivalently $\TSym_\ell(\R^d)$) forms an irreducible unitary representation of $\cO_d$. The functions
\begin{equation}\label{eq:matrix-coefficients}
g\mapsto \langle \rho(g)\cdot f,h\rangle_{L^2(\tau_d)}, \qquad f,h\in\sh_{d,\ell},
\end{equation}
are the \emph{matrix coefficients} of this representation, and satisfy the Schur orthogonality relations: for any $f,h \in \sh_{d,\ell}$ and $f',h' \in \sh_{d,k}$,
\begin{equation}\label{eq:schur-orthogonality}
\int_{\cO_d}
\langle \rho(g)\cdot f,h\rangle_{L^2}\,
\langle \rho(g)\cdot f',h'\rangle_{L^2}
\,\pi_d(\de g)
=
\frac{\delta_{\ell k}}{N_{d,\ell}}\,
\langle f,f'\rangle_{L^2}\,
\langle h,h'\rangle_{L^2}.
\end{equation}  
By unitary equivalence, the same relation holds when $f,h$ are replaced by traceless symmetric tensors in $\TSym_\ell(\R^d)$ (we have equality $\<\rho(g) \cdot \Phi_{d,\ell}(\bA), \Phi_{d,\ell} (\bB)\>_{L^2} = \<g \cdot \bA, \bB\>_\frob $).

Denote $\cM_{d,\ell} \subset L^2 (\cO_d)$ the span of all matrix coefficients \eqref{eq:matrix-coefficients} with $f,h \in \sh_{d,\ell}$. Using Gross' theorem, the heat semigroup on the connected component $\cSO_d$, and the Casimir eigenvalue associated to $\sh_{d,\ell}$, the subspace $\cM_{d,\ell}$ satisfies an hypercontractivity property (see Appendix \ref{app:hypercontractivity}): for any $d\geq 3$, any $F \in \cM_{d,\ell}$, and any $1 < p \leq q < \infty $, 
\begin{equation}\label{eq:hypercontractivity-ineq-O-d}
    \| F \|_{L^q (\cO_d)} \leq 2^{\frac{1}{p} - \frac{1}{q}} \left( \frac{q-1}{p-1}\right)^{\gamma_{d} (\ell)} \| F\|_{L^p (\cO_d)}, \qquad \gamma_d(\ell) = \frac{2 \ell (\ell + d - 2)}{d-2}.
\end{equation}

We further recall the following celebrated result on the hypercontractivity of polynomials on the sphere \cite{beckner1992sobolev}:
\begin{equation}\label{eq:hypercontractivity-ineq-sphere}
\| f\|_{L^q (\tau_d)} \leq \left(\frac{q-1}{p-1}\right)^{\ell/2} \| f \|_{L^p (\tau_d)},
\end{equation}
for all polynomials $f$ of degree at most $\ell$.

\section{Lower bounds and leap complexity}\label{sec:lower-bounds-leap}

We begin by establishing computational lower bounds for learning spherical multi-index models. As noted in the introduction, under mild assumptions these models are information-theoretically learnable with $\Theta_d( d)$ samples; see, e.g., \cite[Appendix H]{joshi2025learning}. However, it is conjectured that no polynomial-time algorithms can recover the latent subspace with this many samples for general link functions $\nu_d$ \cite{barbier2019optimal,damian2024computational,damian2025generative,diakonikolas2025algorithms,joshi2025learning}. That is, these models exhibit a so-called \textit{Statistical-Computational gap}. Ruling out all polynomial-time algorithms would require resolving $\texttt{NP} \neq \texttt{P}$, so the standard approach is to prove hardness within restricted, yet powerful, computational models. In this paper, we use the popular \textit{Statistical Query} (SQ) and \textit{Low-Degree Polynomial} (LDP) framework to derive such computational lower bounds for spherical MIMs.

\subsection{Statistical Query and Low-Degree frameworks}
\label{sec:LDP-SQ-background}

We briefly review the SQ and LDP frameworks and refer to \cite{feldman2017general,reyzin2020statistical,wein2025computational,misiakiewicz2025short} for a more comprehensive introduction. Both frameworks reduce estimation to a \textit{detection} problem.

Suppose we observe data generated under a family of distributions $\{ \P_{\btheta} : \btheta \in \bTheta\}$. We lower bound the complexity of estimating $\btheta$ (from $n$ samples) by the  complexity of the simpler task of distinguishing 
\begin{equation}\label{eq:detection-general}
        H_0: (\by_i,\bz_i)_{i\in[n]}\overset{\text{i.i.d.}}{\sim} \P_{0}
    \qquad \text{versus} \qquad
    H_1: (\by_i,\bz_i)_{i\in[n]} \overset{\text{i.i.d.}}{\sim} \P_\btheta \;\; \text{for some}\;\; \btheta \in \bTheta.
\end{equation}
for some null $\P_0$. %Since successful estimation implies successful detection (with the same sample size), any lower bound for detection immediately yields a lower bound for estimation. 
Although estimation may be strictly harder than detection, detection lower bounds provide a robust baseline.

\paragraph*{Statistical Query algorithms.} The SQ framework, introduced by Kearns \cite{kearns1998efficient}, models algorithms that interact with the data only through noisy estimates of expectations of query functions, rather than through direct sample access. A broad class of algorithms---including gradient-based methods, SVMs, and MCMC---can be implemented in this model. The SQ framework is generally viewed as capturing the power of noise-robust algorithms.

For a number of queries $q$ and tolerance $\tol >0$, an SQ algorithm $\cA \in \SQ(q,\tol)$ for the detection task \eqref{eq:detection-general} takes an input distribution $\P \in \{\P_0\} \cup \{ \P_{\btheta} : \btheta \in \bTheta\} $ and proceeds in $q$ rounds. At each round $t \in \{1,\ldots, q\}$, it issues a query $\phi_t : \cY \times \cZ \to [-1,1]$ and receives a response $v_t$ satisfying
\begin{equation}\label{eq:tolerance-SQ}
    | v_t - \E_{(\by,\bz) \sim \P} [ \phi(\by,\bz)] | \leq \tol. 
\end{equation}
The choice of $\phi_t$ can depend on previous responses $v_1,\ldots,v_{t-1}$. After $q$ rounds, the algorithm outputs either $0$ or $1$. We say that $\cA$ succeeds at distinguishing $\P_0$ from $\{ \P_{\btheta} : \btheta \in \bTheta\}$ if, for any valid responses $v_t$, $\cA$ outputs $0$ if $\P = \P_0$, and $1$ if $\P = \P_\btheta$ for some $\btheta \in \Omega$.

The oracle response \eqref{eq:tolerance-SQ} can be implemented using $n \asymp 1/\sqrt{\tol}$ samples, via an empirical average. Consequently, the query complexity $q/\tol^2$ is often used as a proxy lower bound for the runtime of the algorithm, under the heuristic that $\Omega (1/\tol^2)$ computation is required to simulate each query.

\paragraph*{Low-Degree Polynomial algorithms.} The LDP framework is a complementary approach for studying statistical–computational trade-offs in high-dimensional inference. Conceptually, it captures algorithms whose test statistics can be expressed as multivariate polynomials of bounded degree in the observed data.

Let $\btheta \sim \pi$ be a prior on the parameter space and consider the likelihood ratio
\begin{equation}
    \cR((\by_i,\bz_i)_{i \in [n]}) = \E_{\btheta \sim \pi} \left[ \prod_{i \in [n]} \frac{\de \P_\btheta}{\de \P_0} (\by_i,\bz_i) \right].
\end{equation}
Let $\proj_{\leq D}$ denote the orthogonal projection in $L^2 ( \P^{\otimes n}_0)$ onto the subspace of polynomials of degree at most $D$ in the covariates $\{\bz_i\}_{i \in [n]}$. The low-degree likelihood ratio is defined as
\begin{equation}
        \cR_{\leq D}((\by_i,\bz_i)_{i \in [n]}) = \proj_{\leq D} \cR((\by_i,\bz_i)_{i \in [n]}). 
\end{equation}
The degree $D$ is interpreted as a proxy for computational complexity: evaluating degree-$D$ polynomials in $O(n)$ variables takes time at most $n^{O(D)}$.  

The \textit{low-degree conjecture} \cite{hopkins2018statistical} asserts that if for $D = \omega_n (\log n)$, $\| \cR_{\leq D} \|_{L^2}^2 = 1 + o_n(1)$, then  no polynomial time algorithm can achieve weak detection (have non-vanishing advantage compared to random guessing). In other words, if all  degree-$\omega_n (\log n)$ polynomial tests fail, this is considered strong evidence that all polynomial-time algorithms fail for the problem \cite{wein2025computational}.

\subsection{Lower bound on weak recovery: Alignment complexity}

We begin by establishing computational lower bounds for the task of \emph{weak recovery}, namely estimating $\bW_*$ with accuracy better than random guessing. To capture this notion, we consider the detection problem
\begin{equation}\label{eq:hypothesis-testing-weak-recovery}
     H_0: (\by_i,\bz_i)_{i\in[n]}\overset{\text{i.i.d.}}{\sim} \P_{\nu_d,\emptyset}
    \quad \text{v.s.} \quad
    H_1: (\by_i,\bz_i)_{i\in[n]} \overset{\text{i.i.d.}}{\sim} \P_{\nu_d}^{\bW} \;\; \text{for some}\;\; \bW \in \Stf_\s (\R^d),
\end{equation}
where the null distribution $\P_{\nu_d,\emptyset} = \nu_d^Y \otimes \tau_d$ corresponds to the model in which the response $\by$ is independent of the input $\bz$. We also write $\overline{\nu}_{d,\emptyset} := \nu_d^Y \otimes \tilde{\tau}_{d,\s}$.  

Averaging over $\bW\sim\pi_{d,\s}$ and using the rotational invariance of $\tau_d$, we obtain 
\begin{equation}\label{eq:expectation-mixture-null}
    \P_{\nu_d,\emptyset} = \E_{\bW \sim \pi_{d,\s}}\left[\P_{\nu_d}^{\bW} \right] = \E_{g \sim \pi_d} \left[\P_{\nu_d}^{g \cdot \bW_*} \right] = \E_{g \sim \pi_d} \left[\rho(g) \cdot \P_{\nu_d}^{\bW_*} \right],
\end{equation}
and $\P_{\nu_d,\emptyset}$ is precisely the mixture of the planted model under a uniform prior on~$\bW$.

To apply the LDP framework, we assume that the model has finite chi-squared divergence with respect to the null (see Remark~\ref{rmk:chi-squared-bound}):
\begin{assumption}\label{ass:chi-squared-weak}
We have $\nu_d \ll \overline{\nu}_{d,\emptyset}$ and the Radon-Nikodym derivative satisfies $\frac{\de \nu_d}{\de \overline{\nu}_{d,\emptyset}} \in L^2 ( \overline{\nu}_{d,\emptyset})$.
\end{assumption}

Equivalently, the likelihood ratio
\begin{equation}
\frac{\de \P_{\nu_d}^\bW}{\de \P_{\nu_d,\emptyset}} \in L^2 ( \P_{\nu_d,\emptyset})
\end{equation}
has finite $L^2$-norm. This norm is independent of the particular choice of $\bW\in\Stf_{\s}(\R^d)$.  
Using the decomposition \eqref{eq:L2-decompo-product-space}, we expand the likelihood ratio as
\begin{equation}\label{eq:decompo-L2-likelihood-weak}
    \frac{\de \P_{\nu_d}^\bW}{\de \P_{\nu_d,\emptyset}}  (\by,\bz) = 1+ \sum_{\ell =1}^\infty \< \bxi_{\emptyset,\ell}^\bW (\by), \cH_{d,\ell} (\bz) \>_\frob , \qquad \bxi_{\emptyset,\ell}^\bW (\by) := \E_{\P_{\nu_d}^\bW} \left[ \cH_{d,\ell} (\bz) \mid \by\right].
\end{equation}
For each degree $\ell\ge 1$, define the second-moment operator
\begin{equation}
    \bGamma^\bW_{\emptyset,\ell} := \E_{\nu_{d,\emptyset}} [\bxi_{\emptyset,\ell}^\bW (\by) \otimes \bxi_{\emptyset,\ell}^\bW (\by) ] \in \Lop_{d,\ell}.
\end{equation}
We will often omit the superscript $\bW$ and simply write 
$\bxi_{\emptyset,\ell}$ and $\bGamma_{\emptyset,\ell}$. 
In particular, $\Tr(\bGamma_{\emptyset,\ell})$, 
$\|\bGamma_{\emptyset,\ell}\|_\op$, and $\|\bGamma_{\emptyset,\ell}\|_\frob$ do not depend on~$\bW$.

For weak recovery, we establish two complementary lower bounds:  
(i)~a query-complexity lower bound in the SQ framework (a proxy for algorithmic runtime), and  
(ii)~a sample-complexity lower bound in the LDP framework (a barrier for all polynomial-time algorithms).  
They are captured by the \emph{query-alignment} and the \emph{sample-alignment} complexity respectively:
\begin{equation}\label{eq:alignment-complexities}
    \qAlign (\nu_d \| \overline{\nu}_{d,\emptyset}) = \inf_{\ell \geq 1} \; \frac{N_{d,\ell}}{\| \bGamma_{\emptyset,\ell} \|_\op}, \qquad\quad
    \sAlign (\nu_d \| \overline{\nu}_{d,\emptyset}) = \inf_{\ell \geq 1} \; \frac{\sqrt{N_{d,\ell}}}{\| \bGamma_{\emptyset,\ell} \|_\frob}. 
\end{equation}
Further note that $\de \P_{\nu_d}^\bW/\de \P_{\nu_d,\emptyset}$ is $\cO_d^\bW$-invariant. Thus, by Lemma~\ref{lem:harmonic_expansion_invariant_functions} (Appendix \ref{app:tech-bg}), we can write the coefficients as $\bxi_{\emptyset,\ell} (\by) = \ptf (\bW^{\otimes \ell} \bzeta_{\emptyset,\ell} (\by))$. In particular, $\bxi_{\emptyset,\ell}$ has rank at most $\s^\ell$, and 
\[
\| \bGamma_{\emptyset,\ell} \|_\op =  \Theta_d ( \| \bxi_{\emptyset,\ell} \|_{L^2}^2 ) , \quad \|\bGamma_{\emptyset,\ell}\|_\frob = \Theta_d ( \| \bxi_{\emptyset,\ell} \|_{L^2}^2 ), \quad \text{where $\| \bxi_{\emptyset,\ell} \|_{L^2}^2 := \Tr( \bGamma_{\emptyset,\ell}) = \E [\| \bxi_{\emptyset,\ell} (\by)\|^2_\frob] $.}
\]
We always have $\| \bGamma_{\emptyset,\ell} \|_\op \leq 1$ and $\| \bGamma_{\emptyset,\ell} \|_\frob \leq \s^{\ell/2}$.

\begin{theorem}[Lower bounds on weak recovery of spherical MIMs]\label{thm:LB-weak-recovery}$\;$
    \begin{itemize}
        \item[(a)] \emph{(Query lower bound.)} Let $\nu_d$ be a spherical MIM satisfying Assumption \ref{ass:chi-squared-weak}. If an algorithm $\cA \in \SQ(q,\tol) $ succeeds at distinguishing $\{ \P_{\nu_d}^{\bW} : \bW \in \Stf_\s (\R^d) \}$ from $\P_{\nu_d,\emptyset}$, then 
        \begin{equation}\label{eq:sq-LB-weak}
            q/\tol^2 \geq \qAlign (\nu_d \| \overline{\nu}_{d,\emptyset}) .
        \end{equation}

        \item[(b)] \emph{(Sample lower bound.)} Let $\{\nu_d\}_{d \geq 1}$ be a sequence of spherical MIMs satisfying Assumption \ref{ass:chi-squared-weak}. Assume there exists $p \in \naturals$ such that $\sAlign (\nu_d \| \overline{\nu}_{d,\emptyset}) = O_d (d^{p/2})$. If for $D = o_d (d^{1/(p+2)})$, 
        \begin{equation}\label{eq:LDP-LB-weak}
            n = o_d \left( \frac{\sAlign (\nu_d \| \overline{\nu}_{d,\emptyset})}{D^{9p/2 - 1}}
            \right),
        \end{equation}
        then $\| \cR_{\leq D} \|_{L^2} = 1 + o_d(1)$ and under the low-degree conjecture, no polynomial time algorithm can achieve weak detection between $\{ \P_{\nu_d}^{\bW} : \bW \in \Stf_\s (\R^d) \}$ and $\P_{\nu_d,\emptyset}$.
    \end{itemize}
\end{theorem}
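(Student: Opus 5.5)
For part (a), I would follow the standard statistical-dimension argument for SQ lower bounds. Fix the null $\P_0 = \P_{\nu_d,\emptyset}$ and write the planted likelihood ratios as $L_g := \de \P_{\nu_d}^{g\cdot \bW_*}/\de \P_0 = \rho(g)\cdot L_{e}$. Each query $\phi_t$ distinguishes $\P_0$ from $\P^{g\cdot\bW_*}$ only if $|\E_{\P_0}[\phi_t (L_g - 1)]| > \tol$. Consequently it suffices to bound, for any fixed $\phi$ with $\|\phi\|_\infty\le 1$, the Haar measure of the set of $g$ on which this happens. Chebyshev's inequality (or Markov applied to the second moment) gives
\[
\pi_d\big(|\<\phi, L_g-1\>_{L^2(\P_0)}|>\tol\big) \le \tol^{-2}\,\E_{g\sim\pi_d}\big[\<\phi,L_g-1\>^2\big].
\]
I would then expand $\phi(\by,\bz) = \sum_\ell \<\bA_\ell(\by),\cH_{d,\ell}(\bz)\>_\frob$ using \eqref{eq:L2-decompo-product-space}, and expand $L_g - 1$ using \eqref{eq:decompo-L2-likelihood-weak} together with the equivariance \eqref{eq:equiv-Phi-d-ell}, so that $\bxi^{g\cdot\bW}_{\emptyset,\ell} = g\cdot \bxi_{\emptyset,\ell}$. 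This gives $\<\phi,L_g-1\> = \sum_{\ell\ge1}\E_{\by}\<\bA_\ell(\by), g\cdot\bxi_{\emptyset,\ell}(\by)\>_\frob$. Squaring and averaging over $g$, the Schur orthogonality relations \eqref{eq:schur-orthogonality}, applied in their tensor form, kill the cross terms between different $\ell$. Each diagonal term should then equal
\[
N_{d,\ell}^{-1}\,\E_{\by,\by'}\big[\<\bA_\ell(\by),\bA_\ell(\by')\>_\frob\<\bxi_{\emptyset,\ell}(\by),\bxi_{\emptyset,\ell}(\by')\>_\frob\big].
\]
I would bound this by $N_{d,\ell}^{-1}\|\bGamma_{\emptyset,\ell}\|_\op \E\|\bA_\ell\|_\frob^2$, viewing it as the quadratic form of the operator $\E[\bxi\otimes\bxi]$ on the $\TSym_\ell$-valued function $\bA_\ell$; writing it out in an orthonormal basis is a routine step. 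Summing over $\ell$ and using $\sum_\ell\E\|\bA_\ell\|^2\le\|\phi\|_{L^2}^2\le1$ bounds the fraction of $g$ ruled out by one query by $1/(\tol^2\,\qAlign)$. If $q/\tol^2 < \qAlign$, some $g$ survives all $q$ queries, and the oracle can answer $\E_{\P_0}[\phi_t]$ consistently for both $\P_0$ and $\P^{g\cdot\bW_*}$, which is a contradiction. One subtlety is adaptivity; the standard fix is to fix the query sequence generated by the null answers.

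For part (b), I would compute $\|\cR_{\le D}\|^2 = \E_{g,g'}[\prod_i \<L_g,L_{g'}\>]$ restricted to low degree. Because the data are i.i.d. and $\proj_{\le D}$ acts coordinatewise on polynomial degrees, the norm becomes $\E_{g}\big[\sum_{|\boldsymbol\ell|\le D}\prod_{i:\ell_i\ge1} F_{\ell_i}(g)\big]$, where $\boldsymbol\ell\in\N^n$, the product runs over the samples with $\ell_i\ge1$, and $F_\ell(g) := \E_\by\<\bxi_{\emptyset,\ell}(\by), g\cdot\bxi_{\emptyset,\ell}(\by)\>_\frob$ is a matrix coefficient in $\cM_{d,\ell}$ (the relative rotation $g^{-1}g'$ is again Haar distributed). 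Grouping the terms by the multiset of degrees gives
\[
\|\cR_{\le D}\|^2-1\le\sum_{m\ge1}\binom{n}{m}\sum_{\ell_1+\dots+\ell_m\le D}\E_g\Big|\prod_{j} F_{\ell_j}(g)\Big|.
\]
The key estimates are as follows. By Schur, $\|F_\ell\|_{L^2(\cO_d)}^2 = \|\bGamma_{\emptyset,\ell}\|_\frob^2/N_{d,\ell}\le \sAlign^{-2}$. For the product of $m$ factors I would use generalized Hölder with $L^m$ norms, then the hypercontractivity inequality \eqref{eq:hypercontractivity-ineq-O-d} to pass from $L^m$ back to $L^2$ at a cost of $(m-1)^{\gamma_d(\ell_j)/2}$ per factor. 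When $\ell\ll \sqrt d$ we have $\gamma_d(\ell)\approx 2\ell$, so the total cost is about $m^{\sum\ell_j}\le D^{D}$-type factors.

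I expect the main obstacle to be this combinatorial bookkeeping, because hypercontractivity alone loses a factor of $m^{\ell}$ per factor. To prevent the sum from blowing up, the $m$-th term must decay like $(nD^{c}/\sAlign)^m$. The factors with $\ell_j>p$ are where the hypothesis $\sAlign=O(d^{p/2})$ and the constraint $D=o(d^{1/(p+2)})$ enter: for such degrees, $\|F_\ell\|_{L^2}\le \s^{\ell/2}/\sqrt{N_{d,\ell}}\lesssim d^{-\ell/2}$ is much smaller than $\sAlign^{-1}$. That leftover gain is what should absorb the hypercontractive constants and the count of degree compositions. Carrying this out should produce the polynomial loss $D^{9p/2-1}$, after which condition \eqref{eq:LDP-LB-weak} makes the sum $o_d(1)$.
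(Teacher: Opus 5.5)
Your proposal follows the paper's proof essentially step by step. In (a) it is the same Markov/second-moment argument over $g\sim\pi_d$, with the queries fixed by the null responses and Schur orthogonality separating the harmonic degrees; your direct bound $\|\E[\bxi_{\emptyset,\ell}\otimes\bA_\ell]\|_\frob^2\le\|\bGamma_{\emptyset,\ell}\|_\op\,\E\|\bA_\ell\|_\frob^2$ simply replaces the paper's representer-theorem step. In (b) it is the same expansion of $\|\cR_{\le D}\|_{L^2}^2$ into products of matrix coefficients $F_\ell\in\cM_{d,\ell}$, controlled by H\"older, hypercontractivity on $\cO_d$, the Schur identity $\|F_\ell\|_{L^2}^2=\|\bGamma_{\emptyset,\ell}\|_\frob^2/N_{d,\ell}$, and the split $\ell\le p$ versus $\ell>p$ using $\|\bGamma_{\emptyset,\ell}\|_\frob\le\s^{\ell/2}$ and $\sAlign=O(d^{p/2})$.

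One small slip: passing from $L^2$ to $L^m$ via \eqref{eq:hypercontractivity-ineq-O-d} costs $(m-1)^{\gamma_d(\ell_j)}$ per factor, roughly $m^{2\ell_j}$, not $(m-1)^{\gamma_d(\ell_j)/2}$. This only changes the power of $D$ that the bookkeeping produces.
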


These lower bounds are obtained by applying the Schur orthogonality relations \eqref{eq:schur-orthogonality}, a standard second-moment argument (for SQ) and hypercontractivity of matrix coefficients \eqref{eq:hypercontractivity-ineq-O-d} (for LDP). The detailed proof of Theorem~\ref{thm:LB-weak-recovery} can be found in Appendix~\ref{app:proof-lower-bounds}. 

The lower bounds in Theorem~\ref{thm:LB-weak-recovery} decouple across the irreducible subspaces $\sh_{d,\ell}$ and admit a simple heuristic interpretation. Fix $\ell\ge 1$ and consider estimators whose statistics depend only on degree-$\ell$ spherical harmonics of $\bz$. For such estimators, the lower bounds reduce to $q/\tol^2 \gtrsim N_{d,\ell}/\|\bGamma_{\emptyset,\ell}\|_\op$ and $n \gtrsim \sqrt{N_{d,\ell}}/\|\bGamma_{\emptyset,\ell}\|_\frob$ (see \cite[Appendix~C]{joshi2025learning} for details).
 The global bounds \eqref{eq:alignment-complexities} are then obtained by taking the infimum (the best complexity) over $\ell\ge 1$. Thus, the lower bounds decompose the problem into separate detection subproblems, one for each harmonic subspace $\sh_{d,\ell}$, and each term in \eqref{eq:alignment-complexities} corresponds to the complexity of algorithms restricted to degree-$\ell$ spherical harmonics.

In Section~\ref{sec:tensor-unfolding}, we will construct algorithms that (nearly) attain these lower bounds for each subspace $\sh_{d,\ell}$. This suggests choosing the degree $\ell$ that achieves the minimum in $\qAlign$ (for runtime-optimal procedures) or in $\sAlign$ (for sample-optimal procedures); see equation \eqref{eq:intro_sample_runtime_optimal}.

\begin{remark}[Detection--recovery gap]\label{rmk:detection-recovery-gap}
The bounds in Theorem~\ref{thm:LB-weak-recovery} concern the detection task \eqref{eq:hypothesis-testing-weak-recovery}, rather than estimation directly. A detection–recovery gap arises in regimes where the infimum in \eqref{eq:alignment-complexities} is achieved at $\ell=1$ and $\sqrt{d}/\|\bGamma_{\emptyset,1}\|_\frob \ll d$.  
In this case the bounds are tight for detection, but estimation still requires $n = \Omega_d(d)$ samples information-theoretically. Moreover, any algorithm must incur runtime of at least $\Omega(dn)$ simply to process the $n$ samples.
\end{remark}

\begin{remark}[Finite chi-squared divergence]\label{rmk:chi-squared-bound}
Assumption~\ref{ass:chi-squared-weak} requires the likelihood ratio to be squared-integrable. This is essential for applying the LDP framework, and can be interpreted as assuming a sufficient amount of noise in the label. In practice, it can be enforced by adding a small Gaussian noise to the response. For SQ lower bounds, this assumption can be relaxed via a similar argument as in \cite{diakonikolas2025algorithms}. More broadly, Assumption~\ref{ass:chi-squared-weak} rules out non-robust algorithms that outperform the bounds \eqref{eq:alignment-complexities} in the noise-free setting (see, e.g., \cite{song2021cryptographic}). Further note that $\bGamma_{\emptyset,\ell}$ is always defined (with $\bxi_{\emptyset,\ell} (\by) := \E_{\P_{\nu_d}} [ \cH_{d,\ell}(\bz) |\by]$) even for infinite chi-squared divergence, and so are the alignment (and leap) complexities. 
\end{remark}

\begin{remark}[Additional trade-offs]\label{rmk:additional-trade-offs}
The LDP lower bound \eqref{eq:LDP-LB-weak} leaves room for potential trade-offs of order $D^{-\Theta(1)}$ in sample complexity by considering degree-$D$ algorithms (with runtime $d^{\widetilde{\Theta}(D)}$). Such trade-offs are known to be tight (under the low-degree conjecture) in tensor PCA \cite{wein2019kikuchi,kothari2025smooth,li2025smooth}. Exploring analogous trade-offs in the present setting is an interesting direction which we leave to future work.
\end{remark}

\subsection{Lower bound on strong recovery: Leap complexity}\label{sec:leap_complexity}

We now turn to the task of \emph{strong recovery}, namely recovering the full subspace $\bW_*$. As discussed in the introduction, optimal procedures will proceed in multiple steps and gradually recover $\bW_*$. We capture the complexity of this process by considering a family of \emph{partial} detection problems, corresponding to intermediate stages where only a strict subspace of $\bW_*$ has been recovered.

For simplicity and without loss of generality, we fix a reference frame $\bW_* \in \Stf_\s (\R^d)$. Define ${\rm Sub}(\bW_*)$ the set of all strict subframes of $\bW_*$, that is, all $\bU \in \Stf_r (\R^d)$ with $0 \leq r < \s$ and $\bU \subset \bW_*$ (i.e., $\Span (\bU) \subset \Span (\bW_*)$). Write $s_\bU := \mathrm{rank}(\bU)$ and $d_\bU := d - s_\bU$, and let $\bU_\perp \in \Stf_{d_\bU}(\R^{d})$ be an orthogonal complement of $\bU$ in $\R^d$.

% Define
% \[
% \Stf_{<} (\R^\s) = \bigcup_{r= 0}^{s-1} \Stf_r (\R^s),
% \]
% the set of strict subspaces of $\R^\s$. Fix a reference frame $\bW_*$ and, with a slight abuse of notation, identify $\bU \in \Stf_{<}(\R^\s)$ with its image $\bU := \bW_* \bU \in \Stf_{s_\bU}(\R^d)$ where $s_\bU := \mathrm{rank}(\bU)$. Write $d_\bU := d - s_\bU$ and let $\bU_\perp \in \Stf_{d_\bU}(\R^{d})$ be an orthogonal complement of $\bU$ in $\R^d$.

Consider data $(\by,\bz) \sim \P_{\nu_d}^{\bW_*}$, and suppose we have already recovered a subspace $\bU \in {\rm Sub}(\bW_*)$. We decompose the input as
\begin{equation}\label{eq:decomposition-input}
    \bz = \bU \br_\bU + \sqrt{1 - \| \br_\bU\|_2^2} \; \bU_\perp \bz_\bU, \qquad \br_\bU := \bU^\sT \bz \in \R^{s_\bU} , \qquad \bz_\bU = \frac{\bU_\perp^\sT \bz}{\sqrt{1 - \| \br_\bU\|_2^2}} \in \S^{d_\bU - 1},
\end{equation}
so that $(\br_\bU,\bz_\bU) \sim \tilde{\tau}_{d,s_\bU} \otimes \tau_{d_\bU}$. We then define a new spherical MIM $(\by_\bU,\bz_\bU) \sim \P_{\nu_{d,\bU}}$ by setting the new response $\by_\bU := (\by,\br_\bU)$ with conditional law
\begin{equation}\label{eq:new-MIM-when-conditioned-U}
\by_\bU | \bz_\bU \sim \nu_{d,\bU} ( \de \by_\bU | \bW_*^\sT \bU_\perp \bz_\bU) = \nu_d \big( \de \by \big| \bW_*^\sT\bz \big) \Tilde \tau_{d,s_{\bU}} (\de \br_{\bU}).
\end{equation}
We define the corresponding \emph{partial null} as 
\begin{equation}
    \P_{\nu_d,\bU} = \nu_{d,\bU}^Y \otimes \tau_{d_\bU} = \E_{g \sim \Unif (\cO^\bU_d)} \left[ \P_{\nu_d}^{g \cdot \bW_*}\right],
\end{equation}
where we recall that $\cO_d^\bU$ corresponds to the subgroup of $\cO_d$ that leaves $\bU$ invariant. We also write $\overline{\nu}_{d,\bU} := \nu_{d,\bU}^{Y} \otimes \tilde{\tau}_{d,\s-s_\bU}$, where $\nu_{d,\bU}^{Y}$ is the marginal distribution of $(\by,\bU^\sT \bz)$. 

We then consider the \emph{partial} detection problem
\begin{equation}\label{eq:hypothesis-testing-strong-recovery}
     H_0: (\by_{\bU,i},\bz_{\bU,i})_{i\in[n]}\overset{\text{i.i.d.}}{\sim} \P_{\nu_d,\bU}
    \quad \text{v.s.} \quad
    H_1: (\by_{\bU,i},\bz_{\bU,i})_{i\in[n]} \overset{\text{i.i.d.}}{\sim} \P_{\nu_d}^{g \cdot \bW} \;\; \text{for some}\;\; g \in \cO^{\bU}_d.
\end{equation}
In our lower bound, we require the algorithm to succeed for \emph{all} such partial problems indexed by $\bU\in {\rm Sub}(\bW_*)$. Equivalently, we consider the worst-case $\bU$ for which one must still decide whether $\by_\bU$ and $\bz_\bU$ are independent.

As before, we impose a finite chi-squared condition, now with respect to all partial nulls:
\begin{assumption}\label{ass:chi-squared-strong}
    For every $\bU\in {\rm Sub}(\bW_*)$, the Radon-Nikodym derivative satisfies $\frac{\de \nu_d}{\de \overline{\nu}_{d,\bU}} \in L^2 (\overline{\nu}_{d,\bU})$.
\end{assumption}

For $\bU \subset \bW$, we can expand the likelihood ratio as before
\begin{equation}
    \frac{\de \P_{\nu_d}^\bW}{\de \P_{\nu_d,\bU}}  (\by_\bU,\bz_{\bU}) = 1+ \sum_{\ell =1}^\infty \< \bxi_{\bU,\ell}^\bW (\by), \cH_{d_\bU,\ell} (\bz_\bU) \>_\frob , \qquad \bxi_{\bU,\ell}^\bW (\by) := \E_{\P_{\nu_d}^\bW} \left[ \cH_{d_\bU,\ell} (\bz_\bU) \mid \by_\bU\right],
\end{equation}
and define the second-moment operators 
\begin{equation}
    \bGamma^\bW_{\bU,\ell} := \E_{\nu_{d,\bU}} [\bxi_{\bU,\ell}^\bW (\by_\bU) \otimes \bxi_{\bU,\ell}^\bW (\by_\bU) ] \in \Lop_{d_{\bU},\ell}.
\end{equation}
We again omit the superscript $\bW$ and write $\bxi_{\bU,\ell}$ and $\bGamma_{\bU,\ell}$. In particular, $\Tr(\bGamma_{\bU,\ell})$, $\|\bGamma_{\bU,\ell}\|_\op$, and $\|\bGamma_{\bU,\ell}\|_\frob$ do not depend on the choice of the complement $\bW\bU_\perp$.

For strong recovery, we take the worst (i.e., most difficult) weak-recovery instance over all partial problems \eqref{eq:hypothesis-testing-strong-recovery} indexed by $\bU\in{\rm Sub}(\bW_*)$. Since each such instance is itself a weak-recovery problem for the reduced spherical MIM \eqref{eq:new-MIM-when-conditioned-U}, we may apply Theorem~\ref{thm:LB-weak-recovery} to each $\bU$ and then take a supremum. This leads to the \emph{query-leap} and \emph{sample-leap} complexities:
\begin{align}
    \qLeap (\nu_d) =&~ \sup_{\bU \in {\rm Sub}(\bW_*)} \;\qAlign (\nu_d \| \overline{\nu}_{d,\bU}) = \sup_{\bU \in {\rm Sub}(\bW_*)} \; \inf_{\ell \geq 1} \; \frac{N_{d_\bU,\ell}}{\|\bGamma_{\bU,\ell}\|_\op},\\
    \sLeap (\nu_d) =&~ \sup_{\bU \in {\rm Sub}(\bW_*)} \;\sAlign (\nu_d \| \overline{\nu}_{d,\bU}) = \sup_{\bU \in {\rm Sub}(\bW_*)} \; \inf_{\ell \geq 1} \; \frac{\sqrt{N_{d_\bU,\ell}}}{\|\bGamma_{\bU,\ell}\|_\frob}.
\end{align}
The following theorem is then a direct consequence of Theorem~\ref{thm:LB-weak-recovery}.

\begin{theorem}[Lower bounds on strong recovery of spherical MIMs]\label{thm:LB-strong-recovery}$\;$
    \begin{itemize}
        \item[(a)] \emph{(Query lower bound.)} Let $\nu_d$ be a spherical MIM satisfying Assumption \ref{ass:chi-squared-strong}. If an algorithm $\cA \in \SQ(q,\tol) $ succeeds at the strong recovery task \eqref{eq:hypothesis-testing-strong-recovery}, then 
        \begin{equation}\label{eq:sq-LB-strong}
            q/\tol^2 \geq \qLeap (\nu_d ) .
        \end{equation}

        \item[(b)] \emph{(Sample lower bound.)} Let $\{\nu_d\}_{d \geq 1}$ be a sequence of spherical MIMs satisfying Assumption \ref{ass:chi-squared-strong}. Assume there exists $p \in \naturals$ such that $\sLeap (\nu_d ) = O_d (d^{p/2})$. If for $D = o_d (d^{1/(p+2)})$, 
        \begin{equation}\label{eq:LDP-LB-strong}
            n = o_d \left( \frac{\sLeap (\nu_d )}{D^{9p/2 - 1}}
            \right),
        \end{equation}
        then $\| \cR_{\leq D} \|_{L^2} = 1 + o_d(1)$ and under the low-degree conjecture, no polynomial time algorithm can succeed at the strong recovery task \eqref{eq:hypothesis-testing-strong-recovery}.
    \end{itemize}
\end{theorem}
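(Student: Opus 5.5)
The plan is to reduce Theorem~\ref{thm:LB-strong-recovery} to Theorem~\ref{thm:LB-weak-recovery}, applied separately to each partial problem \eqref{eq:hypothesis-testing-strong-recovery}, and then optimize over $\bU \in {\rm Sub}(\bW_*)$.

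\emph{Step 1: the reduced model is a genuine spherical MIM.} Fix $\bU \in {\rm Sub}(\bW_*)$ and form $(\by_\bU,\bz_\bU)$ via the decomposition \eqref{eq:decomposition-input}. We check that this is a spherical MIM in dimension $d_\bU$, of index $\s - s_\bU$, with link $\nu_{d,\bU}$ and signal frame spanned by $\bU_\perp^\sT \bW_*$. Three facts are needed:
\begin{itemize}
\item $\br_\bU$ and $\bz_\bU$ are independent, with $\bz_\bU \sim \tau_{d_\bU}$. This follows from the rotational invariance of $\tau_d$ under $\cO_d^\bU$.
\item $\bW_*^\sT \bz$ is a function of $(\br_\bU, \bW_*^\sT \bU_\perp \bz_\bU)$, so the conditional law of $\by_\bU$ given $\bz_\bU$ is exactly \eqref{eq:new-MIM-when-conditioned-U}.
\item The stabilizer $\cO_d^\bU$ acts on $\bz_\bU$ as $\cO_{d_\bU}$. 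Hence the alternative $\{\P^{g\cdot \bW_*}_{\nu_d} : g \in \cO_d^\bU\}$ coincides with the family $\{\P^{\bW'}_{\nu_{d,\bU}} : \bW' \in \Stf_{\s-s_\bU}(\R^{d_\bU})\}$. The partial null $\P_{\nu_d,\bU} = \nu^Y_{d,\bU}\otimes\tau_{d_\bU}$ is the corresponding null of \eqref{eq:hypothesis-testing-weak-recovery}, where it arises as the Haar mixture \eqref{eq:expectation-mixture-null}.
\end{itemize}
Under this identification, Assumption~\ref{ass:chi-squared-strong} for this $\bU$ is precisely Assumption~\ref{ass:chi-squared-weak} for $\nu_{d,\bU}$. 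The reference measure $\overline{\nu}_{d,\bU}$ matches: the marginal of $\by_\bU$ tensored with the law of the first $\s-s_\bU$ coordinates of $\bz_\bU$ gives $\nu^Y_{d,\bU}\otimes\tilde\tau_{d_\bU,\s-s_\bU}$. The subscript $\tilde\tau_{d,\s-s_\bU}$ in the paper should be read this way. Likewise, $\bxi_{\bU,\ell}$ and $\bGamma_{\bU,\ell}$ are the harmonic coefficients and second-moment operators of the reduced model, so $\qAlign(\nu_{d,\bU}\|\cdot)$ equals the term indexed by $\bU$ in $\qLeap$, and similarly for $\sAlign$ and $\sLeap$.

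\emph{Step 2: SQ bound.} Success at strong recovery means, by definition, that the algorithm distinguishes every partial null from its alternative; in particular it does so for each fixed $\bU$. Any query on $(\by_\bU,\bz_\bU)$ is a bounded measurable function of the original data $(\by,\bz)$ for known $\bU$, and conversely. So $\cA$ yields an $\SQ(q,\tol)$ distinguisher for the reduced model. Theorem~\ref{thm:LB-weak-recovery}(a) then gives $q/\tol^2 \ge \qAlign(\nu_d\|\overline\nu_{d,\bU})$ for every $\bU$, and taking the supremum yields \eqref{eq:sq-LB-strong}.

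\emph{Step 3: LDP bound.} Choose a near-maximizing $\bU_d$, attaining $\sLeap(\nu_d)$ up to a factor $1+o(1)$. The set ${\rm Sub}(\bW_*)$ is infinite, but the relevant quantities depend only on $\bU$ through finitely many parameters, so such a choice is possible. Then apply Theorem~\ref{thm:LB-weak-recovery}(b) to the sequence $\nu_{d,\bU_d}$, whose ambient dimension is $d_\bU = d - O(1)$.

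The main technical point is checking the hypotheses of Theorem~\ref{thm:LB-weak-recovery}(b) uniformly in $\bU_d$:
\begin{itemize}
\item $\sAlign = O(d_\bU^{p/2})$, which follows from $\sLeap = O(d^{p/2})$ together with $d_\bU \asymp d$.
\item $D = o(d_\bU^{1/(p+2)})$, which holds for the same reason.
\item $n = o(\sAlign/D^{9p/2-1})$, which follows from the assumed bound on $n$.
\end{itemize}
This gives $\|\cR_{\le D}\|_{L^2} = 1+o(1)$ for the worst partial problem. Under the low-degree conjecture, no polynomial-time algorithm solves that partial detection, and hence none solves the strong recovery task.
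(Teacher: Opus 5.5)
Your proposal is correct and follows the same route as the paper, which obtains Theorem~\ref{thm:LB-strong-recovery} as a direct consequence of Theorem~\ref{thm:LB-weak-recovery}. The paper applies that theorem to each reduced spherical MIM $\nu_{d,\bU}$, $\bU \in {\rm Sub}(\bW_*)$, and then takes the supremum over $\bU$. Your Step~1 identification of the partial problem with a weak-recovery problem in dimension $d_\bU$, and your near-maximizing choice of $\bU_d$ in the LDP part, only make explicit what the paper leaves implicit. Such a near-maximizer exists simply by the definition of the supremum, so no finite-parametrization argument is needed.
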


The leap complexities above motivate a natural \emph{sequential} recovery process.  Fix a sequence of harmonic degrees $\ell_1, \ell_2, \ldots \in \naturals$.  
At the first step, we apply an estimator based on degree-$\ell_1$ spherical harmonics.  
Suppose this recovers an $s_1$-dimensional subspace $  \bU_1 \in \Stf_{s_1}(\R^d)$ with $\bU_1 \subset \bW_*$. At the second step, we switch to a degree-$\ell_2$ harmonic estimator, now on input $\bz_{\bU_1}$ within the orthogonal complement $\bU_{1,\perp}$.  
This yields an additional block of new directions $\bU_2 \subset \bU_{1,\perp}$ (with $\bU_2 \in \Stf_{s_2}(\R^{d-s_1})$) corresponding to new components of $\bW_*$ orthogonal to $\bU_1$.
Iterating this procedure, we obtain a sequence of nested subspaces
\[
    \bU_{\le t} := \bU_1 \oplus \bU_2 \oplus \cdots \oplus \bU_t \in \Stf_{s_{\leq t} } (\R^d) \subseteq \bW_*,
    \qquad
    s_{\le t} := s_1 + \cdots + s_t,
\]
with the convention $s_0:=0$ and $\bU_0=\emptyset$. Since at least one new direction must be recovered at each stage, the process terminates after some $T \le \s$ steps when $\bU_{\le T}=\bW_*$.

The total cost of this sequential procedure is governed by the \emph{hardest} intermediate step.  
At step $t$, the relevant SQ and LDP complexities are $d^{\ell_t} / \|\bGamma_{\bU_{\le t},\ell_t}\|_\op$ and $d^{\ell_t/2}/\|\bGamma_{\bU_{\le t},\ell_t}\|_\frob$. Hence the total sequential complexity is
\begin{equation}\label{eq:cost-sequential-procedure}
    \max_{t\in[T-1]}
    \frac{d^{\ell_t}}{\|\bGamma_{\bU_{\le t},\ell_t}\|_\op},
    \qquad\qquad
    \max_{t\in[T-1]}
    \frac{d^{\ell_t/2}}{\|\bGamma_{\bU_{\le t},\ell_t}\|_\frob}
\end{equation}
for the SQ and LDP settings respectively.

The \emph{runtime-optimal} and \emph{sample-optimal} strategies choose harmonic degrees inductively by minimizing the corresponding intermediate costs:
\begin{equation}\label{eq:optimal-sequences}
\ell_{t+1}^{(q)} \in \argmin_{\ell \geq 1}    \frac{d^{\ell}}{\|\bGamma_{\bU^{(q)}_{\le t},\ell}\|_\op},\qquad \quad
    \ell_{t+1}^{(s)} 
    \in \argmin_{\ell \ge 1}
        \frac{d^{\ell/2}}{\|\bGamma_{\bU^{(s)}_{\le t},\ell}\|_\frob},
\end{equation}
where $\bU^{(q)}_{\le t}$ and $\bU^{(s)}_{\le t}$ denote the subspaces recovered along each procedure. By construction,
\[
    \max_{t\in[T^{(q)}-1]}
    \frac{d^{\ell^{(q)}_t}}{\|\bGamma_{\bU^{(q)}_{\le t},\ell^{(q)}_t}\|_\op} \lesssim \qLeap (\nu_d), 
    \qquad\qquad
    \max_{t\in[T^{(s)}-1]}
    \frac{d^{\ell^{(s)}_t/2}}{\|\bGamma_{\bU^{(s)}_{\le t},\ell_t}\|_\frob} \lesssim \sLeap (\nu_d)
\]
If the complexity of recovering $\bU^{(q)}_{t+1}$ or $\bU^{(s)}_{t+1}$ matches the SQ and LDP lower bound at each stage, then these sequential procedures achieve the corresponding leap complexities.

Many different degree sequences may achieve the same overall complexity.  
However, as illustrated in the examples (Section~\ref{sec:examples}), the runtime-optimal and sample-optimal sequences can lead to very different recovery trajectories.  
In such cases, one must choose whether to optimize for runtime or for sample size:  
our lower bounds indicate that \emph{no single sequential procedure is simultaneously optimal for both}.  
Intermediate trade-offs are also possible by selecting a degree sequence that interpolates between the two extremes.

\section{Iterative harmonic tensor unfolding algorithms}\label{sec:tensor-unfolding}

In this section, we present a family of sequential spectral algorithms for recovering the latent subspace $\bW_*$ using i.i.d.\ samples from a spherical multi-index model. These methods are based on iteratively applying a \emph{harmonic tensor unfolding} algorithm---with specified harmonic degree and kernel function---to the reduced spherical MIM obtained by conditioning on the signal subspace already recovered. The sequence of harmonic degrees used in the process can be chosen arbitrarily, or by following an \emph{optimal leap sequence}, based on either the sample-leap ($\sLeap$) or query-leap ($\qLeap$) complexities, as presented in Section \ref{sec:leap_complexity}.

We first describe and analyze a single step of harmonic tensor unfolding in Section~\ref{sec:one-step-HTU}.  
We then show in Section~\ref{sec:multi-step-HTU} how iterating this procedure leads to full recovery of the latent subspace $\bW_*$.

\subsection{One-step harmonic tensor unfolding}
\label{sec:one-step-HTU}

Let \(\{(\by_i,\bz_i)\}_{i=1}^n\) be i.i.d.\ samples drawn from a spherical multi-index model \(\P_{\nu_d}^{\bW}\) with link function $\nu_d$ and unknown support \(\bW\in \Stf_{\s}(\R^d)\). In this section, we describe a single step of the \emph{harmonic tensor unfolding} procedure at a fixed harmonic degree \(\ell \geq 1\), and establish guarantees for recovering a subspace \(\bU_0 \subseteq \bW\) of the latent signal directions. Throughout, we assume without loss of generality that $\| \bxi^\bW_{\emptyset,\ell} \|_{L^2} >0$; otherwise, no signal is present at harmonic degree \(\ell\) and the procedure does not identify any latent direction. For notational simplicity, we suppress the dependence on \(\bW\) in the remainder of this section and write \(\P_{\nu_d} := \P^{\bW}_{\nu_d}\) and \(\bxi_{\emptyset,\ell} := \bxi^\bW_{\emptyset,\ell}\).

For a tensor \(\bA \in (\R^d)^{\otimes k}\), we define its \emph{\((a,b)\)-unfolding}, for integers \(a,b \geq 0\) with \(a+b = k\), as the \(d^a \times d^b\) matrix \(\Mat_{a,b}(\bA)\) whose entries are given by
\begin{equation}
    (\Mat_{a,b}(\bA))_{(i_1,\ldots,i_a),(j_1,\ldots,j_b)} = A_{i_1,\ldots,i_a,j_1,\ldots,j_b},
\end{equation}
for all multi-indices \((i_1,\ldots,i_a) \in [d]^a\) and \((j_1,\ldots,j_b) \in [d]^b\). We also introduce a standard notion of distance between orthonormal frames \(\bU \in \R^{d \times k}\) and \(\bU' \in \R^{d \times k}\), defined as
\begin{equation}
    \dist(\bU,\bU') := \| \bPi_{\bU} - \bPi_{\bU'} \|_{\op},
\end{equation}
where \(\bPi_{\bU} = \bU \bU^\sT\) denotes the orthogonal projector onto \(\Span(\bU)\). Finally, for any matrix \(\bA = [\ba_1,\ldots,\ba_q] \in \R^{p \times q}\), we write $\Span(\bA) := \Span(\ba_1,\ldots,\ba_q) \subseteq \R^p$ for its column span.

\subsubsection{Preliminaries}
\label{sec:preliminaries_one_step}

We begin by characterizing the subspace \(\bU_0 \subseteq \bW\) of signal directions that can be recovered by a single step of the harmonic tensor unfolding algorithm. 

Recall that the \(\ell\)-th harmonic coefficient of the link function \(\nu_d\) is defined as
\[
\bxi_{\emptyset,\ell} (\by) := \E_{\P_{\nu_d}} \left[  \cH_\ell (\bz) | \by \right] \in \TSym_\ell (\R^d).
\]
By invariance of \(\P_{\nu_d}\) under orthogonal transformations that leave \(\bW\) fixed, the tensor \(\bxi_{\emptyset,\ell}(\by)\) is itself \(\cO_d^{\bW}\)-invariant for all \(\by \in \cY\). As a consequence, by Lemma~\ref{lem:harmonic_expansion_invariant_functions}, there exists a tensor-valued function \(\bzeta_{\emptyset,\ell}(\by) \in \Sym_\ell(\R^\s)\) such that
\begin{equation}
    \bxi_{\emptyset,\ell} (\by) = \ptf(\bW^{\otimes \ell}\bzeta_{\emptyset,\ell}(\by)).
\end{equation}
Let \(D_{\s,\ell} := \dim\!\big(\Sym_\ell(\R^\s)\big)\), and define
\begin{equation}\label{eq:eigendecomposition-Upsilon-ell}
\bUpsilon_{\emptyset, \ell} := \E[\bzeta_{\emptyset,\ell}(\by) \otimes \bzeta_{\emptyset,\ell}(\by)] \in \Sym_\ell (\R^\s) \otimes \Sym_\ell (\R^\s).
\end{equation}
This tensor can be naturally identified with a self-adjoint, positive semidefinite linear operator on \(\Sym_\ell(\R^\s)\). By the spectral theorem, there exists an orthonormal basis \(\{\bV_j\}_{j \in [D_{\s,\ell}]}\) of \(\Sym_\ell(\R^\s)\) and non-negative eigenvalues \(\mu_1 \geq \mu_2 \geq \cdots \geq 0\) such that
\begin{equation}\label{eq:spectral_decomposition_zeta}
\bUpsilon_{\emptyset,\ell}
=
\sum_{j=1}^{D_{\s,\ell}} \mu_j \, \bV_j \otimes \bV_j .
\end{equation}
For any $k \in [D_{\s,\ell}]$, define the signal tensor
\begin{equation}\label{eq:definition_signal_tensor}
    \bZ^{(k)} := \sum_{j =1}^k (\bW^{\otimes \ell}\bV_j) \otimes (\bW^{\otimes \ell}\bV_j) \in (\R^d)^{\otimes 2\ell}.
\end{equation}
Let \(\bU^{(k)} \in \R^{d \times \s^{(k)}}\) denote the matrix of left singular vectors of the unfolded matrix
\[
    \Mat_{1,2\ell-1}(\bZ^{(k)}) = \bW \Mat_{1,2\ell-1}\left(\sum_{j=1}^{k}\bV_j\otimes \bV_j\right)[\bW^{\otimes (2\ell-1)}]^\sT,
\]
so that $\bU^{(k)} \subseteq \bW$. Here, with a slight abuse of notation, $\bW^{\otimes (2\ell-1)}$ denotes the $d^{2\ell - 1} \times \s^{2\ell - 1}$ matrix with entries, indexed by multi-indices $(i_1,\ldots,i_{2\ell - 1}) \in [d]^{2\ell -1}$ and $(s_1,\ldots, s_{2\ell - 1}) \in [\s]^{2\ell -1}$, given by
\begin{equation}\label{eq:matrix_W_tensorialize_a_times}
(\bW^{\otimes (2\ell-1)})_{(i_1,\ldots,i_{2\ell - 1}),(s_1,\ldots, s_{2\ell - 1})} = W_{i_1s_1}W_{i_2 s_2} \ldots W_{i_{2\ell - 1} s_{2\ell - 1}}. 
\end{equation}
In particular, since $\bW^\sT \bW = \bI_\s$, we have $[\bW^{\otimes (2\ell-1)}]^\sT\bW^{\otimes (2\ell-1)} = \bI_{\s^\ell}$.

Our results show that when \(\mu_{k+1} \ll \mu_k\), the one-step estimator with sample size $n \asymp d^{\ell/2} / \mu_k$ recovers the subspace \(\bU^{(k)}\) of signal directions. Since our goal is weak recovery, we select \(k = \rnk\) as the smallest index for which a spectral gap occurs, so that $\mu_{\rnk} \asymp \mu_1 \asymp \| \bxi_{\emptyset,\ell} \|_{L^2}^2$ and $\mu_{\rnk+1} \ll \mu_{\rnk}$, and the corresponding sample complexity
\(n \asymp d^{\ell/2} / \| \bxi_{\emptyset,\ell} \|_{L^2}^2\)
matches the LDP lower bound. To simplify the exposition, we assume the following condition, which holds without loss of generality.

\begin{assumption}[Spectral gap $\gamma > 1$]\label{ass:spectral-gap}
There exist an integer \(\rnk \in[ D_{\s,\ell}]\) and a constant \(c_\mu \in (0,1)\) such that the eigenvalues \(\{\mu_j\}_{j \in [D_{\s,\ell}]}\) of the operator \(\bUpsilon_{\emptyset, \ell} \) satisfies (with convention $\mu_{D_{\s,\ell}+1} = 0$)
\begin{equation}\label{eq:spectral_gap}
    \mu_{\rnk} \geq c_\mu \mu_{1} \quad \text{ and }\quad \mu_{\rnk+1}\leq \frac{c_\mu \mu_{1}}{\gamma}.
\end{equation}
\end{assumption} 

Note that we could restate our sample guarantee (Theorem \ref{thm:tensor_unfolding_one_step}) without this assumption by replacing $\|\bxi_{\emptyset,\ell} \|_{L^2}^2$ and $\gamma$ in \eqref{eq:dist_guarantee_one_step} by $\mu_{\rnk}$ and $\mu_{\rnk}/\mu_{\rnk+1}$ respectively. For the choice of rank $\rnk$ in Assumption \ref{ass:spectral-gap}, we denote
\begin{equation}\label{eq:def_Z_0_U_0_s_0}
    \bZ_0 := \bZ^{(\rnk)}, \qquad \bU_0 := \bU^{(\rnk)}, \qquad \s_0 = {\rm rank} (\bU_0).
\end{equation}
In the remainder of this section, we describe and establish guarantees for recovering \(\bU_0\) via a single step of harmonic tensor unfolding.

%Note that we will always have $\s_0 >0$ if $\| \bxi_{\emptyset,\ell} \|_{L^2} >0$.

\subsubsection{Harmonic tensor unfolding algorithm}
\label{sec:harmonic-one-step-guarantees}

The one-step harmonic tensor unfolding algorithm depends on a choice of \emph{shape parameters} \((a,b)\) satisfying \(a+b=\ell\). These parameters determine how the \(\ell\)-th order harmonic tensor \(\cH_{d,\ell}(\bz)\) is unfolded into a matrix. Given a positive semidefinite kernel \(K : \cY \times \cY \to \R\), we form an empirical matrix \(\widehat{\bM} \in \R^{d^a \times d^a}\) as follows:
\begin{itemize}
    \item If $\ell$ is even and $a = b = \ell/2$,
    \[
    \widehat \bM = \frac{1}{n^2} \sum_{1 \leq i,j \leq n} K(\by_i,\by_j) \Mat_{a,b} (\cH_\ell (\bz_i)) \Mat_{a,b} (\cH_\ell (\bz_j))^\sT.
    \]
    \item If $\ell = 1$ and $(a,b) = (1,0)$, or $\ell \geq 3$ and $1 \leq a < b$ with $a+b=\ell$,
        \[
    \widehat \bM = \frac{1}{n(n-1)} \sum_{1 \leq i\neq j \leq n} K(\by_i,\by_j) \Mat_{a,b} (\cH_\ell (\bz_i)) \Mat_{a,b} (\cH_\ell (\bz_j))^\sT.
    \]
\end{itemize}

We now informally describe how a subspace
\(\widehat{\bU}_0 \in \R^{d \times \s_0}\)
can be estimated from the leading eigenvectors of \(\widehat{\bM}\), so that
\(\dist(\widehat{\bU}_0,\bU_0)=o(1)\),
where \(\bU_0 \subseteq \bW\) is the signal subspace introduced in
Section~\ref{sec:preliminaries_one_step}. For a suitable choice of kernel \(K\) (see Assumption~\ref{ass:kernel} below), one can show that
\begin{equation}\label{eq:approx_bMhat}
\widehat \bM \approx \| \bxi_{\emptyset,\ell} \|_{L^2}^2 \Mat_{a,2\ell - 1} (\bZ_0) \Mat_{a,2\ell - 1} (\bZ_0)^\sT + \bDelta,
\end{equation}
where the error term satisfies with high probability 
\[
\| \bDelta \|_\op \lesssim \frac{d^{\ell/2}}{n} + \frac{d^{\ell/4} \| \bxi_{\emptyset,\ell} \|_{L^2}}{\sqrt{n}}.
\]

Let $\thr = {\rm rank} ( \Mat_{a,2\ell - a} (\bZ_0))$, and let $\bV = [\bv_1, \ldots , \bv_{\thr} ] \in \R^{d^a \times \thr}$ denote the top $\thr$ left eigenvectors of $\Mat_{a,2\ell - 1} (\bZ_0) $. By \eqref{eq:approx_bMhat}, when $n \gtrsim d^{\ell/2} / \| \bxi_{\emptyset,\ell} \|_{L^2}^2$, computing 
\[
\widehat{\bV}
=
[\widehat{\bv}_1,\ldots,\widehat{\bv}_{\thr}]
\in \R^{d^a \times \thr}
\;\gets\;
\text{top \(\thr\) eigenvectors of \(\widehat{\bM}\)}
\]
yields ${\rm span} (\widehat{\bV}) \approx {\rm span} (\bV )$. To extract a subspace in $\R^d$, we further contract these eigenvectors and compute
\[
\widehat\bU_0 \gets \text{top $\s_0$ eigenvectors of } \sum_{s = 1}^{\thr} \Mat_{1,a-1} ( \widehat \bv_s)  \Mat_{1,a-1} ( \widehat \bv_s)^\sT.
\]
The resulting subspace $\Span (\widehat\bU_0)$ is close to the span of $\sum_{s = 1}^{\thr} \Mat_{1,a-1} ( \bv_s)  \Mat_{1,a-1} ( \bv_s)^\sT$ which coincides exactly with \(\Span(\bU_0)\).
The full procedure is summarized in
Algorithm~\ref{alg:tensor_unfold_one_step}.

\begin{algorithm}[t]
\caption{One-step Harmonic Tensor Unfolding with split $(a,b),\; a+b = \ell$}
\label{alg:tensor_unfold_one_step}

\SetKwFunction{TUstep}{TensorUnfoldOneStep}

\SetKwProg{Fn}{Function}{}{}
\Fn{\TUstep{\(\{(\by_i,\bz_i)\}_{i=1}^n\), degree \(\ell\), kernel \(K\), rank \(\thr\), rank $\s_0$}}{
    \tcp{Compute the empirical matrix $\widehat{\bM} \in \R^{d^a \times d^a}$}
    \eIf{$a=b$}{
    \(
    \widehat{\bM} \gets \frac{1}{n^2} \sum_{1 \leq i,j \leq n} K(\by_i,\by_j) \Mat_{a,a} (\cH_\ell (\bz_i)) \Mat_{a,a} (\cH_\ell (\bz_j))^\sT 
    \)
    }{
    \(
    \widehat{\bM} \gets \frac{1}{n(n-1)} \sum_{1 \leq i\neq j \leq n} K(\by_i,\by_j) \Mat_{a,b} (\cH_\ell (\bz_i)) \Mat_{a,b} (\cH_\ell (\bz_j))^\sT 
    \)
    }

    \tcp{Extract the top \(\thr\) eigenvectors}
    \(\displaystyle \widehat{\bV} = [\widehat \bv_1,\ldots, \widehat \bv_\thr] \gets \) top \(\thr\) eigenvectors of \(\widehat{\bM}\)
    \tcp*[r]{\(\widehat{\bV} \in \R^{d^{a}\times \thr}\)}

    \tcp{Contract to a \(d\)-dimensional subspace}
    \(\widehat{\bU}_0 \gets\) top $\s_0$ eigenvectors of \(\sum_{s=1}^{\thr}\Mat_{1,a-1}(\widehat \bv_s)\Mat_{1,a-1}(\widehat \bv_s)^\sT \) 
    \tcp*[r]{\(\widehat{\bU}_0 \in \R^{d\times \s_0}\)}

    \KwRet{\(\widehat{\bU}_0\)}
}
\end{algorithm}

\paragraph*{Sample complexity of the algorithm.} We assume that the kernel satisfies the following conditions.

\begin{assumption}[Admissible kernels]\label{ass:kernel}
A PSD kernel \(K:\cY\times \cY\to \R\) is admissible if it satisfies:
\begin{enumerate}[label=(K\arabic*), ref=(K\arabic*)]
    \item \label{ass:kernel-boundedness} \emph{(Boundedness)} There exists a constant \(B_K>0\) such that \(\| K\|_\infty \leq B_K\).
    
    \item \label{ass:kernel-positive-definite} \emph{(Spectral lower bound on signal directions)} There exists a constant \(c_K>0\) such that, 
    \begin{equation}\label{eq:kernel-positive-ass}
        \E\left[K(\by,\by')\< \bV,\bzeta_{\emptyset,\ell}(\by)\>_\frob \<  \bzeta_{\emptyset,\ell}(\by'), \bV\>_\frob  \right] 
         \geq c_K \E\left[\< \bV,\bzeta_{\emptyset,\ell}(\by)\>_\frob \<  \bzeta_{\emptyset,\ell}(\by), \bV\>_\frob  \right],
    \end{equation}
    for all $\bV \in \TSym_\ell (\R^s)$, where \((\by,\by')\sim \nu_d^Y \otimes \nu_d^Y\).

    \item \label{ass:finite_rank} \emph{(Finite rank)} There exists $\m \in \naturals$ such that $K$ is of rank $\m$, that is, there exist $\m$ functions $\cT_s : \cY \to \R$ such that
    \[
    K(\by,\by') = \sum_{s = 1}^\sm \cT_s (\by) \cT_s (\by').
    \]
\end{enumerate}
\end{assumption}

Assumption~\ref{ass:kernel}.\ref{ass:kernel-boundedness} is used to control the concentration of the empirical matrix \(\widehat{\bM}\) around its expectation.
Assumption~\ref{ass:kernel}.\ref{ass:kernel-positive-definite} ensures that the expectation \(\E[\widehat{\bM}]\) is positive-definite along the signal directions, allowing the algorithm to recover the signal subspace.
Finally, Assumption~\ref{ass:kernel}.\ref{ass:finite_rank} allows to match the runtime predicted by the statistical query framework.

\begin{remark}[Oracle kernel] 
Ideally, one would like to use the following oracle kernel defined as
\begin{equation}\label{eq:ideal-kernel}
K (\by,\by') := \blambda(\by)^\sT \E[\blambda(\by)\blambda(\by')^\sT ]^\dagger \blambda(\by'), \qquad \text{where}\;\;\; \blambda (\by) := \Mat_{\ell,0} (\bzeta_{\emptyset,\ell}) \in \R^{\s^\ell}
\end{equation}
With this choice, \eqref{eq:kernel-positive-ass} holds with equality with \(c_K = 1\), while the rank of \(K\) is at most \(\s^\ell\).
However, this kernel may not satisfy the boundedness condition
(Assumption~\ref{ass:kernel}.\ref{ass:kernel-boundedness}), which is the most restrictive requirement in our analysis. This condition can be relaxed to a suitable moment bound on \(K\), at the cost of an additional \(\polylog(d)\) factor in the sample complexity.
Moreover, in settings where \(\nu_d\) is not (or only partially) known, it may be preferable to consider dense kernels with \(\sm = \infty\).
Our analysis can be extended to this case with minor modifications (see Appendix \ref{app:proof-concentration-asym-one-step}).
We further comment on the setting where $\nu_d$ is unknown in Remark \ref{rmk:unknow-nu-d}.
\end{remark}

Under Assumption~\ref{ass:kernel}, we obtain the following guarantee for recovering the signal subspace \(\bU_0\) from i.i.d.\ samples drawn from \(\P_{\nu_d}\).

\begin{theorem}[One-step harmonic tensor unfolding]
\label{thm:tensor_unfolding_one_step}
    Let \((\nu_d,\bW)\) be a spherical MIM, and $\ell \geq 1$ be an integer with $\| \bxi_{ \emptyset,\ell} \|_{L^2} >0$. Under Assumption \ref{ass:spectral-gap} with gap $\gamma >1$ and Assumption \ref{ass:kernel}, there exist constants $C,C',c,c'>0$ that only depend on $\ell,\, \s$, and constants in these assumptions, such that the following holds. Let $\bU_0 \subseteq \bW$ be the signal subspace defined in \eqref{eq:def_Z_0_U_0_s_0}, and let $\widehat{\bU}_0 \in \Stf_{\s_0} (\R^d)$ be the output of Algorithm \ref{alg:tensor_unfold_one_step}. Then, for any 
    %$C' \leq d$, 
    $ C' \leq \gamma \leq \sqrt{d}$, and $n \leq \exp (d^{c'})$,
    \begin{equation}\label{eq:dist_guarantee_one_step}
        \dist (\widehat{\bU}_0, \bU_0) \leq C \left[ \sqrt{\frac{d^{\ell/2 \vee 1}}{n \| \bxi_{\emptyset,\ell} \|_{L^2}^2} } + \frac{1}{\sqrt{\gamma}}\right],
    \end{equation}
    with probability at least $1 - \exp (-d^{c})$.
\end{theorem}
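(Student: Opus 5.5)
The proof splits $\widehat{\bM}$ into its expectation and a fluctuation, then runs two Davis--Kahan perturbation arguments: one in $\R^{d^a}$, one after contraction to $\R^d$.

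\emph{Step 1: the population matrix.} Write $K=\sum_{s\le \m}\cT_s(\by)\cT_s(\by')$ (Assumption~\ref{ass:kernel}.\ref{ass:finite_rank}). Set $\bT_s:=\E[\cT_s(\by)\bxi_{\emptyset,\ell}(\by)]=\E[\cT_s(\by)\cH_{d,\ell}(\bz)]\in\TSym_\ell(\R^d)$. The off-diagonal ($i\ne j$) part of $\widehat{\bM}$ then has expectation
\[
\bM_0:=\sum_{s}\Mat_{a,b}(\bT_s)\Mat_{a,b}(\bT_s)^\sT .
\]
When $a=b$, the diagonal terms add $\frac1n\E[K(\by,\by)\Mat_{a,a}(\cH_\ell)\Mat_{a,a}(\cH_\ell)^\sT]$. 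By the isometry \eqref{eq:Phi-d-ell-isometry} this equals $\frac1n$ times a multiple of the projector onto $\Mat_{a,a}$ of symmetric traceless tensors, up to a bounded, nearly isotropic correction. It therefore only shifts the spectrum by $O(d^{\ell/2}/n)$ (the trace normalization gives $\|\cdot\|_\op\lesssim B_K N_{d,\ell}/d^{a}\asymp d^{\ell/2}/n$ times a constant). This is why the diagonal can be kept in the square case.

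Next, use $\bxi_{\emptyset,\ell}=\ptf(\bW^{\otimes\ell}\bzeta_{\emptyset,\ell})$ together with the fact that $\ptf$ differs from the identity on $\bW^{\otimes \ell}\Sym_\ell(\R^\s)$ only by trace terms of relative size $O(d^{-1/2})$ in operator norm after unfolding. This gives
\[
\bM_0\approx \Mat_{a,b}\big((\bW^{\otimes\ell})^{\otimes 2}\,\bar\bUpsilon_K\big)\Mat(\cdot)^\sT,
\]
where $\bar\bUpsilon_K$ is the kernel-weighted analogue of $\bUpsilon_{\emptyset,\ell}$. Assumption~\ref{ass:kernel}.\ref{ass:kernel-positive-definite} gives $\bar\bUpsilon_K\succeq c_K\bUpsilon_{\emptyset,\ell}$ on the top eigenspace. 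Assumptions~\ref{ass:kernel}.\ref{ass:kernel-boundedness} and \ref{ass:spectral-gap} give $\bar\bUpsilon_K\preceq B_K\bUpsilon_{\emptyset,\ell}$, hence a top-$\thr$ block of size $\gtrsim\|\bxi_{\emptyset,\ell}\|_{L^2}^2$ whose column span equals that of $\Mat_{a,2\ell-a}(\bZ_0)$. The remainder has size $\lesssim \|\bxi\|^2/\gamma$, and this tail contributes the $\gamma^{-1/2}$ term.

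\emph{Step 2: concentration (the main obstacle).} I need to show
\[
\|\widehat{\bM}-\E\widehat{\bM}\|_\op\lesssim \frac{d^{\ell/2}}{n}+\frac{d^{\ell/4}\|\bxi\|_{L^2}}{\sqrt n}
\]
with probability $1-e^{-d^c}$. I would write $\widehat{\bM}-\E$ as a Hoeffding decomposition. The linear part is $\frac1n\sum_i\sum_s(\bX_{i,s}\bT_s^\sT+\bT_s\bX_{i,s}^\sT)$ with $\bX_{i,s}=\cT_s(\by_i)\Mat(\cH_\ell(\bz_i))-\Mat(\bT_s)$. The degenerate U-statistic part is $\frac1{n^2}\sum_{i\ne j}\sum_s\bX_{i,s}\bX_{j,s}^\sT$. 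For the linear part, apply matrix Bernstein, noting that $\bT_s$ has rank $\le\s^\ell$. The variance proxy is $\|\bT_s\|^2\,\|\E\Mat(\cH)^\sT\Mat(\cH)\|$-type quantities $\asymp\|\bxi\|^2 d^{\ell/2}$ for a near-square unfolding. The almost-sure bound on $\|\Mat_{a,b}(\cH_\ell(\bz))\|_\op$ must be handled by truncation via the hypercontractivity \eqref{eq:hypercontractivity-ineq-sphere}; this is where $n\le\exp(d^{c'})$ enters. For the U-statistic, use decoupling and then a matrix Rosenthal/Bernstein bound conditionally on one copy. Here $\|\sum_j\bX_{j}\bX_j^\sT\|_\op\approx n\, d^{\max(a,b)}\cdot$const, so the bound is $\sqrt{d^{a}d^{b}}/n=d^{\ell/2}/n$. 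This requires $\E[\Mat\Mat^\sT]\approx (N_{d,\ell}/d^a)\Pi$ and its transpose analogue; these follow from the isometry. For $a\ne b$ the diagonal terms would contribute $d^{b}/n\gg d^{\ell/2}/n$, which is exactly why they are removed. For $\ell=1$ the same argument gives $d/n$, hence $d^{\ell/2\vee1}$ in the statement.

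\emph{Step 3: perturbation.} Apply Davis--Kahan to $\widehat{\bM}$ versus $\E\widehat{\bM}$, with gap $\gtrsim\|\bxi\|^2$. This gives
\[
\|\widehat\bV\widehat\bV^\sT-\bV\bV^\sT\|_\op\lesssim \sqrt{d^{\ell/2}/(n\|\bxi\|^2)}+d^{\ell/4}/(\sqrt n\|\bxi\|)+\gamma^{-1}.
\]
The first term dominates the second when the bound is nontrivial, and the square-root form arises by balancing. Then, since $\sum_s\Mat_{1,a-1}(\bv_s)\Mat_{1,a-1}(\bv_s)^\sT$ depends only on the projector $\bV\bV^\sT$ (it is a partial trace of it), it is Lipschitz in that projector. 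Its range is $\Span(\bU_0)$ with smallest nonzero eigenvalue $\gtrsim1$, since it is a fixed finite-dimensional matrix in $\s$ coordinates by the $\bW$-structure. A second Davis--Kahan step then yields \eqref{eq:dist_guarantee_one_step}.
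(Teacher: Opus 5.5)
\textbf{Verdict: genuine gap in Step 2 (concentration).} Your population analysis and endgame match the paper's:
\begin{itemize}
\item approximating $\ptf(\bW^{\otimes\ell}\bzeta_{\emptyset,\ell})$ by $\bW^{\otimes\ell}\bzeta_{\emptyset,\ell}$;
\item sandwiching the kernel-weighted second moment between $c_K\bUpsilon_{\emptyset,\ell}$ and $B_K\bUpsilon_{\emptyset,\ell}$, and pushing this through the partial trace over the last $b$ slots;
\item Davis--Kahan, plus the trace-norm-contractive partial trace for the contraction step.
\end{itemize}
The problem is Step 2, which is where the paper does its real work. Matrix Bernstein or Rosenthal bounds control $\|\sum_i \bY_i\|_\op$ at scale $\sigma\sqrt{t+\log d^a}+R\,(t+\log d^a)$. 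They therefore cannot deliver the log-free bound $d^{\ell/2}/n+d^{\ell/4}\|\bxi_{\emptyset,\ell}\|_{L^2}/\sqrt n$ with probability $1-e^{-d^{c}}$. You lose $\sqrt{\log d}$ in expectation and $d^{c/2}$ at that probability level, so you do not obtain \eqref{eq:dist_guarantee_one_step} with a constant $C$.

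\textbf{The case $a<b$.} Here the paper uses the sharp inequality of Lemma~\ref{lem:matrix_concentration}. Its leading term is $2\sigma$. The dimensional term $\sigma^{1/2}v^{1/2}\log^{3/4}$ is negligible because $v\ll\sigma$. The deviation term is $\sigma_\ast t^{1/2}$ with $\sigma_\ast\ll\sigma$, so $t=d^{c}$ is affordable. The $Rt$ term is absorbed by the polynomial slack $d^{(a-b)/4}$ between the maximal summand norm and $\sigma$. For the conditional degenerate sum, $R$ is controlled by $L^{2p}$ hypercontractivity with $p\asymp d^{c}$; this is where $n\le e^{d^{c'}}$ enters.

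\textbf{The square case $a=b$ needs an ingredient your plan does not contain.} For every $\bz$,
\[
\|\Mat_{a,a}(\cH_{d,\ell}(\bz))\|_\op\ge\langle\bz^{\otimes a},\Mat_{a,a}(\cH_{d,\ell}(\bz))\bz^{\otimes a}\rangle=\sqrt{N_{d,\ell}}/\kappa_{d,\ell}\asymp d^{\ell/2}.
\]
So there is no tail to truncate. For $\widehat{\bS}-\E\widehat{\bS}$, the maximal summand norm $d^{\ell/2}/n$ is of the same order as $\sigma\asymp\sqrt{d^{\ell/2}/n}$ at $n\asymp d^{\ell/2}$. Every generic matrix inequality, including Lemma~\ref{lem:matrix_concentration} through its $Rt$ term, then loses at least $\log d$.

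Your argument needs exactly this estimate, in two places:
\begin{itemize}
\item \emph{Diagonal part.} You control only the mean of $\frac{1}{n^2}\sum_i K(\by_i,\by_i)\Mat_{a,a}(\cH_{d,\ell}(\bz_i))\Mat_{a,a}(\cH_{d,\ell}(\bz_i))^\sT$, not its fluctuation. That fluctuation is a heavy-tailed sample-covariance problem with summands of norm $\asymp d^{\ell}/n^2$.
\item \emph{Degenerate part.} After decoupling, its conditional variance contains $\frac{d^{a}}{n}\|\tilde{\bB}-\E\tilde{\bB}\|_\op^2$. Here $\tilde{\bB}$ is again an average of $\cT(\by_j)\Mat_{a,a}(\cH_{d,\ell}(\bz_j))$, i.e.\ the very quantity being bounded. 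Your estimate $\|\sum_j\bX_j\bX_j^\sT\|\approx nd^{\max(a,b)}$ only captures its mean.
\end{itemize}

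The missing idea is the paper's:
\begin{itemize}
\item write $\widehat{\bM}=\sum_r\widehat{\bS}_r\widehat{\bS}_r^\sT$ and reduce everything to $\|\widehat{\bS}-\E\widehat{\bS}\|_\op\lesssim\sqrt{d^{\ell/2}/n}$;
\item use Lemma~\ref{lem:product-harmonic-tensors-decomposition} to write $\Mat_{a,a}(\cH_{d,\ell}(\bz))$ as a multiple of $\bh_{d,a}(\bz)\bh_{d,a}(\bz)^\sT$ minus lower-degree harmonic pieces;
\item the lower-degree pieces have slack and go through Lemma~\ref{lem:matrix_concentration};
\item the leading sample covariance of the sub-Weibull vectors $\cT_{\pm}^{1/2}\bh_{d,a}$ is handled by the heavy-tailed Bai--Yin bound of Gu\'edon et al., which has no logarithmic loss.
\end{itemize}

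\textbf{Smaller slips in Step 3.}
\begin{itemize}
\item The two terms in your display coincide. The first should be $\delta=d^{\ell/2}/(n\|\bxi_{\emptyset,\ell}\|_{L^2}^2)$, which is dominated by $\sqrt\delta$, not the other way round.
\item The $\gamma^{-1}$ should be $\gamma^{-1/2}$. The kernel can couple top and tail eigenvectors of $\bUpsilon_{\emptyset,\ell}$ with strength $\sqrt{\mu_1\mu_{\rnk+1}}$. The paper handles this via $\|\bPi_0^{(a)}\E\widehat{\bM}\bPi_{0,\perp}^{(a)}\|_\op\le\|\E\widehat{\bM}\|_\op^{1/2}\|\bPi_{0,\perp}^{(a)}\E\widehat{\bM}\bPi_{0,\perp}^{(a)}\|_\op^{1/2}$.
\item The $O(d^{-1/2})$ error from $\ptf$ is absorbed only because $\gamma\le\sqrt d$.
\end{itemize}
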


The proof of Theorem~\ref{thm:tensor_unfolding_one_step} is given in
Appendix~\ref{app:asymmetric_one_step} for the asymmetric unfolding case (\(a \neq b\)),
and in Appendix~\ref{app:symmetric_one_step} for the symmetric case (\(a=b\)).
Taking \(\gamma\) sufficiently large, the bound~\eqref{eq:dist_guarantee_one_step} implies that a good approximation of \(\bU_0\) can be recovered with sample size
\[
n \asymp \frac{d^{\ell/2 \vee 1}}{\| \bxi_{\emptyset,\ell} \|_{L^2}^2}.
\]
For \(\ell \geq 2\), this matches the LDP lower bound~\eqref{eq:LDP-LB-weak}
for any choice of unfolding \(1 \leq a \leq b\).
For \(\ell=1\), the bound is worse by a factor \(\sqrt{d}\) compared to the lower bound for detection in Theorem~\ref{thm:LB-weak-recovery}(a).
However, as discussed in Remark~\ref{rmk:detection-recovery-gap}, this reflects an intrinsic detection-recovery gap in this regime.
When \(\| \bxi_{\emptyset,1} \|_{L^2}^2 \asymp 1\), the sample complexity \(n \asymp d\) is in fact information-theoretically optimal.
When \(\| \bxi_{\emptyset,1} \|_{L^2}^2 \ll 1\), following~\cite{joshi2025learning}, one can show that the algorithm recovers a subset of directions with overlap
\(\widetilde{\Theta}_d(d^{-1/4})\),
which can be boosted to \(\Theta_d(1)\) using higher-order harmonics under additional assumptions, thereby matching the detection lower bound.
Without such assumptions, determining the optimal sample complexity for recovery at \(\ell=1\) would require establishing LDP lower bounds for estimation (e.g., along the lines of~\cite{schramm2022computational,carpentier2025low}).
We leave this question to future work.
Finally, note that even if Assumption \ref{ass:spectral-gap} holds with $\gamma = \infty$, the guarantee \eqref{eq:dist_guarantee_one_step} in Theorem \ref{thm:tensor_unfolding_one_step} only yields $\dist (\widehat{\bU}_0,\bU_0) \lesssim d^{-1/4}$ as $n \to \infty$. To achieve arbitrary accuracy requires a second refinement step with a different algorithm.

\paragraph{Runtime of the algorithm.}
The dominant computational cost of Algorithm~\ref{alg:tensor_unfold_one_step} lies in computing the leading eigenvectors of the empirical matrix \(\widehat{\bM}\).
This is performed using subspace power iteration, and the total runtime is governed by the cost of multiplying vectors by \(\widehat{\bM}\).
Under the finite-rank assumption
(Assumption~\ref{ass:kernel}.\ref{ass:finite_rank}),
the matrix \(\widehat{\bM}\) admits the decomposition
\begin{equation}\label{eq:decompo_M_runtime}
\widehat{\bM} = \left(1 + \frac{\delta_{a \neq b}}{n-1} \right)\sum_{r = 1}^{\m} \left[  \bS_r \bS_r^\sT - \delta_{a \neq b} \bD_r \right],
\end{equation}
where 
\[
\bS_r := \frac{1}{n} \sum_{i \in [n]} \cT_r (\by_i) \Mat_{a,b} ( \cH_{d,\ell}(\bz_i)), \qquad \bD_r := \frac{1}{n^2} \sum_{i \in [n]} \cT_r (\by_i)^2 \Mat_{a,b} ( \cH_{d,\ell}(\bz_i))\Mat_{a,b} ( \cH_{d,\ell}(\bz_i))^\sT.
\]
Thus, multiplying a vector by \(\widehat{\bM}\) reduces to computing matrix-vector products involving \(\bS_r\) and \(\bD_r\), each of which requires summing \(n\) terms, instead of $n^2$.
Furthermore, by exploiting the structure of the harmonic tensor, these matrix-vector products can be computed in time
\(O(d^a + d^b)\),
without explicitly forming the full matrix \(\Mat_{a,b}(\cH_{d,\ell}(\bz))\), which would otherwise require \(O(d^{a+b})\) operations.
This leads to the following runtime guarantee.

\begin{proposition}[Runtime of harmonic tensor unfolding]\label{prop:runtime_tensor_unfolding_one_step}
There exists a constant $C>0$ that only depends on $\ell$ and $\s$ such that for all $\eps >0$, one can approximate the output $\widehat{\bU_0}$ of Algorithm \ref{alg:tensor_unfold_one_step} within precision $\eps>0$ (with respect to $\dist$) in time
\begin{equation}
    \rt \leq C  \frac{\m \thr }{g} n (\rt_K + d^a + d^b) \log (d/\eps),
\end{equation}
where $\rt_K$ denotes the cost of evaluating $\{ \cT_r \}_{r \in [\m]}$, and $g = 1- \hat \lambda_{\thr+1}/\hat \lambda_{\thr}$ with $\hat \lambda_j $ being the $j$-th eigenvalue of $\widehat{\bM}$. In particular, under the setting of Theorem \ref{thm:tensor_unfolding_one_step}, $g \geq 1 - c/\gamma$ with high probability. 
\end{proposition}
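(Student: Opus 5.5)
The runtime bound splits into three parts: the cost of one multiplication by \(\widehat{\bM}\), the number of subspace power iterations, and the cost of the final contraction step. For the eigenvector computation I would use block (subspace) power iteration with block size \(\thr\) on \(\widehat{\bM}\). One can shift by a multiple of the identity, or work with \(\widehat{\bM}\) directly, since the top \(\thr\) eigenvalues dominate in absolute value with high probability by \eqref{eq:approx_bMhat}. Standard convergence results for subspace iteration give an error of \(\eps\) in \(\dist\) after \(O(g^{-1}\log(d/\eps))\) iterations, given a random Gaussian initialization. The factor \(\log d\) comes from the initial overlap, which is \(\gtrsim d^{-C}\) with high probability, together with the ambient dimension \(d^a\) being polynomial in \(d\). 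Each iteration costs \(\thr\) matrix–vector products plus a QR step costing \(O(d^a \thr^2)\), which is absorbed into the main term.

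\textbf{Cost of one product \(\widehat{\bM}\bv\).} I use the decomposition \eqref{eq:decompo_M_runtime}. Computing \(\bS_r\bS_r^\sT\bv\) amounts to first forming \(\bu_r=\bS_r^\sT\bv=\frac1n\sum_i\cT_r(\by_i)\Mat_{a,b}(\cH_{d,\ell}(\bz_i))^\sT\bv\in\R^{d^b}\), and then \(\bS_r\bu_r\in\R^{d^a}\). Computing \(\bD_r\bv\) needs, for each \(i\), the product \(\Mat_{a,b}(\cH_{d,\ell}(\bz_i))^\sT\bv\) followed by multiplication back. So everything reduces to two primitives for a single sample \(\bz\): \(\bv\mapsto\Mat_{a,b}(\cH_{d,\ell}(\bz))^\sT\bv\) and \(\bu\mapsto\Mat_{a,b}(\cH_{d,\ell}(\bz))\bu\). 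I expect the key lemma to be that both primitives cost \(O_\ell(d^a+d^b)\).

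To prove this lemma I would use the explicit formula for \(\ptf\) from Appendix \ref{app:symmetric-traceless-tensor}:
\[
\ptf(\bz^{\otimes\ell})=\sum_{k\le \ell/2}c_{d,\ell,k}\,\psym\big(\bz^{\otimes(\ell-2k)}\otimes \bI_d^{\otimes k}\big).
\]
Each term, after symmetrization, is a sum of \(O_\ell(1)\) tensor products of copies of \(\bz\) and \(\bI_d\) placed on various index slots. Consider one such term under the \((a,b)\) unfolding. Its row indices consist of some \(\bz\)'s, some identity pairs internal to the row block, and some identity legs crossing to the column block; the column indices are split analogously. Contracting it with \(\bv\in\R^{d^a}\) therefore consists of contracting \(\bv\) against the \(\bz\) factors and the internal traces, which costs \(O(d^a)\), and then tensoring the resulting crossing-leg tensor with the column-side \(\bz\)'s and identities, which costs \(O(d^b)\). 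The same argument applies in the other direction. Summing over the \(O_\ell(1)\) terms proves the lemma. This bookkeeping over index placements is the main obstacle. The point to check is that contraction through the crossing identity legs never forces a \(d^{a+b}\) intermediate. It does not, because each crossing leg simply transfers an index, so the intermediate object has at most \(\max(d^a,d^b)\) entries.

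\textbf{Assembling the bound.} Each product \(\widehat{\bM}\bv\) costs \(O(\m\, n(\rt_K+d^a+d^b))\), where the \(\cT_r(\by_i)\) are precomputed once at cost \(n\rt_K\). Multiplying by \(\thr\) vectors per iteration and \(O(g^{-1}\log(d/\eps))\) iterations gives the main term. The final contraction \(\sum_s\Mat_{1,a-1}(\widehat\bv_s)\Mat_{1,a-1}(\widehat\bv_s)^\sT\) costs \(O(\thr d^{a+1})\), and its top \(\s_0\) eigenvectors are computed in \(\mathrm{poly}(d)\log(1/\eps)\) time. This is dominated by the main term since \(n\gtrsim d\) in the relevant regime. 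It also requires only \(\eps\)-accuracy in \(\widehat{\bV}\), which propagates to \(\widehat{\bU}_0\) by Davis–Kahan with the \(\Theta(1)\) gap of the contracted matrix. Finally, the claim \(g\ge1-c/\gamma\) follows from the proof of Theorem \ref{thm:tensor_unfolding_one_step}: with high probability \(\hat\lambda_{\thr}\gtrsim\mu_{\rnk}\|\,\cdot\,\|\) and \(\hat\lambda_{\thr+1}\lesssim \mu_{\rnk+1}+\|\bDelta\|_\op\). Weyl's inequality, Assumption \ref{ass:spectral-gap}, and the sample-size condition then give \(\hat\lambda_{\thr+1}/\hat\lambda_\thr\le c/\gamma\).
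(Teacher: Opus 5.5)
Your argument has the same skeleton as the paper's proof (Appendix~\ref{app:runtime-tensor-unfolding}). It uses subspace iteration with $O(g^{-1}\log(d/\eps))$ rounds. It uses the decomposition \eqref{eq:decompo_M_runtime} to reduce each product $\widehat{\bM}\bv$ to $O(\m n)$ products with $\Mat_{a,b}(\cH_{d,\ell}(\bz_i))$ or its transpose. And it charges $O(d^a+d^b)$ per such product.

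The real difference is that the paper simply cites \cite[Lemma~15]{joshi2025learning} for this last cost, whereas you prove it from \eqref{eq:projection_traceless_rank_one}. Each $\psym(\bz^{\otimes(\ell-2k)}\otimes\bI_d^{\otimes k})$ is an average of $O_\ell(1)$ index placements. In each placement the row--column identity legs only transfer indices, so no intermediate object exceeds $\max(d^a,d^b)$ entries. This is correct and makes the result self-contained.

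There are three smaller remarks.

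First, in the contraction step the paper iterates with products against $\Mat_{1,a-1}(\widehat\bv_s)$ and its transpose, at $O(\thr d^a)$ per round. Given $h\geq c$, this is dominated by the main term with no condition on $n$. Your explicit $O(\thr d^{a+1})$ formation needs $n\gtrsim d$, which the concentration lemmas assume anyway. Your ``$\mathrm{poly}(d)$'' eigensolver must also be iterative: a dense $O(d^3)$ solve is not dominated for $\ell\in\{1,2\}$ with $n\asymp d$. In those cases $a=1$ and $\thr=\s_0$, so the step is actually vacuous.

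Second, the paper states $g\geq 1-c/\gamma$ without proof. Weyl's inequality together with Lemmas~\ref{lem:spectral_bounds_population_U_stat} and~\ref{lem:concentration_U_statistic_final} gives $\hat\lambda_{\thr+1}/\hat\lambda_{\thr}\lesssim\gamma^{-1}+\sqrt{\delta}$, where $\delta=d^{\ell/2\vee1}/(n\|\bxi_{\emptyset,\ell}\|_{L^2}^2)$. So the literal $c/\gamma$ requires $n\gtrsim\gamma^2 d^{\ell/2\vee1}/\|\bxi_{\emptyset,\ell}\|_{L^2}^2$. What the runtime bound actually uses is $g\geq 1/2$, and that holds once $\gamma\geq C$ and $\delta\leq c$.

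Third, the same estimate settles the non-PSD issue you raise for $a\neq b$, which the paper ignores. Since $\E[\widehat{\bM}]\succeq 0$, we have $\hat\lambda_{\min}\geq-\|\widehat{\bM}-\E[\widehat{\bM}]\|_\op\gtrsim-\sqrt{\delta}\,\|\bxi_{\emptyset,\ell}\|_{L^2}^2$. So working with $\widehat{\bM}$ directly is fine. An identity shift, by contrast, would change the eigenvalue ratio that $g$ measures.
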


Further details of the runtime analysis are provided in Appendix~\ref{app:runtime-tensor-unfolding}.
While the sample complexity is independent of the unfolding shape for \(a \leq b\),
the runtime depends on this choice:
the cost is minimized when the unfolding is as close to square as possible.
Accordingly, we take \(a = \lfloor \ell/2 \rfloor\) and \(b = \lceil \ell/2 \rceil\), which corresponds to \(a=b=\ell/2\)
for even \(\ell\),
and to \((a,b)=((\ell-1)/2,(\ell+1)/2)\) for odd \(\ell\).

Combining Theorem~\ref{thm:tensor_unfolding_one_step} with
Proposition~\ref{prop:runtime_tensor_unfolding_one_step},
the signal subspace \(\bU_0\) can be recovered in time
\[
\rt \asymp \begin{cases}
d^2 \log (d)/ \| \bxi_{\emptyset,1} \|_{L^2}^2 & \text{if $\ell =1$};\\
d^\ell \log (d) / \| \bxi_{\emptyset,\ell} \|_{L^2}^2 & \text{if $\ell$ is even};\\
d^{\ell + \frac{1}{2}} \log (d) / \| \bxi_{\emptyset,\ell} \|_{L^2}^2& \text{if $\ell$ is odd}.
\end{cases}
\]
This matches the SQ runtime lower bound for even \(\ell\) (up to a logarithmic factor),
but is worse by a factor \(\sqrt{d}\) for odd\footnote{For $\ell = 1$, it is worse by a factor $d$. In the case $\| \bxi_{\emptyset,1} \|_{L^2} \asymp d$, this is optimal: it corresponds to the runtime to read $\Theta_d(d)$ samples in $\R^d$, which is information theoretically necessary.} \(\ell \geq 3\).
Whether this gap can be removed remains open.
In the special case of single-index models (\(\s=1\)),
\cite{joshi2025learning} showed that online SGD achieves runtime
\(d^\ell / \| \bxi_{\emptyset,\ell} \|_{L^2}^2\)
(without logarithmic factors) for all \(\ell \geq 3\),
albeit with a larger sample complexity
\(d^{\ell-1} / \| \bxi_{\emptyset,\ell} \|_{L^2}^2\).
We leave the question of achieving the optimal SQ runtime for odd \(\ell\) to future work.

\begin{remark}[Unknown $\nu_d$] \label{rmk:unknow-nu-d}
Algorithm~\ref{alg:tensor_unfold_one_step} relies on knowledge of \(\nu_d\) through the choice of the kernel \(K\), as well as the ranks \(\thr\) and \(\s_0\).
When \(\nu_d\) is only partially known, one may instead select a dense, bounded kernel adapted to a suitable class of link functions, so that
Assumption~\ref{ass:kernel}.\ref{ass:kernel-positive-definite} holds.
%(the sample complexity depends as $1/c_K^2$)
A natural example is a positive kernel such as a Gaussian kernel, as considered in~\cite{damian2025generative}.
We note, however, that the setting of~\cite{damian2025generative} differs from ours in that the labels do not depend on the ambient dimension, whereas in the present work the dependence on \(d\) introduces additional challenges.
We do not pursue this direction further here.
Such dense kernels generally do not admit a finite-rank representation, which precludes the efficient decomposition \eqref{eq:decompo_M_runtime} and do not satisfy our definition of admissibility (see Assumption \ref{ass:kernel}.\ref{ass:finite_rank}). Nonetheless, one may employ a random feature approximation to recover the computational efficiency nearly matching that of our SQ lower bounds.
Finally, when \(\thr\) and \(\s_0\) are unknown, these quantities can be selected adaptively, for instance by retaining only those eigenvalues that are sufficiently separated from the bulk.
\end{remark}

\subsection{Multi-step procedure}
\label{sec:multi-step-HTU}

We now show how iteratively applying the one-step harmonic tensor unfolding algorithm to a sequence of reduced multi-index models allows us to recover the entire support \(\bW\).

We first fix a sequence of harmonic degrees \(\ell_1,\ell_2,\ell_3,\ldots \geq 1\), and describe the algorithm associated with this choice.
We then discuss how to select the degree sequence so as to achieve optimal sample and runtime complexity (within the LDP and SQ frameworks).
The algorithm sequentially estimates a collection of subspaces
\(\{\widehat{\bU}_t\}_{t=1}^{T}\).
Writing \(\s_0=\s_{\leq 0}=0\) and \(d_0:=d\), and defining for \(t\geq 1\)
\[
\s_t := {\rm rank} (\widehat{\bU}_t), \qquad \s_{\leq t} := \s_1 + \ldots + \s_t, \qquad d_t := d - \s_{\leq t - 1},
\]
each subspace $\widehat{\bU}_t \in \Stf_{\s_t} ( \R^{d_t})$ is viewed as living in the orthogonal complement of the previously recovered directions $\widehat{\bU}_{\leq t-1}$ given by
\[
\widehat{\bU}_{\leq t-1} := \widehat{\bU}_1 \oplus \ldots \oplus \widehat{\bU}_{t-1} \in \Stf_{\s_{\leq t-1}} (\R^d). 
\]

To present the algorithm, we first define an idealized \emph{population} recovery sequence \(\{\bU_t\}_{t=1}^{T}\), and then explain how this sequence can be approximately recovered from data using harmonic tensor unfolding.

\paragraph*{Population recovery sequence.} Fix a spectral gap parameter \(\gamma>0\), and set \(\bU_0 := \emptyset\). We construct the sequence \(\{\bU_t\}_{t=1}^{T}\) recursively as follows.
Suppose that \(\{\bU_k\}_{k=0}^{t}\) have already been defined, with
\(\s_k := \rank(\bU_k)\). Define 
\[
\bU_{\leq t} := \bU_0 \oplus \ldots \oplus \bU_t \in \Stf_{\s_{\leq t}} (\R^d), \qquad \s_{\leq t} := \s_0 + \ldots + \s_t, \qquad d_{t+1} := d - \s_{\leq t}.
\]
We consider the reduced multi-index model obtained by conditioning on
\(\bU_{\leq t}^\sT \bz\), as described in~\eqref{eq:reduced-MIM-intro}.
Let \(\bU_{\leq t,\perp} \in \R^{d \times d_{t+1}}\) be an orthonormal basis for the orthogonal complement of \(\bU_{\leq t}\),
and decompose the data \((\by,\bz)\) as
\[
\br_t := \bU_{\leq t}^\sT \bz \in \R^{\s_{\leq t}}, \qquad \bz_t := \frac{\bU_{\leq t,\perp}^\sT \bz}{\| \bU_{\leq t,\perp}^\sT \bz\|_2} \in \S^{d_{t+1}-1}, \qquad \by_t := (\by,\br_t) \in \cY_t := \cY \times \R^{\s_{\leq t}}.
\]
Denote by \(\nu_{d,t} := \nu_{d,\bU_{\leq t}}\) the link function of the reduced MIM corresponding to this decomposition.
Conditionally on \(\bz_t\), the response satisfies
\[
\by_t | \bz_t \sim \nu_{d,t} ( \de \by_t | \bW^\sT \bU_{\leq t, \perp} \bz_t ) = \nu_d ( \de \by | \bW^\sT \bz) \tilde{\tau}_{d,\s_{\leq t}} (\de \br_t).
\]

We apply the degree-\(\ell_{t+1}\) harmonic tensor unfolding procedure to this reduced MIM, which has \(\s-\s_{\leq t}\) remaining indices.
Let \(\rnk_{t+1}\) denote the smallest rank satisfying Assumption~\ref{ass:spectral-gap} with gap \(\gamma\)
(in particular, \(c_\mu > \gamma^{-\rnk_{t+1}}\)).
We then define \(\bU_{t+1} \in \Stf_{\s_{t+1}}(\R^{d_{t+1}})\) to be the signal subspace constructed according to~\eqref{eq:def_Z_0_U_0_s_0} for the corresponding one-step procedure. We denote by \(\bZ_{t+1}\) the associated signal tensor, and write $\thr_{t+1} := {\rm rank} ( \Mat_{a,2\ell_{t+1} - a} (\bZ_{t+1}))$. The \(\ell_{t+1}\)-th harmonic coefficient of the reduced MIM is given by
\[
\bxi_{t} (\by_t) := \bxi_{\bU_{\leq t}, \ell_{t+1}} (\by)= \E [ \cH_{d_{t+1},\ell_{t+1}} (\bz_t) | \by_t] \in \TSym_{\s - \s_{\leq t}} (\R^{d_{t+1}}). 
\]
Without loss of generality, we assume that
\(\| \bxi_t \|_{L^2} > 0\) for all \(t=0,\ldots,T-1\), and we set $T$ such that \(\bU_{\leq T} = \bW\) (in particular, $T \leq \s$).
Finally, we assume the existence of a sequence of kernels
\(\{K_t\}_{t=1}^{T}\)
such that each
\(K_{t} : \cY_{t-1} \times \cY_{t-1} \to \R\)
satisfies Assumption~\ref{ass:kernel} for the pair \((\nu_{d,t-1},\ell_{t})\).

\paragraph*{Empirical recovery sequence.} We now describe the empirical multi-step procedure, which iteratively applies harmonic tensor unfolding with parameters chosen according to the population recovery sequence and using fresh samples at each step.

Initialize \(\widehat{\bU}_0 := \emptyset\).
For \(t=0,\ldots,T-1\), suppose that we have obtained estimates
\(\{\widehat{\bU}_k\}_{k=0}^{t}\),
with \(\rank(\widehat{\bU}_k)=\s_k\).
We then apply Algorithm~\ref{alg:tensor_unfold_one_step} to the reduced multi-index model $( \hat \by_t , \hat \bz_t) \sim \hat \nu_{d,t}$ conditioned on $\widehat{\bU}_{\leq t}$:
\[
\hat \nu_{d,t} := \nu_{d,\widehat{\bU}_{\leq t}}, \qquad \hat \by_t := (\by, \br_{\widehat{\bU}_{\leq t}}) , \qquad \hat \bz_t := \bz_{\widehat{\bU}_{\leq t}}.
\]
The algorithm is run at harmonic degree \(\ell_{t+1}\), with the optimal unfolding shape: \((a,b)=(1,0)\) if \(\ell_{t+1}=1\), and
\((a,b)=(\lfloor \ell_{t+1}/2\rfloor,\lceil \ell_{t+1}/2\rceil)\) otherwise.
We use the target ranks \(\thr_{t+1}\) and \(\s_{t+1}\), and the kernel
\begin{equation}\label{eq:symmetrized-kernel}
\overline{K}_{t+1} ((\by,\br),(\by',\br')) = \int_{\cO_{\s_{\leq t}}} K_{t+1} ( (\by,g\cdot \br), (\by',g\cdot \br')) \pi_{\s_{\leq t}} (\de g).
\end{equation}
This symmetrized kernel is used because the algorithm only recovers the span of
\(\bU_{\leq t+1}\), and not necessarily a specific basis. More precisely, the algorithm returns a subspace \(\widetilde{\bU}_{t+1}\subseteq\R^{d_{t+1}}\), which lies in the image of \(\bU_{\leq t,\perp}^{\top}\bW_\ast\). We lift this estimate back to the original \(d\)-dimensional space by taking \(\widehat{\bU}_{t+1}\) to be any orthonormal basis of the image of \(\bU_{\leq t,\perp}\,\widetilde{\bU}_{t+1}\subseteq\R^{d}\). Additional discussion can be found in Appendix \ref{app:multi-step-tensor-unfolding}.

The full multi-step procedure is summarized in
Algorithm~\ref{alg:multi_step_tensor_unfolding}.

\begin{algorithm}[t]
\caption{Multi-step Harmonic Tensor Unfolding for spherical MIMs}
\label{alg:multi_step_tensor_unfolding}
\SetKwFunction{MSTU}{MultiStepTensorUnfolding}
\SetKwProg{Fn}{Function}{}{}
\Fn{\MSTU{\(\{\{(\by^{(t)}_i,\bz^{(t)}_i)\}_{i=1}^{n}\}_{t=0}^{T-1}\), degrees \(\{\ell_t\}_{t= 1}^T\), kernels \(\{\overline{K}_t\}_{t= 1}^T\), ranks \(\{\thr_t\}_{t= 1}^T\), ranks \(\{\s_t\}_{t= 1}^T\)}}{ 
    \tcp{Initialize recovered subspace}
    \(\widehat{\bU}_0 \gets \emptyset\)

    \tcp{Iterate over the steps}
    \For{$t\leftarrow 0$ \KwTo $T-1$}{

        \tcp{Form the residual subspace}
        \(\widehat{\bU}_{\leq t} \gets \bigoplus_{j=0}^{t} \widehat{\bU}_j\) %, \(\hat{\bU}_{\leq t-1,\perp} \gets \text{ONB}(\Span(\hat{\bU}_{\leq t-1})^\perp)\)

        \tcp{Decompose batch $t$ of samples in the residual space}
        \(\{(\hat{\by}_{t,i},\hat{\bz}_{t,i})\}_{i=1}^{n} \gets\) decomposition of \(\{(\by^{(t)}_i,\bz^{(t)}_i)\}_{i=1}^{n}\) conditional on \(\widehat{\bU}_{\leq t}\)

        \tcp{Apply one-step tensor unfolding with $(a,b) = (1,0)$ or $(\lfloor \frac{\ell_{t+1}}{2} \rfloor,\lceil \frac{\ell_{t+1}}{2}\rceil)$}
            \(\widehat{\bU}_{t+1} \gets \TUstep(\{(\hat{\by}_{t,i},\hat{\bz}_{t,i})\}_{i=1}^n, \ell_{t+1},  \overline{K}_{t+1}, \thr_{t+1}, \s_{t+1})\)
    }
    \KwRet{\(\widehat{\bW} = \bigoplus_{j=1}^{T}\widehat{\bU}_j\)}
}
\end{algorithm}

\paragraph*{Recovery guarantees.} To establish that the above procedure indeed recovers the full signal subspace \(\bW\), one would like to apply Theorem~\ref{thm:tensor_unfolding_one_step} at each iteration.
However, this cannot be done directly beyond the first step.
Indeed, for \(t \geq 1\), we only have an approximate estimate of \(\bU_{\leq t}\), and the empirical reduced model
\(\widehat{\nu}_{d,t} := \nu_{d,\widehat{\bU}_{\leq t}}\)
differs from the population model
\(\nu_{d,t} := \nu_{d,\bU_{\leq t}}\).
In particular, since \(\widehat{\bU}_{\leq t}\) is not exactly a subspace of \(\bW\), the model
\(\widehat{\nu}_{d,t}\) almost surely still depends on \(\s\) indices.

To control the resulting propagation of error across iterations, we impose a stability condition on the family of reduced link functions \(\{\nu_{d,t}\}_{t=0}^{T-1}\).
Informally, this condition requires that the harmonic coefficients of the reduced models vary continuously with respect to perturbations of the conditioning subspace.

\begin{assumption}[Stability of reduced link function]\label{ass:stability-conditional-distributions}
    There exists a modulus of continuity $\varphi : [0,1] \to \R_{\geq 0}$, with $\varphi(\eps) \to 0$ as $\eps \to 0$, such that the following hold.
    \begin{itemize}
        \item[(a)] For all $t \in [T]$ and $\bU' \in \Stf_{\s_{\leq t-1}} (\R^d)$, letting $\bU'_\perp$ denote its orthogonal complement,
    \begin{equation}
        \| \bU_{\leq t-1,\perp}^{\otimes \ell_{t}} \bxi_{\bU_{\leq t-1},\ell_{t}} - (\bU'_\perp)^{\otimes \ell_{t}} \bxi_{\bU',\ell_{t}} \|_{L^2} \leq ( \| \bxi_{\bU_{\leq t-1},\ell_{t}} \|_{L^2} + \| \bxi_{\bU',\ell_{t}}\|_{L^2} ) \varphi \big( \dist ( \bU_{\leq t-1}  , \bU') \big).
    \end{equation}
    where $\bU_{\leq t, \perp}$ is the orthogonal complement of $\bU_{\leq t}$ in the population recovery sequence.

    \item[(b)] For all $t = 0, \ldots ,T-1$, and $\bU' \in \Stf_{\s_{\leq t}} (\R^d)$,
    \begin{equation}
        \E\left[ \left( K_{t+1} \left( \by_{t} , \by_{t}' \right) - K_{t+1}\left(\hat{\by}_{t},\hat{\by}_{t}'\right) \right)^2 \right]^{1/2} \leq \varphi ( \| \bU_{\leq t} - \bU' \|_\frob ), 
    \end{equation}
    where $\by_t = (\by, \bU_{\leq t}^\sT \bz)$, $\hat \by_t = ( \by, (\bU')^\sT \bz)$, and $(\by'_t, \hat \by_t ')$ denote independent copies.
    \end{itemize}
\end{assumption}

Under Assumption~\ref{ass:stability-conditional-distributions}.(a), the oracle kernel defined in~\eqref{eq:ideal-kernel} is itself stable with respect to perturbations of the conditioning subspace.
However, since the algorithm does not use this oracle kernel directly, we additionally impose a stability condition on the sequence of kernels \(\{K_t\}_{t=1}^{T}\) (Assumption~\ref{ass:stability-conditional-distributions}.(b)). While Assumption~\ref{ass:stability-conditional-distributions}
is somewhat restrictive, it is satisfied by important examples such as Gaussian multi-index models, as well as by models obtained from Gaussian MIMs after normalizing the input vector to have unit norm.

Under these conditions, we obtain the following recovery guarantee for the multi-step procedure.

\begin{theorem}[Multi-step harmonic tensor unfolding]
\label{thm:tensor_unfolding_mutli_step}
    Let \((\nu_d,\bW)\) be a spherical MIM, and let $(\ell_t)_{t\geq 1}$ be an harmonic degree sequence. Let $\{ \bU_t \}_{t= 1}^T$ be the associated population recovery sequence with spectral gap $\gamma >1$, and let $\widehat{\bW}$ be the output of Algorithm \ref{alg:multi_step_tensor_unfolding}. Assume that the kernel sequence $\{ K_t \}_{t = 1}^T$ satisfies Assumption \ref{ass:kernel}, and that Assumption \ref{ass:stability-conditional-distributions} holds. Then there exist constants $C,C',c,c'>0$ depending only on $\s$, the degree sequence, and the constants in these assumptions, such that for any 
    %$C'\leq d$, 
    $ C' \leq \gamma \leq  \sqrt{d}$, and $n \leq \exp (d^{c'})$,
    \begin{equation}\label{eq:dist_guarantee_multi_step}
        \dist (\widehat{\bW}, \bW) \leq C \left[ \max_{t \in [T]} \left( \frac{d^{\ell_t/2 \vee 1}}{n \| \bxi_{t-1} \|_{L^2}^2}\right)^{1/C}  + \frac{1}{\gamma^{1/C}} \right],
    \end{equation}
    with probability at least $1 - \exp (-d^{c})$.
\end{theorem}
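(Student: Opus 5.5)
The plan is an induction over the steps $t=0,\ldots,T-1$. At each step Theorem~\ref{thm:tensor_unfolding_one_step} is applied to the empirical reduced model $\hat\nu_{d,t}$, treated as a perturbation of the population reduced model $\nu_{d,t}$, and Assumption~\ref{ass:stability-conditional-distributions} controls the gap between the two. Write $\eps_t := \dist(\widehat{\bU}_{\le t},\bU_{\le t})$, with $\eps_0=0$. Each batch is fresh, so conditionally on $\widehat{\bU}_{\le t}$ the samples $(\hat\by_{t,i},\hat\bz_{t,i})$ are i.i.d.\ from the spherical MIM $\hat\nu_{d,t}$ in dimension $d_{t+1}$. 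It therefore suffices to prove the recursion
\[
\eps_{t+1}\le C\Big[\eps_t^{1/C}+\Big(\tfrac{d^{\ell_{t+1}/2\vee1}}{n\|\bxi_t\|_{L^2}^2}\Big)^{1/C}+\gamma^{-1/C}\Big]
\]
on an event of probability $1-\exp(-d^c)$. Iterating this $T\le\s$ times and taking a union bound gives \eqref{eq:dist_guarantee_multi_step}, with the exponent $1/C$ absorbing the compositions of $\varphi$. Here I treat $\varphi$ as H\"older, or else absorb it into the constants.

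\textbf{Step 1 (perturbed population matrix).} I expect the proof of Theorem~\ref{thm:tensor_unfolding_one_step} to rest on two ingredients. The first is a concentration bound $\|\widehat\bM-\E\widehat\bM\|_\op\lesssim d^{\ell/2}/n+d^{\ell/4}\|\bxi\|_{L^2}/\sqrt n$, which uses only boundedness and finite rank of the kernel and hence holds verbatim for $\hat\nu_{d,t}$ with kernel $\overline K_{t+1}$. The second is a population identity $\E\widehat\bM=\bM_{\rm pop}$ whose top eigenspace is the $\bV$-subspace. The new work is to compare $\hat\bM_{\rm pop}$, built from $(\hat\nu_{d,t},\overline K_{t+1})$ and embedded in $\R^d$ via $\widehat{\bU}_{\le t,\perp}$, with $\bM_{\rm pop}$, built from $(\nu_{d,t},\overline K_{t+1})$ and embedded via $\bU_{\le t,\perp}$. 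Both are of the form $\E[K(\by,\by')\Mat(\bxi(\by))\Mat(\bxi(\by'))^\sT]$. A triangle inequality splits the difference into two pieces. The coefficient difference is controlled by Assumption~\ref{ass:stability-conditional-distributions}(a); it is bounded by $(\|\bxi_t\|+\|\hat\bxi_t\|)\,\|\bxi_t\|\,\varphi(\eps_t)\,B_K$ via Cauchy--Schwarz. The kernel difference is controlled by Assumption~\ref{ass:stability-conditional-distributions}(b), after converting $\|\bU_{\le t}-\bU'\|_\frob$ to $\eps_t$ by choosing the basis of $\widehat{\bU}_{\le t}$ optimally. This choice of basis is exactly where the $\cO_{\s_{\le t}}$-symmetrization \eqref{eq:symmetrized-kernel} matters, since $\overline K$ is invariant under it. Stability~(a) with $\varphi$ small also gives $\|\hat\bxi_t\|\asymp\|\bxi_t\|$. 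Altogether $\|\hat\bM_{\rm pop}-\bM_{\rm pop}\|_\op\lesssim\|\bxi_t\|_{L^2}^2\,\varphi(\eps_t)$.

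\textbf{Step 2 (Davis--Kahan twice).} Relative to $\bM_{\rm pop}$, the perturbation from Step~1 and the concentration error together stay below the eigengap $\asymp c_K c_\mu\|\bxi_t\|^2$ guaranteed by Assumptions~\ref{ass:spectral-gap} and \ref{ass:kernel}. This gives $\sin\Theta(\widehat\bV,\bV)\lesssim \varphi(\eps_t)+\sqrt{d^{\ell/2}/(n\|\bxi_t\|^2)}+\gamma^{-1/2}$. The contraction step $\sum_s\Mat_{1,a-1}(\hat\bv_s)\Mat_{1,a-1}(\hat\bv_s)^\sT$ is Lipschitz in the projector onto $\widehat\bV$, and its population version has range $\Span(\bU_{\le t,\perp}\bU_{t+1})$ with gap bounded below by a constant depending on $\s,\ell$. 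A second Davis--Kahan step then bounds $\dist(\widetilde\bU_{t+1},\text{target})$. Lifting by $\widehat{\bU}_{\le t,\perp}$ instead of $\bU_{\le t,\perp}$ costs an extra $O(\eps_t)$. The bound on $\dist(\widehat\bU_{\le t+1},\bU_{\le t+1})$ is then at most the sum of the two distances, because both summands are orthogonal to $\widehat\bU_{\le t}$ and to $\bU_{\le t}$ respectively, up to errors of order $\eps_t$.

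\textbf{Main obstacle.} The delicate point is Step~1. The perturbed model $\hat\nu_{d,t}$ still depends on all $\s$ indices, so $\hat\bxi_t$ carries a small amount of mass outside the target structure. This mass must be shown to enter $\hat\bM_{\rm pop}$ only through an operator-norm-small term, rather than creating spurious eigenvalues of size comparable to the spectral gap. This is where the choice of rank $\rnk_{t+1}$ (the smallest index with gap $\gamma$) and the loss of a power $1/C$ enter. My first attempt is to apply Weyl's inequality to the full perturbation and to require $\varphi(\eps_t)\ll c_\mu/\gamma$; whenever that condition fails, the claimed bound is trivial anyway because the right-hand side exceeds one.
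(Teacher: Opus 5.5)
Your route is the paper's: a one-step lemma, iterated over $T\le\s$ fresh batches. It compares the lifted expected matrix conditioned on $\widehat{\bU}_{\le t}$ with the one conditioned on $\bU_{\le t}$, using Assumption~\ref{ass:stability-conditional-distributions}(a) for the coefficients and (b) for the kernel. It then applies Davis--Kahan twice and the triangle inequality for $\dist(\bU_{\le t+1},\widehat{\bU}_{\le t+1})$. Your H\"older caveat on $\varphi$ is fair, since the paper's per-step bound carries $\sqrt{\varphi(\eps_t)\vee\eps_t}$. Two points need fixing.

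\textbf{A missing step: Assumption~\ref{ass:kernel} for $\overline K_{t+1}$.} In Step~2 you use an eigengap $\asymp c_Kc_\mu\|\bxi_t\|_{L^2}^2$ ``guaranteed by Assumptions~\ref{ass:spectral-gap} and~\ref{ass:kernel}''. However, the hypotheses give the spectral lower bound (K2) only for $K_{t+1}$, while the algorithm runs with the symmetrized kernel~\eqref{eq:symmetrized-kernel}. Nothing in your argument shows that averaging over $\cO_{\s_{\le t}}$ acting on $\br_t$ preserves a $d$-independent lower-bound constant. Without it, the bound $c\|\bxi_t\|_{L^2}^2\bPi_0^{(a)}\preceq\E[\widetilde{\bM}]$, and hence your eigengap, is unjustified.

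The paper's first step supplies this. Since $K_{t+1}$ is PSD, for each $g$ the quantity $\E[K_{t+1}(g\cdot\by_t,g\cdot\by_t')\langle\bV,\bzeta_{\bU_{\le t},\ell}(\by_t)\rangle\langle\bzeta_{\bU_{\le t},\ell}(\by_t'),\bV\rangle]$ is nonnegative. So the Haar integral can be restricted to a neighborhood $O$ of the identity. There $g\cdot\by_t=(\by,(\bU_{\le t}g^\sT)^\sT\bz)$ with $\|\bU_{\le t}-\bU_{\le t}g^\sT\|_\frob$ small. Assumption~\ref{ass:stability-conditional-distributions}(b) plus Cauchy--Schwarz then yields (K2) for $\overline K_{t+1}$ with constant of order $\pi_{\s_{\le t}}(O)\,c_K$.

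Relatedly, the concentration bound does not carry over ``verbatim'': $\overline K_{t+1}$ is generally not of finite rank. You need either the finite-rank $\eta$-net approximation of $\overline K_{t+1}$ (with (K2) re-checked by the same neighborhood argument), or the extension of the concentration lemma to general bounded kernels.

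\textbf{The `main obstacle' is illusory, and the proposed fix would break the induction.} The separation Davis--Kahan needs is between two groups of eigenvalues of the idealized expected matrix:
\begin{itemize}
\item the top $\thr_{t+1}$ eigenvalues, which are $\ge c\|\bxi_t\|_{L^2}^2$;
\item the remaining ones, which are $\lesssim\|\bxi_t\|_{L^2}^2(\gamma^{-1}+d^{-1/2})$.
\end{itemize}
This gap is $\Theta(\|\bxi_t\|_{L^2}^2)$, not $\Theta(\|\bxi_t\|_{L^2}^2/\gamma)$.

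Your Step~1 bound already accounts for all the extra mass that $\hat\bxi_t$ carries outside the target structure, in Frobenius and hence operator norm. By Weyl, the $(\thr_{t+1}+1)$-th eigenvalue of the perturbed matrix is at most $C\|\bxi_t\|_{L^2}^2(\gamma^{-1}+d^{-1/2}+\varphi(\eps_t))$. It therefore suffices that $\varphi(\eps_t)$ be below a small constant, which is what your own Step~2 says.

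The condition $\varphi(\eps_t)\ll c_\mu/\gamma$ is not only unnecessary but generally unattainable. The inductive bound on $\eps_t$ contains the bias term $\gamma^{-1/2}$ already after one step, so for H\"older exponents below $2$ the condition cannot be guaranteed. When it fails, the target right-hand side is of order $\gamma^{-1/C}$, i.e.\ small, not larger than one. The choice of $\rnk_{t+1}$ enters only through the population sandwich bound. The exponent $1/C$ comes only from compounding the per-step square roots.
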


The proof of Theorem \ref{thm:tensor_unfolding_mutli_step} can be found in Appendix \ref{app:multi-step-tensor-unfolding}. This theorem implies that, taking $\gamma$ sufficiently large, the sample complexity to recover the signal subspace $\bW$ with iterative tensor unfolding with degree sequence $\{ \ell_t \}_{t \geq 1}$ is
\[
n \asymp \max_{t \in [T]} \frac{d^{\ell_t/2 \vee 1}}{\| \bxi_{t-1} \|_{L^2}^2}.
\]
Regarding runtime complexity, while \(K_t\) is assumed to be finite rank, the symmetrized kernel \eqref{eq:symmetrized-kernel} does not necessarily preserve this low-rank structure. In Appendix \ref{app:multi-step-tensor-unfolding}, we propose a procedure for constructing a finite-rank approximation. Utilizing this approximation yields a total runtime complexity of
\[
\rt \asymp \max_{t \in [T]} \frac{d^{\ell_t \vee (3/2)  + \frac{1}{2}\delta_{\ell_t \equiv 1[2]} }}{\| \bxi_{t-1} \|_{L^2}^2} \log (d). 
\]

\paragraph*{Optimal multi-step procedures.}

A priori, Algorithm~\ref{alg:multi_step_tensor_unfolding} can be run with any choice of harmonic degree sequence \(\{\ell_t\}_{t=1}^{T}\). In light of the lower bounds established in Theorem~\ref{thm:LB-strong-recovery} and the discussion following it, the harmonic degrees can be chosen to minimize either the sample complexity or the runtime within the LDP and SQ frameworks, or to trade off between these two objectives.

In particular, consider the sequences of sample-optimal and runtime-optimal degrees \(\{\ell_t^{(s)}\}_{t=1}^{T_s}\) and \(\{\ell_t^{(q)}\}_{t=1}^{T_q}\) as defined in \eqref{eq:optimal-sequences}, together with their corresponding population recovery sequences \(\{\bU_t^{(s)}\}_{t=1}^{T_s}\) and \(\{\bU_t^{(q)}\}_{t=1}^{T_q}\).
By Theorem~\ref{thm:tensor_unfolding_mutli_step}, the multi-step harmonic tensor unfolding algorithm equipped with these two degree sequences achieves sample and runtime complexities
\[
    n \asymp  \max_{t \in [T_s]} \inf_{\ell \geq 1} \frac{d^{\ell/2 \vee 1}}{\| \bxi_{\bU^{(s)}_{\leq t-1},\ell} \|_{L^2}^2}, \qquad \quad
    \rt \asymp  \max_{t \in [T_q]} \inf_{\ell \geq 1} \frac{d^{\ell \vee (3/2)  + \frac{1}{2}\delta_{\ell} }}{\| \bxi_{\bU^{(q)}_{\leq t-1},\ell} \|_{L^2}^2} \log (d),
\]
respectively, where $\delta_\ell = 1$ if $\ell$ odd and $0$ otherwise. This matches the sample-leap and query-leap complexities in Theorem~\ref{thm:LB-strong-recovery}, up to possibly some $\widetilde O(\sqrt{d})$ factors. 

\section{Applications}
\label{sec:applications}

To illustrate our framework, we apply our results to several popular classes of multi-index models.

\subsection{Learning Gaussian MIMs}
\label{sec:applications-gaussian-mims}

 Consider a Gaussian MIM with link function $\rho \in \cP(\cY \times \R^\s)$ and support $\bW_\ast \in \Stf_{\s}(\R^d)$:
\begin{equation}\label{eq:gaussian-mim-application}
    (y,\bx) \sim \P_{\rho}^{\bW_*} : \qquad 
    \bx \sim \gamma_d, 
    \qquad 
    y \mid \bx \sim \rho(\,\cdot \mid \bW_*^\sT \bx).
\end{equation}
This setting has been extensively studied; see, e.g., 
\cite{dudeja2018learning,barbier2019optimal,mondelli2018fundamental,troiani2024fundamental,damian2024computational,damian2025generative,diakonikolas2025algorithms,diakonikolas2025robust}
and references therein. In particular, the optimal sample complexity for weak recovery (within the LDP framework) is
$n \asymp d^{k_*/2}$, where $k_*$ is the \emph{generative exponent}, defined as the index of the first non-zero coefficient in the Hermite expansion of $\rho$ \cite{damian2024computational}. As in our framework, recovery of the full subspace $\bW_*$ is then achieved via an iterative procedure that repeatedly conditions on the currently recovered subspace \cite{damian2025generative,diakonikolas2025algorithms,diakonikolas2025robust}.

We revisit this classical setting using our framework. As discussed in Example~\ref{ex:spherically_invariant_MIM}, the model can be rewritten as a spherical MIM using the polar decomposition $\bx = r \bz$, where $r = \|\bx\|_2 \sim \chi_d$ and $\bz = \bx / \|\bx\|_2 \sim \tau_d$ are independent. For clarity, we focus on the complexity of a single learning step; the same analysis extends naturally to the full iterative procedure. Precise statements, proofs, and additional details are deferred to Appendix~\ref{app:gaussian_directional_mims}.

Let 
\begin{equation}\label{eq:hermite-tensor-decompo-coeff}
\bpsi_{\emptyset, k} (y) := \E [ \He_k (\bx) \mid y ] \in \Sym_k (\R^d)
\end{equation}
denote the order-$k$ Hermite coefficient of $\rho$, where $\He_k (\bx)\in \Sym_k (\R^d)$ is the Hermite tensor (see \eqref{eq:Hermite-tensor} in Appendix \ref{app:gaussian_directional_mims}). The generative exponent is 
\begin{equation}
k_* = \argmin_{k \geq 1} \{ k \; : \; \| \bpsi_{\emptyset, k}  \|_{L^2(\rho^Y)}^2 > 0 \},
\end{equation}
where $\rho^Y$ denotes the marginal distribution of $y$. 
In contrast, Theorems~\ref{thm:LB-weak-recovery} and \ref{thm:tensor_unfolding_one_step} show that the computational complexity is governed by the harmonic decomposition, and in particular by
$\| \bxi_{\emptyset,\ell} \|_{L^2}$ for $\ell \geq 1$, where
\begin{equation}\label{eq:harmonic-tensor-decompo-coeff}
\bxi_{\emptyset,\ell} (y,r) := \E [ \cH_{d,\ell} (\bz) \mid y,r] \in \TSym_\ell (\R^d)
\end{equation}
are the harmonic coefficients. To relate harmonic and Hermite coefficients, we establish a decomposition of Hermite tensors into harmonic tensors: for all $\bA \in \Sym_k (\R^d)$,
\begin{equation}\label{eq:hermite-to-harmonic-decompo}
            \< \bA, \He_k(\bx) \>_{\frob} = \sum_{j=0}^{\lfloor k/2\rfloor} \beta^{(d)}_{k,k-2j}(r)\, \< \ptf \left( \tau^j (\bA)\right), \cH_{d,k-2j}(\bz) \>_{\frob},
\end{equation}
where $\| \beta^{(d)}_{\ell+2j,\ell}\|_{L^2}^2 \asymp d^{-j}$ (see Lemma~\ref{lem:Hermite-to-harmonic-tensors} in Appendix~\ref{app:gaussian_directional_mims} for explicit expressions). Under suitable regularity assumptions on $\rho$, this decomposition allows us to relate $\| \bxi_{\emptyset,\ell} \|_{L^2}$ to $\| \bpsi_{\emptyset,\ell} \|_{L^2}$.

First, $\| \bxi_{\emptyset, k_*} \|_{L^2}^2 \asymp \| \bpsi_{\emptyset,k_*} \|_{L^2}^2 \asymp 1$. Thus applying tensor unfolding at degree $\ell = k_*$ yields 
\begin{equation}\label{eq:Gaussian_MIMs-taking-k-star-degree}
    n  \asymp d^{k_\ast/2 \vee 1} , \qquad\qquad \rt \asymp d^{k_\ast \vee (3/2) + \delta_{k_\ast}/2} \log(d),
\end{equation}
matching the optimal sample complexity of \cite{damian2025generative}. The runtime bound follows from tensor unfolding with shape $(a,b) = (\lfloor k_\ast/2 \rfloor, \lceil k_\ast/2 \rceil)$ and improves upon \cite{damian2025generative} whenever $k_\ast > 2$. Indeed, \cite{damian2025generative} relies on a rectangular tensor unfolding that is effectively equivalent to choosing $(a,b) = (1,k_\ast-1)$ in our framework, leading to a runtime $\rt \asymp d^{3k_\ast/2 - 1} \log d$. Thus, while rectangular unfolding suffices for optimal sample complexity, more balanced reshaping is computationally more efficient.

While \eqref{eq:Gaussian_MIMs-taking-k-star-degree} achieves optimal sample complexity, degree $k_*$ need not be runtime-optimal. Indeed, by \eqref{eq:hermite-to-harmonic-decompo}, Hermite degree $k$ contributes to harmonic degrees $\ell < k$. Hence $\bxi_{\emptyset,\ell}$ may be non-zero for $\ell < k_*$ whenever partial traces of Hermite coefficients are non-zero. For example, let $j_*$ be the largest integer $0 \le j < \lceil k_*/2 \rceil$ such that 
$\| \tau^j (\bpsi_{\emptyset, k_*} )\|_{L^2} >0$. Then $\| \bxi_{\emptyset, k_* - 2j_*} \|_{L^2} \asymp d^{-j_*}$, and applying harmonic tensor unfolding at degree $\ell_* = k_* - 2j_*$ (assuming $\ell_* \ge 2$ for simplicity) gives
\[
n \asymp d^{k_\ast/2}, 
\qquad \qquad
\rt \asymp d^{k_\ast -j_* + \frac{1}{2}\delta_{\ell_\ast }} \log(d).
\]
Consider the following two examples introduced in Section \ref{sec:examples}:
\begin{itemize}
    \item For the $\s$-parity function \eqref{eq:parity_function_ex_intro}, one can show that for some $h:\cY \to \R$,
\[
\bpsi_{\emptyset,\s} (y) 
= h(y)\, \psym (\bw_{*,1} \otimes \cdots \otimes \bw_{*,\s}),
\]
and all partial traces vanish. In this case, $\ell=\s$ is both sample-optimal and runtime-optimal.

\item For the Gaussian SIM \eqref{eq:Gaussian-SIM-example} with generative exponent $k_*$,
\[
\bpsi_{\emptyset,k_*}  (y) = h(y)\, \bw_*^{\otimes k_*},
\qquad
\tau^j(\bpsi_{\emptyset,k_*} (y)) 
= h(y)\, \bw_*^{\otimes (k_* - 2j)}.
\]
Contracting as much as possible reduces the degree to $\ell_\ast = 1$ or $2$. The sample complexity remains $ n \asymp d^{k_\ast/2}$, while (with a modification of the algorithm for odd $k_*$ \cite{joshi2025learning})
\[
\rt \asymp d^{k_\star/2 +1 } \log (d).
\]
This recovers the partial-trace estimator of \cite{damian2024computational}.

\end{itemize}

In the above examples, one can attain near-optimal runtime without sacrificing optimal sample complexity. More generally, however, sample-runtime trade-offs may arise from coefficients $\bpsi_{\emptyset,k}$ with $k > k_*$. In such cases, one must accept worse sample complexity to achieve improved runtime, in contrast to Gaussian SIMs where both can always be simultaneously optimal. To illustrate this point, fix an integer $p \ge 1$ and consider a mixture of a $4p$-parity and a Gaussian SIM with generative exponent $6p$, with equal probability. The sample-optimal degree is $\ell = 4p$, thanks to the $4p$-parity component, yielding
\[
n \asymp d^{2p}, 
\qquad \qquad
\rt \asymp d^{4p} \log(d).
\]
The runtime-optimal degree is $\ell = 2$, thanks to the Gaussian SIM component, yielding
\[
n \asymp d^{3p}, 
\qquad \qquad
\rt \asymp d^{3p+1} \log(d).
\]
Appendix~\ref{app:gaussian_MIMs} provides a general characterization of sample- and runtime-optimal degrees and the associated complexities, in terms of the Hermite coefficients of $\rho$.

In summary, for Gaussian MIMs, while optimal sample complexity depends only on the generative exponent, computational complexity can depend on higher-order Hermite coefficients and their partial traces. This is naturally and succinctly captured by our harmonic-analytic framework.

\subsection{Learning directional MIMs}\label{sec:applications-directional-mims}

Consider the same Gaussian MIM \eqref{eq:gaussian-mim-application} as in the previous section, but suppose now that the radial component $r$ is not observed, and that only $(y,\bz)$ (the label and the direction of the input) are available. We refer to this model as a \emph{directional multi-index model}. This setting arises naturally as the common practice in statistics and machine learning to normalize input vectors to have constant norm. While the resulting model is no longer a Gaussian MIM, it remains a spherical MIM (on $\cY \times \S^{d-1}$) and can therefore be analyzed within our framework.

As in the previous section, we use the Hermite-to-harmonic decomposition. However, in the directional setting, the relevant quantity scales as $\E[\beta^{(d)}_{\ell+2j,\ell}(r)]^2 \asymp d^{-2j}$ (while $\E[\beta^{(d)}_{\ell+2j,\ell}(r)^2] \asymp d^{-j}$).
Consequently, choosing a harmonic degree $\ell < k_*$ always increases the sample complexity without improving the runtime. Thus, unlike in the Gaussian MIM setting, no trade-off between sample and runtime complexity arises in directional MIMs. The optimal choice is always $\ell = k_*$, and tensor unfolding at that degree yields
\[
n \asymp d^{k_*/2}, \qquad \qquad \rt \asymp d^{k_* + \frac{1}{2}\delta_{k_*}} \log (d).
\]

In particular, while the optimal sample complexity is unchanged, the runtime can be substantially worse when only directional information is available (i.e., when inputs are normalized). For instance, in Gaussian SIMs, the runtime increases from $d^{k_*/2+1}$ to $d^{k_*}$. This phenomenon was previously observed for Gaussian SIMs in \cite{joshi2025learning}, where it was also shown to have important consequences for gradient-based algorithms and algorithm design.

We refer to Appendix~\ref{app:directional_MIMs} for full statements and proofs.

\subsection{Other applications}

Another natural direction is to study classes of spherical MIMs with structured link functions. Consider inputs $\bx \sim \mu$ that are spherically invariant, with polar decomposition $\bx = r \bz$, where $r \sim \mu_r$ and $\bz \sim \tau_d$. Let $\bW_* \in \Stf_{\s}(\R^d)$ be the support, and consider responses of the form
\[
    y = f(\bW_*^\sT \bx) + \epsilon,
\]
where $\epsilon$ is independent noise and $f:\R^\s \to \R$ belongs to a prescribed function class. Relevant examples include polynomials, intersections of half-spaces, piecewise linear functions (e.g., ReLU networks or multiclass linear classification), multi-layer neural networks, and parity functions. In the Gaussian setting, learning such function classes has been studied in 
\cite{damian2025generative,diakonikolas2025algorithms,diakonikolas2025robust}
and references therein.

Our framework can yield analogous guarantees for general spherically invariant input distributions. As a toy case, one can directly analyze the harmonic decomposition of parity functions $f(\bt) = \sign(t_1 t_2 \cdots t_\s)$ (which does not depend on the radial component $r$), as discussed in Section~\ref{sec:examples}. Intersections of half-spaces, $f(\bt)=2\prod_{j=1}^\s \ind(\langle \ba_j,\bt\rangle \geq b_j)-1,$
can be studied (under suitable regularity assumptions on $\mu_r$) by adapting the arguments of 
\cite{damian2025generative,diakonikolas2025robust}. As another example, consider polynomial link functions
$f(\bt) = \sum_{|\alpha| \leq D} c_\alpha \bt^\alpha,$ for some fixed degree $D \in \N$. In Appendix~\ref{app:learning-polynomials-sphere}, we show that when $\mu_r = \delta_{\sqrt{d}}$, the following holds: for any recovered subspace $\bU$ of dimension $s_\bU < \s$, one has $\|\bxi_{\bU,2}\|_{L^2}^2 \asymp 1.$
Consequently, applying iterative tensor unfolding with degree $\ell=2$ at each step recovers the planted subspace with
\[
    n \asymp d,
    \qquad\qquad
    \rt \asymp d^{2}\log(d).
\]
Unsurprisingly, this matches the complexity in the Gaussian setting, where \cite{damian2025generative} showed that the generative leap complexity is at most $2$.

We leave the exploration of additional function classes to future work.

\section{Discussion}
\label{sec:discussion}

In this paper, we introduced \emph{spherical multi-index models}, a natural equivariant extension of the popular Gaussian multi-index model, and characterized the sample and runtime complexity of support recovery within the LDP and SQ frameworks. We further presented a family of iterative algorithms based on harmonic tensor unfolding that (nearly) match these lower bounds. In particular, these algorithms can realize different trade-offs between sample and runtime complexity by selecting the sequence of harmonic degrees used during recovery.  Our characterization relies on decomposing the learning problem into irreducible representations of $\cO_d$ and exploiting the intertwining isomorphism with traceless symmetric tensors. As an application, we revisit Gaussian MIMs: in addition to recovering the optimal sample complexity established in 
\cite{damian2025generative,diakonikolas2025algorithms}, we obtain improved runtime bounds. In particular, we exhibit algorithms whose query complexity closely tracks the SQ lower bounds, although in some regimes this may come at the expense of optimal sample complexity.

\paragraph*{Limitations.} Our work has several limitations. 
First, for simplicity, we assume that $\nu_d$ is fixed, known. While we expect that this assumption can be relaxed in our algorithm, we leave this extension to future work. 
Second, our lower bounds are stated for detection. In particular, for $\ell=1$, there may exist a detection--recovery gap. Addressing this would require proving a low-degree lower bound directly on estimation; recent progress in this direction includes \cite{schramm2022computational,carpentier2025low}. 
Third, our lower bounds rely on the low-degree conjecture and a heuristic correspondence with query complexity; see, e.g., \cite{wein2025computational,diakonikolas2025ptf,chen2025optimized}.

\paragraph{Future directions.} There are several natural directions for future work. These include:
(i) analyzing gradient-based methods for learning multi-index models (see, e.g., \cite{bietti2023learning,montanari2026phase}); 
(ii) characterizing finer sample--runtime trade-offs within a fixed harmonic subspace using higher-order tensor representations (see Remark~\ref{rmk:additional-trade-offs}); 
(iii) understanding learning when spherical symmetry is broken; and 
(iv) studying regimes where the rank $\s$ grows with $d$ (see, e.g., \cite{oko2024learning}).

Finally, our lower and upper bounds rely on general properties of the orthogonal group $\cO_d$ and its irreducible representations. Consequently, the same analysis applies directly to other $\cO_d$-equivariant models, including single- and multi-spike tensor PCA. In particular, tensor unfolding \cite{montanari2014statistical} and partial-trace estimator \cite{hopkins2016fast} can be interpreted as operating on specific irreducible components $\TSym_{\ell}(\R^d)$, with $\ell = k$ (the tensor order) and $\ell\in \{1,2\}$ respectively. More broadly, in a follow-up paper \cite{joshi2026equivariance}, we extend some of these ideas to general equivariant learning problem and compact group actions. This suggests the possibility of a systematic theory for designing statistically and computationally optimal equivariant learning algorithms based on representation-theoretic principles.

%\printbibliography
\bibliographystyle{amsalpha}
\bibliography{bibliography}

\clearpage

\appendix

\section{Proof of the lower bounds}
\label{app:proof-lower-bounds}

\begin{remark}
The operator norms $\|\bGamma_{\bU,\ell}\|_{\op}$ appearing in the query-alignment and query-leap complexities admit a natural variational interpretation: they characterize the optimal correlation between non-linear transformations of the generalized response $\by_{\bU}$ and degree-$\ell$ spherical harmonics of the input $\bz_{\bU}$,
    \[
    \| \bGamma_{\bU, \ell} \|_\op = \sup_{T \in L^2 (\nu_{d,\bU}^Y), \; \psi \in \sh_{d_\bU,\ell}} \frac{\E_{\P_{\nu_d}} [T(\by_\bU) \psi(\bz_\bU)]^2}{\|T \|_{L^2}^2 \| \psi \|_{L^2}^2}.
    \]
    Similar variational representations appear, for instance, in \cite[Proposition~A.1]{joshi2024complexity} for learning sparse functions, and \cite[Proposition~2.6]{damian2024computational} when learning Gaussian single-index models.
\end{remark}

\begin{proof}[Proof of Theorem \ref{thm:LB-weak-recovery}.(a)] The argument follows a standard second-moment method  \cite{joshi2024complexity,joshi2025learning,misiakiewicz2025short}. Fix $\cA \in \SQ(q,\tol^2)$ and denote $\phi_1, \ldots , \phi_q$ the sequence of queries issued by $\cA$ when it receives responses $v_t = \E_{\P_{\nu_d,\emptyset}} [ \phi_t], t\in [q]$. For these responses, the queries $\phi_t$ are fixed, deterministic, and are independent of $g \cdot \bW_* $ for $g \sim \pi_{d}$. By union bound and Markov's inequality, 
\[
\begin{aligned}
\P_{g \sim \pi_{d}}\left( \exists t \in [q], |\E_{\P_{\nu_d}^{g \cdot \bW_*}} [ \phi_t ] - v_t | >\tol\right) \leq &~ \frac{q}{\tol^2 } \cdot \sup_{t \in [q]} \Var_{g \sim \pi_d} \left\{\E_{\P_{\nu_d}^{g \cdot \bW_*}} [ \phi_t] \right\} \\
\leq&~ \frac{q}{\tol^2 } \cdot  \sup_{\| \phi\|_{L^2 ( \P_{\nu_d,\emptyset})} \leq 1} \Var_{g \sim \pi_d} \left\{\E_{\P_{\nu_d}^{g \cdot \bW_*}} [ \phi] \right\}, 
\end{aligned}
\]
where we used that $\| \phi_t \|_\infty \leq 1$. 
Thus, with positive probability over $g\sim\pi_d$, all responses remain $\tol$-consistent whenever
\begin{equation}\label{eq:LB-SQ-variance}
\frac{q}{\tol^2 } < \left[ \sup_{\| \phi\|_{L^2 ( \P_{\nu_d,\emptyset})} \leq 1} \Var_{g \sim \pi_d} \left\{\E_{\P_{\nu_d}^{g \cdot \bW_*}} [ \phi] \right\} \right]^{-1},
\end{equation}
in which case $\cA$ fails the detection task.

Let's compute the right-hand side of \eqref{eq:LB-SQ-variance}. Define
\[
\Delta_{\phi} (g) := \E_{\P_{\nu_d}^{g \cdot \bW_*}} \left[ \phi (\by,\bz) \right]- \E_{\P_{\nu_d,\emptyset}} \left[  \phi (\by,\bz)\right] = \E_{\P_{\nu_d,\emptyset}} \left[ \left(\rho (g) \cdot \frac{\de \P_{\nu_d}^{\bW_*} }{\de \P_{\nu_d,\emptyset}} (\by,\bz) - 1 \right)  \phi (\by,\bz)\right].
\]
Using the harmonic decomposition \eqref{eq:L2-decompo-product-space} and the equivariance \eqref{eq:equiv-Phi-d-ell}, 
\[
\begin{aligned}
\rho (g) \cdot \frac{\de \P_{\nu_d} }{\de \P_{\nu_d,\emptyset}} (\by,\bz) - 1 =&~ \sum_{\ell =1 }^\infty \< g \cdot \bxi_{\emptyset,\ell} (\by), \cH_{d,\ell} (\bz)\>_\frob ,
\end{aligned}
\]
where we suppressed the dependence on $\bW_*$ (which is fixed arbitrarily here). Hence
\[
\begin{aligned}
    \Delta_{\phi} (g) =  \sum_{\ell = 1}^\infty \E_{\nu_d^Y} \left[ \< g \cdot \bxi_{\emptyset, \ell} (\by) , \balpha (\by) \>_\frob \right], \qquad \balpha (\by) := \E_{\P_{\nu_d,\emptyset}} \left[\phi(\by,\bz) \cH_{d,\ell} (\bz) \mid \by \right].
\end{aligned}
\]
By Schur's orthogonality relations \eqref{eq:schur-orthogonality},
\[
\begin{aligned}
    \E_{g \sim \pi_{d}} \left[ |\Delta_{\phi} (g)|^2 \right] =&~ \sum_{\ell = 1}^\infty \frac{1}{N_{d,\ell}} \E_{\by_1,\by_2 \sim \nu_d^Y}\left[ \< \bxi_{\emptyset, \ell} (\by_1), \bxi_{\emptyset, \ell} (\by_2)\>_\frob  \< \balpha_\ell (\by_1),\balpha_\ell (\by_2) \>_\frob  \right] \\
    =&~ \sum_{\ell = 1}^\infty \frac{\| \E_{\by \sim \nu_d^Y} [ \bxi_{\emptyset,\ell} (\by) \otimes \balpha_\ell (\by)]\|_\frob^2}{N_{d,\ell}}.
\end{aligned}
\]
Therefore,
\[
\begin{aligned}
\sup_{\| \phi\|_{L^2 ( \P_{\nu_d,\emptyset})} \leq 1}  \Var_{g \sim \pi_d} \left\{\E_{\P_{\nu_d}^{g \cdot \bW_*}} [ \phi] \right\} = &~
\sup_{\| \phi\|_{L^2 ( \P_{\nu_d,\emptyset})} \leq 1} \E_{g \sim \pi_{d}} \left[ |\Delta_{\phi} (g)|^2 \right] \\
=&~ \sup_{\ell \geq 1}\; \frac{1}{N_{d,\ell}} \left\{ \sup_{\balpha_\ell : \cY \to \TSym_\ell (\R^d) } \frac{\| \E_{\nu_d^Y} [ \bxi_{\emptyset,\ell} (\by) \otimes \balpha_\ell (\by)]\|_\frob^2}{\E_{\nu_d^Y}[ \| \balpha_\ell (\by) \|_\frob^2] } \right\}.
\end{aligned}
\]
By a standard representer theorem, it suffices to take $\balpha(\by) = \cL [\bxi_{\emptyset,\ell} (\by)]$ for some $\cL \in \Lop_{d,\ell}$. Hence
\[
\begin{aligned}
    \sup_{\balpha_\ell : \cY \to \TSym_\ell (\R^d) }  \frac{\| \E [ \bxi_{\emptyset,\ell} \otimes \balpha_\ell]\|_\frob^2}{\E[ \| \balpha_\ell \|_\frob^2] } =&~ \sup_{\cL \in \Lop_{d,\ell}} \frac{\| \E [ \bxi_{\emptyset,\ell} \otimes \cL[ \bxi_{\emptyset,\ell}]]\|_\frob^2}{\E[ \| \cL[ \bxi_{\emptyset,\ell}] \|_\frob^2] } \\
    =&~ \sup_{\cL \in \Lop_{d,\ell}} \frac{\< \cL \cL^* , \bGamma_{\emptyset,\ell} \otimes_\ell \bGamma_{\emptyset,\ell}\>_\frob }{ \< \cL \cL^*, \bGamma_{\emptyset,\ell} \>_\frob } = \| \bGamma_{\emptyset,\ell} \|_\op,
\end{aligned}
\]
where we used $\bGamma_{\emptyset,\ell} = \E[ \bxi_{\emptyset,\ell} \otimes \bxi_{\emptyset,\ell}]$. Combining this with \eqref{eq:LB-SQ-variance} yields the desired SQ lower bound.
\end{proof}

\begin{proof}[Proof of Theorem \ref{thm:LB-weak-recovery}.(b)] The result follows by adapting the proof of \cite[Theorem 5]{joshi2025learning}. For convenience, set $\sA_\star := \sAlign (\nu_d \| \overline{\nu}_{d,\emptyset})$. Expand the degree-$D$ projection of the likelihood ratio as
    \[
    \begin{aligned}
       \cR_{\leq D} (\{\by_i,\bz_i\}_{i \in [m]} ) = &~ \sum_{\ell_1 + \ldots + \ell_m \leq D} \E_{g \sim \pi_d} \left[ \prod_{i \in [m]} \< \bxi_{\emptyset,\ell_i} (\by_i) , \rho(g) \cdot \cH_{d,\ell_i} (\bz_i) \>_\frob  \right],
    \end{aligned}
    \]
    so that
    \[
\begin{aligned}
       \| \cR_{\leq D}\|_{L^2}^2 =&~1 + \sum_{s =1}^D \binom{m}{s} \sum_{\substack{1 \leq \ell_1, \ldots , \ell_s \leq D\\
    \ell_1 + \ldots + \ell_s \leq D}} \E_{g \sim \pi_d} \left[ \prod_{i \in [s]}  F_{\ell_i} (g) \right],
    \end{aligned}
    \]
where we introduced the matrix-coefficient functions
\[
F_{\ell_i} (g) := \left\< \bGamma_{\emptyset,\ell_i}, \E_{\bz}\Big[ [\rho(g)\cdot \cH_{d,\ell_i} (\bz)] \otimes \cH_{d,\ell_i} (\bz) \Big]\right\>_\frob .
\]
Each $F_\ell$ lies in the subspace $\cM_{d,\ell}$ of degree-$\ell$ matrix coefficients. By hypercontractivity \eqref{eq:hypercontractivity-ineq-O-d} and Hölder’s inequality,
\[
\E_{g \sim \pi_d} \left[ \prod_{i \in [s]}  F_{\ell_i} (g) \right] \leq \prod_{i \in [s]} \| F_{\ell_i} \|_{L^s (\pi_d)} \leq  2^{\frac{s-2}{2}}  \prod_{i \in [s]} (s-1)^{\gamma_d (\ell_i)} \| F_{\ell_i} \|_{L^2}.
\]
Next note that
\[
F_{\ell} (g) = \E_{\by \sim \nu_d^Y}\left[\< g^{-1}  \cdot \bxi_{\emptyset, \ell} ( \by), \bxi_{\emptyset, \ell} ( \by)\>_\frob \right],
\]
so by Schur's orthogonality relations \eqref{eq:schur-orthogonality},
\[
\begin{aligned}
\| F_{\ell} \|_{L^2 (\pi_d)}^2 =&~ \E_{\by_1,\by_2 \sim \nu_d^Y} \left[ \E_g \left[ \< g  \cdot \bxi_{\emptyset, \ell} ( \by_1), \bxi_{\emptyset, \ell} ( \by_1)\>_\frob  \< g  \cdot \bxi_{\emptyset, \ell} ( \by_2), \bxi_{\emptyset, \ell} ( \by_2)\>_\frob \right] \right] \\
=&~ \frac{1}{N_{d,\ell}} \E_{\by_1,\by_2 \sim \nu_d^Y} \left[  \< \bxi_{\emptyset, \ell} ( \by_1), \bxi_{\emptyset, \ell} ( \by_2)\>_\frob ^2 \right] = \frac{\|\bGamma_{\emptyset, \ell} \|_\frob^2 }{N_{d,\ell}}. 
\end{aligned}
\]
Using $\gamma_d (\ell_i) \leq 4 \ell_i$ for $\ell_i \leq D \leq d-2$, we obtain
\[
\begin{aligned}
\|  \cR_{\leq D}\|_{L^2}^2 - 1 \leq&~ \sum_{s =1}^D \binom{m}{s} 2^{s/2} \sum_{\substack{1 \leq \ell_1, \ldots , \ell_s \leq D\\
    \ell_1 + \ldots + \ell_s \leq D}} \prod_{i \in [s]} s^{4 \ell_i} \frac{\|\bGamma_{\emptyset, \ell_i} \|_\frob}{\sqrt{N_{d,\ell_i}}} \leq \sum_{s =1}^D \binom{m}{s} \rho(s,D)^s,
\end{aligned}
\]
where
\[
\rho (s,D) := \sqrt{2} \sum_{\ell = 1}^D s^{4\ell} \frac{\|\bGamma_{\emptyset, \ell} \|_\frob}{\sqrt{N_{d,\ell}}}.
\]

By assumption 
\begin{equation}\label{eq:LDP-proof-bound-1-term}
\frac{\| \bGamma_{\emptyset,\ell} \|_{\frob}}{ \sqrt{N_{d,\ell}}} \leq \sA_\star\leq C d^{p/2}, \qquad \text{ for all $\ell >1$.}
\end{equation}
We also have
\[
\begin{aligned}
 \|  \bGamma_{\emptyset,\ell} \|_\op =&~ \sup_{\bA \in \TSym_{\ell} (\R^d)} \frac{\E[\< \bA,\bxi_{\emptyset,\ell} (\by) \>_\frob ^2 ] }{\| \bA \|_\frob^2} 
 \leq \sup_{\bA \in \TSym_{\ell} (\R^d)} \frac{\E[\< \bA,\cH_{d,\ell} (\bz) \>_\frob ^2 ] }{\| \bA \|_\frob^2}= 1,  
\end{aligned}
\]
where we used $\bxi_{\emptyset,\ell} (\by) = \E[\cH_{d,\ell} (\bz)|\by]$, Jensen's inequality, and the isometry \eqref{eq:Phi-d-ell-isometry} on the last equality. By Lemma~\ref{lem:harmonic_expansion_invariant_functions}, $\bxi_{\emptyset,\ell}$ is supported on a space of dimension at most $\s^\ell$. Thus, $ \| \bGamma_{\emptyset,\ell} \|_\frob \leq s^{\ell/2}$, which gives the alternative bound
\begin{equation}\label{eq:LDP-proof-bound-2-term}
\frac{\| \bGamma_{\emptyset,\ell} \|_{\frob}}{ \sqrt{N_{d,\ell}} }\leq C \frac{s^{\ell/2} d^{p/2}}{\sA_\star \sqrt{N_{d,\ell}}}.
\end{equation}
Splitting the sum between $\ell \leq p $ and $\ell >p$ and using the bounds \eqref{eq:LDP-proof-bound-1-term} and \eqref{eq:LDP-proof-bound-2-term},
\[
\rho(s,D) \leq \sqrt{2} \sum_{\ell =1}^p \frac{s^{4\ell}}{\sA_\star} + C \sum_{\ell =p+1}^D \frac{s^{9\ell/2} d^{p/2}}{\sA_\star \sqrt{N_{d,\ell}}} \leq \frac{s^{9p/2}}{\sA_\star} \left[ \sqrt{2} p + \sum_{\ell =p+1}^D \frac{D^{9(\ell - p)/2}d^{p/2}}{\sqrt{N_{d,\ell}}}  \right].
\]
For $\ell \leq D \leq \sqrt{d}$, we have $N_{d,\ell} \geq c (d/\ell)^\ell$ for some constant $c>0$ and 
\[
\sum_{\ell =p+1}^D \frac{D^{9(\ell - p)/2}d^{p/2}}{\sqrt{N_{d,\ell}}} \leq C \sum_{\ell = p+1}^D \ell^{p/2} (D^9 \ell /d)^{(\ell-p)/2}  \leq C'\frac{D^{p/2 +1}}{\sqrt{d}}.
\]
Assuming  $D = o_d (d^{1/(p + 2)})$, we deduce 
\[
\rho(s,D) \leq 2p \frac{s^{9p/2}}{\sA_\star}.
\]
Thus,
\[
\|  \cR_{\leq D}\|_{L^2}^2 - 1 \leq \sum_{s = 1}^D \binom{n}{s} \frac{s^{9sp/2}}{\sA_\star^s} (2p)^s \leq \sum_{s = 1}^D \left(2ep \frac{n D^{9p/2 -1}}{\sA_\star} \right)^s.
\]
Hence, if $n = o_d (\sA_\star/ D^{9p/2 - 1} )$, then $\|  \cR_{\leq D}\|_{L^2}^2 = 1 + o_d(1)$, completing the proof.
\end{proof}

\clearpage

\section{Analysis of the harmonic tensor unfolding algorithms}
\label{app:analysis_tensor_unfolding}

This section is devoted to the analysis of the iterative tensor unfolding algorithm described in Section~\ref{sec:tensor-unfolding}. We first consider one step of tensor unfolding (Algorithm \ref{alg:tensor_unfold_one_step}) and separate the analysis between the symmetric case $a=b$ and the asymmetric case $a \neq b$. We prove Theorem \ref{thm:tensor_unfolding_one_step} for the asymmetric case in Appendix \ref{app:asymmetric_one_step}, and for the symmetric case in Appendix \ref{app:symmetric_one_step}. We then study in Appendix \ref{app:multi-step-tensor-unfolding} the multi-step procedure, obtained by iteratively applying tensor unfolding to a sequence of reduced spherical MIM (Algorithm \ref{alg:multi_step_tensor_unfolding}), and prove Theorem \ref{thm:tensor_unfolding_mutli_step}. Finally, we analyze the runtime of these algorithms in Appendix \ref{app:runtime-tensor-unfolding}.

Throughout this section, we denote $c,c',C,C' >0$ general constants that only depend on $\s,\ell$, and constants in the assumptions. In particular, these constants are allowed to change from line to line. We will further denote $X \lesssim Y$ if there exists such a constant $C>0$ such that $X \leq C Y$.

\subsection{One-step of tensor unfolding: the asymmetric case}\label{app:asymmetric_one_step}

We first consider Algorithm \ref{alg:tensor_unfold_one_step} with asymmetric unfolding $a \neq b$, that is, $(a,b) = (1,0)$ if $\ell = 1$, or $1 \leq a < b$ with $a + b = \ell$ if $\ell \geq 3$.
Recall that we defined the empirical unfolded matrix
\[
\widehat{\bM} = \frac{1}{n(n-1)} \sum_{1 \leq i \neq j \leq n} K(\by_i,\by_j) \Mat_{a,b} (\cH_{d,\ell} (\bz_i)) \Mat_{a,b} (\cH_{d,\ell} (\bz_j))^\sT \in \R^{d^a \times d^a}.
\]

Our proof proceeds in three steps. We first control the top eigenvectors of $\E\big[\widehat{\bM}\big]$ (Lemma \ref{lem:spectral_bounds_population_U_stat}) and show that their contractions indeed recover the signal subspace $\bU_0 \subseteq \bW$ (Lemma \ref{lem:recovered_subspace_population_U_stat}). We then bound $\big\| \widehat{\bM} - \E\big[\widehat{\bM}\big] \big\|_\op$ (Lemma \ref{lem:concentration_U_statistic_final}). Finally, Theorem \ref{thm:tensor_unfolding_one_step} follows from a standard application of Davis-Kahan theorem. 

We start by analyzing the expectation $\E\big[\widehat{\bM}\big]$.
Let $\{ \mu_j\}_{j \in [D_{\s,\ell}]} $ and $\{\bV_j \}_{j \in [D_{\s,\ell}]}$ be the eigenvalues and eigenfunctions of $\bUpsilon_{\emptyset, \ell} := \E[\bzeta_{\emptyset,\ell}(\by) \otimes \bzeta_{\emptyset,\ell}(\by)]$ as defined in \eqref{eq:eigendecomposition-Upsilon-ell}. Let $\rnk \in [D_{\s,\ell}]$ be the rank in Assumption \ref{ass:spectral-gap}. Define the tensors
\begin{equation}\label{eq:decomposition_Z0_Zplus}
    \bZ_0 = \sum_{j=1}^\rnk  (\bW^{\otimes \ell}\bV_j) \otimes (\bW^{\otimes \ell}\bV_j)\quad \text{ and }\quad \bZ_+ = \sum_{j=\rnk+1}^{D_{\s,\ell}}  (\bW^{\otimes \ell}\bV_j) \otimes (\bW^{\otimes \ell}\bV_j),
\end{equation}
both belonging to \((\R^d)^{\otimes 2\ell}\). Let \(\bPi_0^{(a)}\) and \(\bPi_+^{(a)}\) denote the orthogonal projectors in $\R^{d^a}$ onto the ranges of \(\Mat_{a,2\ell-a}(\bZ_0)\) and \(\Mat_{a,2\ell-a}(\bZ_+)\), respectively. The next lemma controls the top eigenspaces of $\E\big[\widehat{\bM}\big]$ in terms of the projectors \(\bPi_0^{(a)}\) and \(\bPi_+^{(a)}\).

\begin{lemma}\label{lem:spectral_bounds_population_U_stat}
    Under Assumptions \ref{ass:spectral-gap}, \ref{ass:kernel}.\ref{ass:kernel-boundedness} and \ref{ass:kernel}.\ref{ass:kernel-positive-definite}, there exist constants \(c,C,C'>0\) that only depend on $\s$, $\ell$, and the constants in these assumptions, such that for all \(d\geq C'\) and spectral gap $\gamma >1$, \begin{equation}
    c\|\bxi_{\emptyset,\ell}\|_{L^2}^2 \bPi_0^{(a)} \preceq \E\big[\widehat{\bM}\big] \preceq C \|\bxi_{\emptyset,\ell} \|_{L^2}^2 \left(\bPi_0^{(a)} +  \frac{1}{\gamma}\bPi_+^{(a)} + \frac{1}{\sqrt{d}} \bI_{d^a}\right).
    \end{equation}
\end{lemma}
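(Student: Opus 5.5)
Since $\widehat{\bM}$ is a U-statistic over distinct pairs, independence of $(\by_i,\bz_i)$ and $(\by_j,\bz_j)$ gives
\[
\E\big[\widehat{\bM}\big] = \E_{\by,\by'}\big[K(\by,\by')\,\Mat_{a,b}(\bxi_{\emptyset,\ell}(\by))\,\Mat_{a,b}(\bxi_{\emptyset,\ell}(\by'))^\sT\big],
\]
where $(\by,\by')\sim\nu_d^Y\otimes\nu_d^Y$. I would substitute $\bxi_{\emptyset,\ell}(\by)=\ptf(\bW^{\otimes\ell}\bzeta_{\emptyset,\ell}(\by))$ and split $\ptf = \id - (\id-\ptf)$ on $\Sym_\ell(\R^d)$. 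The leading term is
\[
\bM_0 := \bW^{\otimes a}\,\E\big[K(\by,\by')\Mat_{a,b}(\bzeta(\by))\Mat_{a,b}(\bzeta(\by'))^\sT\big](\bW^{\otimes a})^\sT,
\]
which lives in the range of $\bW^{\otimes a}$. The remainder comes from the trace-correction terms in the explicit formula for $\ptf$ (Appendix~\ref{app:symmetric-traceless-tensor}): each has the form $\psym(\bI_d\otimes \tau^j(\cdots))$ with coefficients $O(1/d)$. Once they are unfolded into $d^a\times d^b$ matrices, a factor $\bI_d$ split between row and column indices produces terms whose operator norm is $O(d^{-1/2})$ relative to $\|\bzeta\|$, because $\|\bI_d\|_\frob=\sqrt d$ while the $1/d$ coefficient is only partly compensated. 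Together with Cauchy--Schwarz and $\|K\|_\infty\le B_K$ (Assumption~\ref{ass:kernel}.\ref{ass:kernel-boundedness}), this bounds the cross and remainder terms by $C d^{-1/2}\E\|\bzeta\|^2\,\bI_{d^a}\asymp d^{-1/2}\|\bxi_{\emptyset,\ell}\|_{L^2}^2\bI_{d^a}$. Here $\E\|\bzeta\|^2=\Tr\bUpsilon_{\emptyset,\ell}\asymp\|\bxi_{\emptyset,\ell}\|_{L^2}^2$ because $\kappa_{d,\ell}\|\ptf(\bW^{\otimes \ell}\cdot)\|$ is a near-isometry on $\Sym_\ell(\R^\s)$ for $d$ large.

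For the upper bound on $\bM_0$, I would use that $K$ is PSD and bounded by $B_K$. For any $\bu\in\R^{d^a}$, write $f_\bu(\by) = \Mat_{a,b}(\bzeta(\by))^\sT(\bW^{\otimes a})^\sT\bu$. Then $\bu^\sT\bM_0\bu = \E[K f_\bu(\by)\cdot f_\bu(\by')]$. The integral operator of $K$ on $L^2(\nu_d^Y)$ has operator norm at most $B_K$, so this is at most $B_K\E\|f_\bu\|^2 = B_K\,\bu^\sT\bW^{\otimes a}\,\Mat_{a,a}(\Mat\text{-contraction of }\bUpsilon_{\emptyset,\ell})(\bW^{\otimes a})^\sT\bu$. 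Expanding $\bUpsilon=\sum_j\mu_j\bV_j\otimes\bV_j$, the resulting matrix is $\sum_j\mu_j \Mat_{a,\ell-a}(\bW^{\otimes\ell}\bV_j)\Mat_{a,\ell-a}(\bW^{\otimes\ell}\bV_j)^\sT$. It is dominated by $\mu_1\bPi_0^{(a)}+\mu_{\rnk+1}\bPi_+^{(a)}$, since the ranges of the $j\le\rnk$ terms lie in $\mathrm{range}\,\Mat_{a,2\ell-a}(\bZ_0)$ and those of the $j>\rnk$ terms in the range of $\bZ_+$. Assumption~\ref{ass:spectral-gap} then gives $\mu_{\rnk+1}\le \mu_1/\gamma$ and $\mu_1\asymp\|\bxi_{\emptyset,\ell}\|_{L^2}^2$, which yields the upper bound.

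For the lower bound, I would restrict to $\bu\in\mathrm{range}(\bPi_0^{(a)})$ and apply Assumption~\ref{ass:kernel}.\ref{ass:kernel-positive-definite}. The quantity $\bu^\sT\bM_0\bu$ is a sum over column indices of the quadratic form in~\eqref{eq:kernel-positive-ass}, with test tensors $\bV$ obtained by contracting $\bu$ and a basis column into $\Sym_\ell(\R^\s)$. The symmetrization and tracelessness mismatch in $\bV$ should be harmless, because $\bzeta$ is symmetric and the trace part again contributes only $O(d^{-1})$. This gives $\bu^\sT\bM_0\bu\ge c_K\,\bu^\sT(\sum_j\mu_j\cdots)\bu\ge c_K\mu_\rnk\,\bu^\sT\bPi_0^{(a)}\bu$. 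The subtle step is that the $j\le\rnk$ Gram matrix is bounded below by its smallest nonzero eigenvalue on its own range, which is $\bPi_0^{(a)}$; that eigenvalue is $\Theta(1)$ depending only on $\s,\ell$, by finite dimensionality and orthonormality of the $\bV_j$. Combining this with $\mu_\rnk\ge c_\mu\mu_1$ and absorbing the $d^{-1/2}$ remainder for $d\ge C'$ gives the lower bound on $\bPi_0^{(a)}$.

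I expect the main obstacle to be the careful control of the traceless-projection remainder, that is, proving the $d^{-1/2}$ operator-norm bound uniformly over the unfolding shape $(a,b)$. I would handle it term by term, with $j$ ranging over the number of traces, by counting how many $\bI_d$ factors straddle the row/column split.
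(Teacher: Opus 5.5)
Your skeleton is the paper's. Lemma~\ref{lem:population_unfolded_U_stat} reduces $\E[\widehat{\bM}]$ to your $\bM_0$ plus a remainder of operator norm $O(d^{-1/2}\|\bxi_{\emptyset,\ell}\|_{L^2}^2)$. The upper bound uses $|K|\le B_K$ and the eigendecomposition of $\bUpsilon_{\emptyset,\ell}$, and the lower bound uses (K2) together with $\mu_{\rnk}\ge c_\mu\mu_1$. Your upper bound is fine. The remainder you single out as the main obstacle needs no shape-by-shape counting of $\bI_d$ factors. Lemma~\ref{lem:approximation_traceless_projection} gives $\|\bxi_{\emptyset,\ell}(\by)-\bW^{\otimes\ell}\bzeta_{\emptyset,\ell}(\by)\|_\frob\le 2\ell^2\sqrt{\s/d}\,\|\bxi_{\emptyset,\ell}(\by)\|_\frob$. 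Since $\Mat_{a,b}$ is a Frobenius isometry and $\|\cdot\|_\op\le\|\cdot\|_\frob$, this is automatically uniform in $(a,b)$.

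Your lower bound, however, does not give the displayed inequality. Testing only $\bu\in\mathrm{range}(\bPi_0^{(a)})$ yields the compressed bound $\bPi_0^{(a)}\E[\widehat{\bM}]\bPi_0^{(a)}\succeq c\|\bxi_{\emptyset,\ell}\|_{L^2}^2\bPi_0^{(a)}$. The Loewner bound also needs control of the off-diagonal block, which you never obtain. It cannot be obtained when $a\ge 2$, because the displayed bound is then false. Every column of $\Mat_{a,b}(\bxi_{\emptyset,\ell}(\by))$ is a traceless $a$-tensor. So for $\s=1$, $\bW=\bw$, the vector $\boldsymbol{n}=\mathrm{vec}(\bI_d\otimes\bw^{\otimes(a-2)})$ satisfies $\boldsymbol{n}^\sT\E[\widehat{\bM}]\boldsymbol{n}=0$. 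Yet $\bPi_0^{(a)}=\bw^{\otimes a}(\bw^{\otimes a})^\sT$ gives $\boldsymbol{n}^\sT\bPi_0^{(a)}\boldsymbol{n}=1$. The paper's proof applies (K2) to every $\bv$, so it really yields $\E[\widehat{\bM}]\succeq c\|\bxi_{\emptyset,\ell}\|_{L^2}^2\bPi_0^{(a)}-Cd^{-1/2}\|\bxi_{\emptyset,\ell}\|_{L^2}^2\bI_{d^a}$, and then drops the last term at ``combining these bounds''. Your compressed form, or this additive form, is all the Davis--Kahan step in the proof of Theorem~\ref{thm:tensor_unfolding_one_step} needs from the lower bound. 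That step works through $\bB=\bPi_0^{(a)}\E[\widehat{\bM}]\bPi_0^{(a)}+\bPi_{0,\perp}^{(a)}\E[\widehat{\bM}]\bPi_{0,\perp}^{(a)}$. So state the lemma in that form.

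Two further steps need repair; the first is shared with the paper.

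(i) The smallest nonzero eigenvalue of $\bP_0^{(a)}=\sum_{j\le\rnk}\Mat_{a,b}(\bV_j)\Mat_{a,b}(\bV_j)^\sT$ is not bounded below by a constant depending only on $(\s,\ell)$. Orthonormal families form a compact set, but this eigenvalue is discontinuous where the rank drops. Take $\s=2$, $\ell=3$, $a=1$, and $\bzeta_{\emptyset,3}(\by)\propto\zeta(\by)\bV_1$ with $\bV_1=\sqrt{1-\epsilon^2}\,\be_1^{\otimes 3}+\sqrt{3}\,\epsilon\,\psym(\be_1\otimes\be_1\otimes\be_2)$, $\zeta(\by)\in\{-1,1\}$ and $K(\by,\by')=\zeta(\by)\zeta(\by')$. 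Then $\rnk=1$ and all assumption constants stay fixed. But $\Mat_{1,2}(\bV_1)\Mat_{1,2}(\bV_1)^\sT$ has eigenvalues $\approx 1$ and $\approx 2\epsilon^4/9$. Hence $c$ degenerates even on $\mathrm{range}(\bPi_0^{(a)})$, and it must be allowed to depend on this eigenvalue. The paper's ``by compactness'' has the same flaw.

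(ii) The test tensors you feed into (K2) fail to be traceless in $\R^{\s}$, not in $\R^d$, so that defect is not $O(d^{-1})$. As stated, (K2) only quantifies over $\TSym_\ell(\R^{\s})$. You need it for all $\bV\in\Sym_\ell(\R^{\s})$, which is how the paper's proof implicitly uses it.
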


For large enough \(d\), this lemma shows that the $\thr := {\rm rank} ( \Mat_{a,2\ell - a} (\bZ_0))$ top eigenvectors of \(\E\big[\widehat{\bM}\big]\) span approximately the column span of $\Mat_{a,2\ell - a} (\bZ_0)$. To collapse this subspace back to the original space \(\R^d\), we contract each eigenvector with itself to form a $d \times d$ matrix. The next lemma shows that this procedure starting from $\Span (\Mat_{a,2\ell - a} (\bZ_0))$ indeed recover $\bU_0$, which is defined as the column span of $\Mat_{1,2\ell -1} (\bZ_0)$.

\begin{lemma}\label{lem:recovered_subspace_population_U_stat}
    Denote $\thr = {\rm rank} ( \Mat_{a,2\ell - a} (\bZ_0))$. Let $\bV = [\bv_1, \ldots , \bv_{\thr} ] \in \R^{d^a \times \thr}$ be an arbitrary orthonormal basis of the column span of $\Mat_{a,2\ell - a} (\bZ_0) $. Then,
    \begin{equation}
        \Span\left(\sum_{i=1}^{\thr}\Mat_{1,a-1}(\bv_i)\Mat_{1,a-1}(\bv_i)^\sT\right) = \Span(\Mat_{1,2\ell-1}(\bZ_0)) = \Span (\bU_0).
    \end{equation}
\end{lemma}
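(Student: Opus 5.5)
The plan is to reduce the statement to a purely linear-algebraic identity about the matrix $\bA := \Mat_{a,2\ell-a}(\bZ_0)$, using only that $\bZ_0$ is a sum of tensor squares and that the $\bW^{\otimes\ell}\bV_j$ are symmetric tensors. Write $\bT_j := \bW^{\otimes \ell}\bV_j \in \Sym_\ell(\R^d)$. Then
\[
\bA = \sum_{j\le \rnk} \Mat_{a,0}(\cdot)\text{-type outer products} = \sum_{j=1}^{\rnk} \Mat_{a,\ell-a}(\bT_j)\,\big(\mathrm{vec}(\bT_j)\big)^\sT ,
\]
i.e.\ $\bA = \bB \bC^\sT$ with $\bB := [\Mat_{a,\ell-a}(\bT_j)]_j$ (blocks of size $d^a\times d^{\ell-a}$) and $\bC := [\mathrm{vec}(\bT_j)]_j$. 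Since the $\bV_j$ are orthonormal and $\bW$ is an isometry, the vectors $\mathrm{vec}(\bT_j)$ are orthonormal. Hence $\Span(\bA) = \Span\{\text{columns of } \Mat_{a,\ell-a}(\bT_j),\ j\le\rnk\} =: \cS_a$, and $\bA\bA^\sT = \sum_{j} \Mat_{a,\ell-a}(\bT_j)\Mat_{a,\ell-a}(\bT_j)^\sT$. The same computation with $a=1$ gives $\Span(\Mat_{1,2\ell-1}(\bZ_0)) = \Span\{\text{columns of }\Mat_{1,\ell-1}(\bT_j)\} =: \cS_1$.

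Next I use that the quantity $\bP := \sum_{i=1}^{\thr}\Mat_{1,a-1}(\bv_i)\Mat_{1,a-1}(\bv_i)^\sT$ depends only on the subspace $\cS_a$, not on the chosen orthonormal basis. This holds because it is the partial contraction over the last $a-1$ indices of the projector $\bPi_0^{(a)} = \sum_i \bv_i\bv_i^\sT$, viewed as a tensor in $(\R^d)^{\otimes 2a}$. Since $\bP$ is PSD, its span equals the span of all columns of all $\Mat_{1,a-1}(\bv)$ with $\bv\in\cS_a$. This follows by writing $\bx^\sT\bP\bx = \sum_i\|\Mat_{1,a-1}(\bv_i)^\sT\bx\|^2$, so $\ker\bP = \{\bx: \Mat_{1,a-1}(\bv)^\sT\bx=0\ \forall \bv\in\cS_a\}$, using linearity of $\bv\mapsto\Mat_{1,a-1}(\bv)$.

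It thus remains to show that the span of the first-mode fibers of vectors in $\cS_a$ equals $\cS_1$. For one inclusion, a column of $\Mat_{a,\ell-a}(\bT_j)$ is the contraction $\bT_j[\cdot,\ldots,\cdot,\be_{k_1},\ldots,\be_{k_{\ell-a}}]$ in the last $\ell-a$ modes. Its first-mode fibers are contractions of $\bT_j$ in $\ell-1$ modes, which are columns of $\Mat_{1,\ell-1}(\bT_j)$. This gives $\subseteq \cS_1$. For the reverse inclusion, take any column of $\Mat_{1,\ell-1}(\bT_j)$, indexed by $(k_2,\dots,k_\ell)$. It is the first-mode fiber, at index $(k_2,\ldots,k_a)$, of the column $(k_{a+1},\ldots,k_\ell)$ of $\Mat_{a,\ell-a}(\bT_j)$, which lies in $\cS_a$. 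Symmetry of $\bT_j$ makes the ordering of modes irrelevant. The case $a=1$ is trivial, with $\bP = \bPi_0^{(1)}$. Finally, $\Span(\bU_0)=\Span(\Mat_{1,2\ell-1}(\bZ_0))$ holds by definition of $\bU_0$.

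The main obstacle is bookkeeping rather than substance. One must check that the unfolding conventions ($\Mat_{a,2\ell-a}$ of $\bZ_0$ versus $\Mat_{a,\ell-a}$ of $\bT_j$) factor as claimed, which uses the orthonormality of the $\mathrm{vec}(\bT_j)$. One must also check that the basis-independence of $\bP$ is justified, since the $\bv_i$ form an arbitrary orthonormal basis.
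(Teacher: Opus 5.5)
Your proof is correct and follows essentially the paper's route. The paper deduces the statement from Lemma~\ref{lem:partial_trace_unfolding}, which it proves for an arbitrary tensor using exactly your ingredients: the span of a PSD sum is the sum of the spans, linearity of $\Mat_{1,a-1}$ lets one replace the arbitrary orthonormal basis by the columns of the unfolding, and the columns of $\Mat_{1,2\ell-1}(\bZ_0)$ are precisely the columns of the matrices $\Mat_{1,a-1}(\bx)$ for $\bx$ a column of $\Mat_{a,2\ell-a}(\bZ_0)$. Your preliminary reduction through $\bT_j=\bW^{\otimes\ell}\bV_j$, the orthonormality of the $\mathrm{vec}(\bT_j)$, and the appeal to symmetry are therefore unnecessary, since the fiber argument applies directly to $\bZ_0$ and the mode orderings of the unfoldings already agree. (If you keep the reduction, the factorization should be stated as a Kronecker product, $\Mat_{a,2\ell-a}(\bZ_0)=\sum_j \Mat_{a,\ell-a}(\bT_j)\otimes \mathrm{vec}(\bT_j)^\sT$.)
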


The proof of the above two lemmas can be found in Section \ref{app:proof_expectation_asymmetric_one_step} below.

\begin{lemma}[Concentration of $\widehat{\bM} - \E \widehat{\bM}$ with $a \neq b$]\label{lem:concentration_U_statistic_final}
    Under Assumption \ref{ass:kernel}.\ref{ass:kernel-boundedness} and \ref{ass:kernel}.\ref{ass:finite_rank}, there exist constants $c,c',C,C' >0$ that only depend on $\ell$, $\s$, and the constants in these assumptions such that for any \(n,d\) with $d \geq C'$ and \(d^{\ell/2}\leq n \leq \exp(d^{c'})\), we have  with probability at least \(1 - e^{-d^{c}}\),
    \begin{equation}
        \big\|\widehat{\bM} - \E\big[\widehat{\bM} \big] \big\|_\op \leq C \left[   \frac{d^{\ell/2 \vee 1}}{n} + \frac{d^{\ell/4 \vee 1/2}\|\bxi_{\emptyset,\ell}\|_{L^2}}{\sqrt{n}} \right].
    \end{equation}
\end{lemma}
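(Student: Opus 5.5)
Using the finite-rank structure (Assumption \ref{ass:kernel}.\ref{ass:finite_rank}), we reduce the U-statistic to sums of independent matrices. Write $\bH_i := \Mat_{a,b}(\cH_{d,\ell}(\bz_i)) \in \R^{d^a\times d^b}$ and $\bX_{r,i} := \cT_r(\by_i)\bH_i$. Then $\widehat{\bM} = \sum_{r\le \m}\frac{1}{n(n-1)}\sum_{i\neq j}\bX_{r,i}\bX_{r,j}^\sT$. For each $r$ we perform a Hoeffding decomposition around $\bar\bX_r := \E[\bX_{r,i}]$. Writing $\bX_{r,i} = \bar\bX_r + \bG_{r,i}$ with $\bG_{r,i}$ centered, we get
\[
\widehat{\bM}-\E\widehat{\bM} = \sum_{r}\Big[\tfrac{1}{n}\textstyle\sum_i (\bG_{r,i}\bar\bX_r^\sT + \bar\bX_r\bG_{r,i}^\sT)\Big] + \sum_r \tfrac{1}{n(n-1)}\textstyle\sum_{i\neq j}\bG_{r,i}\bG_{r,j}^\sT .
\]
The linear term should produce the $d^{\ell/4\vee 1/2}\|\bxi_{\emptyset,\ell}\|_{L^2}/\sqrt n$ contribution and the degenerate quadratic term the $d^{\ell/2\vee1}/n$ contribution. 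Since $\m$ is a constant, it suffices to treat a single $r$.

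\textbf{Linear term.} Note $\bar\bX_r = \Mat_{a,b}(\E[\cT_r(\by)\bxi_{\emptyset,\ell}(\by)])$, which has Frobenius norm at most $\sqrt{B_K}\|\bxi_{\emptyset,\ell}\|_{L^2}$ and rank at most $\s^a$ (its range lies in $\Span(\bW^{\otimes a})$). The term $\frac1n\sum_i \bG_{r,i}\bar\bX_r^\sT$ is therefore a sum of i.i.d.\ centered matrices. We would apply matrix Bernstein, truncating $\|\bH_i\|$ via spherical hypercontractivity \eqref{eq:hypercontractivity-ineq-sphere}, which gives polynomial tails with stretched-exponential probability $e^{-d^c}$; this is where $n\le \exp(d^{c'})$ enters. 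The variance proxies are $\|\E[\bG\bar\bX^\sT\bar\bX\bG^\sT]\|\le \|\bar\bX\|_\op^2\|\E\bH\bH^\sT\|$ and its transpose analogue. Using the isometry, $\E[\bH\bH^\sT]$ is (up to constants) the partial trace of the identity on $\TSym_\ell$, with operator norm $\asymp N_{d,\ell}/N_{d,a}\asymp d^{b}$, while the other proxy is $\lesssim \|\bar\bX\|^2_\frob$. This yields a bound $\|\bxi\|_{L^2}\sqrt{d^{b}/n}$. That is too weak when $b>\ell/2$, so we must exploit more structure: the projection onto the low-rank range of $\bar\bX_r^\sT\subseteq \Span(\bW^{\otimes b})$ makes $\bH\bar\bX^\sT$ effectively $d^a\times \s^{b}$. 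Its variance is then $\lesssim \|\bar\bX\|^2\sup_{\bu}\E\|\bH\bu\|^2$ over unit $\bu$ in that span, which is $\lesssim d^{a}\cdot d^{-a}\cdot d^{a}$... Concretely, we aim to show $\|\E[\bG\bar\bX^\sT\bar\bX\bG^\sT]\|_\op\lesssim \|\bxi\|^2_{L^2}$ and use the dimension factor $d^a$ in the log term, recovering the $d^{\ell/4\vee1/2}$ rate after combining both sides. This bookkeeping is the delicate part of this step.

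\textbf{Quadratic (degenerate) term.} This is the main obstacle. We would decouple (de la Peña--Montgomery-Smith) to replace $\bG_{r,j}$ by an independent copy, and then condition on one side. Given $\{\bG'_j\}$, the sum $\sum_i \bG_i(\sum_j\bG'_j)^\sT$ is linear in the $\bG_i$. Matrix Bernstein then gives a bound of order $\|\sum_j \bG'_j\|_\op\cdot \max(\sqrt{n d^{a}},\sqrt{n d^{b}})$-type variance terms divided by $n^2$. Separately, $\|\sum_j\bG'_j\|_\op\lesssim \sqrt{n d^{b}}$ by matrix Bernstein, since $\E[\bH\bH^\sT]\asymp d^{b}\bI$ and $\E[\bH^\sT\bH]\asymp d^{a}\bI$ on the relevant spaces. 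The naive product $d^{b}/n$ is too large, so instead we bound the conditional variance directly as $\|\E_i[\bG_i\bS'^\sT\bS'\bG_i^\sT]\|\lesssim d^{b}\cdot d^{-b}\|\bS'\|_\frob^2$, using the Schur isotropy of $\cH_{d,\ell}$ (applied for each fixed $\bS'$), with $\|\bS'\|_\frob^2\approx n N_{d,\ell}$. This gives $\sqrt{n\cdot nd^{\ell}}/n^2\cdot$polylog, i.e.\ $d^{\ell/2}/n$; for $\ell=1$ the same argument gives $d/n$. The diagonal removal $i\neq j$ is exactly what eliminates the $\frac1n\E[\bH\bH^\sT]\asymp d^{b}/n$ bias that would otherwise dominate when $a<b$. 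Finally, a union bound over the $\m$ kernel components and the truncation events gives probability $1-e^{-d^c}$, and the assumption $n\ge d^{\ell/2}$ ensures the truncation-level (Bernstein $L$) terms are dominated.
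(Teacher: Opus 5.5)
Your architecture (rank-one reduction, Hoeffding decomposition, decoupling the degenerate part, conditioning on the independent copy, and bounding the conditional variance by the isometry/trace bound $\E[\bG_i(\bS')^\sT\bS'\bG_i^\sT]\preceq B_K\|\bS'\|_\frob^2\,\bI$) is the paper's, and your count $\sigma\asymp d^{\ell/2}/n$ for the degenerate term is right. The gap is the concentration inequality you plug it into, in the case $1\leq a<b$. First, the uniform-bound parameter is not controlled as you describe. Write $\boldsymbol{H}_i=\Mat_{a,b}(\cH_{d,\ell}(\bz_i))$. Testing against the unit vectors $\bz_i^{\otimes a},\bz_i^{\otimes b}$ gives $\|\boldsymbol{H}_i\|_\op\geq\kappa_{d,\ell}\sqrt{N_{d,\ell}}\,\|\ptf(\bz_i^{\otimes\ell})\|_\frob^2=\sqrt{N_{d,\ell}}/\kappa_{d,\ell}$. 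So this norm is $\asymp d^{\ell/2}$ deterministically, and truncating it by hypercontractivity buys nothing. For the conditional sum $n^{-2}\sum_i\bG_i(\bS')^\sT$ you then get $L\approx n^{-2}\|\bG_i\|_\op\|\bS'\|_\op\gtrsim\frac{d^{\ell/2}}{n}\sqrt{d^{b}/n}$, using $\|\bS'\|_\op^2\geq\|\bS'\|_\frob^2/d^{a}$ and $\|\bS'\|_\frob^2\approx nN_{d,\ell}$. This is negligible against $d^{\ell/2}/n$ only when $n\gtrsim d^{b}$. At $n\asymp d^{\ell/2}$ you lose a factor $d^{(b-a)/4}$, so the claim that $n\geq d^{\ell/2}$ makes the $L$ terms harmless is false. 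What you need is a bound on $\max_i\|\bG_i(\bS')^\sT\|_\op$ itself. For example, use $\|\bG_i(\bS')^\sT\|_\frob$: its conditional second moment is at most $B_K d^{a}\|\bS'\|_\frob^2\approx n\,d^{a+\ell}$ by the same isometry, and hypercontractivity of its entries (degree-$\ell$ polynomials in $\bz_i$) makes this uniform in $i$ with probability $1-e^{-d^{c}}$. This is how the paper bounds $\bar{R}(\bDelta_1\mid\tilde{\cD})$, obtaining $\approx d^{(a+\ell)/2}/n^{3/2}$. That is smaller than $d^{\ell/2}/n$ by a factor $\sqrt{d^{a}/n}\leq d^{-(b-a)/4}$, up to a $d^{c}$ factor.

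Second, even with the right $L$, matrix Bernstein cannot give the stated bound for this term, because its deviation is driven by $\sigma$ itself. At failure probability $e^{-t}$ it yields $\sigma\sqrt{\log(2d^{a})+t}$. That is $\frac{d^{\ell/2}}{n}\sqrt{\log d}$ at polynomial confidence (the ``polylog'' you wrote and then dropped), and $\frac{d^{\ell/2}}{n}\,d^{c/2}$ at the required confidence $1-e^{-d^{c}}$. In the linear term this $\sigma$-driven loss is harmless, since $\sqrt{d^{a}/n}\,\|\bxi_{\emptyset,\ell}\|_{L^2}$ has slack $d^{-(b-a)/4}$ against $d^{\ell/4}\|\bxi_{\emptyset,\ell}\|_{L^2}/\sqrt{n}$; the degenerate term has none. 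The missing ingredient is the sharper inequality of Lemma~\ref{lem:matrix_concentration} (from \cite{bandeira2023matrix,brailovskaya2024universality}). Its leading term is $2\sigma$, the logarithm only multiplies $\sigma^{1/2}v^{1/2}$, and the deviation is governed by the weak variance $\sigma_\ast$. Conditionally on the decoupled copy, $\sigma_\ast$ and $v$ are $\lesssim d^{b/2}/n$ while $\sigma\lesssim d^{\ell/2}/n$. So the logarithmic correction and the choice $t=d^{c}$ cost only factors $d^{-a/4}\log^{3/4}d$ and $d^{(c-a)/2}$. For $\ell=1$ everything is rank one and vector concentration suffices. Also, in your linear term the two variance proxies are swapped. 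The isometry gives $\E[\bG\bar{\bX}^\sT\bar{\bX}\bG^\sT]\preceq B_K\|\bar{\bX}\|_\frob^2\,\bI$ and $\E[\bar{\bX}\bG^\sT\bG\bar{\bX}^\sT]\preceq B_K d^{a}\,\bar{\bX}\bar{\bX}^\sT$, and this is what produces $\sqrt{d^{a}/n}\,\|\bxi_{\emptyset,\ell}\|_{L^2}$.
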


The proof of this lemma can be found in Section \ref{app:proof-concentration-asym-one-step}. Note that a slightly lengthier argument allows to remove the finite rank Assumption \ref{ass:kernel}.\ref{ass:finite_rank} in Lemma \ref{lem:concentration_U_statistic_final} (and thus in Theorem \ref{thm:tensor_unfolding_one_step}). However, the above lemma will be sufficient for the purpose of this paper.

We are now ready to prove Theorem \ref{thm:tensor_unfolding_one_step} in the asymmetric case.

\begin{proof}[Proof of Theorem \ref{thm:tensor_unfolding_one_step} with $a \neq b$] Recall that \(\thr=\mathrm{rank}(\Mat_{a,2\ell-a}(\bZ_0))\). Let \(\hat{\bV}\in\R^{d^a\times\thr}\) be the top \(\thr\) eigenvectors of \(\widehat{\bM}\), and \(\bV\in\R^{d^a\times\thr}\) be an orthonormal basis of \(\Span(\bPi_0^{(a)})=\Span(\Mat_{a,2\ell-a}(\bZ_0))\). For any matrix \(\bA=[\ba_1,\ldots,\ba_\thr] \in \R^{d^a \times \thr}\), introduce 
    \[
    F(\bA):= \bigl[\Mat_{1,a-1}(\ba_1),\ldots,\Mat_{1,a-1}(\ba_\thr)\bigr] \in \R^{d \times (d^{a-1}\thr)}.
    \]
    By Lemma \ref{lem:recovered_subspace_population_U_stat}, \(\Span(F(\bV)F(\bV)^\sT)=\Span(\bU_0)\). Note that $F$ is a linear operator and an isometry with respect to the Frobenius norm. Thus, for any orthogonal  matrix \(\bQ\in\R^{\thr\times\thr}\),
    \[
    \begin{aligned}
    F(\bV)F(\bV)^\sT-F(\hat{\bV})F(\hat{\bV})^\sT =&~ F(\bV)F(\bV)^\sT-F(\hat{\bV} \bQ)F(\hat{\bV}\bQ)^\sT  \\
    =&~F(\bV - \hat{\bV}\bQ)F(\bV)^\sT + F(\hat{\bV}\bQ)F(\bV-\hat{\bV}\bQ)^\sT,
    \end{aligned}
    \]
    and since \(\|\bV\|_\frob=\|\hat{\bV}\|_\frob=\sqrt{\thr}\),
    \[
    \big\|F(\bV)F(\bV)^\sT-F(\hat{\bV})F(\hat{\bV})^\sT\big\|_\frob\leq 2\sqrt{\thr}\,\|\bV-\hat{\bV}\bQ\|_\frob.
    \]
    Taking the infinum over \(\bQ\in\cO_\thr\) and using the standard relation
    between Procrustes and projector distances,
    \[
    \inf_{\bQ\in\cO_\thr}\|\bV-\hat{\bV}\bQ\|_\frob \leq \sqrt{2\thr}\,\|\bV\bV^\sT-\hat{\bV}\hat{\bV}^\sT\|_\op,
    \]
    we conclude that
    \[
    \|F(\bV)F(\bV)^\sT-F(\hat{\bV})F(\hat{\bV})^\sT\|_\frob \leq 2\sqrt{2}\,\thr\,\|\bPi_0^{(a)}-\hat{\bV}\hat{\bV}^\sT\|_\op.
    \]
    
    Decompose \(\bPi_0^{(a)}-\hat{\bV}\hat{\bV}^\sT = \bDelta_1+\bDelta_2\) where
    \[
    \bDelta_1:=\bPi_0^{(a)}-\bPi_{\mathrm{pop}}^{(a)},
    \qquad
    \bDelta_2:=\bPi_{\mathrm{pop}}^{(a)}-\hat{\bV}\hat{\bV}^\sT,
    \]
    and \(\bPi_{\mathrm{pop}}^{(a)}\) is the projector onto the top
    \(\thr\) eigenspace of \(\E[\widehat{\bM}]\). We can write
    \[
    \|  \bDelta_1 \|_\op = \| ( \bI - \bPi_0^{(a)}) \bPi_{\mathrm{pop}}^{(a)} \|_\op, \qquad \|  \bDelta_2 \|_\op = \| ( \bI - \bPi_{\mathrm{pop}}^{(a)})  \hat{\bV} \|_\op,
    \]
    which we bound using Davis-Kahan theorem (e.g., see Wedin's sin theta theorem in~\cite[Theorem~VII.5.9]{Bhatia_1997}). For $\bDelta_1$, we compare the top $\thr$-eigenspaces of $\E[\widehat{\bM}]$ and
    \[
    \bB = \bPi_0^{(a)}\E[\widehat{\bM}]\bPi_0^{(a)} +\bPi_{0,\perp}^{(a)}\E[\widehat{\bM}]\bPi_{0,\perp}^{(a)},
    \]
    where $\bPi_{0,\perp}^{(a)} = \bI - \bPi_{0}^{(a)}$. 
    By Lemma \ref{lem:spectral_bounds_population_U_stat}, there exist $c,C>0$, such that, for all $d,\gamma > C$, the top $\thr$-eigenspaces of $\bB$ are exactly $\bPi_0^{(a)}$, and the top $\thr$ eigenvalues are separated to the rest of the eigenvalues by $c \| \bxi_{\emptyset,\ell} \|_{L^2}^2$. Furthermore,
    \[
    \begin{aligned}
          \|\E[\widehat{\bM}]-\bB\|_\op  \leq 2 \| \bPi_0^{(a)} \E[\widehat{\bM}] \bPi_{0,\perp}^{(a)} \|_\op \leq 2 \| \E[\widehat{\bM}] \|_\op^{1/2} \|\bPi_{0,\perp}^{(a)} \E[\widehat{\bM}] \bPi_{0,\perp}^{(a)}\|_\op^{1/2}
         \leq C  \frac{\|\bxi_{\emptyset,\ell}\|_{L^2}^2}{\sqrt{\gamma}}.
    \end{aligned}
    \]
    We deduce by Davis-Kahan theorem that
    \begin{equation}\label{eq:bound_Delta_1}
        \|\bDelta_1\|_\op  \leq C \frac{\|\E[\widehat{\bM}]-\bB\|_\op}{\|\bxi_{\emptyset,\ell}\|_{L^2}^2} \leq C' \frac{1}{\sqrt{\gamma}}.
    \end{equation}
   For $\bDelta_2$, by Lemma \ref{lem:concentration_U_statistic_final}, there exist $c,c',C,C'>0$ such that with probability at least $1 - e^{-d^{c}}$,
   \begin{equation}\label{eq:bound_Delta_2}
       \|\bDelta_2\|_\op \leq C \frac{\|\widehat{\bM}-\E[\widehat{\bM}]\|_\op}{\|\bxi_{\emptyset,\ell}\|_{L^2}^2} \leq \sqrt{\delta},
   \end{equation}
   where we denoted $\delta = C' d^{\ell/2 \vee 1}/ (n \| \bxi_{\emptyset,\ell} \|_{L^2}^2)$ and assumed $d \geq C'$, $n \leq \exp(d^{c'})$, and $\delta \leq c'$.

   Combining \eqref{eq:bound_Delta_1} and \eqref{eq:bound_Delta_2} yields
    \[
    \|F(\bV)F(\bV)^\sT-F(\hat{\bV})F(\hat{\bV})^\sT\|_\frob \leq C \left(\sqrt{\delta} + \frac{1}{\sqrt{\gamma}}\right).
    \]
    From \eqref{eq:eigenvalues_unfolded_Z_a} applied at $a = 1$, the non-zero eigenvalues of \(\sum_{i=1}^{\thr}\Mat_{1,a-1}(\bv_i)\Mat_{1,a-1}(\bv_i)^\sT\) are bounded away from zero, with constants that only depend on $\s,\ell$, and constants in the assumptions. Hence, another application of Davis-Kahan, with $\gamma \geq C$ and $\delta \leq 1/C$, implies that
    \[
        \dist\big(\widehat{\bU}_0,\bU_0\big) \leq C' \|F(\bV)F(\bV)^\sT-F(\hat{\bV})F(\hat{\bV})^\sT\|_\frob \leq C''\left(\sqrt{\delta} + \frac{1}{\sqrt{\gamma}}\right),
    \]
    where we used that $\Span (F(\bV)) = \Span (\bU_0)$ and $\widehat{\bU}_0$ are the top $\s_0$ eigenvectors of $F(\widehat{\bV})F(\widehat{\bV})^\sT$.   
\end{proof}

\subsubsection{Proof of Lemma \ref{lem:spectral_bounds_population_U_stat} and Lemma \ref{lem:recovered_subspace_population_U_stat}}\label{app:proof_expectation_asymmetric_one_step}

Before proving Lemma \ref{lem:spectral_bounds_population_U_stat}, we first show the following low-rank approximation of $\E\big[\widehat{\bM}\big]$:

\begin{lemma}\label{lem:population_unfolded_U_stat}
    Under Assumption \ref{ass:kernel}.\ref{ass:kernel-boundedness}, if \(d\geq 4\ell^4\s\), then
    \[
        \E\big[\widehat \bM\big] = (\bW^{\otimes a}) \E\left[K(\by,\by') \Mat_{a,b}\left(\bzeta_{\emptyset,\ell}(\by) \right) \Mat_{a,b}\left( \bzeta_{\emptyset,\ell}(\by')\right)^\sT\right](\bW^{\otimes a})^\sT + \bDelta_{d,\ell}
    \]
    where $\bW^{\otimes a} \in \R^{d^a \times \s^a}$ is the orthonormal $\s^a$-frame as defined in \eqref{eq:matrix_W_tensorialize_a_times} and
    \[
        \|\bDelta_{d,\ell}\|_\frob \leq  B_K \| \bxi_{\emptyset,\ell}\|_{L^2}^2 \frac{8\ell^2\sqrt{\s}}{\sqrt{d}}.
    \]
\end{lemma}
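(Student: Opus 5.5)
Since $\widehat{\bM}$ is a U-statistic over $i\neq j$ with independent samples, we first write
\[
\E\big[\widehat\bM\big] = \E_{\by,\by'}\Big[K(\by,\by')\,\Mat_{a,b}\big(\bxi_{\emptyset,\ell}(\by)\big)\Mat_{a,b}\big(\bxi_{\emptyset,\ell}(\by')\big)^\sT\Big],
\]
where $\by,\by'$ are independent and we condition on $\by_i,\by_j$ to replace $\cH_{d,\ell}(\bz_i)$ by $\E[\cH_{d,\ell}(\bz)\mid\by_i]=\bxi_{\emptyset,\ell}(\by_i)$. Next we use $\bxi_{\emptyset,\ell}(\by)=\ptf(\bW^{\otimes\ell}\bzeta_{\emptyset,\ell}(\by))$ (Lemma \ref{lem:harmonic_expansion_invariant_functions}) and split $\ptf(\bW^{\otimes\ell}\bzeta)=\bW^{\otimes\ell}\bzeta+\bE(\by)$. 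By the explicit formula \eqref{eq:projection_traceless_symmetric_tensors}, $\bE(\by)$ is a linear combination, with coefficients of order $1/d$, of symmetrizations of $\bI_d^{\otimes j}\otimes \bW^{\otimes(\ell-2j)}\tau^j(\bzeta)$ for $j\geq1$. Since $\bW^{\otimes\ell}\bzeta$ unfolds as $\bW^{\otimes a}\Mat_{a,b}(\bzeta)(\bW^{\otimes b})^\sT$ and $(\bW^{\otimes b})^\sT\bW^{\otimes b}=\bI$, the main term is exactly the displayed one. The remainder $\bDelta_{d,\ell}$ collects the cross terms (main times $\bE$) and the $\bE\times\bE$ term.

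To bound $\|\bDelta_{d,\ell}\|_\frob$, we move the norm inside the expectation and use $|K|\le B_K$ together with $\|\bA\bB^\sT\|_\frob\le\|\bA\|_\frob\|\bB\|_\frob$. This gives
\[
\|\bDelta_{d,\ell}\|_\frob\le B_K\,\E\big[\|\bE(\by)\|_\frob\big]\Big(\E\|\bW^{\otimes\ell}\bzeta(\by')\|_\frob+\E\|\ptf(\bW^{\otimes\ell}\bzeta(\by'))\|_\frob\Big).
\]
By Cauchy--Schwarz, it then suffices to bound $\|\bE\|_\frob\le (C_\ell\sqrt{\s}/\sqrt d)\,\|\ptf(\bW^{\otimes\ell}\bzeta)\|_\frob$ and $\|\bW^{\otimes\ell}\bzeta\|_\frob\le 2\|\ptf(\bW^{\otimes\ell}\bzeta)\|_\frob$ pointwise in $\by$. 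Here $\|\ptf(\bW^{\otimes\ell}\bzeta)\|_\frob=\|\bxi_{\emptyset,\ell}\|_\frob$, and Jensen gives $\E\|\bxi_{\emptyset,\ell}\|_\frob\le\|\bxi_{\emptyset,\ell}\|_{L^2}$, which produces the factor $\|\bxi_{\emptyset,\ell}\|_{L^2}^2$.

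The key deterministic estimate, and the main obstacle, is the following: for symmetric $\bzeta\in\Sym_\ell(\R^\s)$ with $\bA=\bW^{\otimes\ell}\bzeta$,
\[
\|\bA-\ptf(\bA)\|_\frob\le \frac{4\ell^2\sqrt{\s}}{\sqrt d}\|\bA\|_\frob,
\]
which implies $\|\bA\|_\frob\le 2\|\ptf\bA\|_\frob$ once $d\ge 4\ell^4\s$. We plan to prove it from the explicit expansion of $\ptf$ via two facts. First, each trace term satisfies $\|\tau^j(\bzeta)\|_\frob\le \s^{j/2}\|\bzeta\|_\frob$, since traces live in the $\s$-dimensional space. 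Second, $\|\psym(\bI^{\otimes j}\otimes\bB)\|_\frob\lesssim d^{j/2}\|\bB\|_\frob$ up to combinatorial factors in $\ell$. The coefficients in $\ptf$ are of order $\binom{\ell}{2j}/\big(2^j (d+2\ell-4)\cdots\big)\sim \ell^{2j}d^{-j}$, so the $j$-th term is at most $(\ell^2\sqrt{\s/d})^j\|\bA\|_\frob$ up to constants. The geometric series is dominated by $j=1$ when $\ell^2\sqrt{\s/d}\le1/2$, which is exactly what $d\ge4\ell^4\s$ guarantees, and this yields the stated constant $8\ell^2\sqrt{\s}/\sqrt d$ after combining the cross terms. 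Tracking the symmetrization norms carefully, rather than merely asymptotically, is where the effort lies. An alternative we would try if the combinatorics gets messy is to write $\bA-\ptf(\bA)$ as the orthogonal projection of $\bA$ onto $\psym(\bI\otimes\Sym_{\ell-2})$ and bound it directly by $\sup_{\bB}\langle\bA,\psym(\bI\otimes\bB)\rangle/\|\psym(\bI\otimes\bB)\|_\frob$, using $\|\psym(\bI\otimes\bB)\|_\frob^2\gtrsim (d/\ell^2)\|\bB\|_\frob^2$ and $\langle\bA,\psym(\bI\otimes\bB)\rangle=\langle\tau(\bA),\bB\rangle\le\sqrt{\s}\,\|\bA\|_\frob\|\bB\|_\frob$.
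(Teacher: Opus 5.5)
This is essentially the paper's proof. The paper also replaces $\cH_{d,\ell}$ by $\bxi_{\emptyset,\ell}$, splits off $\bW^{\otimes\ell}\bzeta_{\emptyset,\ell}$, and bounds the two cross terms using $\|K\|_\infty\le B_K$, independence and Jensen; it then controls $\ptf(\bW^{\otimes\ell}\bzeta)-\bW^{\otimes\ell}\bzeta$ by Lemma~\ref{lem:approximation_traceless_projection}, whose proof is exactly your expansion of $\ptf$ using $\tau^j(\bW^{\otimes\ell}\bzeta)=\bW^{\otimes(\ell-2j)}\tau^j(\bzeta)$, $\|\tau^j(\bzeta)\|_\frob\le\s^{j/2}\|\bzeta\|_\frob$ and a geometric series.

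The step you flag as the main obstacle is easy, but your stated constant needs tightening. Since $\psym$ is an orthogonal projection, $\|\psym(\bI_d^{\otimes j}\otimes\bB)\|_\frob\le d^{j/2}\|\bB\|_\frob$ with no combinatorial loss, and with $|h^{(d)}_{\ell,j}|\le(\ell^2/(2d))^j$ one gets $\|\bA-\ptf(\bA)\|_\frob\le\ell^2\sqrt{\s/d}\,\|\bA\|_\frob$. You need this sharper constant: your stated factor $4\ell^2$ would not give $\|\bA\|_\frob\le 2\|\ptf(\bA)\|_\frob$ under $d\ge 4\ell^4\s$, nor the final constant $8$.
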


\begin{proof}
Since the samples \(\{(\by_i,\bz_i)\}_{i=1}^n\) are i.i.d.~and recalling the definition $\bxi_{\emptyset,\ell} (\by) = \E[\cH_{d,\ell} (\bz)|\by]$,
\begin{equation}\label{eq:expectation_Mhat_asym_PSD}
\begin{aligned}
    \E\big[\widehat \bM\big] =&~ \E\left[K(\by,\by') \Mat_{a,b} \big(\cH_{d,\ell}(\bz)\big) \Mat_{a,b} \big(\cH_{d,\ell}(\bz')\big)^\sT\right] \\
    =&~ \E\left[ K(\by,\by') \Mat_{a,b}\left(\bxi_{\emptyset,\ell}(\by) \right) \Mat_{a,b}\left(\bxi_{\emptyset,\ell}(\by') \right)^\sT \right],
\end{aligned}
\end{equation}
where \((\by,\bz),(\by',\bz')\sim \P_{\nu_d}\) independently. 
Using \(\bxi_{\emptyset,\ell}(\by) = \ptf(\bW^{\otimes \ell}\bzeta_{\emptyset,\ell}(\by))\) and Lemma~\ref{lem:approximation_traceless_projection}, we get whenever \(d\geq 4\ell^2\s\),
\begin{equation}\label{eq:approximation_xi_zeta}
    \big\|\bxi_{\emptyset,\ell}(\by) - \bW^{\otimes \ell}\bzeta_{\emptyset,\ell}(\by)\big\|_\frob \leq \|\bxi_{\emptyset,\ell}(\by)\|_\frob \frac{2\ell^2\sqrt{\s}}{\sqrt{d}}
\end{equation}
which implies that \(\|\bxi_{\emptyset,\ell} - \bW^{\otimes \ell}\bzeta_{\emptyset,\ell}\|_{L^2} \leq \|\bxi_{\emptyset,\ell}\|_{L^2} 2\ell^2\sqrt{\s/d}\) and, by the reverse triangle inequality,
\begin{equation}\label{eq:bound_wzeta_xi}
    \|\bW^{\otimes \ell}\bzeta_{\emptyset,\ell}\|_{L^2} \leq \|\bxi_{\emptyset,\ell}\|_{L^2} \left(1 + \frac{2\ell^2\sqrt{\s}}{\sqrt{d}}\right).
\end{equation}
Thus,
\begin{align*}
    \E\big[\widehat \bM\big] & = \E \left[K(\by,\by') \Mat_{a,b}\big(\bW^{\otimes \ell}\bzeta_{\emptyset,\ell}(\by) \big) \Mat_{a,b}\big(\bW^{\otimes \ell}\bzeta_{\emptyset,\ell}(\by') \big)^\sT\right] + \bDelta_{d,\ell},
\end{align*}
where by \eqref{eq:approximation_xi_zeta} and \eqref{eq:bound_wzeta_xi}, 
\[
    \|\bDelta_{d,\ell}\|_\frob \leq \|K\|_{\infty} \| \bxi_{\emptyset,\ell}\|_{L^2}^2 \frac{4\ell^2\sqrt{\s}}{\sqrt{d}} \left(1 + \frac{2\ell^2\sqrt{\s}}{\sqrt{d}}\right) \leq B_K  \| \bxi_{\emptyset,\ell}\|_{L^2}^2 \frac{8 \ell^2 \sqrt{\s}}{\sqrt{d}}.
\]
It only remains to observe that
\[
    \Mat_{a,b}\left(\bW^{\otimes \ell}\bzeta_{\emptyset,\ell}(\by)\right) = \bW^{\otimes a}\Mat_{a,b}\left(\bzeta_{\emptyset,\ell}(\by)\right)(\bW^{\otimes b})^\sT,
\]
and $(\bW^{\otimes b})^\sT (\bW^{\otimes b}) = \bI_{\s^b}$.
This concludes the proof.
\end{proof}

\begin{proof}[Proof of Lemma \ref{lem:spectral_bounds_population_U_stat}] First note that $\E[\widehat{\bM}]$ is PSD by \eqref{eq:expectation_Mhat_asym_PSD}. By Lemma \ref{lem:population_unfolded_U_stat}, for any $\bv \in \R^{d^a}$,
\begin{equation}\label{eq:quadratic_form_population_U_stat}
    \bv^\sT \E\big[\widehat \bM\big] \bv  = \bv_{\bW}^\sT \E\left[K(\by,\by') \Mat_{a,b}\left(\bzeta_{\emptyset,\ell}(\by) \right) \Mat_{a,b}\left( \bzeta_{\emptyset,\ell}(\by')\right)^\sT\right] \bv_{\bW} + \bv^\sT \bDelta_{d,\ell}\bv,
\end{equation}
where \(\bv_{\bW} := (\bW^{\otimes a})^\sT \bv\in \R^{\s^a}\) verifies $\| \bv_{\bW} \|_2 \leq \| \bv \|_2$, using that \(\bW^{\otimes a} \in \R^{d^a \times \s^a}\) is an orthonormal \(\s^a\)-frame, and
\begin{align*}
    |\bv^\sT \bDelta_{d,\ell}\bv| & \leq \|\bDelta_{d,\ell}\|_\frob \|\bv\|_2^2 \leq B_K \|\bxi_{\emptyset,\ell}\|_{L^2}^2 \frac{8\ell^2\sqrt{\s}}{\sqrt{d}} \| \bv \|_2^2.
\end{align*}
Let \(\bzeta_{\emptyset,\ell}(\by)[\bv_\bW]\) denote the contraction of the tensor \(\bzeta_{\emptyset,\ell}(\by)\) with the vector \(\bv_\bW \in \R^{\s^a}\) along the firsts \(a\) indices such that 
\[
\bv_{\bW}^\sT \E\left[K(\by,\by') \Mat_{a,b}\left(\bzeta_{\emptyset,\ell}(\by) \right) \Mat_{a,b}\left( \bzeta_{\emptyset,\ell}(\by')\right)^\sT\right] \bv_{\bW} = \E[K(\by,\by') \<\bzeta_{\emptyset,\ell}(\by)[\bv_{\bW}], \bzeta_{\emptyset,\ell}(\by')[\bv_{\bW}]\>_\frob].
\]
By Cauchy-Schwarz inequality and Assumption \ref{ass:kernel}.\ref{ass:kernel-boundedness},
\begin{equation}\label{eq:first_upper_bound_zeta}
    |\E[K(\by,\by') \<\bzeta_{\emptyset,\ell}(\by)[\bv_{\bW}], \bzeta_{\emptyset,\ell}(\by')[\bv_{\bW}]\>_\frob] | \leq B_K \|\bzeta_{\emptyset,\ell}[\bv_{\bW}]\|_{L^2}^2.
\end{equation}
On the other hand, by Assumption \ref{ass:kernel}.\ref{ass:kernel-positive-definite}, there exists \(c_K>0\) such that
\begin{equation}\label{eq:first_lower_bound_zeta}
    \E[K(\by,\by') \<\bzeta_{\emptyset,\ell}(\by)[\bv_{\bW}], \bzeta_{\emptyset,\ell}(\by')[\bv_{\bW}]\>_\frob] \geq c_K \|\bzeta_{\emptyset,\ell}[\bv_{\bW}]\|_{L^2}^2.
\end{equation}
It remains to characterize the subspace of \(\R^{\s^a}\) where the quantity \(\|\bzeta_{\emptyset,\ell}[\bv_{\bW}]\|_{L^2}^2\) is large.

First, note that we can decompose
\begin{equation}\label{eq:decomposition_zeta_v_W}
\|\bzeta_{\emptyset,\ell}[\bv_{\bW}]\|_{L^2}^2  = \sum_{j=1}^{D_{\s,\ell}} \mu_j \< \bv_{\bW}, \Mat_{a,b}(\bV_j) \Mat_{a,b}(\bV_j)^\sT \bv_{\bW} \>.
\end{equation}
Let \(\bZ_0\) and \(\bZ_+\) be defined as in~\eqref{eq:decomposition_Z0_Zplus}. We may write
\begin{align*}
    \bZ_0 \otimes_{2\ell-a} \bZ_0 & = \sum_{i,j=1}^{\rnk} \mu_i \mu_j \left( (\bW^{\otimes \ell}\bV_i) \otimes (\bW^{\otimes \ell}\bV_i) \right) \otimes_{2\ell - a} \left( (\bW^{\otimes \ell}\bV_j) \otimes (\bW^{\otimes \ell}\bV_j) \right).
\end{align*}
Since \(2\ell - a \geq \ell\), the above is contracting at least the \(\ell\) indices from $\bW^{\otimes \ell}\bV_i$ and $\bW^{\otimes \ell}\bV_j$. Given that the tensors \(\{\bW^{\otimes \ell}\bV_j\}_{j\in [D_{\s,\ell}]}\) form an orthonormal set in \(\Sym_{\ell}(\R^d)\), we get
\begin{align*}
    \bZ_0\otimes_{2\ell-a}\bZ_0 & = \sum_{i=1}^{\rnk} \mu_i^2  \left((\bW^{\otimes \ell}\bV_i) \otimes_{\ell-a} (\bW^{\otimes \ell}\bV_i)\right),
\end{align*}
and similarly for \(\bZ_+\) by summing \(j=\rnk+1,\ldots,D_{\s,\ell}\) instead. Introduce the matrices 
\[
\bP^{(a)}_0 := \sum_{j=1}^{\rnk}  \Mat_{a,b}(\bV_j) \Mat_{a,b}(\bV_j)^\sT, \qquad  \bP^{(a)}_+ := \sum_{j=\rnk+1}^{D_{\s,\ell}}  \Mat_{a,b}(\bV_j) \Mat_{a,b}(\bV_j)^\sT. 
\]
Note that the tensors $\{ \bV_i\}_{i \in [D_{\s,\ell}]}$ form an orthonormal set in $\Sym_\ell (\R^\s)$. By compactness, there exist constants $\lambda_{0,1}, \lambda_{0,2}, \lambda_{+,1}, \lambda_{+,2}>0$ that only depends on $(\s,\ell,\rnk)$, such that all non-zero eigenvalues of $\bP^{(a)}_0$ and $\bP^{(a)}_+$ are in $[\lambda_{0,1}, \lambda_{0,2}]$ and $[\lambda_{+,1}, \lambda_{+,2}]$ respectively. Thus, we deduce that
\begin{equation}\label{eq:eigenvalues_unfolded_Z_a}
\begin{aligned}
    \lambda_{0,1} \bPi_0^{(a)} \preceq &~\Mat_{a,2\ell-a}(\bZ_0)\Mat_{a,2\ell-a}(\bZ_0)^\sT = (\bW^{\otimes a}) \bP^{(a)}_0 (\bW^{\otimes a})^\sT \preceq  \lambda_{0,2} \bPi_0^{(a)}, \\
    \lambda_{+,1} \bPi_+^{(a)} \preceq &~\Mat_{a,2\ell-a}(\bZ_+)\Mat_{a,2\ell-a}(\bZ_+)^\sT = (\bW^{\otimes a}) \bP^{(a)}_+ (\bW^{\otimes a})^\sT \preceq  \lambda_{+,2} \bPi_+^{(a)}.
\end{aligned}
\end{equation}

From \eqref{eq:decomposition_zeta_v_W}, we deduce that
\begin{align*}
    \|\bzeta_{\emptyset,\ell}[\bv_{\bW}]\|_{L^2}^2  \leq&~ \mu_1 \< \bv, (\bW^{\otimes a}) \bP^{(a)}_0 (\bW^{\otimes a})^\sT  \bv \> + \mu_{\rnk +1} \< \bv, (\bW^{\otimes a}) \bP^{(a)}_+ (\bW^{\otimes a})^\sT  \bv \> \\
    \leq&~ \left\< \bv , \left[  \mu_1 \lambda_{0,2} \bPi_0^{(a)} + \frac{c_\mu  \mu_1 \lambda_{+,2}}{\gamma} \bPi_+^{(a)}\right] \bv \right\>,
\end{align*}
and
\begin{align*}
     \|\bzeta_{\emptyset,\ell}[\bv_{\bW}]\|_{L^2}^2   \geq \mu_\rnk \< \bv, (\bW^{\otimes a}) \bP^{(a)}_0 (\bW^{\otimes a})^\sT  \bv \>  \geq \lambda_{0,1}c_\mu \mu_1 \< \bv, \bPi_0^{(a)} \bv \>,
\end{align*}
where we used Assumption \ref{ass:spectral-gap}. Combining these bounds with \eqref{eq:first_upper_bound_zeta},~\eqref{eq:first_lower_bound_zeta}, and \eqref{eq:quadratic_form_population_U_stat},  and the observation that \(\mu_1 \leq \|\E[\bzeta_{\emptyset,\ell}(\by) \otimes \bzeta_{\emptyset,\ell}(\by)]\|_\frob\leq \s^\ell \mu_1\) with \eqref{eq:approximation_xi_zeta}, so that
\begin{equation}\label{eq:bounds_mu_1}
    s^{-\ell}\left(1-\frac{6\ell^2 \sqrt{\s}}{\sqrt{d}}\right)\|\bxi_{\emptyset,\ell}\|_{L^2}^2 \leq \mu_1 \leq \left(1+\frac{6\ell^2 \sqrt{\s}}{\sqrt{d}}\right)\| \bxi_{\emptyset,\ell}\|_{L^2}^2,
\end{equation}
yields the desired result. 
\end{proof}

Lemma \ref{lem:recovered_subspace_population_U_stat} follows from the following general property of the span of unfolded tensors.

\begin{lemma}\label{lem:partial_trace_unfolding}
Let \(\bT \in (\R^d)^{\otimes \ell}\) and \(\bX^{(a)} = \Mat_{a,\ell-a}(\bT)\) for $2 \leq a \leq \ell$. Denote $\thr_a = {\rm rank} (\bX^{(a)})$ and let \(\bY = [\by_1, \ldots, \by_{\thr_a}] \in \R^{d^a \times \thr_a}\) be any orthonormal basis of \(\Span(\bX^{(a)})\). For \(1 \leq c < a\),
\[
    \Span\left(\sum_{j = 1}^{\thr_a} \Mat_{c,a-c}(\by_j)\Mat_{c,a-c}(\by_j)^\sT\right) = \Span(\bX^{(c)}).
\]
\end{lemma}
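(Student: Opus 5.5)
The plan is to reduce the statement to a basis-independent identity between positive semidefinite matrices, and then compare column spans. Throughout, write multi-indices of $[d]^a$ as pairs $(\bi,\bj)\in[d]^c\times[d]^{a-c}$.

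\emph{Step 1 (basis independence).} The matrix
\[
\bG := \sum_{j=1}^{\thr_a}\Mat_{c,a-c}(\by_j)\Mat_{c,a-c}(\by_j)^\sT \in \R^{d^c\times d^c}
\]
depends only on the projector $\bPi := \bY\bY^\sT$ onto $\Span(\bX^{(a)})$. Indeed, in coordinates,
\[
\bG_{\bi,\bi'}=\sum_{\bj\in[d]^{a-c}}\bPi_{(\bi,\bj),(\bi',\bj)},
\]
which is a partial trace of $\bPi$ over the last $a-c$ tensor factors. Changing basis $\bY\mapsto\bY\bQ$ with $\bQ\in\cO_{\thr_a}$ leaves $\bPi$, and hence $\bG$, unchanged. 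I may therefore work with the specific basis coming from an SVD.

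\emph{Step 2 (replace $\bPi$ by a matrix with the same range).} Let $\bX^{(a)}=\bY\bSigma\bR^\sT$ be a compact SVD with $\bSigma\succ 0$. Then $\bPi$ and $\bX^{(a)}(\bX^{(a)})^\sT = \bY\bSigma^2\bY^\sT$ have the same range. Moreover there are constants $0<\sigma_{\min}\le\sigma_{\max}$ with
\[
\sigma_{\min}^2\,\bPi \preceq \bX^{(a)}(\bX^{(a)})^\sT \preceq \sigma_{\max}^2\,\bPi .
\]
The partial-trace map $\bA\mapsto\sum_{\bj}\bA_{(\cdot,\bj),(\cdot,\bj)}$ is positive, since it equals $\sum_{\bj}\bE_\bj^\sT\bA\bE_\bj$ with $\bE_\bj := \bI\otimes \be_\bj$. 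It therefore preserves the Loewner order. Because PSD matrices $\bA\preceq\bB\preceq C\bA$ share the same range, it follows that $\Span(\bG)=\Span(\bG')$, where $\bG'$ is the partial trace of $\bX^{(a)}(\bX^{(a)})^\sT$.

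\emph{Step 3 (identify $\bG'$).} Compute
\[
\bG'_{\bi,\bi'}=\sum_{\bj}\sum_{\bk\in[d]^{\ell-a}}T_{\bi,\bj,\bk}\,T_{\bi',\bj,\bk}
=\big(\bX^{(c)}(\bX^{(c)})^\sT\big)_{\bi,\bi'},
\]
since summing over $(\bj,\bk)\in[d]^{\ell-c}$ is exactly the $(c,\ell-c)$ unfolding contraction. Hence
\[
\Span(\bG')=\Span\big(\bX^{(c)}(\bX^{(c)})^\sT\big)=\Span(\bX^{(c)}),
\]
which concludes the argument.

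The only mildly delicate point is Step 2: justifying that a positive map applied to two PSD matrices sandwiched in Loewner order yields matrices with equal range. This holds because $\ker(\bA)=\{\bv:\bv^\sT\bA\bv=0\}$ for PSD $\bA$, and the sandwich inequalities give equality of these null sets. Everything else is index bookkeeping.
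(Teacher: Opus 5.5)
Your proof is correct, but it takes a different route from the paper's.

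\emph{The paper's argument} works only with column spans, in three moves. First, since each $\Mat_{c,a-c}(\by_j)\Mat_{c,a-c}(\by_j)^\sT$ is PSD, the span of their sum equals $\sum_j \Span(\Mat_{c,a-c}(\by_j))$. Second, by linearity of $\Mat_{c,a-c}$, and because the $\by_j$ and the columns $\bx_k$ of $\bX^{(a)}$ span the same space, this equals $\sum_k \Span(\Mat_{c,a-c}(\bx_k))$. Third, after reordering columns, $\bX^{(c)}$ is the horizontal concatenation of the blocks $\Mat_{c,a-c}(\bx_k)$.

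\emph{Your argument} instead identifies $\bG$ as the partial trace of the projector $\bPi=\bY\bY^\sT$. You then use the exact identity that the partial trace of $\bX^{(a)}(\bX^{(a)})^\sT$ is $\bX^{(c)}(\bX^{(c)})^\sT$. Finally, you pass between $\bPi$ and $\bX^{(a)}(\bX^{(a)})^\sT$ through a Loewner sandwich and positivity of the partial trace.

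\emph{Comparison.} The paper's proof is shorter, purely combinatorial, and applies verbatim to any spanning family $\{\by_j\}$. Yours proves strictly more. It gives $\sigma_{\max}^{-2}\,\bX^{(c)}(\bX^{(c)})^\sT \preceq \bG \preceq \sigma_{\min}^{-2}\,\bX^{(c)}(\bX^{(c)})^\sT$, so the nonzero eigenvalues of $\bG$ are controlled by those of $\bX^{(c)}(\bX^{(c)})^\sT$ and by the singular values of $\bX^{(a)}$.

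That quantitative fact is exactly what the end of the proof of Theorem~\ref{thm:tensor_unfolding_one_step} relies on. There the paper asserts that the nonzero eigenvalues of $\sum_{s}\Mat_{1,a-1}(\bv_s)\Mat_{1,a-1}(\bv_s)^\sT$ are bounded away from zero. Combined with \eqref{eq:eigenvalues_unfolded_Z_a} at unfolding levels $a$ and $1$, your sandwich yields an explicit lower bound for those eigenvalues depending only on $\s,\ell,\rnk$. The span-only argument does not by itself provide this.
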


\begin{proof} 
    Since each matrix \(\Mat_{c,a-c}(\by_j)\Mat_{c,a-c}(\by_j)^\sT\) is positive semidefinite, we have
    \[
        \Span\left(\sum_{j=1}^{\thr_a}\Mat_{c,a-c}(\by_j)\Mat_{c,a-c}(\by_j)^\sT\right) = \sum_{j=1}^{\thr_a} \Span\left(\Mat_{c,a-c}(\by_j)\right),
    \]
    where the right-hand side denotes the sum of subspaces. Let \(\{\bx_j\}_{j=1}^{d^{\ell-a}}\) denote the columns of \(\bX^{(a)}\).
    Since \(\{\by_j\}_{j=1}^{\thr_a}\) is a basis of \(\Span(\bX^{(a)})\),
    the families \(\{\by_j\}_{j=1}^{\thr_a}\) and \(\{\bx_j\}_{j=1}^{d^{\ell-a}}\) generate the same subspace. Consequently,
    \[
        \sum_{j=1}^{\thr_a} \Span\left(\Mat_{c,a-c}(\by_j)\right) = \sum_{j=1}^{d^{\ell-a}} \Span\left(\Mat_{c,a-c}(\bx_j)\right).
    \]
    Finally, by construction,
    \[
    \bX^{(c)} = \bigl[\Mat_{c,a-c}(\bx_1)\mid \cdots \mid \Mat_{c,a-c}(\bx_{d^{\ell-a}})\bigr],
    \]
    after a reordering of columns, which implies
    \[
    \sum_{j=1}^{d^{\ell-a}} \Span\left(\Mat_{c,a-c}(\bx_j)\right) = \Span(\bX^{(c)}). \qedhere
    \]
\end{proof}

\subsubsection{Proof of Lemma \ref{lem:concentration_U_statistic_final}}
\label{app:proof-concentration-asym-one-step}

Under Assumption \ref{ass:kernel}.\ref{ass:finite_rank}, there exists \(\sm\in \naturals\) such that we can decompose
\[
    \widehat{\bM} = \sum_{k = 1}^\sm \widehat{\bM}_k,
\]
where
\[
    \widehat{\bM}_k := \frac{1}{n(n-1)} \sum_{1 \leq i\neq j\leq n} \cT_k (\by_i) \cT_k (\by_j) \Mat_{a,b}\big(\cH_{d,\ell} (\bz_i) \big)\Mat_{a,b}\big(\cH_{d,\ell} (\bz_j) \big)^\sT.
\]
Furthermore, because \(\sum_{k=1}^{\sm}\cT_k(\by)^2=K(\by,\by) \leq B_K\) for all \(y\in \cY\) by Assumption \ref{ass:kernel}.\ref{ass:kernel-boundedness}, it follows that \(\|\cT_k\|_\infty \leq B_K^{1/2}\). By union bound over $k \in [\sm]$, it is sufficient to show Lemma \ref{lem:concentration_U_statistic_final} for a rank-$1$ kernel $K(\by,\by') = \cT(\by) \cT(\by')$ with $\| \cT \|_\infty \leq B_K^{1/2}$. Note that one can modify the argument to directly prove Lemma \ref{lem:concentration_U_statistic_final} for arbitrary bounded kernel (without the finite rank assumption). For simplicity, we ignore this lengthier argument.

Below, we prove Lemma \ref{lem:concentration_U_statistic_final} for rank-$1$ kernels and separate the analysis between \(1\leq a < b\) and \((a,b)=(1,0)\).

\begin{proof}[Proof of Lemma \ref{lem:concentration_U_statistic_final}: case \(1\leq a<b\)]
\noindent\textbf{Step 1: Decoupling.} Using a now standard decoupling argument due to~\cite{delapeña1999decouplinginequalitiestailprobabilities}, we can reduce the problem of bounding \( \| \widehat{\bM} - \E[\widehat{\bM}] \|_\op \) to bounding a decoupled analogue \(\| \widetilde{\bM} - \E[\widetilde{\bM}] \|_\op \) defined by
\[
\widetilde{\bM} = \frac{1}{n (n-1)} \sum_{1 \leq i \neq j \leq n} \cT(y_i) \cT (\tilde y_j) \Mat_{a,b}(\cH_{d,\ell}(\bz_i))\Mat_{a,b}(\cH_{d,\ell}(\tilde{\bz}_j))^\sT =: \sum_{i = 1}^n \bA_{i}(\tilde{\bB}_{i})^\sT,
\]
where \(\tilde{\cD}:=\{(\tilde{\by}_i,\tilde{\bz}_i)\}_{i=1}^{n}\) are i.i.d.~copies of \(\cD:=\{(\by_i,\bz_i)\}_{i=1}^{n}\) and
\[
    \bA_i = \frac{1}{n} \cT(\by_i) \Mat_{a,b}(\cH_{d,\ell}(\bz_i)), \quad \text{and}\quad \tilde{\bB}_i = \frac{1}{n-1} \sum_{j\neq i} \cT(\tilde{\by}_j) \Mat_{a,b}(\cH_{d,\ell}(\tilde{\bz}_j)).
\]
Let \(\bE = \E[\cT(\by)\Mat_{a,b}(\cH_{d,\ell}(\bz))]\) and note that $\E[\widetilde{\bM} ] = \bE \bE^\sT$. We decompose 
\begin{equation}\label{eq:decompo_M_U_stat}
\widetilde{\bM} - \bE\bE^\sT = \bDelta_1 + \bDelta_2+\bDelta_3,
\end{equation}
where
\begin{gather*}
    \bDelta_1 := \sum_{i=1}^{n}(\bA_{i}-\bE)(\tilde{\bB}_{i}-\bE)^\sT,\qquad\bDelta_2 := \sum_{i=1}^{n}(\bA_{i} - \bE)\bE^\sT, \qquad  \bDelta_3 := \sum_{i=1}^{n}\bE(\tilde{\bB}_{i} - \bE)^\sT.
\end{gather*}
Note that \(\bDelta_1\), \(\bDelta_2\) and \(\bDelta_3\) are sums of independent centered random matrices. We will apply Lemma~\ref{lem:matrix_concentration} to these matrices, as well as to \(\tilde{\bB}_{i}-\bE\), to show concentration in operator norm of \eqref{eq:decompo_M_U_stat}.

\medskip
\noindent\textbf{Step 2: Concentration of \(\Tilde \bB_i-\bE\).} We first bound the parameters defined in~\eqref{eq:matrix_concentration_parameters}. We have
\begin{align*}
    \sigma_\ast(\Tilde \bB_i-\bE)^2 & = \sup_{\|\bu\|_2 = \|\bv\|_2 = 1} \E\left[\<\bu, (\Tilde \bB_i-\bE)\bv\>^2\right] 
    \\ & \leq \frac{1}{(n-1)^2}\sum_{j\in [n]\setminus \{i\}} \sup_{\|\bu\|_2 = \|\bv\|_2 = 1} \E\left[\<\bu, \cT(\by_j)\Mat_{a,b}(\cH_{d,\ell}(\bz_j))\bv\>^2\right].
\end{align*}
For every \(j\in [n]\setminus \{i\}\) and any \(\bX \in \R^{d^a \times d^b}\), it follows from Assumption \ref{ass:kernel}.\ref{ass:kernel-boundedness}, Lemma~\ref{lem:isometry_spherical_harmonics} and the fact that \(\Mat_{a,b}\) is an isometry with respect to the Frobenius norm that
\begin{equation}\label{eq:quad_forms_Bi}
    \E\left[\<\bX, \cT(\by_j)\Mat_{a,b}(\cH_{d,\ell}(\bz_j))\>^2_\frob\right] \leq B_K\left\|\ptf\left(\Mat_{a,b}^{-1}(\bX)\right)\right\|_\frob^2 \leq B_K\|\bX\|_\frob^2.
\end{equation}
Hence, applying~\eqref{eq:quad_forms_Bi} with \(\bX=\bu\bv^\sT\), we obtain \(\sigma_\ast(\tilde \bB_i-\bE)^2 \leq \frac{B_K}{n-1}\) and, by~\eqref{eq:sigma_vs_sigma_ast}, \(\sigma(\tilde \bB_i-\bE)^2 \leq \frac{B_K d^b}{n-1}\). Furthermore, by~\eqref{eq:quad_forms_Bi},
\begin{align*}
    v(\tilde \bB_i-\bE)^2 \leq \frac{1}{n-1}\sup_{\|\bX\|_\frob \leq 1}\E\left[\<\bX,\cT(\by_j) \Mat_{a,b}(\cH_{d,\ell}(\bz_j))\>^2_\frob\right] \leq \frac{B_K}{n-1}.
\end{align*}
Additionally, note that the summands \(\cT(\by_j) \Mat_{a,b}(\cH_{d,\ell}(\bz_j))/(n-1)\) are deterministically bounded in operator norm by \(\sqrt{B_K N_{d,\ell}}/(n-1)\) thanks to Assumption \ref{ass:kernel}.\ref{ass:kernel-boundedness} along with the definition of the harmonic tensor in~\eqref{eq:harmonic_tensor_definition} (see~\eqref{eq:definition_kappa}). Thus, we directly obtain
\begin{align*}
    \bar{R}(\Tilde \bB_i-\bE)^2 & = \E\left[\max_{j\in [n]\setminus \{i\}} \left\|\frac{1}{n-1}\cT(\by_j) \Mat_{a,b}(\cH_{d,\ell}(\bz_j))-\bE\right\|_\op^2\right]
    \\ & \leq \frac{4}{(n-1)^2}\E\left[\max_{j\in [n]\setminus \{i\}} \left\|\cT_k(\by_j) \Mat_{a,b}(\cH_{d,\ell}(\bz_j))\right\|_\op^2\right] \leq \frac{4 B_k N_{d,\ell}}{(n-1)^2}.
\end{align*}
Applying Lemma~\ref{lem:matrix_concentration} with the bounds above, we get that there exists a constant \(C>0\) such that for all \(t\geq 0\) and \(C\leq d\leq n^{2/\ell}\),
\[
    \|\Tilde \bB_i-\bE\|_\op \leq C\sqrt{B_K}\sqrt{\frac{d^b}{n}}\left(1+\frac{t^{1/2}}{d^{b/2}} + \left(\frac{d^a}{n}\right)^{1/12}t^{2/3} + \left(\frac{d^{a}}{n}\right)^{1/4}t\right),
\] 
with probability at least \(1 - (d^a+d^b+1)e^{-t}\). Choose \(0<c<3(b-a)/48\) and set \(t=d^{c}\) such that
$\|\bB_i-\bE\|_\op \leq C'\sqrt{B_K d^b/n}$ 
with probability at least \(1 - (d^a+d^b+1)e^{-d^{c}}\) for all \(C\leq d\leq n^{2/\ell}\). Since this bound holds uniformly over \(i\in [n]\), a union bound yields that
\begin{equation}\label{eq:concentration_max_Bi}
    \max_{i\in [n]}\|\Tilde \bB_i-\bE\|_\op \leq C'\sqrt{B_K}\sqrt{\frac{d^b}{n}},
\end{equation}
with probability at least \(1 - n(d^a+d^b+1)e^{-d^{c}}\) for all \(C\leq d\leq n^{2/\ell}\).

\medskip
\noindent\textbf{Step 3: Concentration of \(\bDelta_1\).} Let \(\bC_i := (\bA_{i}-\bE)(\tilde{\bB}_{i}-\bE)^\sT\) denote the summands of \(\bDelta_1\) for \(i\in [n]\). We apply Lemma~\ref{lem:matrix_concentration} to \(\bDelta_1\) conditionally on \(\tilde{\cD}\), where we place ourselves on the \(\tilde{\cD}\)-measurable event \eqref{eq:concentration_max_Bi}, which occurs with probability at least \(1 - n(d^a+d^b+1)e^{-d^{c}}\). By~\eqref{eq:quad_forms_Bi}, we have for every \(i\in [n]\),
\begin{align*}
    \E\left[\<\bX, \bC_{i}\>^2_\frob \mid \tilde{\cD}\right] & \leq \frac{1}{n^2} \E\left[\<\bX(\tilde{\bB}_{i}-\bE), \cT(\by_i)\Mat_{a,b}(\cH_{d,\ell}(\bz_i))\>^2_\frob \mid \tilde{\cD}\right]
        \\ & \leq \frac{B_K^2}{n^2} \|\bX(\tilde{\bB}_{i}-\bE)\|_\frob^2 \leq C^2\frac{B_K^2 d^b}{n^3}\|\bX\|_\frob^2,
\end{align*}
where we used \(\|\bM\bX\|_\frob \leq \|\bM\|_\op \|\bX\|_\frob\) and \eqref{eq:concentration_max_Bi}. Hence, if we denote by \(\sigma_\ast(\cdot\mid \tilde{\cD})\), \(v(\cdot\mid \tilde{\cD})\) and \(\bar{R}(\cdot\mid \tilde{\cD})\) the parameters defined in~\eqref{eq:matrix_concentration_parameters} but for conditional expectations given \(\tilde{\cD}\), it follows from a similar calculation as before that
\[
    \sigma_\ast(\bDelta_1 \mid \tilde{\cD})^2 \leq C^2 \frac{B_K^2 d^b}{n^2}, \quad \sigma(\bDelta_1 \mid \tilde{\cD})^2 \leq C^2 \frac{B_K^2 d^{\ell}}{n^2}, \quad \text{and}\quad v(\bDelta_1\mid \tilde{\cD})^2 \leq C^2 \frac{B_K^2 d^b}{n^2}.
\]
To bound the summands' operator norm, let \(p\geq 1\) be an integer and note that by monotonicity of \(L^p\)-norms and Minkowski's inequality,
\begin{align*}
    \left\|\max_{i\in [n]} \|\bC_i\|_\op\right\|^2_{L^{2p}(\mid \tilde{\cD})} & \leq \sum_{\alpha,\beta=1}^{d^a} \left\|\max_{i\in [n]} |\<\be_\alpha, \bC_i \be_\beta\>| \right\|_{L^{2p}(\mid \tilde{\cD})}^2
\end{align*}
where \(\{\be_\alpha\}_{\alpha=1}^{d^a}\) is the canonical basis of \(\R^{d^a}\). Since \(\max_{i\in [n]} |X_i| \leq (\sum_{i=1}^n |X_i|^{2p})^{1/(2p)}\) for any random variables \(\{X_i\}_{i=1}^n\),
\begin{align*}
     \left\|\max_{i\in [n]} \|\bC_i\|_\op\right\|_{L^{2p}(\mid \tilde{\cD})}^2 & \leq \sum_{\alpha,\beta=1}^{d^a} \left(\sum_{i=1}^n \E[|\<\be_\alpha, \bC_i \be_\beta\>|^{2p} \mid \tilde{\cD}] \right)^{1/p}.
\end{align*}
By Assumption \ref{ass:kernel}.\ref{ass:kernel-boundedness} and hypercontractivity~\eqref{eq:hypercontractivity-ineq-sphere}, for every \(i\in [n]\) and any \(\alpha,\beta\in [d^a]\),
\begin{align*}
    \E[|\<\be_\alpha, \bC_i \be_\beta\>|^{2p} \mid \tilde{\cD}] & \leq \frac{B_K^{p}}{n^{2p}}\E\left[\left|\<\be_\alpha, \Mat_{a,b}(\cH_{d,\ell}(\bz_i))(\tilde{\bB}_i-\bE)^\sT \be_\beta\>\right|^{2p} \mid \tilde{\cD}\right]
    \\ & \leq (2p-1)^{\ell p}\frac{B_K^{p}}{n^{2p}}\E\left[\left|\<\be_\alpha, \Mat_{a,b}(\cH_{d,\ell}(\bz_i))(\tilde{\bB}_i-\bE)^\sT \be_\beta\>\right|^{2} \mid \tilde{\cD}\right]^{p} .
\end{align*}
In particular, by~\eqref{eq:quad_forms_Bi},
\[
    \E[|\<\be_\alpha, \bC_i \be_\beta\>|^{2p} \mid \tilde{\cD}]  \leq (2p-1)^{\ell p}\frac{B_K^{2p}C^{2p}d^{pb}}{n^{3p}},
\]
and consequently,
\[
     \left\|\max_{i\in [n]} \|\bC_i\|_\op\right\|_{L^{2p}(\mid \tilde{\cD})} \leq \frac{CB_K(2p-1)^{\ell/2}d^{a/2}d^{\ell/2}}{n^{3/2-1/(2p)}}.
\]
Let \(c\in (0,1-2a/\ell)\) and \(p= \lfloor d^{c}\rfloor\). Then,
\[
     \left\|\max_{i\in [n]} \|\bC_i\|_\op\right\|_{L^{2p}(\mid \tilde{\cD})} \leq \frac{CB_K2^{\ell/2}d^{3\ell/4}}{d^{\epsilon}n^{3/2}}\exp\left(\frac{\log(n)}{2\lfloor d^c\rfloor}\right)
\]
where \(\epsilon = ((1-2c)\ell/2 - a)/2>0\). Monotonicity of \(L^p\) norms in \(p\) yields
\[
    \bar{R}(\bDelta_1\mid \tilde{\cD}) \leq  \left\|\max_{i\in [n]} \|\bC_i\|_\op\right\|_{L^{2p}(\mid \tilde{\cD})}\leq \frac{CB_K2^{\ell/2}d^{3\ell/4}}{d^{\epsilon}n^{3/2}}\exp\left(\frac{\log(n)}{2\lfloor d^c\rfloor}\right)
\]
while Markov's inequality gives
\begin{align*}
    \P\left(\max_{i\in [n]} \|\bC_i\|_\op\geq \frac{B_K d^{3\ell/4}}{d^{\epsilon/2}n^{3/2}}\mid \tilde{\cD}\right) &  \leq \left(\frac{C 2^{\ell/2}}{d^{\epsilon/2}}\exp\left(\frac{\log(n)}{2\lfloor d^c\rfloor}\right)\right)^{2\lfloor d^{c}\rfloor}.
\end{align*}
Suppose that \(d^{\ell/2}\leq n \leq e^{d^{c'}}\) for a suitably small constant \(c'>0\). Applying Lemma~\ref{lem:matrix_concentration} conditionally on \(\tilde{\cD}\) and using~\eqref{eq:concentration_max_Bi}, we get that there exists a constant \(C'>0\) such that for all \(d\geq C'\) and \(t\geq 0\),
\[
    \|\bDelta_1\|_\op \leq \frac{C'B_K d^{\ell/2}}{n}\left(1+\frac{\log(d^b)^{3/4}}{d^{a/4}} + \frac{t^{1/2}}{d^{a/2}} + \frac{t^{2/3}}{d^{\epsilon/12}}+\frac{t}{d^{\epsilon/4}}\right),
\]
with probability at least
\[
    1-\left(\frac{CB_K^{1/2}2^{\ell/2}}{d^{\epsilon/2}}\exp\left(\frac{\log(n)}{2\lfloor d^c\rfloor}\right)\right)^{2\lfloor d^{c}\rfloor} - (1+2d^a)e^{-t}-n(d^a+d^b+1)e^{-d^{c}}.
\]
In particular, we can take \(c''>0\) small enough and \(t=d^{c''}\) such that
\begin{equation}\label{eq:concentration_delta1}
    \|\bDelta_1\|_\op \leq \frac{C''B_K d^{\ell/2}}{n},
\end{equation}
with probability at least \(1-e^{-d^{c''}}\).

\medskip
\noindent\textbf{Step 4: Concentration of \(\bDelta_2\) and \(\bDelta_3\).} Note that
\[
  \bE = \E[\cT(\by)\Mat_{a,b}(\cH_{d,\ell}(\bz))] = \E[\cT(\by)\Mat_{a,b}(\bxi_{\emptyset,\ell}(\by))],  
\]
so that we can use Assumption \ref{ass:kernel}.\ref{ass:kernel-boundedness} to obtain $\|\bE\|_\frob \leq B_K^{1/2}\|\bxi_{\emptyset,\ell}\|_{L^2}$.
Applying a similar argument as the one leading to~\eqref{eq:concentration_delta1}, replacing the bound on \( \max_{i\in [n]}\|\bB_i-\bE\|_\op\) by the bound on \( \|\bE\|_\frob\) above, we get that there exist constants \(C,c,c'>0\) such that for \(d\geq C\) and \(d^{\ell/2}\leq n\leq e^{d^{c'}}\),
\begin{equation}\label{eq:concentration_delta2}
    \|\bDelta_2\|_\op,  \|\bDelta_3\|_\op \leq \frac{CB_Kd^{\ell/4}\|\bxi_{\emptyset,\ell}\|_{L^2}}{\sqrt{n}},
\end{equation}
with probability at least \(1-e^{-d^c}\). We omit these repetitive details. 

\medskip
\noindent\textbf{Step 5: Concluding.} Let \(t\geq 0\). By the decoupling inequality of~\cite{delapeña1999decouplinginequalitiestailprobabilities}, there exists a universal constant \(C>0\) such that
\begin{align*}
    \P\left(\|\widehat{\bM}-\E[\widehat{\bM}]\|_\op\geq t\right) &\leq C\P\left(\|\widetilde{\bM}-\bE\bE^{\sT}\|_\op\geq \frac{t}{C}\right) \leq C \sum_{j \in [3]} \P\left(\|\bDelta_j\|_\op\geq \frac{t}{3C}\right).
\end{align*}
Combining~\eqref{eq:concentration_delta1} and~\eqref{eq:concentration_delta2}, it follows that there exist constants \(C',c,c'>0\) that depends only on $\s,\ell$ and the constants in the assumptions such that for all \(d\geq C'\) and \(d^{\ell/2}\leq n\leq e^{d^{c'}}\),
\[
    \|\widehat{\bM}-\E[\widehat{\bM}]\|_\op \leq C'  \left(\frac{d^{\ell/2}}{n}+\frac{d^{\ell/4}\|\bxi_{\emptyset,\ell}\|_{L^2}}{\sqrt{n}}\right),
\]
with probability at least \(1- e^{-d^{c}}\). This concludes the proof.
\end{proof}

\begin{proof}[Proof of Lemma \ref{lem:concentration_U_statistic_final}: case \((a,b)=(1,0)\)] For $\ell = 1$ (space of vectors), we have simply \(\cH_{d,1}(\bz)=\sqrt{d}\kappa_{d,1}\bz\). Using the same decoupling argument as in the case \(1\leq a < b\), we introduce 
\[
    \widetilde{\bM} = \frac{d \kappa_{d,1}^2}{n(n-1)} \sum_{1 \leq i \neq j \leq n} \cT(\by_i) \cT(\tilde \by_j) \bz_i \Tilde \bz_j^\sT =: \sum_{i=1}^{n}\ba_{i}\tilde{\bb}_{i}^\sT,
\]
where \(\tilde{\cD}:=\{(\tilde{\by}_i,\tilde{\bz}_i)\}_{i=1}^{n}\) are i.i.d.~copies of \(\cD:=\{(\by_i,\bz_i)\}_{i=1}^{n}\) and
\[
    \ba_i = \frac{\sqrt{d}\kappa_{d,1}}{n} \cT(\by_i) \bz_i, \qquad  \tilde{\bb}_i = \frac{\sqrt{d}\kappa_{d,1}}{n-1} \sum_{j\neq i} \cT(\tilde{\by}_j) \tilde{\bz}_j.
\]
Let \(\be := \sqrt{d}\kappa_{d,1}\E[\cT(\by)\bz]\) and decompose \(\widetilde{\bM} - \be\be^\sT = \bDelta_1 + \bDelta_2+\bDelta_3\) as in the previous case \(1\leq a <b \), where
\begin{gather*}
    \bDelta_1 := \sum_{i=1}^{n}(\ba_{i}-\be)(\tilde{\bb}_{i}-\be)^\sT,\qquad\bDelta_2 := \sum_{i=1}^{n}(\ba_{i} - \be)\be^\sT, \qquad  \bDelta_3 := \sum_{i=1}^{n}\be(\tilde{\bb}_{i} - \be)^\sT.
\end{gather*}
Let \(\bu,\bv\in \S^{d-1}\) be arbitrary. By Assumption \ref{ass:kernel}.\ref{ass:kernel-boundedness} and the fact that \(\|\<\bz,\bu\>\|_{\psi_2}\lesssim 1/\sqrt{d}\), there exists a universal constant \(C_1>0\) such that $ \|\<\ba_i,\bu\>\|_{\psi_2} \leq C_1\kappa_{d,1}B_{K}^{1/2}/ n$.
Conditionally on \(\tilde{\cD}\), \(\bu^\sT\bDelta_1\bv\) is the sum of independent centered subgaussian random variables. Applying the subgaussian Hoeffding inequality conditionally on \(\tilde{\cD}\), there exists a constant \(C_2>0\) such that
\[
    \P\left(|\bu^\sT\bDelta_{1}\bv|\geq t \mid \tilde{\cD}\right) \leq 2\exp\left(-\frac{C_2 n t^2}{\kappa_{d,1}^2B_{K}\max_{i\in [n]}\|\bb_i-\be\|^2_2}\right),
\]
for any \(t\geq 0\). On the other hand, another application of Hoeffding's inequality yields $ \|\<\bb_i,\bv\>\|_{\psi_2}\leq  C\kappa_{d,1}B_{K}^{1/2}/\sqrt{n-1}$. A standard \(\epsilon\)-net argument with \(\epsilon=1/4\) gives
\[
    \P\left(\|\bb_i-\be\|_2 \geq \frac{C_3B_{K}^{1/2}\kappa_{d,1}\sqrt{d}}{\sqrt{n-1}}\right) \leq 2e^{-d},
\]
for some sufficiently large \(C_3>0\). Taking a union bound, we obtain
\[
    \P\left(|\bu^\sT\bDelta_{1}\bv|\geq t\right) \leq 2\exp\left(-\frac{C_4 n(n-1) t^2}{\kappa_{d,1}^{4}B_{K}^2d}\right) + 2ne^{-d}
\]
for any \(t\geq 0\), where \(C_4>0\) is a constant. Choosing \(t=C_5\kappa_{d,1}^2B_Kd/n\) and using a standard \(\epsilon\)-net argument, we get that there exist constants \(C,c,c'>0\) such that
\[
    \|\bDelta_1\|_\op \leq \frac{C\kappa_{d,1}^2B_Kd}{n},
\]
with probability at least \(1-e^{-d^{c}}\) for any \(C\leq d \leq n \leq e^{d^{c'}}\). Applying similar arguments, using \(\|\be\|_2 \leq B_k^{1/2}\|\bxi_{\emptyset,1}\|_{L^2}\), we obtain
\[
    \|\bDelta_2\|_\op \leq \frac{CB_K\sqrt{d}\|\bxi_{\emptyset,\ell}\|_{L^2}}{\sqrt{n}},\qquad \|\bDelta_3\|_\op\leq \frac{CB_K\sqrt{d}\|\bxi_{\emptyset,\ell}\|_{L^2}}{\sqrt{n}},
\]
with probability at least \(1-e^{-d^c}\). The result follows similarly as in the case $1 \leq a < b$.
\end{proof}

\subsection{One-step of tensor unfolding: the symmetric case}\label{app:symmetric_one_step}

We now consider Algorithm \ref{alg:tensor_unfold_one_step} with symmetric unfolding $a = b = \ell/2$ ($\ell$ even). In that case, the empirical unfolded matrix is given by
\[
\widehat{\bM} = \frac{1}{n^2} \sum_{1 \leq i , j \leq n} K(\by_i,\by_j) \Mat_{a,a} (\cH_{d,\ell} (\bz_i)) \Mat_{a,a} (\cH_{d,\ell} (\bz_j))^\sT \in \R^{d^a \times d^a}.
\]
Under the finite-rank Assumption \ref{ass:kernel}.\ref{ass:finite_rank}, we can rewrite this matrix as
\begin{align}
  \widehat{\bM} = \sum_{r = 1}^\m \widehat{\bS}_r  \widehat{\bS}_r^\sT, \qquad\qquad \widehat{\bS}_r:= \frac{1}{n} \sum_{1 \leq i  \leq n} \cT_r (\by_i) \Mat_{a,a} (\cH_{d,\ell}(\bz_i) ) \in \R^{d^a \times d^a}.
\end{align}
We proceed similarly as in the proof of the asymmetric case, but now comparing to the population $\E\big[ \widehat{\bS}_r\big]\E\big[ \widehat{\bS}_r\big]^\sT$ instead of $\E\big[\widehat{\bM}\big]^\sT$. The following two lemmas are analogous to Lemmas \ref{lem:spectral_bounds_population_U_stat} and \ref{lem:concentration_U_statistic_final}.

\begin{lemma}\label{lem:spectral_bounds_population_symmetric}
    Under Assumptions \ref{ass:spectral-gap} and Assumption \ref{ass:kernel}, there exist constants \(c,C,C'>0\) that only depend on $\s$, $\ell$, and the constants in these assumptions, such that for all \(d\geq C'\) and $\gamma >1$, 
    \begin{equation}
    c\|\bxi_{\emptyset,\ell}\|_{L^2}^2 \bPi_0^{(a)}\preceq \sum_{r =1}^\m \E\big[ \widehat{\bS}_r\big]\E\big[ \widehat{\bS}_r\big]^\sT \preceq C \|\bxi_{\emptyset,\ell} \|_{L^2}^2 \left(\bPi_0^{(a)} +  \frac{1}{\gamma}\bPi_+^{(a)} + \frac{1}{\sqrt{d}} \bI_{d^a}\right).
    \end{equation}
\end{lemma}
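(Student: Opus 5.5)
The plan is to reduce the statement to the same quadratic-form analysis used for Lemma~\ref{lem:spectral_bounds_population_U_stat}. The sum $\sum_r \E[\widehat{\bS}_r]\E[\widehat{\bS}_r]^\sT$ is exactly the "off-diagonal" (decoupled) population term of $\widehat{\bM}$. Since the samples are i.i.d., $\E[\widehat{\bS}_r]=\E[\cT_r(\by)\Mat_{a,a}(\cH_{d,\ell}(\bz))]=\E[\cT_r(\by)\Mat_{a,a}(\bxi_{\emptyset,\ell}(\by))]$. Summing over $r$ with the finite-rank decomposition of Assumption~\ref{ass:kernel}.\ref{ass:finite_rank} gives
\[
\sum_{r=1}^\m \E[\widehat{\bS}_r]\E[\widehat{\bS}_r]^\sT=\E\left[K(\by,\by')\Mat_{a,a}(\bxi_{\emptyset,\ell}(\by))\Mat_{a,a}(\bxi_{\emptyset,\ell}(\by'))^\sT\right],
\]
with $\by,\by'$ independent. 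This is the same expression as \eqref{eq:expectation_Mhat_asym_PSD}, now with $b=a$. The matrix is evidently PSD as a sum of Gram matrices.

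First, I would invoke Lemma~\ref{lem:population_unfolded_U_stat}. Its proof never used $a\neq b$, only Assumption~\ref{ass:kernel}.\ref{ass:kernel-boundedness}, the approximation \eqref{eq:approximation_xi_zeta} of $\bxi_{\emptyset,\ell}$ by $\bW^{\otimes\ell}\bzeta_{\emptyset,\ell}$, and the factorization $\Mat_{a,b}(\bW^{\otimes \ell}\bzeta)=\bW^{\otimes a}\Mat_{a,b}(\bzeta)(\bW^{\otimes b})^\sT$. It replaces the matrix by $\bW^{\otimes a}\E[K\,\Mat_{a,a}(\bzeta_{\emptyset,\ell}(\by))\Mat_{a,a}(\bzeta_{\emptyset,\ell}(\by'))^\sT](\bW^{\otimes a})^\sT$ up to an error $\bDelta_{d,\ell}$ with $\|\bDelta_{d,\ell}\|_\op\le \|\bDelta_{d,\ell}\|_\frob\lesssim \|\bxi_{\emptyset,\ell}\|_{L^2}^2/\sqrt d$. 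This error produces the $d^{-1/2}\bI_{d^a}$ term in the upper bound. For the lower bound, it is absorbed once $d\ge C'$, because the main term is bounded below by a constant times $\|\bxi_{\emptyset,\ell}\|_{L^2}^2$ on $\Span(\bPi_0^{(a)})$.

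Next, for $\bv\in\R^{d^a}$, set $\bv_\bW=(\bW^{\otimes a})^\sT\bv$ and write the quadratic form as $\E[K(\by,\by')\<\bzeta_{\emptyset,\ell}(\by)[\bv_\bW],\bzeta_{\emptyset,\ell}(\by')[\bv_\bW]\>_\frob]$. Boundedness together with Cauchy–Schwarz gives an upper bound $B_K\|\bzeta_{\emptyset,\ell}[\bv_\bW]\|_{L^2}^2$. Assumption~\ref{ass:kernel}.\ref{ass:kernel-positive-definite}, applied to each contracted coordinate, gives the lower bound $c_K\|\bzeta_{\emptyset,\ell}[\bv_\bW]\|_{L^2}^2$. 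From there, the decomposition \eqref{eq:decomposition_zeta_v_W} in the eigenbasis of $\bUpsilon_{\emptyset,\ell}$, the sandwich bounds \eqref{eq:eigenvalues_unfolded_Z_a} on $\bW^{\otimes a}\bP_0^{(a)}(\bW^{\otimes a})^\sT$ and $\bW^{\otimes a}\bP_+^{(a)}(\bW^{\otimes a})^\sT$, the spectral gap Assumption~\ref{ass:spectral-gap}, and the comparison \eqref{eq:bounds_mu_1} of $\mu_1$ with $\|\bxi_{\emptyset,\ell}\|_{L^2}^2$ give the two-sided bound verbatim. None of these steps depends on whether $a<b$ or $a=b$; the fact $2\ell-a\ge\ell$ used in computing $\bZ_0\otimes_{2\ell-a}\bZ_0$ still holds.

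The only point needing care is the kernel lower bound step. Assumption~\ref{ass:kernel}.\ref{ass:kernel-positive-definite} is stated for scalar contractions $\<\bV,\bzeta\>$, while here the contraction $\bzeta[\bv_\bW]$ is tensor-valued. I would expand $\bzeta[\bv_\bW]$ in an orthonormal basis of the remaining $a$ indices, apply the assumption to each coordinate tensor $\bv_\bW\otimes \be_\beta$ (after symmetrization, which does not change the pairing with the symmetric $\bzeta$), and sum over coordinates. This is exactly as in the asymmetric proof. Since that step was already accepted in \eqref{eq:first_lower_bound_zeta}, I expect no genuine obstacle, and the proof amounts to observing that the asymmetric argument transfers.
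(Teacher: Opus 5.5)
Your reduction matches the paper's: the identity $\sum_{r}\E[\widehat{\bS}_r]\E[\widehat{\bS}_r]^\sT=\E[K(\by,\by')\Mat_{a,a}(\bxi_{\emptyset,\ell}(\by))\Mat_{a,a}(\bxi_{\emptyset,\ell}(\by'))^\sT]=:\bM$, followed by the quadratic-form argument of Lemma~\ref{lem:spectral_bounds_population_U_stat}. The upper bound is fine. The gap is the step where you absorb $\bDelta_{d,\ell}$ into the lower bound. The argument only gives, for all $\bv\in\R^{d^a}$,
\[
\bv^\sT\bM\bv \;\geq\; c\,\|\bxi_{\emptyset,\ell}\|_{L^2}^2\,\langle \bv,\bPi_0^{(a)}\bv\rangle-C\,\|\bxi_{\emptyset,\ell}\|_{L^2}^2\,d^{-1/2}\,\|\bv\|_2^2 .
\]
The error scales with $\|\bv\|_2^2$, not with $\langle\bv,\bPi_0^{(a)}\bv\rangle$. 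So it cannot be absorbed for vectors with a small component in $\Span(\bPi_0^{(a)})$ and a large orthogonal one. Positivity of $\bM$ does not help: if $\ker\bM$ contains a vector with nonzero projection onto $\Span(\bPi_0^{(a)})$, then no bound $\bM\succeq c\,\bPi_0^{(a)}$ with $c>0$ can hold.

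This does happen, so the Loewner lower bound is false as stated once $a\geq 2$. Every $\cH_{d,\ell}(\bz)$ is traceless, hence $\E[\widehat{\bS}_r]^\sT\Mat_{a,0}(\bI_d\otimes\bT')=0$ for all $\bT'\in(\R^d)^{\otimes(a-2)}$. These trace directions are not orthogonal to $\Span(\bW^{\otimes a})\supseteq\Span(\bPi_0^{(a)})$. Concretely, take $\s=1$ with $\bW=\bw$, $\ell=4$, $a=2$. Then $\bPi_0^{(2)}=\mathrm{vec}(\bw\bw^\sT)\mathrm{vec}(\bw\bw^\sT)^\sT$, and $\bv=\mathrm{vec}(\bI_d)$ gives $\bv^\sT\bM\bv=0$ while $\bv^\sT\bPi_0^{(2)}\bv=(\Tr\,\bw\bw^\sT)^2=1$.

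What your argument does prove is $\bM\succeq c\|\bxi_{\emptyset,\ell}\|_{L^2}^2\bPi_0^{(a)}-C\|\bxi_{\emptyset,\ell}\|_{L^2}^2d^{-1/2}\bI_{d^a}$. Hence you get the compressed bound $\bPi_0^{(a)}\bM\bPi_0^{(a)}\succeq\tfrac{c}{2}\|\bxi_{\emptyset,\ell}\|_{L^2}^2\bPi_0^{(a)}$ for $d\geq C'$. Together with the upper bound, this is all the Davis--Kahan step in the proof of Theorem~\ref{thm:tensor_unfolding_one_step} uses. There, the comparison matrix $\bB$ involves only the diagonal blocks, and the off-diagonal block is controlled by the upper bound via Cauchy--Schwarz. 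So you should state and prove this corrected lower bound instead.

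The paper's one-line proof has exactly the same flaw. It is inherited from the final ``yields the desired result'' in the proof of Lemma~\ref{lem:spectral_bounds_population_U_stat}.

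A second inherited issue concerns the compactness claim behind \eqref{eq:eigenvalues_unfolded_Z_a}, which you cite. It does not give a lower bound $\lambda_{0,1}$ that is uniform over orthonormal families $\{\bV_j\}$. For $\s\geq 2$, $\bV_1=\cos\theta\,\be_1^{\otimes\ell}+\sin\theta\,\be_2^{\otimes\ell}$ gives smallest nonzero eigenvalue $\min(\cos^2\theta,\sin^2\theta)$. So $c$ depends on $\nu_d$ through the eigenvectors of $\bUpsilon_{\emptyset,\ell}$, not only on $\s$, $\ell$, and the constants in the assumptions.
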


This lemma follows directly from Lemma \ref{lem:spectral_bounds_population_U_stat} by noting that
\[
\sum_{r =1}^\m \E\big[ \widehat{\bS}_r\big]\E\big[ \widehat{\bS}_r\big]^\sT = \E\left[K(\by,\by') \Mat_{a,b} \big(\cH_{d,\ell}(\bz)\big) \Mat_{a,b} \big(\cH_{d,\ell}(\bz')\big)^\sT\right].
\]

The concentration part proceeds differently. We expand \(\widehat{\bS}_r\) using the semisimple decomposition of the tensor space \(\TSym_{\ell}(\R^d)\otimes \TSym_{\ell}(\R^d)\) (established in Lemma~\ref{lem:product-harmonic-tensors-decomposition}). The leading component corresponds to a sample covariance matrix, which we bound via a Bai-Yin-type inequality (for heavy-tailed random vectors). The other components are lower-dimensional and are bounded using the same matrix concentration bound (Lemma~\ref{lem:matrix_concentration}) as in the asymmetric case. 

% The leading component in the decomposition is controlled using a Bai-Yin-type inequality for sample covariance matrices with heavy-tailed entries.
% the semisimple decomposition of \(\TSym_{a}(\R^d)\otimes \TSym_{a}(\R^d)\) to expand $\Mat_{a,a} (\cH_{d,\ell}(\bz))$ as $\Mat_{a,0}(\cH_{d,a} (\bz)) \Mat_{a,0}(\cH_{d,a} (\bz))^\sT$ (a covariance part) plus remainder terms. We then show concentration of $\bS_r - \E[\bS_r]$ using a Bai-Yin type inequality for the sample covariance matrix part (with heavy-tailed entries), and Lemma~\ref{lem:matrix_concentration} for the remainder terms.

\begin{lemma}[Concentration: symmetric case $a = b$]\label{lem:concentration_aggregated_symmetric_unfolding}
Under Assumption \ref{ass:kernel}.\ref{ass:kernel-boundedness} and \ref{ass:kernel}.\ref{ass:finite_rank}, there exist constants $c,C,C' >0$ that only depend on $\ell$, $\s$, and the constants in these assumptions such that for any \(n,d\) with $d \geq C'$ and \( n \geq d^{\ell/2}\), we have  with probability at least \(1 - e^{-d^{c}}\),
    \begin{equation}
        \left\|\widehat{\bM}  - \sum_{r=1}^{\m}\E\big[ \widehat{\bS}_r\big]\E\big[ \widehat{\bS}_r\big]^\sT \right\|_\op \leq C \left[ \frac{d^{\ell/2}}{n} + \frac{d^{\ell/4} \|\bxi_{\emptyset,\ell}\|_{L^2}}{\sqrt{n}} \right].
    \end{equation}
\end{lemma}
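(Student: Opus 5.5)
By the finite-rank decomposition $\widehat{\bM} = \sum_{r} \widehat{\bS}_r\widehat{\bS}_r^\sT$ and a union bound over $r\in[\m]$, I reduce to a single rank-one kernel $\cT$ with $\|\cT\|_\infty \le B_K^{1/2}$. Write $\widehat{\bS} = \bE + \bG$, where $\bE := \E[\widehat{\bS}] = \E[\cT(\by)\Mat_{a,a}(\bxi_{\emptyset,\ell}(\by))]$ and $\bG := \frac1n\sum_i \bX_i$ with $\bX_i := \cT(\by_i)\Mat_{a,a}(\cH_{d,\ell}(\bz_i)) - \bE$ centered i.i.d. Then
\[
\widehat{\bS}\widehat{\bS}^\sT - \bE\bE^\sT = \bE\bG^\sT + \bG\bE^\sT + \bG\bG^\sT .
\]
The cross terms are sums of independent centered matrices. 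I control them exactly as $\bDelta_2,\bDelta_3$ in Step 4 of the proof of Lemma~\ref{lem:concentration_U_statistic_final}. The inputs are $\|\bE\|_\frob \le B_K^{1/2}\|\bxi_{\emptyset,\ell}\|_{L^2}$, the quadratic-form bound \eqref{eq:quad_forms_Bi}, hypercontractivity \eqref{eq:hypercontractivity-ineq-sphere}, and Lemma~\ref{lem:matrix_concentration}. This gives the term $d^{\ell/4}\|\bxi_{\emptyset,\ell}\|_{L^2}/\sqrt n$. It remains to show $\|\bG\|_\op^2 \lesssim d^{\ell/2}/n$, that is, $\|\bG\|_\op \lesssim \sqrt{d^{a}/n}$ with $a=\ell/2$.

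Lemma~\ref{lem:matrix_concentration} alone cannot give this. For a square $d^a\times d^a$ unfolding, the parameter $\sigma(\bG)^2 \lesssim d^a/n$ is the right size. However, the summands have operator norm up to $\sqrt{N_{d,\ell}}/n \approx d^a/n$, and the free-probability bound in Lemma~\ref{lem:matrix_concentration} needs the residual terms $(d^a/n)^{1/12}$ and similar to be small relative to a polynomially unbalanced shape. With $a=b$ no such room exists. This is why the diagonal terms $i=j$ are kept in the symmetric case. The difficulty is structural: $\Mat_{a,a}(\cH_{d,\ell}(\bz))$ is essentially rank one plus lower-order corrections. I expand it with the Clebsch--Gordan decomposition (Lemma~\ref{lem:product-harmonic-tensors-decomposition}) applied to $\cH_{d,a}(\bz)\otimes\cH_{d,a}(\bz)$, inverted to express $\cH_{d,\ell}$:
\[
\Mat_{a,a}(\cH_{d,\ell}(\bz)) = \beta_0\,\Mat_{a,0}(\cH_{d,a}(\bz))\Mat_{a,0}(\cH_{d,a}(\bz))^\sT + \sum_{j\ge1}\bR_j(\bz),
\]
where $\beta_0=\Theta_d(1)$. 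Each remainder $\bR_j$ is a linear image of $\cH_{d,2a-2j}(\bz)$, with the identity/trace structure contributing factors of $\bI$ or lower-degree harmonics. After projecting away deterministic identity parts, which have zero mean against $\cT$ up to $\bE$, the remainders are effectively unfoldings of lower-degree tensors. They have smaller dimension relative to their $d^a\times d^a$ ambient shape, so Lemma~\ref{lem:matrix_concentration} does have slack for them and yields $\|\cdot\|_\op \lesssim \sqrt{d^{a}/n}$, following Steps 2--3 of the asymmetric proof.

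The leading term is $\frac{\beta_0}{n}\sum_i \cT(\by_i)\bh_i\bh_i^\sT$ minus its mean, where $\bh_i := \Mat_{a,0}(\cH_{d,a}(\bz_i))\in\R^{d^a}$ are isotropic on $\TSym_a(\R^d)$ with $\E[\bh\bh^\sT]=\id$ and $\|\bh\|_2^2 = N_{d,a}$. Its norm is at most $\|\frac1n\sum_i \cT(\by_i)^{\pm}\bh_i\bh_i^\sT\| + B_K$ for each sign part. Splitting $\cT=\cT^+-\cT^-$ with weights $\le B_K^{1/2}$, it suffices to bound $\|\frac1n\sum_i w_i\bh_i\bh_i^\sT\|_\op$ with bounded weights. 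Since $\bh_i$ has norm exactly $\sqrt{N_{d,a}}$ and sphere-hypercontractive marginals, a Bai--Yin-type bound for heavy-tailed isotropic vectors applies; I would use the Rudelson-type bound for bounded-norm vectors. This gives $\lesssim 1 + N_{d,a}\log d/n + \sqrt{N_{d,a}\log d/n}$, which is $O(1)$ once $n\gtrsim d^{a}\log d$. The main obstacle is the logarithm, since the statement allows $n \ge d^{\ell/2}$ without one. Here $O(1)\lesssim d^{\ell/2}/n$ only when $n\lesssim d^{\ell/2}$, and otherwise I need the normalized deviation to be $\lesssim \sqrt{d^a/n}$. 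I would therefore invoke a log-free sharp bound, such as Adamczak--Litvak--Pajor--Tomczak-Jaegermann or Tikhomirov's Bai--Yin for vectors with hypercontractive one-dimensional marginals; the latter gives $\|\frac1n\sum \bh\bh^\sT - \id\| \lesssim \sqrt{N_{d,a}/n} + N_{d,a}/n$. The weight $w_i$ is handled by conditioning on $\by$ or by comparison with $w\le B_K$. Combining all pieces gives $\|\bG\|_\op^2\lesssim d^{\ell/2}/n$ with probability $1-e^{-d^c}$, which concludes the proof.
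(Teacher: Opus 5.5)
Your architecture is the paper's. You reduce to a rank-one kernel and split $\Mat_{a,a}(\cH_{d,\ell}(\bz))$ via Clebsch--Gordan (Lemma~\ref{lem:product-harmonic-tensors-decomposition}) into $\bh\bh^\sT/b^{(d)}_{a,a,0}$ plus remainders. The remainders are controlled by the smaller unfoldings $\Mat_{a-j,a-j}(\cH_{d,\ell-2j}(\bz))$, where Lemma~\ref{lem:matrix_concentration} has room to spare since $n\ge d^{a}\gg d^{a-j}$. The leading term gets a log-free heavy-tailed covariance bound after splitting $\cT=\cT_+-\cT_-$, and you finish with the expansion $\bE\bG^\sT+\bG\bE^\sT+\bG\bG^\sT$. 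Your separate concentration argument for the cross terms is unnecessary. You need $\|\bG\|_\op\lesssim\sqrt{d^{\ell/2}/n}$ anyway for $\bG\bG^\sT$, and then $\|\bE\bG^\sT\|_\op\le B_K^{1/2}\|\bxi_{\emptyset,\ell}\|_{L^2}\|\bG\|_\op$ gives the second term directly; this is how the paper concludes.

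The step that does not go through as written is the handling of the weights $w_i=\cT_\pm(\by_i)$ in the leading term. This is exactly where the $\sqrt{d^{\ell/2}/n}$ \emph{deviation} bound for $n\gg d^{\ell/2}$ has to come from.
\begin{itemize}
\item The comparison $\frac1n\sum_i w_i\bh_i\bh_i^\sT\preceq\frac{B_K^{1/2}}{n}\sum_i\bh_i\bh_i^\sT$ only bounds the \emph{norm} of the weighted sum by $O(1)$. It says nothing about the deviation from $\E[w\bh\bh^\sT]$.
\item Conditioning on the $\by_i$ does not help either. Given $\by_i$, $\bz_i$ is no longer uniform on the sphere, since its law depends on $\bW^\sT\bz_i$. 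The $\bh_i$ are then neither identically distributed nor isotropic, and the sphere hypercontractivity \eqref{eq:hypercontractivity-ineq-sphere} is unavailable for their marginals. So the isotropic i.i.d.\ Bai--Yin results you cite do not apply.
\end{itemize}

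The fix, which is what the paper does, is to keep the weights inside the vectors. The vectors $\bx_i:=\cT_\pm(\by_i)^{1/2}\bh_i$ are i.i.d. Because $\bz_i\sim\tau_d$ marginally and $0\le\cT_\pm\le B_K^{1/2}$ pointwise, one gets $\E\langle\bx_i,\bu\rangle^{2p}\le B_K^{p/2}(2p-1)^{p\ell/2}$ for unit $\bu$, i.e.\ sub-Weibull marginals with exponent $\alpha<4/\ell$. The paper then applies a heavy-tailed sample-covariance estimate for such non-isotropic vectors (Gu\'edon--Litvak--Pajor--Tomczak-Jaegermann, Theorem~4.7). This gives
\[
\Big\|\tfrac1n\sum_i\bx_i\bx_i^\sT-\E[\bx\bx^\sT]\Big\|_\op\lesssim\tfrac1n\max_i\|\bx_i\|_2^2+\sqrt{d^{\ell/2}/n}\lesssim\sqrt{d^{\ell/2}/n},
\]
using $\|\bx_i\|_2^2\le B_K^{1/2}N_{d,\ell/2}$ and $n\ge d^{\ell/2}$.

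Also mind the probability level. As far as I recall, Tikhomirov's bound is stated for isotropic vectors and has failure probability of order $1/\dim$. Here that is only polynomially small in $d$, short of the required $1-e^{-d^c}$.
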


The proofs of this lemma can be found in Appendix \ref{app:proof_concentration-symmetric-one-step} below. Using these two lemmas, the proof of Theorem \ref{thm:tensor_unfolding_one_step} with symmetric unfolding is identical to the proof with asymmetric unfolding.

\subsubsection{Proof of Lemma \ref{lem:concentration_aggregated_symmetric_unfolding}}\label{app:proof_concentration-symmetric-one-step}

\begin{proof}[Proof of Lemma \ref{lem:concentration_aggregated_symmetric_unfolding}] Again, by union bound, it is sufficient to prove this lemma for $\sm = 1$. In the following, we assume that $K(\by,\by') = \cT(\by) \cT(\by')$ with $\| \cT \|_{\infty} \leq B_K^{1/2}$ and write $\widehat{\bS} := \widehat{\bS}_1$.

    \medskip
    \noindent\textbf{Step 1: Decomposition of \(\widehat{\bS}\).} Let \(\bu \in \S^{d^a -1}\) be arbitrary and denote by \(\bU=\Mat^{-1}_{a,0}(\bu)\in (\R^d)^{\otimes a}\) its tensor representation. By Lemma~\ref{lem:product-harmonic-tensors-decomposition}, we can decompose
    \begin{multline*}
        b^{(d)}_{a,a,0}\<\cH_{d,\ell}(\bz), \ptf(\bU) \otimes \ptf(\bU)\>_\frob = \<\cH_{d,a}(\bz)\otimes \cH_{d,a}(\bz),\ptf(\bU)\otimes \ptf(\bU)\>_\frob
        \\ - \sum_{j = 1}^{\ell/2} b^{(d)}_{a,a,j}\<\cH_{d,\ell - 2j} (\bz), \ptf(\bU) \diamond_j \ptf(\bU)\>_\frob,
    \end{multline*}
    where the coefficients \(b^{(d)}_{a,a,j} = \Theta_d(1)\) are explicitly given in Lemma~\ref{lem:product-harmonic-tensors-decomposition}. For convenience, denote $\bh_{d,a}(\bz) := \Mat_{a,0} (\cH_{d,a} (\bz)) \in \R^{d^a- 1}$. Then,
    \begin{align*}
        &\|\widehat{\bS} - \E[\widehat{\bS}]\|_\op  \leq \frac{1}{b^{(d)}_{a,a,0}}  \left\|\frac{1}{n}\sum_{i=1}^{n} \cT(\by_i) \bh_{d,a}(\bz_i)\bh_{d,a}(\bz_i)^\sT - \E\left[\cT(\by)\bh_{d,a}(\bz)\bh_{d,a}(\bz)^\sT\right]\right\|_\op
        \\ & \quad + \sum_{j=1}^{\ell/2} \sup_{\bu\in \S^{d^a-1}} \left|\frac{b^{(d)}_{a,a,j}}{b^{(d)}_{a,a,0}} \left\<\frac{1}{n}\sum_{i=1}^{n} \cT(\by_i) \cH_{d,\ell - 2j}(\bz_i)- \E[\cT(\by)\cH_{d,\ell - 2j}(\bz)], \ptf(\bU) \diamond_j \ptf(\bU)\right\>_\frob\right|.
    \end{align*}
    Denote the first term on the right-hand side by \(\bDelta_0\) and the \(j\)-th term in the sum by \(\bDelta_j\) for \(j\in [\ell/2]\). For any \(j\in [\ell/2]\), it follows from the definition of the contracted product \(\diamond_j\) that
    \begin{align*}
        &~\left|\frac{b^{(d)}_{a,a,j}}{b^{(d)}_{a,a,0}} \left\<\frac{1}{n}\sum_{i=1}^{n} \cT(\by_i) \cH_{d,\ell - 2j}(\bz_i)- \E[\cT(\by)\cH_{d,\ell - 2j}(\bz)], \ptf(\bU) \diamond_j \ptf(\bU)\right\>_\frob\right| 
        \\ \lesssim &~ \left|\left\<\frac{1}{n}\sum_{i=1}^{n} \cT(\by_i) \cH_{d,\ell - 2j}(\bz_i)- \E[\cT(\by)\cH_{d,\ell - 2j}(\bz)], \ptf(\bU) \otimes_j \ptf(\bU)\right\>_\frob\right|
        \\ =&~\left|\left\<\Mat_{a-j,j}(\ptf(\bU))^\sT \left(\widehat{\bS}^{(\ell - 2j)} - \E[\widehat{\bS}^{(\ell - 2j)}]\right) \Mat_{a-j,j}(\ptf(\bU))\right\>_\frob\right|
    \end{align*}
    where we defined
    \[
        \widehat{\bS}^{(\ell-2j)} := \frac{1}{n}\sum_{i=1}^{n} \cT(\by_i) \Mat_{a-j,a-j}(\cH_{d,\ell - 2j}(\bz_i)) \in \R^{d^{a-j}\times d^{a-j}}.
    \]
    Since \(\|\Mat_{a-j,j}(\ptf(\bU))\|_\frob \leq \|\bU\|_\frob = 1\), we get that for any \(j\in \{1,\ldots,\ell/2\}\),
    \[
        \bDelta_j \lesssim \left\|\widehat{\bS}^{(\ell - 2j)} - \E[\widehat{\bS}^{(\ell - 2j)}]\right\|_\op.
    \]

    \medskip
    \noindent\textbf{Step 2: Concentration of \(\bDelta_j\) for \(j\in \{1,\ldots,\ell/2\}\).} Fix \(j\in \{1,\ldots,\ell/2\}\). Let us bound the parameters appearing in~\eqref{eq:matrix_concentration_parameters}. By Assumption \ref{ass:kernel}.\ref{ass:kernel-boundedness} and~\eqref{eq:definition_kappa},
    \[
        \left\|\cT(\by)\Mat_{a-j,a-j}\left(\cH_{d,\ell-2j}(\bz)\right)\right\|_\frob \leq B_K^{1/2}\sqrt{N_{d,\ell-2j}},
    \]
    and thus
    \[
        \bar{R}\left(\widehat{\bS}^{(\ell-2j)}-\E[\widehat{\bS}^{(\ell-2j)}]\right)^2 \leq\frac{4B_K N_{d,\ell-2j}}{n^2}.
    \]
    Furthermore, we can use Assumption \ref{ass:kernel}.\ref{ass:kernel-boundedness} along with Lemma~\ref{lem:isometry_spherical_harmonics} to get
    \begin{align*}
        \sigma_\ast\left(\widehat{\bS}^{(\ell-2j)}-\E[\widehat{\bS}^{(\ell-2j)}]\right)^2 & \leq \frac{1}{n^2}\sum_{i=1}^{n}\sup_{\|\bu\|_2=\|\bv\|_2=1}\E\left[\left\<\cT(\by_i)\cH_{d,\ell-2j}(\bz_i),\Mat_{\ell/2-j,\ell/2-j}^{-1}(\bu\bv^\sT)\right\>^2_\frob\right]
        \\ & \leq \frac{B_K}{n},
    \end{align*}
    and, by a similar argument,
    \[
        v\left(\widehat{\bS}^{(\ell-2j)}-\E[\widehat{\bS}^{(\ell-2j)}]\right)^2 \leq \frac{B_K}{n}.
    \]
    Therefore, applying Lemma~\ref{lem:matrix_concentration}, we get that there exists constants \(C,C'>0\) such that, for all \(t\geq 0\) and \(C'\leq d\leq n^{2/\ell}\),
    \[
        \left\|\widehat{\bS}^{(\ell-2j)}-\E[\widehat{\bS}^{(\ell-2j)}]\right\|_\op \leq \frac{Cd^{\frac{\ell}{4}-\frac{j}{2}}}{\sqrt{n}}\left(1+\frac{t^{1/2}}{n} + \left(\frac{d^{\frac{\ell}{2}-j}}{n}\right)^{1/12}t^{2/3} + \left(\frac{d^{\frac{\ell}{2}-j}}{n}\right)^{1/4} t\right),
    \]
    with probability at least \(1 - 3d^{\frac{\ell}{2}-j}e^{-t}\). In particular, we can choose \(t=d^{c}\) for some sufficiently small constant \(c>0\) such that
    \[
        \max_{j\in \{1,\ldots, \ell/2\}}\|\bDelta_j\|_\op \leq \frac{C'd^{\frac{\ell}{4}-\frac{1}{2}}}{\sqrt{n}} \lesssim \frac{1}{\sqrt{d}},
    \]
    with probability at least \(1 -  e^{-d^{c}}\).

    \medskip
    \noindent\textbf{Step 3: Concentration of \(\bDelta_0\).} We now control the term \(\bDelta_0\) using a Bai-Yin-type inequality for sample covariance matrices with heavy-tailed entries. Decompose \(\cT(\by)\) as \(\cT(\by) = \cT_{+}(\by) - \cT_{-}(\by)\) where \(\cT_{+}(\by) = \max\{\cT(\by),0\}\) and \(\cT_{-}(\by) = \max\{-\cT(\by),0\}\). By the triangle inequality and the fact that \(|b_{a,a,0}^{(d)}| = \Theta_d(1)\), it suffices to control the concentration of the two matrices
    \[
       \bDelta_{0,\pm} :=  \frac{1}{n}\sum_{i=1}^n \cT_{\pm}(\by_i)\bh_{d,\ell/2}(\bz_i) \bh_{d,\ell/2}(\bz_i)^\sT - \E[\cT_{\pm}(\by)\bh_{d,\ell/2}(\bz) \bh_{d,\ell/2}(\bz)^\sT].
    \]
    We focus on the matrix \(\bDelta_{0,+}\); \(\bDelta_{0,-}\) can be treated similarly. For every deterministic unit vector \(\bu\in \R^{d^{\ell/2}}\), we have by hypercontractivity on the sphere~\eqref{eq:hypercontractivity-ineq-sphere} and Assumption~\ref{ass:kernel}.\ref{ass:kernel-boundedness},
    \[
        \E[\<\cT_{+}^{1/2}(\by_i)\bh_{d,\ell/2}(\bz),\bu\>^{2p}] \leq B_{K}^{p/2}(2p-1)^{p\ell/2},
    \]
    for every integers \(p\geq 1\). In particular, this means that \(\<\cT_{+}^{1/2}(\by)\bh_{d,\ell/2}(\bz),\bu\>\) is a sub-Weibull random variable, and therefore for every \(0<\alpha < 4/\ell\leq 2\), there exists a constant \(C>0\) independent of \(n,d\) such that for every \(t>0\),
    \[
        \P\left(|\<\cT_{+}^{1/2}(\by)\bh_{d,\ell/2}(\bz),\bu\>| \geq t\right) \leq C\exp\left(-t^{\alpha}\right).
    \]
    By~\cite[Theorem 4.7]{Guédon_Litvak_Pajor_Tomczak-Jaegermann_2017},
    \[
        \|\bDelta_{0,+}\|_\op \lesssim \frac{1}{n}\max_{i\in[n]}\|\cT_{+}^{1/2}(\by_i)\bh_{d,\ell/2}(\bz_i)\|^2_\op + \sqrt{\frac{d^{\ell/2}}{n}}
    \]
    with probability at least\footnote{At the time of writing, there is a minor typo in the statement of~\cite[Theorem 4.7]{Guédon_Litvak_Pajor_Tomczak-Jaegermann_2017}, where there is a missing minus sign in the exponent of the second term in the probability. Nonetheless, we have confirmed with the authors that the expression above is correct.}
    \[
        1 - 8e^{-n} + \frac{1}{(10n)^{4}}\exp\left(-\frac{4 d^{\frac{\alpha \ell}{4}}}{(3.5\ln(2d^{\ell/2}))^{2\alpha}}\right) + \frac{n^2}{2\exp\left((2nd^{\ell/2})^{\alpha/4}\right)}.
    \]
    Note that by Assumption~\ref{ass:kernel}.\ref{ass:kernel-boundedness} and the definition of \(\bh_{d,\ell/2}(\bz)\), we have $\|\cT_{+}^{1/2}(\by_i)\bh_{d,\ell/2}(\bz_i)\|^2_\op \leq \|\cH_{d,\ell/2}(\bz_i)\|_\frob^2 \lesssim d^{\ell/2}$ deterministically. Therefore, if \(n\geq d^{\ell/2}\), we have shown that there exists constants \(C,c>0\) such that for \(C\leq d\leq n^{2/\ell}\),
    \begin{align*}
        \|\bDelta_{0,+}\|_\op \leq C\sqrt{\frac{d^{\ell/2}}{n}},
    \end{align*}
    with probability at least \(1-e^{-d^c}\). A similar bound holds for the second matrix involving \(\cT_{-}(\by)\), and therefore it follows that
    \[
        \|\bDelta_{0}\|_\op \leq C\sqrt{\frac{d^{\ell/2}}{n}},
    \]
    with probability at least \(1-e^{-d^c}\).

    \medskip\noindent\textbf{Step 4: Concluding.} To conclude the proof and aggregate the estimates obtained above, decompose
    \begin{equation*}
        \widehat{\bM}- \E[\widehat{\bS}]  \E[\widehat{\bS}]^\sT = \left(\widehat{\bS} - \E[\widehat{\bS}]\right)\left(\widehat{\bS} - \E[\widehat{\bS}]\right)^\sT + \left(\widehat{\bS} - \E[\widehat{\bS}]\right)\E[\widehat{\bS}]^\sT  +  \E[\widehat{\bS}]\left(\widehat{\bS} - \E[\widehat{\bS}]\right)^\sT
    \end{equation*}
    such that
    \[
        \left\|\widehat{\bM}-\E[\widehat{\bS}]  \E[\widehat{\bS}]^\sT\right\|_\op \leq \|\widehat{\bS} - \E[\widehat{\bS}]\|_\op^2 + 2\|\E[\widehat{\bS}]\|_\op \|\widehat{\bS} - \E[\widehat{\bS}]\|_\op.
    \]
    By the bounds obtained in Steps 2 and 3, along with the decomposition in Step 1, there exist constants \(C,C',c>0\) such that for all \(C'\leq d\leq n^{2/\ell}\),
    \[
        \|\widehat{\bS} - \E[\widehat{\bS}]\|_\op \leq C\sqrt{\frac{d^{\ell/2}}{n}}
    \]
    with probability at least \(1 - e^{-d^{c}}\). Additionally, by Lemma~\ref{lem:isometry_spherical_harmonics} and Assumption \ref{ass:kernel}.\ref{ass:kernel-boundedness}, $ \|\E[\widehat{\bS}]\|_\frob \leq B_{K}^{1/2}\|\bxi_{\emptyset,\ell}\|_{L^2}$. Combining those estimates concludes the proof.
\end{proof}

\subsection{Multi-step procedure: iterative tensor unfolding}\label{app:multi-step-tensor-unfolding}

The proof of Theorem~\ref{thm:tensor_unfolding_mutli_step} follows from the next lemma, which controls a single intermediate step of the multi-step procedure in Algorithm~\ref{alg:multi_step_tensor_unfolding}.

\begin{lemma}
    Assume the setting of Theorem \ref{thm:tensor_unfolding_mutli_step}. Fix $t \in [T-1]$ and suppose  that at step \(t\) we have recovered a subspace $\widehat{\bU}_{\leq t} \in \Stf_{\s_{\leq t}} (\R^d)$. Let $\varphi$ be the modulus of continuity in Assumption \ref{ass:stability-conditional-distributions}. Define
    \[
    \delta := \varphi (\dist ( \widehat{\bU}_{\leq t}, \bU_{\leq t} ) ) \vee \dist ( \widehat{\bU}_{\leq t}, \bU_{\leq t} ).
    \]
    Let $\widehat{\bU}_{\leq t+1}$ be the subspace returned at the next iteration by Algorithm \ref{alg:multi_step_tensor_unfolding}. Then there exist constants $C,C',c,c' >0$ depending only on $\{\ell_t\}_{t \in [T]}$, $\s$, and the constants in the assumptions, such that for all $C'\leq d$, $ C' \leq \gamma \leq  \sqrt{d}$, $\delta \leq 1/C'$, and $n \leq \exp (d^{c'})$,
    \begin{equation}\label{eq:intermediary-guarantee}
        \dist ( \widehat{\bU}_{\leq t+1}, \bU_{\leq t+1} ) \leq C  \left[ \frac{d^{\ell_{t+1}/2 \vee 1}}{n \| \bxi_t \|_{L^2}^2} + \frac{1}{\sqrt{\gamma}} + \sqrt{\delta} \right],
    \end{equation}
    with probability at least $1 - \exp ( - d^c)$.
\end{lemma}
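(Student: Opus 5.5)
We compare one step of Algorithm~\ref{alg:tensor_unfold_one_step} on the empirical reduced model $\hat\nu_{d,t}=\nu_{d,\widehat{\bU}_{\le t}}$ with the same step on the population reduced model $\nu_{d,t}=\nu_{d,\bU_{\le t}}$, whose output is described by Theorem~\ref{thm:tensor_unfolding_one_step}. We embed everything in $\R^d$ by considering the lifted harmonic tensors $(\bU'_\perp)^{\otimes\ell}\cH_{d_{t+1},\ell}(\bz_{\bU'})$ with $\ell=\ell_{t+1}$, for $\bU'\in\{\bU_{\le t},\widehat{\bU}_{\le t}\}$. Let $\widehat{\bM}$ be the empirical unfolded matrix computed from the fresh batch using $\widehat{\bU}_{\le t}$ and the symmetrized kernel $\overline K_{t+1}$, and lift it to $\R^{d^a\times d^a}$ by conjugating with $(\widehat{\bU}_{\le t,\perp})^{\otimes a}$. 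Let $\bM^\star$ be the lifted population matrix $\E[\cdot]$ for the population reduced model. We split
\[
\widehat{\bM}-\bM^\star=(\widehat{\bM}-\E\widehat{\bM})+(\E\widehat{\bM}-\bM^\star).
\]

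\textbf{Fluctuation term.} Since the batch is fresh, we condition on $\widehat{\bU}_{\le t}$. Given this, the samples $(\hat\by_t,\hat\bz_t)$ are i.i.d.\ from a genuine spherical MIM in dimension $d_{t+1}$, and $\overline K_{t+1}$ is bounded and PSD. Lemma~\ref{lem:concentration_U_statistic_final} (case $a\ne b$) or Lemma~\ref{lem:concentration_aggregated_symmetric_unfolding} (case $a=b$) then gives
\[
\|\widehat{\bM}-\E\widehat{\bM}\|_\op\lesssim \frac{d^{\ell/2\vee1}}{n}+\frac{d^{\ell/4\vee1/2}\|\hat\bxi_t\|_{L^2}}{\sqrt n}.
\]
These lemmas use finite rank, and $\overline K_{t+1}$ need not be finite rank. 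We handle this either with the finite-rank approximation mentioned in the text or with the remark that the bounded-kernel version of the lemma holds. Assumption~\ref{ass:stability-conditional-distributions}(a) gives $\|\hat\bxi_t\|_{L^2}\le (1+C\delta)\|\bxi_t\|_{L^2}$, so this matches the population bound.

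\textbf{Bias term.} Write $\E\widehat{\bM}=\E[K(\hat\by,\hat\by')\Mat(\hat\bxi(\hat\by))\Mat(\hat\bxi(\hat\by'))^\sT]$, using the representation~\eqref{eq:expectation_Mhat_asym_PSD}, and similarly for $\bM^\star$. We telescope the difference by swapping one factor at a time.
\begin{itemize}
\item Replacing the lifted $\hat\bxi$ by the lifted $\bxi_t$ costs $\lesssim B_K\|\bxi_t\|_{L^2}^2\varphi(\dist)$, by Cauchy--Schwarz and Assumption~\ref{ass:stability-conditional-distributions}(a).
\item Replacing $\overline K(\hat\by_t,\hat\by'_t)$ by $\overline K(\by_t,\by'_t)$ costs $\lesssim \|\bxi_t\|_{L^2}^2\varphi(\|\bU_{\le t}-\widehat{\bU}_{\le t}\bQ\|_\frob)$, by Cauchy--Schwarz against the bounded $\Mat(\bxi)$ factors and Assumption~\ref{ass:stability-conditional-distributions}(b).
\end{itemize}
The second bullet is why the kernel is symmetrized over $\cO_{\s_{\le t}}$: it makes $\overline K$ invariant to the choice of basis $\bQ$. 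We can therefore choose the Procrustes-aligned basis, for which $\|\bU_{\le t}-\widehat{\bU}_{\le t}\bQ\|_\frob\lesssim\dist(\widehat{\bU}_{\le t},\bU_{\le t})$.

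Together these give $\|\E\widehat{\bM}-\bM^\star\|_\op\lesssim\|\bxi_t\|_{L^2}^2\,\delta$, after enlarging $\varphi$ to a modulus that dominates $\varphi(C\cdot)$. We also verify that symmetrization preserves Assumption~\ref{ass:kernel}\ref{ass:kernel-positive-definite} for the population step. This holds because $\bxi_t$, viewed as a function of $\br_t$, is equivariant under the action of $\cO_{\s_{\le t}}$ once it is paired against the signal.

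\textbf{Spectral step and recombination.} Lemma~\ref{lem:spectral_bounds_population_U_stat} (respectively Lemma~\ref{lem:spectral_bounds_population_symmetric}) applied to $\bM^\star$ gives a gap of order $c\|\bxi_t\|_{L^2}^2$ above the $1/\gamma$ level. We then follow the Davis--Kahan and contraction argument from the proof of Theorem~\ref{thm:tensor_unfolding_one_step}, now with the perturbation $\bDelta_2$ that also contains the bias term. This yields
\[
\dist\big(\widetilde{\bU}_{t+1}\text{ lifted},\ \bU_{\le t,\perp}\bU_{t+1}\big)\lesssim\sqrt{\frac{d^{\ell/2\vee1}}{n\|\bxi_t\|_{L^2}^2}}+\frac1{\sqrt\gamma}+\delta.
\]
The statement is weaker than this, since $\delta\le\sqrt\delta$ for $\delta\le1$ and the square-root form is at least as good as the stated rate. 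Finally, $\widehat{\bU}_{t+1}$ is lifted through $\widehat{\bU}_{\le t,\perp}$ rather than through $\bU_{\le t,\perp}$, which adds $\lesssim\dist(\widehat{\bU}_{\le t},\bU_{\le t})\le\delta$. Combining $\widehat{\bU}_{\le t}\oplus\widehat{\bU}_{t+1}$ and using that direct sums of nearly orthogonal frames satisfy $\dist\le$ the sum of the two distances gives~\eqref{eq:intermediary-guarantee}.

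The main obstacle is the bias step, and in particular the use of Assumption~\ref{ass:stability-conditional-distributions}(b) through the basis alignment. I would first prove carefully that $\overline K$ is $\cO_{\s_{\le t}}$-invariant so that Procrustes alignment is allowed, and then check that the positivity constant $c_K$ survives symmetrization.
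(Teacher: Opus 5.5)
Your architecture matches the paper's: fresh-batch concentration conditional on $\widehat{\bU}_{\le t}$, a bias bound split into a kernel swap (Assumption~\ref{ass:stability-conditional-distributions}(b)) and a coefficient swap (Assumption~\ref{ass:stability-conditional-distributions}(a)), the population spectral bounds of Lemma~\ref{lem:spectral_bounds_population_U_stat}, then Davis--Kahan, contraction, and the triangle inequality for orthogonal direct sums. Your remark that $\overline K_{t+1}$ is invariant under $\br\mapsto\bQ^\sT\br$, which allows a Procrustes-aligned basis, correctly explains why the kernel is symmetrized.

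\textbf{The gap.} It is the step you flag yourself: showing that $\overline K_{t+1}$ still satisfies the lower bound of Assumption~\ref{ass:kernel}\ref{ass:kernel-positive-definite} for the population model $\nu_{d,t}$. Without it, Lemma~\ref{lem:spectral_bounds_population_U_stat} gives no lower bound $c\|\bxi_t\|_{L^2}^2\bPi_0^{(a)}$ on $\bM^\star$. Your justification, that $\bxi_t$ is equivariant in $\br_t$ under $\cO_{\s_{\le t}}$, is false in general. Rotating the recovered coordinates is not a symmetry of the reduced model: take $\s_{\le t}=1$ and a link in which $\by$ depends on the remaining directions only when $r_t>0$. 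What does hold is the reparametrization identity $\bxi_{\bU g}(\by,g^\sT\br)=\bxi_{\bU}(\by,\br)$. That only identifies the $g$-th term of the Haar average with the quantity in (K2) for the rotated frame $\bU_{\le t}g^\sT$, whereas (K2) is assumed only for $\bU_{\le t}$ itself.

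The paper closes this with two facts you do not use. First, $K_{t+1}$ is PSD, so each term $\E[K_{t+1}(g\cdot\by_t,g\cdot\by_t')f(\by_t)f(\by_t')]$, with $f=\<\bV,\bzeta_{\bU_{\le t},\ell}\>_\frob$, is nonnegative (for rank $\m$ it equals $\sum_s\E[\cT_s(g\cdot\by_t)f(\by_t)]^2$). Hence the Haar integral can be restricted to a neighborhood $O$ of the identity. Second, on $O$, Assumption~\ref{ass:stability-conditional-distributions}(b) at the nearby frames $\bU_{\le t}g$, together with Cauchy--Schwarz, bounds each term below by $c_K\E[f^2]-C\|\bxi_t\|_{L^2}^2\eps$. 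The result, $\pi(O)c_K\E[f^2]-C\|\bxi_t\|_{L^2}^2\eps$, is weaker than (K2) because of the additive error. It still suffices on the top signal directions, where $\E[f^2]\gtrsim\|\bxi_t\|_{L^2}^2$, once $\eps$ is a small constant.

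\textbf{Two smaller points.} First, your kernel swap, like the paper's, applies Assumption~\ref{ass:stability-conditional-distributions}(b) to $\overline K_{t+1}$. But (b) is stated for $K_{t+1}$ and only at the single frame $\bU_{\le t}$. Pushing it through the Haar average needs continuity at every frame $\bU_{\le t}h$, $h\in\cO_{\s_{\le t}}$, which Procrustes alignment does not supply.

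Second, your closing comparison is backwards. Set $x:=d^{\ell_{t+1}/2\vee1}/(n\|\bxi_t\|_{L^2}^2)$. For $x\le1$ one has $\sqrt x\ge x$, so your $\sqrt x$ bound does not imply the displayed bound with $x$. The paper's proof also yields only $\sqrt x$: it repeats the Davis--Kahan argument of Theorem~\ref{thm:tensor_unfolding_one_step}, where the fluctuation term $d^{\ell/4\vee1/2}\|\bxi\|_{L^2}/\sqrt n$ forces the square root. That is consistent with Theorem~\ref{thm:tensor_unfolding_one_step} and with the exponent $1/C$ in Theorem~\ref{thm:tensor_unfolding_mutli_step}. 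So read the first term with a square root, and state the $\sqrt x$ bound as what you prove rather than claiming the literal display follows. Your $\delta$ in place of $\sqrt\delta$ is fine.
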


\begin{proof} For notational simplicity, set \(\ell := \ell_{t+1}\). Recall that the unfolding shape is taken to be \((a,b)=(1,0)\) if \(\ell=1\), and
\(a=\lfloor \ell/2 \rfloor\), \(b=\ell-a\) otherwise.
Let \(\bU_{\leq t,\perp}\) and \(\widehat{\bU}_{\leq t,\perp}\) be orthogonal complements of \(\bU_{\leq t}\) and \(\widehat{\bU}_{\leq t}\), respectively.

Let \(\widehat{\bM}\) denote the empirical \(d_{t+1}^a \times d_{t+1}^a\) matrix constructed at step \(t+1\) when conditioning on \(\widehat{\bU}_{\leq t}\), and let \(\widetilde{\bM}\) denote the analogous empirical matrix constructed when conditioning on the population subspace \(\bU_{\leq t}\) (using the corresponding complements \(\widehat{\bU}_{\leq t,\perp}\) and \(\bU_{\leq t,\perp}\)).

\medskip
\noindent{\bf Step 1: Population matrix.} We first verify that \(\E[\widetilde{\bM}]\) satisfies the bounds of Lemma~\ref{lem:spectral_bounds_population_U_stat}.
It suffices to check Assumption~\ref{ass:kernel}.\ref{ass:kernel-positive-definite} (restricted to the top signal directions) for the symmetrized kernel \(\overline{K}_{t+1}\) defined in~\eqref{eq:symmetrized-kernel}.
Fix \(\bV \in \TSym_\ell(\R^{\s_{t+1}})\). By definition,
    \[
    \begin{aligned}
     &~ \E\left[\overline{K}_{t+1} (\by_t,\by'_t)\< \bV,\bzeta_{\bU_{\leq t},\ell}(\by_t)\>_\frob \< \bzeta_{\bU_{\leq t},\ell}(\by_t'), \bV\>_\frob  \right]\\
     = &~  \int_{\cO_{\s_{\leq t}}} \E\left[K_{t+1} (g\cdot \by_t, g \cdot \by'_t)\< \bV,\bzeta_{\bU_{\leq t},\ell}(\by_t)\>_\frob \< \bzeta_{\bU_{\leq t},\ell}(\by_t'), \bV\>_\frob  \right] \de \pi_{\s_{\leq t}} (g),
    \end{aligned}
    \]
    where $g \cdot \by_t = (\by, g \cdot \bU_{\leq t}^\sT \bz)$ and $g \cdot \by_t' = (\by', g \cdot \bU_{\leq t}^\sT \bz')$.

    Let \(O \subseteq \cO_{\s_{\leq t}}\) be a neighborhood of the identity such that
\(\varphi(\|\bU_{\leq t}-\bU_{\leq t}\cdot g\|_\frob) \leq \eps\) for all \(g \in O\).
Restricting the integral to \(O\) and using Assumption~\ref{ass:kernel}.\ref{ass:kernel-positive-definite} for \(K_{t+1}\), we obtain
    \begin{equation}\label{eq:lowerbound_Kbar}
    \begin{aligned}
        &~ \E\left[\overline{K}_{t+1} (\by_t,\by'_t)\< \bV,\bzeta_{\bU_{\leq t},\ell}(\by_t)\> \< \bzeta_{\bU_{\leq t},\ell}(\by_t'), \bV\>  \right] \\
        \geq&~ \pi_{\s_{\leq t}} (O) \E\left[\< \bV,\bzeta_{\bU_{\leq t},\ell}(\by_t)\>^2  \right] \\
    &~    - \| \bxi_{\bU_{\leq t},\ell} \|_{L^2}^2 \int_{O}   \E \left[ (K_{t+1} (\by_t,\by'_t) - K_{t+1} ((\by,g \cdot \bU_{\leq t}^\sT \bz),(\by',g \cdot \bU_{\leq t}^\sT \bz')))^2  \right]^{1/2} \de \pi_{\s_{\leq t}} (g) .
    \end{aligned}
    \end{equation}
    Using Assumption~\ref{ass:stability-conditional-distributions}.(b), we obtain
    \[
    \begin{aligned}
       \E\left[\overline{K}_{t+1} (\by_t,\by'_t)\< \bV,\bzeta_{\bU_{\leq t},\ell}(\by_t)\>_\frob \< \bzeta_{\bU_{\leq t},\ell}(\by_t'), \bV\>_\frob  \right]  \geq &~ c \E\left[\< \bV,\bzeta_{\bU_{\leq t},\ell}(\by_t)\>^2_\frob  \right] - C \| \bxi_{\bU_{\leq t},\ell} \|_{L^2}^2 \eps. 
    \end{aligned}
    \]
    Choosing \(\eps\) such that the second term is small compared to the \(\rnk_{t}\)-th eigenvalue (e.g., \(C\|\bxi_{\bU_{\leq t},\ell}\|_{L^2}^2\eps \leq c \mu_{\rnk_{t}}/2\)),
Lemma~\ref{lem:spectral_bounds_population_U_stat} applies, yielding constants \(c,C>0\) such that for all \(d\) large enough,
    \begin{equation}\label{eq:intermediary-population-idealized}
            c\|\bxi_{\bU_{\leq t},\ell}\|_{L^2}^2 \bPi_0^{(a)} \preceq \E\big[\widehat{\bM}\big] \preceq C \|\bxi_{\bU_{\leq t},\ell} \|_{L^2}^2 \left(\bPi_0^{(a)} +  \frac{1}{\gamma}\bPi_+^{(a)} + \frac{1}{\sqrt{d}} \bI_{d^a}\right),
    \end{equation}
    where $\bPi_0^{(a)},\bPi_+^{(a)}$ are defined as in \eqref{eq:decomposition_Z0_Zplus} for the reduced model $\nu_{d,t}$ associated to the population $\bU_{\leq t}$.

\medskip
\noindent
{\bf Step 2: Lifting to \(\R^d\) and stability of expectations.} Define the lifted matrices
    \[
    \widehat{\bM}^{(L)} := [\widehat{\bU}_{\leq t,\perp}^{\otimes \ell}] \widehat{\bM}[\widehat{\bU}_{\leq t,\perp}^{\otimes \ell}]^\sT \in \R^{d^a \times d^a}, \qquad  \widetilde{\bM}^{(L)} := [\bU_{\leq t,\perp}^{\otimes \ell}] \widetilde{\bM}[\bU_{\leq t,\perp}^{\otimes \ell}]^\sT \in \R^{d^a \times d^a}.
    \]
    Using the definition of \(\widehat{\bM}\), we may write
    \[
     \E \left[  \widehat{\bM}^{(L)}\right] = \E\left[\overline{K}_{t+1} ( \hat \by_t, \hat \by_t')  \Mat_{a,b} (\widehat{\bU}_{\leq t,\perp}^{\otimes \ell} \bxi_{\widehat{\bU}_{\leq t}, \ell} (\hat \by_t)) \Mat_{a,b} (\widehat{\bU}_{\leq t,\perp}^{\otimes \ell} \bxi_{\widehat{\bU}_{\leq t}, \ell} (\hat \by_t))^\sT \right],
    \]
    where \(\widehat{\by}_t=(\by,\widehat{\bU}_{\leq t}^\sT\bz)\) and \(\widehat{\by}_t'\) is an independent copy; an analogous identity holds for \(\E[\widetilde{\bM}^{(L)}]\).
    
    We now bound the difference between these expectations. Using \(\|\overline{K}_{t+1}\|_\infty \leq B_K\) (Assumption~\ref{ass:kernel}.\ref{ass:kernel-boundedness}), the triangle inequality, and Cauchy--Schwarz,
    \[
    \begin{aligned}
        &~\left\|  \E \big[  \widehat{\bM}^{(L)}\big] - \E \big[ \widetilde{\bM}^{(L)} \big] \right\|_\frob \\
        \leq&~  \E\left[ \left| \overline{K}_{t+1} ( \hat \by_t, \hat \by_t') - \overline{K}_{t+1} ( \by_t, \by_t') \right| \| \bxi_{\widehat{\bU}_{\leq t}, \ell} (\hat \by_t) \|_\frob \| \bxi_{\widehat{\bU}_{\leq t}, \ell} (\hat \by_t') \|_\frob \right] \\
        &~+ B_{K} \E\left[ ( \| \bxi_{\widehat{\bU}_{\leq t}, \ell} (\hat \by_t) \|_\frob + \| \bxi_{\bU_{\leq t}, \ell} ( \by_t) \|_\frob ) \| \widehat{\bU}_{\leq t,\perp}^{\otimes \ell} \bxi_{\widehat{\bU}_{\leq t}, \ell} (\hat \by_t') - \bU_{\leq t,\perp}^{\otimes \ell} \bxi_{\bU_{\leq t}, \ell} (\by_t') \|_\frob \right].
    \end{aligned}
    \]
    By Assumption~\ref{ass:stability-conditional-distributions}, if \(\delta \leq 1/2\) then
\(\|\bxi_{\widehat{\bU}_{\leq t},\ell}\|_{L^2} \leq 3\|\bxi_{\bU_{\leq t},\ell}\|_{L^2}\),
and using Cauchy--Schwarz, both terms are controlled by \(C\|\bxi_{\bU_{\leq t},\ell}\|_{L^2}^2\delta\).
Consequently,
    \[
  \left\|  \E \big[  \widehat{\bM}^{(L)}\big] - \E \big[ \widetilde{\bM}^{(L)} \big] \right\|_\frob  \leq C \|\bxi_{\bU_{\leq t}, \ell}\|_{L^2}^2 \delta. 
    \]
    For \(\delta\) sufficiently small, this implies that the leading eigenspaces of
\(\E[\widehat{\bM}^{(L)}]\) are close to those of \(\E[\widetilde{\bM}^{(L)}]\), which by~\eqref{eq:intermediary-population-idealized} correspond to
\(\big[\bU_{\leq t,\perp}^{\otimes \ell}\big]\bPi_0^{(a)}\big[\bU_{\leq t,\perp}^{\otimes \ell}\big]^\sT\).

\medskip
\noindent
{\bf Step 3: Concluding via Davis--Kahan.}
The bound~\eqref{eq:intermediary-guarantee} then follows by repeating the argument used in the proof of Theorem~\ref{thm:tensor_unfolding_one_step}, with two applications of the Davis--Kahan \(\sin\Theta\) theorem, where we compare the empirical matrix \(\widehat{\bM}\) to $ \E \big[ \widetilde{\bM}^{(L)} \big]$. We obtain
\[
\dist(\bU_{\leq t,\perp} \bU_{t+1}, \widehat{\bU}_{\leq t,\perp} \widehat{\bU}_{t+1} ) \leq  C  \left[ \frac{d^{\ell_{t+1}/2 \vee 1}}{n \| \bxi_t \|_{L^2}^2} + \frac{1}{\sqrt{\gamma}} + \sqrt{\delta} \right]. 
\]
Using the triangle inequality
\[
\dist ( \bU_{\leq t+1} , \widehat{\bU}_{\leq t+1}) \leq \dist ( \bU_{\leq t} , \widehat{\bU}_{\leq t}) + \dist(\bU_{\leq t,\perp} \bU_{t+1}, \widehat{\bU}_{\leq t,\perp} \widehat{\bU}_{t+1} )
\]
concludes the proof.
\end{proof}

\begin{remark}[Finite rank approximation of $\overline{K}_t$] To match the SQ lower bound and to avoid summing over all pairs \((i,j)\in[n]^2\) when forming \(\widehat{\bM}\), we replace \(\overline{K}_t\) by a finite-rank approximation while preserving the population bounds in~\eqref{eq:intermediary-population-idealized}.
Specifically, we approximate the Haar integral defining \(\overline{K}_t\) by an average over an \(\eta\)-net of \(\cO_{\s_{\leq t}}\), denoted \(\{g_j\}_{j\in[N_\eta]}\), and define
\[
\overline{K}^{(\eta)}_{t} = \frac{1}{N_\eta} \sum_{j\in [N_\eta]} K_t ( g_j \cdot \by_t, g_j \cdot \by_t').
\]
Since \(K_t\) satisfies Assumption~\ref{ass:kernel}.\ref{ass:finite_rank}, the kernel \(\overline{K}^{(\eta)}_t\) has rank at most \(\m N_\eta \leq \m (1+2/\eta)^{\s_{\leq t}^2} \) where we used  a standard covering-number bound. Finally, the same argument as in~\eqref{eq:lowerbound_Kbar} shows that \(\eta\) can be chosen sufficiently small---depending only on the constants of the problem---so that the key population bound~\eqref{eq:intermediary-population-idealized} continues to hold with \(\overline{K}_t\) replaced by \(\overline{K}^{(\eta)}_t\).
\end{remark}

\subsection{Runtime of the algorithm}
\label{app:runtime-tensor-unfolding}

The dominant computational cost in Algorithm~\ref{alg:tensor_unfold_one_step} arises from computing the leading \(\thr\) eigenvectors of the empirical matrix \(\widehat{\bM} \in \R^{d^a \times d^a}\).
We compute these eigenvectors using subspace power iteration.
Specifically, an \(\eps\)-approximation to the top \(\thr\)-dimensional eigenspace can be obtained in
\(O(\log(d^a/\eps)/g)\) iterations, where
\[
g := 1 - \hat{\lambda}_{\thr+1}/\hat{\lambda}_{\thr},
\]
and \(\hat{\lambda}_j\) denotes the \(j\)-th largest eigenvalue of \(\widehat{\bM}\).

Let \(\rt_{\mathrm{mult}}\) denote the time required to compute a single matrix--vector product
\(\widehat{\bM}\bv\), with \(\bv \in \R^{d^a}\).
Each iteration of subspace power iteration involves \(\thr\) such products, together with an orthonormalization step of cost \(O(\thr^3)\).
Therefore, the total runtime of this first eigen-decomposition step is
\[
O\!\left(
(\thr\,\rt_{\mathrm{mult}} + \thr^3)\,
\frac{\log(d/\eps)}{g}
\right).
\]

The second step of the algorithm computes the top \(\s_0\) eigenvectors of the matrix
\[
\sum_{s=1}^{\thr}
\Mat_{1,a-1}(\widehat{\bv}_s)\,
\Mat_{1,a-1}(\widehat{\bv}_s)^\sT,
\]
where \(\{\widehat{\bv}_s\}_{s=1}^{\thr}\) are the leading eigenvectors of \(\widehat{\bM}\).
Applying the same subspace iteration argument, this step requires
\[
O\!\left(
\thr\, d^a\, \frac{\log(d/\eps)}{h}
\right)
\]
operations, where
\[
h := 1 - \widetilde{\mu}_{\s_0+1}/\widetilde{\mu}_{\s_0},
\]
and \(\widetilde{\mu}_j\) denotes the \(j\)-th largest eigenvalue of the above matrix.
Under the assumptions of Theorem~\ref{thm:tensor_unfolding_one_step}, we have \(h \geq c\) with high probability.
Consequently, this second step is asymptotically cheaper than the first whenever \(\rt_{\mathrm{mult}} = \Omega (d^a)\) and does not affect the overall runtime.

\paragraph{Matrix--vector multiplication.}
We now bound the cost \(\rt_{\mathrm{mult}}\) of computing a matrix--vector product with \(\widehat{\bM}\).
Under the finite-rank kernel assumption
(Assumption~\ref{ass:kernel}.\ref{ass:finite_rank}),
the matrix \(\widehat{\bM}\) admits the decomposition
\[
\widehat{\bM}
=
\left(1 + \frac{\delta_{a \neq b}}{n-1}\right)
\sum_{r=1}^{\m}
\bigl[
\bS_r \bS_r^\sT
-
\delta_{a \neq b}\,\bD_r
\bigr],
\]
where
\[
\bS_r
:=
\frac{1}{n}
\sum_{i \in [n]}
\cT_r(\by_i)\,
\Mat_{a,b}\!\big(\cH_{d,\ell}(\bz_i)\big),
\qquad
\bD_r
:=
\frac{1}{n^2}
\sum_{i \in [n]}
\cT_r(\by_i)^2\,
\Mat_{a,b}\!\big(\cH_{d,\ell}(\bz_i)\big)
\Mat_{a,b}\!\big(\cH_{d,\ell}(\bz_i)\big)^\sT .
\]
As a result, computing \(\widehat{\bM}\bv\) reduces to evaluating
\(O(\m n)\) matrix--vector products involving the harmonic tensor
\(\Mat_{a,b}(\cH_{d,\ell}(\bz_i))\).
By~\cite[Lemma~15]{joshi2025learning}, each such product can be computed in
\(O(d^a + d^b)\) elementary operations, without explicitly forming the full matrix.
Therefore,
\[
\rt_{\mathrm{mult}} = O\!\big(\m n (d^a + d^b)\big).
\]

\clearpage

\section{Details and proofs for the applications}

This section makes precise the applications outlined in Section~\ref{sec:applications} and supplies the necessary details and proofs. In particular, we show how Gaussian and directional MIMs fit within the spherically invariant framework and characterize the complexity of learning spherical MIMs with polynomial relations between input and output.

\subsection{Specialization to Gaussian and directional MIMs}\label{app:gaussian_directional_mims}

In this section, we explain how our framework applies to Gaussian and directional MIMs, as discussed in Section~\ref{sec:applications-gaussian-mims} and Section~\ref{sec:applications-directional-mims} respectively, and provide the missing proofs. Our approach follows the strategy developed in~\cite{joshi2025learning} and relies on several properties of Hermite tensors, as well as on the harmonic decomposition of Hermite tensors introduced therein. We recall these properties before proceeding with the proofs.

\subsubsection{Hermite tensors and their harmonic decomposition}

Let $\gamma_d (\bx) = (2\pi)^{-\frac{d}{2}} e^{- \| \bx\|_2^2/2} $ denote the density of a standard Gaussian vector in $\R^d$. For any \(d\geq 1\) and $k \geq 0$, define $\He_{k} : \R^d \to \Sym_k (\R^d)$ the (normalized) degree-$k$ Hermite tensor in $\R^d$ by
\begin{equation}\label{eq:Hermite-tensor}
\He_{k} (\bx) := \frac{(-1)^k}{\sqrt{k!}} \frac{\nabla^k \gamma_d (\bx)}{\gamma_d (\bx)},
\end{equation}
where $\nabla^k \gamma_d(\bx)$ denotes the $k$-th derivative of $\gamma_d$, viewed as a symmetric $k$-tensor. In particular, when $d=1$, the collection $\{\He_k\}_{k\ge0}$ reduces to the classical orthonormal basis of Hermite polynomials in $L^2(\R,\gamma_1)$. 

The Hermite tensors realize the classical isometry between $\Sym_k (\R^d)$ and the $k$-th Wiener chaos:  for all $\bA \in \Sym_k (\R^d)$ and $ \bB \in \Sym_j (\R^d)$,
\begin{equation}
    \E_{\gamma_d} [ \< \bA,\He_k (\bx) \>_\frob\< \bB,\He_j (\bx) \>_\frob ] = \delta_{jk} \< \bA, \bB\>_\frob.
\end{equation}
Hence any $f\in L^2(\gamma_d)$ admits the Wiener chaos expansion
\begin{equation*}
    f(\bx)  = \sum_{k = 0}^\infty \< \bA_k, \He_k (\bx) \>_\frob , \qquad \bA_k := \E_{\gamma_d} [ f(\bx) \He_k (\bx)] \in \Sym_k (\R^d),
\end{equation*}
with convergence in $L^2(\gamma_d)$. Analogously to the second part of Lemma~\ref{lem:harmonic_expansion_invariant_functions}, if in addition \(f\) is \(\cO_d^\bW\) invariant for some \(\bW \in \Stf_\s(\R^d)\), then one may write \(\bA_k = \bW^{\otimes k} \bB_k\) for some \(\bB_k \in \Sym_k (\R^\s)\). We also recall the identity, valid for all $\bw\in\S^{d-1}$,
\begin{equation}\label{eq:hermite_gegenbauer}
    \He_k ( \< \bw, \bx\>) = \< \bw^{\otimes k}, \He_k (\bx) \>_\frob.
\end{equation}
More generally,~\eqref{eq:hermite_gegenbauer} together with the (scalar) Hermite generating function implies
\[
    \exp\left(\<\bx,\by\> - \frac{\|\by\|_2^2}{2}\right) = \sum_{k=0}^\infty \frac{\< \He_k(\bx), \by^{\otimes k}\>_\frob}{\sqrt{k!}},
\]
and hence, for any  \(\bW \in \Stf_\s(\R^d)\) and \(\bx\in\R^d\),
\begin{equation}\label{eq:hermite_tensor_projection}
    (\bW^\sT)^{\otimes k} \He_k(\bx) = \He_k(\bW^\sT \bx).
\end{equation}

To make the algebraic structure of Hermite tensors explicit, it is convenient to recall the
classical closed-form expression of one-dimensional Hermite polynomials. Namely, for
$u\in\R$ and $k\ge 0$,
\begin{equation}\label{eq:hermite_1d_explicit}
    \He_k(u)
    =
    \frac{1}{\sqrt{k!}}
    \sum_{j=0}^{\lfloor k/2\rfloor}
    \frac{(-1)^j k!}{2^j j!(k-2j)!}\, u^{k-2j}.
\end{equation}
Combined with the identity~\eqref{eq:hermite_gegenbauer}, this yields an explicit tensorial
representation of multivariate Hermite tensors:
\begin{equation}\label{eq:hermite_tensor_explicit}
    \He_k(\bx)
    =
    \frac{1}{\sqrt{k!}}
    \sum_{j=0}^{\lfloor k/2\rfloor}
     \frac{(-1)^jk!}{2^j j!(k-2j)!}\;
    \psym\!\big(\bx^{\otimes(k-2j)}\otimes \bI_d^{\otimes j}\big).
\end{equation}

The next lemma provides an explicit decomposition of Hermite tensors into harmonic tensors. A related decomposition appears in \cite[Proposition 2 and Lemma 3]{joshi2025learning}. For completeness, we include a proof here, which follows directly from the tensorial framework developed in this paper and yields a short derivation.

\begin{lemma}[Hermite-to-harmonic decomposition]\label{lem:Hermite-to-harmonic-tensors}
    For any integers \(d\geq 1\) and \(k\geq 0\), and any \(\bx \in \R^d\), we have the decomposition
    \[
        \He_k(\bx) = \sum_{j=0}^{\lfloor k/2\rfloor} \beta^{(d)}_{k,k-2j}(r)\, \psym\left(\cH_{d,k-2j}(\bz) \otimes \bI_d^{\otimes j}\right),
    \]
    where \(\bx = r \bz\) with \(r = \|\bx\|_2\) and \(\bz = \bx/\|\bx\|_2\), and
    \[
        \beta^{(d)}_{k,k-2j}(r) = \frac{\sqrt{k!}}{\kappa_{d,k-2j} \sqrt{N_{d,k-2j}}}\sum_{i=0}^{j} \frac{(-1)^i r^{k-2i}}{2^i i!(k-2i)!} f^{(d)}_{k-2i,j-i}
    \]
    with \(\kappa_{d,\ell},N_{d,\ell}\) defined in \eqref{eq:harmonic_tensor_definition} and \(f^{(d)}_{\ell,j}\) defined in Lemma~\ref{lem:fischer_decomposition}.

    Furthermore, for any fixed integers \(\ell,j\geq 0\),
    \[
        \E_{r\sim\chi_d}[\beta^{(d)}_{\ell+2j,\ell}(r)]^2 = \Theta_d\left(d^{-2j}\right), \quad \E_{r\sim\chi_d}[\beta^{(d)}_{\ell+2j,\ell}(r)^2] = \Theta_d\left(d^{-j}\right).
    \]
    Also, there exists a constant \(C>0\) such that for all \(d,j\geq C\) and any fixed integer \(\ell\geq 0\),
    \[
        \frac{\E_{r\sim\chi_d}[\beta^{(d)}_{\ell+2(j+1),\ell}(r)]^2}{\E_{r\sim\chi_d}[\beta^{(d)}_{\ell+2j,\ell}(r)]^2} \leq 1, \quad\text{ and }\quad \frac{\E_{r\sim\chi_d}[\beta^{(d)}_{\ell+2(j+1),\ell}(r)^2]}{\E_{r\sim\chi_d}[\beta^{(d)}_{\ell+2j,\ell}(r)^2]} \leq 1.
    \]
\end{lemma}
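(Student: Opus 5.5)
Starting from the explicit Hermite formula \eqref{eq:hermite_tensor_explicit}, I write $\bx^{\otimes(k-2i)} = r^{k-2i}\bz^{\otimes(k-2i)}$ and expand each $\bz^{\otimes m}$, $m=k-2i$, with the Fischer decomposition (Lemma~\ref{lem:fischer_decomposition}):
\[
\psym(\bz^{\otimes m}) = \sum_{p=0}^{\lfloor m/2\rfloor} f^{(d)}_{m,p}\,\psym\big(\ptf(\bz^{\otimes(m-2p)})\otimes \bI_d^{\otimes p}\big),
\]
using $\|\bz\|_2=1$. Two facts make the resulting sum collapse. First, $\psym\big(\psym(\bA\otimes\bI_d^{\otimes p})\otimes\bI_d^{\otimes i}\big)=\psym(\bA\otimes\bI_d^{\otimes(p+i)})$. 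Second, by \eqref{eq:harmonic_tensor_definition}, $\ptf(\bz^{\otimes\ell}) = \cH_{d,\ell}(\bz)/(\kappa_{d,\ell}\sqrt{N_{d,\ell}})$.

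Substituting, a term indexed by $(i,p)$ carries harmonic degree $k-2(i+p)$. Regrouping by $j=i+p$ gives exactly the stated coefficient
\[
\beta^{(d)}_{k,k-2j}(r)=\frac{\sqrt{k!}}{\kappa_{d,k-2j}\sqrt{N_{d,k-2j}}}\sum_{i\le j}\frac{(-1)^i r^{k-2i}}{2^i i!(k-2i)!}\,f^{(d)}_{k-2i,j-i}.
\]
This step is pure bookkeeping.

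For the asymptotics, I avoid computing the alternating sum directly, since the cancellations there are delicate. Instead I identify $\beta$ through an orthogonality argument. Pair $\He_k(\bx)$ with $\psym(\bA\otimes\bI^{\otimes j})$ for some $\bA\in\TSym_{k-2j}$, and use the isometry with $\sh_{d,\cdot}$, the independence of $r$ and $\bz$, and orthogonality of harmonic degrees. The radial factor then becomes a one-dimensional object: $\beta^{(d)}_{k,k-2j}(r)$ is, up to normalization, the degree-$j$ generalized Laguerre polynomial $L^{(d/2+k-2j-1)}_j(r^2/2)$ multiplied by $r^{k-2j}$. This is the classical radial/harmonic factorization of Gaussian $L^2$, $L^2(\gamma_d)=\bigoplus_\ell L^2(\chi_d\text{-weighted})\otimes\sh_{d,\ell}$. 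The normalization constant is fixed by $\E[\langle\bA,\He_k\rangle^2]=\|\bA\|_\frob^2$ together with the explicit value of $\|\psym(\bA\otimes\bI^{\otimes j})\|_\frob^2$, which is of order $d^{j}$ up to constants depending on $k$.

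This identification yields the second-moment claim $\E[\beta^2]\asymp d^{-j}$ directly: $\E[\beta^2]\cdot\|\psym(\bA\otimes\bI^{\otimes j})\|_\frob^2\le\|\bA\|_\frob^2$, with the Laguerre normalization giving equality in order. For the first moment, $\E[\beta]$ is the projection of $\beta$ onto constants in $r$. I compute it using the Laguerre moments against $\chi_d$, namely $\E[r^{2q}]=2^q(d/2)_q$. For the degree-$j$ Laguerre polynomial with parameter $\alpha\sim d/2$, the expectation under a Gamma law whose shape differs from $\alpha+1$ by $O(1)$ is $O(d^{-j})$ times the normalization. Combined with the $d^{-j/2}$ normalization this gives $\E[\beta]^2\asymp d^{-2j}$.

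The main obstacle is this first-moment estimate, in particular the nonvanishing of its leading constant, which is needed for the $\Theta$ lower bound rather than just $O$. I would handle it by writing $\E[r^{k-2j}L_j^{(\alpha)}(r^2/2)]$ in closed form as a Chu–Vandermonde sum, which evaluates to a product of Pochhammer symbols $(\cdot)_j/j!$ with explicit sign.

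The monotonicity claims for large $j$ would follow from the ratio of these closed forms, as $j\mapsto j+1$ multiplies each quantity by a factor that is $O(1/d)$ or bounded by $1$ once $d,j\ge C$.
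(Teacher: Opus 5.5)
Your first step (Fischer-expanding each $\bz^{\otimes(k-2i)}$ in \eqref{eq:hermite_tensor_explicit} and regrouping by $j=i+p$) is exactly the paper's. For the asymptotics you take a different route.

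The paper simply substitutes the closed form of $f^{(d)}_{k-2i,j-i}$. Its Pochhammer factor is $(d/2+\ell)_{j-i}$ with $\ell=k-2j$ independent of $i$, so there is no delicate cancellation: the sum is termwise the Laguerre series, $\beta^{(d)}_{\ell+2j,\ell}(r)\propto r^\ell L_j^{(d/2+\ell-1)}(r^2/2)$. Both moments then come from the Gamma--Laguerre integral and Laguerre orthogonality.

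Your chaos-orthogonality identification is also valid. Since $\beta(r)\langle\bA,\cH_{d,\ell}(\bz)\rangle$ lies in the $k$-th chaos, it is orthogonal to the degree-$(\ell+2t)$ polynomial $r^{\ell+2t}\langle\bA,\cH_{d,\ell}(\bz)\rangle$ for all $t<j$. This forces $r^{-\ell}\beta$ to be the degree-$j$ orthogonal polynomial for the weight $y^{d/2+\ell-1}e^{-y}$.

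Your second-moment argument is a genuine simplification, but it is an exact identity, not an inequality. Write $\tau^j\psym(\bA\otimes\bI_d^{\otimes j})=c_j\bA$. The Hermite isometry then gives $\E[\beta^{(d)}_{k,k-2j}(r)^2]=1/c_j=\|\bA\|_\frob^2/\|\psym(\bA\otimes\bI_d^{\otimes j})\|_\frob^2$. Applying Lemma~\ref{lem:fischer_decomposition} to $\psym(\bA\otimes\bI_d^{\otimes j})$ shows $1/c_j=f^{(d)}_{k,j}\asymp d^{-j}$, with no Laguerre computation needed; this agrees with the paper's formula.

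You handle the first moment as the paper does, via $\E[r^\ell L_j^{(d/2+\ell-1)}(r^2/2)]=2^{\ell/2}\frac{\Gamma(d/2+\ell/2)}{\Gamma(d/2)}\frac{(\ell/2)_j}{j!}$. Note that this makes the mean $d^{-j/2}$ times $\|\beta\|_{L^2}$, not $d^{-j}$ times, so your heuristic exponent count should be corrected; that ratio is what yields $\E[\beta]^2\asymp d^{-2j}$.

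The nonvanishing you single out as the main obstacle genuinely fails at $\ell=0$. There $(0)_j=0$, so $\E[\beta^{(d)}_{2j,0}(r)]=0$ for every $j\ge 1$. In your own framework this is immediate: $c_j\beta^{(d)}_{2j,0}(r)=\langle\psym(\bI_d^{\otimes j}),\He_{2j}(\bx)\rangle$ is a mean-zero element of the $2j$-th chaos. Concretely, $\beta^{(d)}_{2,0}(r)=(r^2/d-1)/\sqrt{2}$.

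Hence the claim $\E[\beta^{(d)}_{\ell+2j,\ell}(r)]^2=\Theta_d(d^{-2j})$, and the first ratio bound (which becomes $0/0$), are false as stated for $\ell=0$, $j\ge 1$. The paper's proof overlooks this as well. The first-moment estimates are only ever used with $\ell\ge 1$ (directional MIMs), where $(\ell/2)_j>0$. Restrict those two assertions to $\ell\ge 1$ and your plan then goes through.
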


\begin{proof}
    By~\eqref{eq:hermite_tensor_explicit} and linearity of the symmetrization operator,
    \[
        \He_k(\bx) = \frac{1}{\sqrt{k!}}\sum_{j=0}^{\lfloor k/2\rfloor}\frac{(-1)^jk! r^{k-2j}}{2^j j!(k-2j)!}\psym\left(\bz^{\otimes (k-2j)}\otimes \bI_d^{\otimes j}\right),
    \]
    where we wrote $\bx = r \bz$ with $r = \|\bx\|_2$ and $\bz = \bx/\|\bx\|_2$. By the Fischer decomposition~\eqref{eq:fischer_decomp_rank_one}, we further decompose
    \[
        \He_k(\bx) = \frac{1}{\sqrt{k!}}\sum_{j=0}^{\lfloor k/2\rfloor}\frac{(-1)^jk! r^{k-2j}}{2^j j!(k-2j)!}\sum_{i=0}^{\lfloor (k-2j)/2\rfloor} f^{(d)}_{k-2j,i} \psym\left(\ptf(\bz^{\otimes (k-2j-2i)}) \otimes \bI_d^{\otimes (j+i)}\right).
    \]
    Re-indexing the sums over $j$ and $i$ by letting $m = j+i$, we obtain
    \[
        \He_k(\bx) = \sum_{m=0}^{\lfloor k/2\rfloor} \left[\frac{1}{\sqrt{k!}}\sum_{j=0}^{m} \frac{(-1)^jk! r^{k-2j}}{2^j j!(k-2j)!} f^{(d)}_{k-2j,m-j}\right] \psym\left(\ptf(\bz^{\otimes (k-2m)}) \otimes \bI_d^{\otimes m}\right).
    \]
    Using the definition of $\cH_{d,k-2m}(\bz)$ from \eqref{eq:harmonic_tensor_definition}, we can rewrite this as
    \[
        \He_k(\bx) = \sum_{m=0}^{\lfloor k/2\rfloor} \left[\frac{\sqrt{k!}}{\kappa_{d,k-2m} \sqrt{N_{d,k-2m}}}\sum_{j=0}^{m} \frac{(-1)^j r^{k-2j}}{2^j j!(k-2j)!} f^{(d)}_{k-2j,m-j}\right]  \psym\left(\cH_{d,k-2m}(\bz) \otimes \bI_d^{\otimes m}\right).
    \]
    This concludes the proof of the first part of the lemma.

    For the second part, we specialize the general expansion to $k=\ell+2j$ and $m=j$. Using the explicit expression of the Fischer coefficients from Lemma~\ref{lem:fischer_decomposition},
    \[
        f^{(d)}_{\ell+2j-2i,j-i} = \frac{(\ell+2j-2i)!}{2^{2(j-i)}(j-i)!\ell!(d/2+\ell)_{j-i}},
    \]
    we obtain
    \begin{align*}
    \beta^{(d)}_{\ell+2j,\ell}(r)
    & =
    \frac{\sqrt{(\ell+2j)!}}{\kappa_{d,\ell}\sqrt{N_{d,\ell}}}
    \sum_{i=0}^{j}
    \frac{(-1)^i r^{\ell+2j-2i}}{2^i i!(\ell+2j-2i)!}
    \frac{(\ell+2j-2i)!}{2^{2(j-i)}(j-i)!\,\ell!\,(d/2+\ell)_{j-i}}
    \\ & = \frac{(-1)^j\sqrt{(\ell+2j)!}\,r^\ell}{\kappa_{d,\ell}\sqrt{N_{d,\ell}}\,2^{j}\ell!}
    \sum_{t=0}^j
    \frac{(-1)^t}{t!(j-t)!}\,
    \frac{(r^2/2)^t}{(d/2+\ell)_t},
    \end{align*}
    where we changed variables $t=j-i$ in the last step.
    Using the standard identity for generalized Laguerre polynomials,
    \[
    L_j^{(\alpha)}(x)
    =
    \frac{(\alpha+1)_j}{j!}
    \sum_{t=0}^j
    \binom{j}{t}\frac{(-x)^t}{(\alpha+1)_t},
    \qquad \alpha>-1,
    \]
    with $\alpha=d/2+\ell-1$, we arrive at
    \[
        \beta^{(d)}_{\ell+2j,\ell}(r)
        =
        \frac{(-1)^j\sqrt{(\ell+2j)!}}{2^{j}\kappa_{d,\ell}\sqrt{N_{d,\ell}}\,\ell!\,(d/2+\ell)_j}\;
        r^\ell\,
        L_j^{(d/2+\ell-1)}\!\left(\frac{r^2}{2}\right).
    \]

    Let \(r\sim \chi_d\) denote a chi-distributed random variable with \(d\) degrees of freedom. Note that \(y = r^2/2\) follows a Gamma distribution with shape parameter \(d/2\) and scale parameter \(1\), i.e., it has density \(y^{d/2-1}e^{-y}/\Gamma(d/2)\) for \(y>0\). Using the integral identity \(\int_0^\infty y^{\beta-1}e^{-y}L_n^{(\alpha)}(y)\d y = \frac{\Gamma(\beta)(\alpha+1-\beta)_n}{n!}\) (see e.g.~\cite[p.~119, Eq.~\(4\beta\)]{Buchholz_1969}) and the orthogonality of Laguerre polynomials (see e.g.~\cite[p.~136, Eq.~9]{Buchholz_1969}), we obtain
    \[
        \E\left[r^\ell L_j^{(d/2+\ell-1)}\left(\frac{r^2}{2}\right)\right] = 2^{\ell/2}\frac{\Gamma(d/2+\ell/2)}{\Gamma(d/2)}\frac{(\ell/2)_j}{j!}, \quad \E\left[r^{2\ell}\Big(L_j^{(d/2+\ell-1)}\left(\frac{r^2}{2}\right)\Big)^2\right] = 2^\ell \frac{(d/2)_{\ell+j}}{j!}.
    \]
    Plugging these formulas into the expression for \(\beta^{(d)}_{\ell+2j,\ell}(r)\) yields the explicit formulas
    \[
        \E[\beta^{(d)}_{\ell+2j,\ell}(r)] = \frac{(-1)^j\,2^{-j+\ell/2}\sqrt{(\ell+2j)!}}{\kappa_{d,\ell}\sqrt{N_{d,\ell}}\,\ell!\,(d/2+\ell)_j}\;
        \frac{\Gamma(d/2+\ell/2)}{\Gamma(d/2)}\;
        \frac{(\ell/2)_j}{j!}
    \]
    and
    \[
        \E[\beta^{(d)}_{\ell+2j,\ell}(r)^2] = \frac{2^{\ell-2j}(\ell+2j)!}{\kappa_{d,\ell}^2\,N_{d,\ell}\,(\ell!)^2}\frac{(d/2)_\ell}{(d/2+\ell)_j}\;\frac{1}{j!}.
    \]
    Since \(N_{d,\ell} = \Theta_d(d^\ell)\) and \(\kappa_{d,\ell} = \Theta_d(1)\) as \(d\to\infty\) with \(\ell,j\) fixed, it follows that
    \[
        \E[\beta^{(d)}_{\ell+2j,\ell}(r)]^2 = \Theta_d\left(d^{-2j}\right), \quad \E[\beta^{(d)}_{\ell+2j,\ell}(r)^2] = \Theta_d\left(d^{-j}\right).
    \]
    To prove the last part of the lemma, we compute the ratios
    \begin{align*}
        \frac{\E[\beta^{(d)}_{\ell+2(j+1),\ell}(r)]^2}{\E[\beta^{(d)}_{\ell+2j,\ell}(r)]^2} = \frac{(\ell+2j+2)(\ell+2j+1)(\ell/2+j)^2}{4(d/2+\ell+j)^2 (j+1)^2}
    \end{align*}
    and
    \begin{align*}
        \frac{\E[\beta^{(d)}_{\ell+2(j+1),\ell}(r)^2]}{\E[\beta^{(d)}_{\ell+2j,\ell}(r)^2]} = \frac{(\ell+2j+2)(\ell+2j+1)}{4(d/2+\ell+j)(j+1)}.
    \end{align*}
    For \(d\) large enough (depending on \(\ell\)), both ratios eventually approach \(1\) from below as \(j\to\infty\), which concludes the proof.
\end{proof}

Lemma \ref{lem:Hermite-to-harmonic-tensors} yields the following chaos-harmonic decomposition: for any $\bA \in \Sym_k (\R^d)$,
\begin{equation}\label{eq:chaos_harmonic_decomposition}
    \< \bA, \He_{k} (\bx) \>_\frob  = \sum_{j = 0}^{\lfloor k/2 \rfloor} \beta^{(d)}_{k,k-2j} (r) \< \ptf ( \tau^j (\bA) ), \cH_{d,k - 2j} (\bz) \>_\frob.
\end{equation}

\subsubsection{Gaussian MIMs}\label{app:gaussian_MIMs}

Consider a Gaussian MIM with link function $\rho \in \cP(\cY \times \R^\s)$, that is,
\[
    (y,\bx) \sim \P_{\rho}^{\bW_*} : \qquad 
    \bx \sim \gamma_d, 
    \qquad 
    y \mid \bx \sim \rho(\,\cdot \mid \bW_*^\sT \bx),
\]
and let $\bW_\ast \in \Stf_{\s}(\R^d)$ denote the planted subspace. For the sake of conciseness, we will omit the dependence on $\bW_\ast$ in the following.

This model is spherically invariant. Consequently, as discussed in Example~\ref{ex:spherically_invariant_MIM}, using the polar decomposition $\bx = r \bz$ of a Gaussian vector, where $r = \|\bx\|_2 \sim \nu_d^R=\chi_d$ and $\bz = \bx / \|\bx\|_2 \sim \tau_d$ are independent, we may equivalently view this model as a spherical MIM by defining $\by = (y,r)$ and
\[
    \bz \sim \tau_d, 
    \qquad 
    \by = (y,r) \mid \bz 
    \sim 
    \nu_d(\d \by \mid \bW_*^\sT \bz)
    :=
    \rho(\d y \mid r\, \bW_*^\sT \bz)\, \chi_d(\d r).
\]

There are two competing notions of null distribution in this model, leading to two different definitions of the generative exponent. First, in this paper as in~\cite{joshi2025learning}, we consider the null distribution in which the label and the radial component of the input are decoupled from its direction. Specifically, let $\nu_d^{(Y,R)}$ denote the marginal distribution of $(y,r)$ under $\P_{\nu_d}$, and define the null distribution
\[
    \P_{\nu_d,\emptyset} := \nu_d^{(Y,R)} \otimes \tau_d .
\]
Under this null model, the likelihood ratio admits the harmonic decomposition
\begin{equation}\label{eq:likelihood_ratio_gaussian_MIM_harmonic_decomposition}
    \frac{\d \P_{\nu_d}}{\d \P_{\nu_d,\emptyset}} (y,\bx) = \sum_{\ell = 0}^\infty \< \bxi_{\emptyset,\ell}(\by), \cH_{d,\ell}(\bz)\>_\frob, \quad \bxi_{\emptyset,\ell}(\by) = \E[\cH_{d,\ell}(\bz)\mid y,r].
\end{equation}

Alternatively, in much of the recent literature on Gaussian MIMs~\cite{damian2024computational,damian2025generative,joshi2025learning}, the null distribution is taken to fully decouple the label from the input. Specifically, one defines \(\tilde{\P}_{\nu_d,\emptyset}:=\nu_d^{Y} \otimes \chi_d \otimes \tau_d\), where $\nu_d^{Y}$ denotes the marginal distribution of $y$. Under this null distribution, the likelihood ratio admits the Wiener chaos expansion
\[
    \frac{\d \P_{\nu_d}}{\d \tilde{\P}_{\nu_d,\emptyset}}(y,\bx) = \sum_{k=0}^{\infty}\<\bpsi_{\emptyset,k}(y),\He_k(\bx)\>_\frob, \quad \bpsi_{\emptyset,k}(y) = \E[\He_k(\bx)\mid y].
\]
The \textit{generative exponent} is defined as
\[
    k_\ast := \argmin_{k \geq 1} \Big\{\, k : \|\bpsi_{\emptyset,k}(y)\|_{L^2(\nu_d^Y)}^2 > 0 \,\Big\},
\]
so that $\bpsi_{\emptyset,k}(y) \equiv 0$ for all $k < k_\ast$. Note that $\|\bpsi_{\emptyset,k}(y)\|_{L^2(\nu_d^Y)}$ depends only on the link function $\rho$ and is independent of the ambient dimension $d$. Indeed, by invariance of the conditional law of $\bx\mid y$ under the stabilizer $\cO_d^{\bW_\ast}$, the tensor $\bpsi_{\emptyset,k}(y)=\E[\He_k(\bx)\mid y]$ must lie in the span of $(\bW_\ast)^{\otimes k}$. Then, we use~\eqref{eq:hermite_tensor_projection} to write
\[
    \bpsi_{\emptyset,k}(y) = \E[(\bW_\ast^\sT)^{\otimes k}\He_k(\bx)\mid y] = \E[\He_k(\bW_\ast^\sT \bx)\mid y]
\]
with \(\bW_\ast^\sT \bx \sim \cN(0,\bI_\s)\) independent of \(d\).

Using the chaos-harmonic decomposition~\eqref{eq:chaos_harmonic_decomposition}, we can rewrite this likelihood ratio in the harmonic basis as
\begin{equation}\label{eq:likelihood_ratio_gaussian_MIM_harmonic_decomposition_alternate}
    \frac{\d \P_{\nu_d}}{\d \tilde{\P}_{\nu_d,\emptyset}}(y,\bx)  =\sum_{\ell = 0}^\infty \sum_{j = 0}^\infty \beta^{(d)}_{\ell + 2j,\ell} (r) \< \tau^j( \bpsi_{\emptyset,\ell + 2j} (y) ) , \cH_{d,\ell} (\bz) \>_\frob = \sum_{\ell = 0}^\infty \< \bar \bxi_{\emptyset, \ell} (\by), \cH_{d,\ell} (\bz)\>_\frob,
\end{equation}
where
\begin{equation}\label{eq:definition_xi_bar}
    \bar \bxi_{\emptyset, \ell} (\by) = \sum_{j \geq 0} \beta^{(d)}_{\ell + 2j,\ell} (r) \ptf (\tau^j( \bpsi_{\emptyset,\ell + 2j} (y))).
\end{equation}
Writing
\[
    \frac{\d \P_{\nu_d}}{\d \tilde{\P}_{\nu_d,\emptyset}}(y,\bx) = \frac{\d \P_{\nu_d}}{\d \P_{\nu_d,\emptyset}} (y,\bx) \frac{\d \P_{\nu_d,\emptyset}}{\d \tilde{\P}_{\nu_d,\emptyset}}(y,\bx) = \frac{\d \P_{\nu_d}}{\d \P_{\nu_d,\emptyset}} (y,\bx) \frac{\d \nu_d^{(Y,R)}}{\d \nu_d^{Y} \otimes \nu_d^{R}}(y,r)
\]
and comparing the harmonic decompositions in \eqref{eq:likelihood_ratio_gaussian_MIM_harmonic_decomposition} and \eqref{eq:likelihood_ratio_gaussian_MIM_harmonic_decomposition_alternate}, we obtain the relation
\begin{equation}\label{eq:relation_xi_bar_xi}
    \bar \bxi_{\emptyset, \ell} (\by) = \bxi_{\emptyset,\ell}(\by) \frac{\d \nu_d^{(Y,R)}}{\d \nu_d^{Y} \otimes \nu_d^{R}}(y,r).
\end{equation}
% Since the likelihood ratios are \(\cO^{\bW_*}_d\)-invariant, there exists \(\bphi_{\emptyset,k}(y) \in \Sym_k (\R^\s)\) such that \(\bpsi_{\emptyset,k}(y) = \bW_*^{\otimes k} \bphi_{\emptyset,k}(y)\). Then, since
Because \(\bxi_{\emptyset,0}(\by) = 1\), \(\ptf(c)=c\) for any \(c \in \R\),
%and \(\tau^j(\bW^{\otimes k}\bA)=\bW^{\otimes (k-j)}\tau^j(\bA)\) for every \(j\leq k\) and orthonormal frame \(\bW\), 
we have from \eqref{eq:definition_xi_bar} and \eqref{eq:relation_xi_bar_xi} that
\begin{equation}\label{eq:relation_xi0_bar_xi0}
    \frac{\d \nu_d^{(Y,R)}}{\d \nu_d^{Y} \otimes \nu_d^{R}}(y,r) = \bar \bxi_{\emptyset, 0} (\by) = \sum_{j \geq 0} \beta^{(d)}_{2j,0} (r) \tau^j( \bpsi_{\emptyset,2j} (y)) = \sum_{j \geq 0} \beta^{(d)}_{2j,0} (r) \tau^j(\bphi_{\emptyset,2j} (y)).
\end{equation}

In fact, the coefficients \(\bar \bxi_{\emptyset, \ell} (\by)\) and \(\bxi_{\emptyset,\ell}(\by)\) are close in \(L^2\) norm under mild assumptions. This is the content of the next lemma.

\begin{lemma}\label{lem:stability_xi_bar_xi_gaussian_MIM}
    Let \(\ell,\s \geq 1\). Suppose that there exists constants \(C',L,\eta>0\) such that
    \[
        \|\bxi_{\emptyset,\ell}(\by)\|_{L^{2+\eta}(\nu_d^{(Y,R)})} \leq L\|\bxi_{\emptyset,\ell}(\by)\|_{L^{2}(\nu_d^{(Y,R)})}
    \]
    for all \(d\geq C'\). Furthermore, suppose that there exists constants \(C>\sqrt{\s}\vee \sqrt{\s}(\lceil 2(2+\eta)/\eta\rceil - 1)\) and \(J>0\) such that \(\|\bpsi_{\emptyset,j}(y)\|_{L^{2}(\nu_d^{Y})} \leq C^{-j/2}\) for all \(j \geq J\) and all \(d\geq C'\). Then, there exists constants \(C''>0\) such that for all \(d\geq C''\),
    \[
        \frac{1}{2}\|\bar{\bxi}_{\emptyset,\ell}\|_{L^2(\nu_d^{Y}\otimes \nu_d^{R})}^2 \leq \|\bxi_{\emptyset,\ell}(\by)\|_{L^2(\nu_d^{(Y,R)})}^2 \leq 2 \|\bar{\bxi}_{\emptyset,\ell}\|_{L^2(\nu_d^{Y}\otimes \nu_d^{R})}^2.
    \]
\end{lemma}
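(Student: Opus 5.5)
The starting point is the relation \eqref{eq:relation_xi_bar_xi}, $\bar\bxi_{\emptyset,\ell}=\bxi_{\emptyset,\ell}\, h$, where
\[
h(y,r):=\frac{\d \nu_d^{(Y,R)}}{\d \nu_d^{Y}\otimes\nu_d^{R}}(y,r).
\]
Changing measure gives
\[
\|\bar\bxi_{\emptyset,\ell}\|^2_{L^2(\nu_d^Y\otimes\nu_d^R)}=\E_{\nu_d^{(Y,R)}}\big[\|\bxi_{\emptyset,\ell}\|_\frob^2\, h\big].
\]
Write $h=1+g$ with $g:=\bar\bxi_{\emptyset,0}-1=\sum_{j\ge1}\beta^{(d)}_{2j,0}(r)\,\tau^j(\bpsi_{\emptyset,2j}(y))$, using \eqref{eq:relation_xi0_bar_xi0} and the fact that $\beta^{(d)}_{0,0}=1$, $\bpsi_{\emptyset,0}=1$. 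Then
\[
\big|\|\bar\bxi_{\emptyset,\ell}\|^2-\|\bxi_{\emptyset,\ell}\|^2\big|=\big|\E_{\nu_d^{(Y,R)}}[\|\bxi_{\emptyset,\ell}\|_\frob^2\, g]\big|.
\]
So both inequalities follow once this error is at most $\tfrac12\|\bxi_{\emptyset,\ell}\|^2_{L^2(\nu_d^{(Y,R)})}$ for large $d$. The lower bound is then immediate, and the upper bound follows from $\|\bxi\|^2\le\|\bar\bxi\|^2+\tfrac12\|\bxi\|^2$.

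To bound the error we use Hölder with exponents $(2+\eta)/2$ and $q:=(2+\eta)/\eta$:
\[
\big|\E[\|\bxi_{\emptyset,\ell}\|_\frob^2 g]\big|\le\|\bxi_{\emptyset,\ell}\|^2_{L^{2+\eta}}\,\|g\|_{L^{q}(\nu_d^{(Y,R)})}\le L^2\|\bxi_{\emptyset,\ell}\|^2_{L^2}\,\|g\|_{L^{q}(\nu_d^{(Y,R)})}.
\]
It therefore suffices to show $\|g\|_{L^q(\nu_d^{(Y,R)})}=o_d(1)$.

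The moment under the planted marginal $\nu_d^{(Y,R)}$ has to be moved to the product measure. Since $h=1+g$, we have $\E_{\nu^{(Y,R)}}|g|^q=\E_{\nu^Y\otimes\nu^R}[|g|^q(1+g)]$. Take $q$ to be an even integer, at least $\lceil 2(2+\eta)/\eta\rceil$ (which is allowed since $L^p$ norms increase in $p$); this is exactly what the constant $\sqrt{\s}(\lceil 2(2+\eta)/\eta\rceil-1)$ in the hypothesis accommodates. It then suffices to bound $\|g\|_{L^{p}(\nu^Y\otimes\nu^R)}$ for $p\in\{q,q+1\}\le 2q$. Apply Minkowski to the series:
\[
\|g\|_{L^p}\le\sum_{j\ge1}\|\beta^{(d)}_{2j,0}\|_{L^p(\chi_d)}\,\|\tau^j(\bpsi_{\emptyset,2j})\|_{L^p(\nu^Y)}.
\]
Each factor is controlled as follows.
\begin{enumerate}
\item Radial factor. $\beta^{(d)}_{2j,0}(r)$ is a degree-$2j$ polynomial in $r$, i.e.\ a degree-$2j$ polynomial of the Gaussian $\bx$. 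Gaussian hypercontractivity gives $\|\beta\|_{L^p}\le(p-1)^{j}\|\beta\|_{L^2}$, and Lemma~\ref{lem:Hermite-to-harmonic-tensors} gives $\|\beta\|_{L^2}\asymp d^{-j/2}$, uniformly for large $j$ by the ratio bound in that lemma.
\item Label factor. $\bpsi_{\emptyset,2j}=\bW_*^{\otimes 2j}\bphi$ with $\bphi\in\Sym_{2j}(\R^\s)$, so $\|\tau^j(\bpsi_{\emptyset,2j}(y))\|\le \s^{j/2}\|\bpsi_{\emptyset,2j}(y)\|_\frob$.
\end{enumerate}

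The main obstacle is the $L^p(\nu^Y)$ norm of $\bpsi_{\emptyset,2j}$: the hypothesis only gives an $L^2$ bound $C^{-j}$. My first attempt is to use $\bpsi_{\emptyset,2j}(y)=\E[\He_{2j}(\bW_*^\sT\bx)\mid y]$ together with conditional Jensen. This gives
\[
\E|\<\bA,\bpsi\>|^p\le\E|\<\bA,\He_{2j}(\bx)\>|^p\le (p-1)^{jp}\|\bA\|^p,
\]
so the $L^p$ norm of $\bpsi_{\emptyset,2j}$ is at most $(p-1)^{j}$ times a dimension-free constant. Crude but uniform in $d$, this bound would only be used for the first $J$ terms. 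For $j\ge J$ I would interpolate instead: combine the $L^2$ decay $C^{-j}$ with the Jensen $L^{2p}$ bound via Hölder (log-convexity of $L^p$ norms), which trades part of the decay for integrability. The hypothesis $C>\sqrt{\s}(q-1)$ should make the resulting geometric series $\sum_j (p-1)^j\s^{j/2}C^{-j\theta}d^{-j/2}$ summable.

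Putting the pieces together, the first $J$ terms contribute $O(d^{-1/2})$ and the tail is dominated by a convergent geometric series times $d^{-J/2}$. Hence $\|g\|_{L^q}=O(d^{-1/2})$, and choosing $C''$ large enough that $L^2\|g\|_{L^q}\le 1/2$ gives both inequalities.
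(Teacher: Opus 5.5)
Your skeleton matches the paper's. You use $\bar\bxi_{\emptyset,\ell}=\bxi_{\emptyset,\ell}\,h$ from \eqref{eq:relation_xi_bar_xi}, apply H\"older with the $L^{2+\eta}$ hypothesis, and reduce to $\|h-1\|_{L^p}=o_d(1)$. You then expand $h-1=\sum_{j\ge1}\beta^{(d)}_{2j,0}(r)\,\tau^j(\bpsi_{\emptyset,2j}(y))$ and combine Minkowski, hypercontractivity for the radial factor, and $|\tau^j(\bpsi_{\emptyset,2j})|\le\s^{j/2}\|\bpsi_{\emptyset,2j}\|_\frob$. Your H\"older under $\nu_d^{(Y,R)}$ followed by $\E_{\nu_d^{(Y,R)}}|g|^q=\E_{\nu_d^Y\otimes\nu_d^R}[|g|^q(1+g)]$ is a valid, slightly cleaner variant of the paper's three-factor H\"older. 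Keep $q=(2+\eta)/\eta$, though: enlarging it to an even integer $\ge\lceil 2(2+\eta)/\eta\rceil$ only pushes $p=q+1$ beyond what the hypothesis is calibrated for.

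The genuine gap is the tail $j\ge J$, for two reasons.
\begin{itemize}
\item The radial factor does not give $d^{-j/2}$ uniformly in $j$. By Lemma~\ref{lem:Hermite-to-harmonic-tensors}, $\|\beta^{(d)}_{2j,0}\|_{L^2(\chi_d)}^2=\prod_{i=0}^{j-1}\frac{2i+1}{d+2i}$. This is decreasing, but at fixed $d$ it decays only like $j^{-(d-1)/2}$. So for $j\ge1$ all you have is $\|\beta^{(d)}_{2j,0}\|_{L^2}\le d^{-1/2}$. The label factor alone must then beat $(p-1)^j\s^{j/2}$ geometrically; otherwise your Minkowski bound diverges for every $d$.
\item Your interpolation cannot supply that decay under the stated hypothesis. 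Set $f_j:=\tau^j(\bpsi_{\emptyset,2j})$. You have $\|f_j\|_{L^2}\le\s^{j/2}C^{-j}$, and by conditional Jensen and hypercontractivity for $\bW_*^\sT\bx\sim\cN(0,\bI_\s)$, $\|f_j\|_{L^{2p}}\le(2p-1)^j\s^{j/2}$. Log-convexity gives $\|f_j\|_{L^p}\le\|f_j\|_{L^2}^{1/(p-1)}\|f_j\|_{L^{2p}}^{(p-2)/(p-1)}$. The $j$-th term then has ratio $(p-1)\sqrt{\s}\,(2p-1)^{(p-2)/(p-1)}C^{-1/(p-1)}$, which is below one only if $C>[(p-1)\sqrt{\s}]^{p-1}(2p-1)^{p-2}$.
\end{itemize}
Changing the endpoint does not help. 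Interpolating the $L^2$ decay against the Jensen bound in any $L^s$, $s>p$, keeps only a fraction $\theta<2/p$ of the exponent of $C$, so it needs at least $C>[(p-1)\sqrt{\s}]^{p/2}$. For every $\eta\le1$ this exceeds the assumed threshold $\sqrt{\s}(\lceil 2(2+\eta)/\eta\rceil-1)$. For example, at $\eta=1$ you have $p=4$ and need $C>9\s$ at the very least, while the hypothesis only gives $C>5\sqrt{\s}$. So your step ``the hypothesis should make the geometric series summable'' fails: there are admissible $C$ for which your bound is infinite.

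The paper handles this term by asserting, ``by Gaussian hypercontractivity'', the reverse-H\"older bound $\|\tau^j(\bpsi_{\emptyset,2j})\|_{L^{p'}(\nu_d^Y)}\le[\sqrt{\s}(p'-1)]^j\|\bpsi_{\emptyset,2j}\|_{L^2(\nu_d^Y)}$. This keeps the full decay $C^{-j}$, and it is where the hypothesis's dependence on $\sqrt{\s}$ and $p'-1$ comes from. You were right to single this out as the crux. $\bpsi_{\emptyset,2j}$ is a function of the label, not an element of a Wiener chaos, so such a bound needs structure in the channel $\bW_*^\sT\bx\mapsto y$; it holds, for instance, for Gaussian-noise labels via Mehler's formula. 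It does not follow from hypercontractivity alone, and Jensen plus interpolation does not recover it.

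To complete your argument you must do one of two things:
\begin{itemize}
\item add a label-side assumption $\big\|\|\bpsi_{\emptyset,2j}\|_\frob\big\|_{L^{p}(\nu_d^Y)}\le K^j\|\bpsi_{\emptyset,2j}\|_{L^2(\nu_d^Y)}$ with $(p-1)\sqrt{\s}\,K<C$; or
\item strengthen the assumption on $C$ to the size computed above, with $p=q+1$.
\end{itemize}
In either case the tail is at most $d^{-1/2}$ times a convergent geometric series, and the first $J$ terms are $O(d^{-1/2})$ by your Jensen bound. The rest of your proof then goes through.
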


\begin{proof}
    We suppose throughout that \(d\) is large enough so that all the assumptions of the lemma are satisfied. We also suppose that \(J=0\) for simplicity, as the general case follows by adjusting the constants. By~\eqref{eq:relation_xi_bar_xi},
    \[
         \|\bxi_{\emptyset,\ell}(\by)\|_{L^2(\nu_d^{(Y,R)})}^2 = \E_{\nu_d^{Y}\otimes \nu_d^{R}}\left[\<\bxi_{\emptyset,\ell}(\by),\bar{\bxi}_{\emptyset,\ell}(\by)\>_\frob\right].
    \]
    Mirroring the approach in \cite[Appendix G]{joshi2025learning}, let
    \[
        S = \sum_{j \geq 1} \beta^{(d)}_{2j,0} (r) \tau^j(\bpsi_{\emptyset,2j} (y)) \in \R
    \]
    such that \(1+S= \frac{\d \nu_d^{(Y,R)}}{\d \nu_d^{Y} \otimes \nu_d^{R}}(y,r)\) by \eqref{eq:relation_xi0_bar_xi0} and decompose
    \begin{equation}\label{eq:decomposition_xi_xi_bar_gaussian_MIM}
        \E_{\nu_d^{Y}\otimes \nu_d^{R}}\left[\<\bxi_{\emptyset,\ell},\bar{\bxi}_{\emptyset,\ell}\>_\frob\right] = \|\bar{\bxi}_{\emptyset,\ell}\|_{L^2(\nu_d^{Y}\otimes \nu_d^{R})}^2 -  \E_{\nu_d^{Y}\otimes \nu_d^{R}}\left[S\<\bxi_{\emptyset,\ell},\bar{\bxi}_{\emptyset,\ell}\>_\frob\right].
    \end{equation}
    
    By Hölder's inequality, for any \(p,q,r \geq 1\) such that \(1/p + 1/q + 1/r = 1\),
    \[
        |\E_{\nu_d^{Y}\otimes \nu_d^{R}}\left[S\<\bxi_{\emptyset,\ell},\bar{\bxi}_{\emptyset,\ell}\>_\frob\right]| \leq \|S\|_{L^p(\nu_d^{Y}\otimes \nu_d^{R})} \|\bar{\bxi}_{\emptyset,\ell}^{1-1/r}\|_{L^q(\nu_d^{Y}\otimes \nu_d^{R})} \|\bxi_{\emptyset,\ell}\|_{L^{2r}(\nu_d^{(Y,R)})}^2.
    \]
    Set \(p = q = 2(2+\eta)/\eta\) and \(r = (2+\eta)/2\). Then, by assumption, \(\|\bxi_{\emptyset,\ell}\|_{L^{2r}(\nu_d^{(Y,R)})}^2 \leq L \|\bxi_{\emptyset,\ell}\|_{L^{2}(\nu_d^{(Y,R)})}^2\) for some constant \(L>0\) independent of \(d\). Furthermore,
    \[
        \|\bar{\bxi}_{\emptyset,\ell}^{1-1/r}\|_{L^q(\nu_d^{Y}\otimes \nu_d^{R})}  = \|\bar{\bxi}_{\emptyset,\ell}\|_{L^2(\nu_d^{Y}\otimes \nu_d^{R})}^{\frac{\eta}{2+\eta}} \leq \left\|\frac{\de \P_{\nu_d}}{\de \tilde{\P}_{\nu_d,\emptyset}}\right\|_{L^2(\tilde{\P}_{\nu_d,\emptyset})}^{\frac{\eta}{2+\eta}}
    \]
    with
    \[
         \left\|\frac{\de \P_{\nu_d}}{\de \tilde{\P}_{\nu_d,\emptyset}}\right\|_{L^2(\tilde{\P}_{\nu_d,\emptyset})} = \sum_{k=0}^\infty \|\bpsi_{\emptyset,k}(y)\|_{L^2(\nu_d^{Y})}^2 \leq \sum_{k\geq 0}^\infty C^{-2k}<\infty
    \]
    independently of \(d\) by assumption. Therefore, to conclude the proof, it suffices to show that \(\|S\|_{L^p(\nu_d^{Y}\otimes \nu_d^{R})}\) can be made arbitrarily small for all \(d\) large enough. Let \(p' = \lceil 2(2+\eta)/\eta\rceil \) be an integer. By Hölder's inequality,
    \[
        \|S\|_{L^{p'}(\nu_d^{Y}\otimes \nu_d^{R})} \leq \sum_{j \geq \lceil k_*/2 \rceil} \|\beta^{(d)}_{2j,0} (r)\|_{L^{p'}(\nu_d^{R})} \|\tau^j(\bpsi_{\emptyset,2j} (y))\|_{L^{p'}(\nu_d^{Y})}.
    \]
    Momentarily fixing \(j\), we bound the two terms in the right-hand side separately. First, it follows from Lemma~\ref{lem:Hermite-to-harmonic-tensors} that \(\beta_{2j,0}^{(d)} (r)\) is a polynomial of degree \(2j\) in \(r\) with only even degree term. Since \(r=\|\bx\|_2\) with \(\bx \sim \cN(0,\bI_d)\), we can think of \(\beta_{2j,0}^{(d)} (r)\) as a degree-\(2j\) polynomial of a standard Gaussian vector in \(\R^d\). Hence, by Gaussian hypercontractivity (see for instance~\cite[Corollary 5.21]{Boucheron_Lugosi_Massart_2013}),
    \[
        \|\beta^{(d)}_{2j,0} (r)\|_{L^{p'}(\nu_d^{R})} \leq (p'-1)^{j} \|\beta^{(d)}_{2j,0} (r)\|_{L^{2}(\nu_d^{R})}.
    \]
    Next, since \(\bpsi_{2j} (y)\) is supported on a finite number of chaos levels, it follows again by Gaussian hypercontractivity that
    \[
        \|\tau^j(\bpsi_{\emptyset,2j} (y))\|_{L^{p'}(\nu_d^{Y})} \leq \s^{j/2} \|\bpsi_{\emptyset,2j} (y)\|_{L^{p'}(\nu_d^{Y})} \leq [\sqrt{\s}(p'-1)]^{j}\|\bpsi_{\emptyset,2j} (y)\|_{L^{2}(\nu_d^{Y})}
    \]
    Combining the above two bounds, we obtain
    \[
        \|S\|_{L^{p'}(\nu_d^Y \otimes \nu_d^R)} \leq \sum_{j \geq \lceil k_*/2 \rceil} [\sqrt{\s}(p'-1)^2]^{j}\|\beta^{(d)}_{2j,0} (r)\|_{L^{2}(\nu_d^{R})}\|\bpsi_{\emptyset,2j} (y)\|_{L^{2}(\nu_d^{Y})}.
    \]
    By assumption, there exists a constant \(\alpha\in (0,1)\) such that \([\sqrt{\s}(p'-1)^2]^{j}\|\bpsi_{\emptyset,2j} (y)\|_{L^{2}(\nu_d^{Y})} \leq \alpha^{j}\) for all \(j\). Hence, using the estimate on the ratios of \(\E[\beta^{(d)}_{2j,0}(r)^2]\) from Lemma~\ref{lem:Hermite-to-harmonic-tensors}, we have
    \[
        \frac{[\sqrt{\s}(p'-1)^2]^{j+1}\|\beta^{(d)}_{2j+2,0} (r)\|_{L^{2}(\nu_d^{R})}\|\bpsi_{\emptyset,2j+2} (y)\|_{L^{2}(\nu_d^{Y})}}{[\sqrt{\s}(p'-1)^2]^{j}\|\beta^{(d)}_{2j,0} (r)\|_{L^{2}(\nu_d^{R})}\|\bpsi_{\emptyset,2j} (y)\|_{L^{2}(\nu_d^{Y})}} \leq \alpha < 1.
    \]
    Therefore, it follows that
    \[
        \|S\|_{L^{p'}(\nu_d^Y \otimes \nu_d^R)} \leq  \frac{\alpha^{\lceil k_*/2 \rceil} }{1-\alpha} \|\beta^{(d)}_{2\lceil k_*/2 \rceil,0} (r)\|_{L^{2}(\nu_d^{R})} \lesssim d^{-\lceil k_*/2 \rceil/2},
    \]
    where the last inequality follows from the asymptotic of \(\E[\beta^{(d)}_{2j,0}(r)^2]\) in Lemma~\ref{lem:Hermite-to-harmonic-tensors}. Hence, for all \(d\) large enough, \(\|S\|_{L^{p'}(\nu_d^Y \otimes \nu_d^R)} \leq 1/2\).
\end{proof}

We can now prove the main lemma of this section, which relates the magnitudes of the harmonic coefficients \(\bxi_{\emptyset,\ell}(\by)\) to the generative exponent \(k_\ast\). To do so, we introduce the \emph{contraction index}: for \(1 \leq \ell \leq k_\ast\),
\begin{equation}\label{eq:definition_contraction_index}
    j_\ast(\ell) = \argmin_{j \geq 0} \left\{\left\|\ptf\left(\tau^{\lceil \frac{k_\ast-\ell}{2}\rceil+j}(\bpsi_{\emptyset,\ell+2\lceil (k_\ast-\ell)/2\rceil + 2j}(y))\right)\right\|_{L^{2}(\nu_d^{Y})}^2 > 0\right\}.
\end{equation}
In words, \(j_\ast(\ell)\) is the smallest number of contractions required so that the projection onto harmonic tensors of order \(\ell\) of the contracted tensor \(\bpsi_{\emptyset,\ell+2\lceil (k_\ast-\ell)/2\rceil + 2j}(y)\) is non-zero. Note that \(j_\ast(k_\ast) = 0\) for all \(d\) larger than some constant depending on \(k_\ast,\s\) by Lemma~\ref{lem:approximation_traceless_projection}.

\begin{lemma}\label{lem:Gaussian-MIM-harmonic-coefficients}
    Let \(1\leq \ell\leq k_\ast\) and suppose that the conditions of Lemma~\ref{lem:stability_xi_bar_xi_gaussian_MIM} are satisfied. Then,
    \[
        \|\bxi_{\emptyset,\ell}(\by)\|_{L^2(\nu_d^{(Y,R)})}^2 \asymp d^{- \lceil (k_\ast - \ell)/2 \rceil - j_\ast(\ell)}.
    \]
\end{lemma}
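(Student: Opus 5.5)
By Lemma~\ref{lem:stability_xi_bar_xi_gaussian_MIM}, $\|\bxi_{\emptyset,\ell}\|_{L^2(\nu_d^{(Y,R)})}^2$ is within a factor $2$ of $\|\bar\bxi_{\emptyset,\ell}\|_{L^2(\nu_d^Y\otimes\nu_d^R)}^2$. It therefore suffices to estimate the latter, using the explicit series \eqref{eq:definition_xi_bar}. Under the product measure $\nu_d^Y\otimes\chi_d$, the radial and label parts factor. Since $\bpsi_{\emptyset,k}\equiv 0$ for $k<k_*$, only indices $j\ge j_0:=\lceil (k_*-\ell)/2\rceil$ contribute. This gives
\[
\|\bar\bxi_{\emptyset,\ell}\|_{L^2}^2=\sum_{j,j'\ge j_0}\E_r\big[\beta^{(d)}_{\ell+2j,\ell}(r)\beta^{(d)}_{\ell+2j',\ell}(r)\big]\,\E_y\big\<\ptf(\tau^j\bpsi_{\emptyset,\ell+2j}),\ptf(\tau^{j'}\bpsi_{\emptyset,\ell+2j'})\big\>_\frob .
\]
Write $a_j(y):=\ptf(\tau^j\bpsi_{\emptyset,\ell+2j}(y))$. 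By definition of the contraction index \eqref{eq:definition_contraction_index}, $a_j\equiv 0$ for $j_0\le j<j_0+j_\ast(\ell)$, and $a_{j_1}\neq 0$ for $j_1:=j_0+j_\ast(\ell)$.

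\textbf{Upper bound.} By Cauchy--Schwarz the sum is at most $\big(\sum_{j\ge j_1}\|\beta^{(d)}_{\ell+2j,\ell}\|_{L^2(\chi_d)}\|a_j\|_{L^2}\big)^2$. Next I bound $\|a_j\|_{L^2}\le \s^{j/2}\|\bpsi_{\emptyset,\ell+2j}\|_{L^2}$: the tensor lives on $\Span(\bW_*)$, the trace contraction costs $\sqrt{\s}$ per contraction, and $\ptf$ is a contraction in norm. Combining this with the assumed decay $\|\bpsi_{\emptyset,k}\|\le C^{-k/2}$, $C>\sqrt\s$, the terms decay geometrically in $j$. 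Together with the ratio bound for $\E[\beta^{(d)2}_{\ell+2j,\ell}]$ in Lemma~\ref{lem:Hermite-to-harmonic-tensors} and $\E[\beta^{(d)2}_{\ell+2j,\ell}]\asymp d^{-j}$, this gives the bound $\lesssim \|\beta_{\ell+2j_1,\ell}\|_{L^2}^2\lesssim d^{-j_1}$. This is the same geometric-series argument used for $S$ in the previous proof.

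\textbf{Lower bound.} I isolate the diagonal term $j=j'=j_1$, which equals $\E[\beta^{(d)2}_{\ell+2j_1,\ell}]\,\|a_{j_1}\|_{L^2}^2\asymp d^{-j_1}$. Here $\|a_{j_1}\|_{L^2}$ is a positive constant independent of $d$ up to $1+O(d^{-1/2})$ corrections, since $\ptf$ is close to the identity on low-rank $\bW_*$-supported tensors (Lemma~\ref{lem:approximation_traceless_projection}). The remaining terms have $\max(j,j')>j_1$. For these I need the cross moments $\E[\beta_{\ell+2j,\ell}\beta_{\ell+2j',\ell}]$ to be small relative to $d^{-j_1}$. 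The Laguerre representation from the proof of Lemma~\ref{lem:Hermite-to-harmonic-tensors} gives
\[
\beta^{(d)}_{\ell+2j,\ell}\propto r^\ell L_j^{(d/2+\ell-1)}(r^2/2),
\]
and these are orthogonal in $L^2$ of the Gamma$(d/2)$ law of $r^2/2$ after the weight $r^{2\ell}$. That weight exactly converts the Gamma$(d/2)$ density into Gamma$(d/2+\ell)$, so the cross terms $j\ne j'$ vanish exactly.

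This reduces the expression to $\sum_{j\ge j_1}\E[\beta^{(d)2}_{\ell+2j,\ell}]\|a_j\|^2$, a sum of nonnegative terms. It is then $\ge$ the $j_1$ term and $\lesssim d^{-j_1}$ by the geometric decay above. The main step to verify carefully is this orthogonality claim, and that the constants $\|a_{j_1}\|_{L^2}$ stay bounded away from $0$ uniformly in large $d$. If orthogonality failed, I would instead bound the cross terms by $d^{-(j+j')/2}$ via Cauchy--Schwarz and absorb them into the diagonal.
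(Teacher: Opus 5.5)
Your argument is correct and shares the paper's skeleton. You reduce to $\bar\bxi_{\emptyset,\ell}$ via Lemma~\ref{lem:stability_xi_bar_xi_gaussian_MIM}, expand the square under $\nu_d^Y\otimes\nu_d^R$, and control the tail by the geometric series built from $\s^{j}\|\bpsi_{\emptyset,\ell+2j}\|_{L^2}^2\le C^{-\ell}(\s/C^2)^{j}$ together with the ratio and $\Theta_d(d^{-j})$ estimates of Lemma~\ref{lem:Hermite-to-harmonic-tensors}.

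The genuine difference is the off-diagonal terms $j\neq j'$. The paper keeps them and bounds them by Cauchy--Schwarz as $S_{m_\ast+1}S_{m_\ast}\lesssim d^{-m_\ast-1/2}$, which is lower order than the diagonal. You instead observe that they vanish identically, and this is right. With $Y=r^2/2\sim\mathrm{Gamma}(d/2,1)$ one has $r^{2\ell}=2^\ell Y^\ell$. Hence $\E[\beta^{(d)}_{\ell+2j,\ell}(r)\beta^{(d)}_{\ell+2j',\ell}(r)]$ is a multiple of $\int_0^\infty L^{(\alpha)}_j(y)L^{(\alpha)}_{j'}(y)\,y^{\alpha}e^{-y}\,\de y$ with $\alpha=d/2+\ell-1$, which is exactly the Laguerre weight. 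The same vanishing is also forced by orthogonality of distinct Wiener chaoses: for traceless $\bC$, $\<\psym(\bC\otimes\bI_d^{\otimes (k-\ell)/2}),\He_k(\bx)\>_\frob$ is a nonzero multiple of $\beta^{(d)}_{k,\ell}(r)\<\bC,\cH_{d,\ell}(\bz)\>_\frob$.

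Your route therefore yields the exact identity $\|\bar\bxi_{\emptyset,\ell}\|_{L^2}^2=\sum_{j\ge j_1}\E[\beta^{(d)}_{\ell+2j,\ell}(r)^2]\,\|a_j\|_{L^2}^2$, a sum of nonnegative terms from which both bounds are immediate. The paper's route needs no such structure, only second-moment bounds on the $\beta$'s, so it would survive if this orthogonality were unavailable.

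You also make explicit a point the paper uses without comment: $\|a_{j_1}\|_{L^2}$ stays bounded below uniformly in $d$, so $j_\ast(\ell)$ is eventually $d$-independent. This follows from Lemma~\ref{lem:approximation_traceless_projection} and the $d$-independence of the coordinate tensors $\tau^{j}(\bphi_{\ell+2j})\in\Sym_\ell(\R^\s)$, where $\bpsi_{\emptyset,\ell+2j}=\bW_*^{\otimes(\ell+2j)}\bphi_{\ell+2j}$.
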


\begin{proof}[{Proof of Lemma~\ref{lem:Gaussian-MIM-harmonic-coefficients}}]
    Again, we suppose throughout that \(d\) is large enough so that all the assumptions of the lemma are satisfied and that the condition in Lemma~\ref{lem:stability_xi_bar_xi_gaussian_MIM} holds with \(J=0\) for simplicity. By Lemma~\ref{lem:stability_xi_bar_xi_gaussian_MIM}, \(\|\bar{\bxi}_{\emptyset,\ell}\|^2_{L^2(\nu_d^{Y}\otimes \nu_d^{R})}\asymp \|\bxi_{\emptyset,\ell}(\by)\|^2_{L^2(\nu_d^{(Y,R)})}\). Therefore, it suffices to prove the desired asymptotic for \(\|\bar{\bxi}_{\emptyset,\ell}\|^2_{L^2(\nu_d^{Y}\otimes \nu_d^{R})}\). Let \(m_\ast = \lceil (k_\ast - \ell)/2 \rceil + j_\ast(\ell)\). Plugging the definition of \(\bar{\bxi}_{\emptyset,\ell}\) from \eqref{eq:definition_xi_bar} and expanding the square,
    \begin{align*}
        \|\bar{\bxi}_{\emptyset,\ell}\|_{L^2(\nu_d^{Y}\otimes \nu_d^{R})}^2 & = \sum_{j,j' \geq 0} \E_{\nu_d^{Y}}\left[\<\ptf(\tau^j( \bpsi_{\ell + 2j} (y))), \ptf(\tau^{j'}( \bpsi_{\ell + 2j'} (y)))\>_\frob\right] \E_{\nu_d^{R}}\left[\beta^{(d)}_{\ell + 2j,\ell} (r) \beta^{(d)}_{\ell + 2j',\ell} (r)\right]
        \\ & = \sum_{j \geq m_\ast} \E_{\nu_d^{Y}}\left[\|\ptf(\tau^j( \bpsi_{\ell + 2j} (y)))\|_{\frob}^2\right] \E_{\nu_d^{R}}\left[\beta^{(d)}_{\ell + 2j,\ell} (r)^2\right]
        \\ & \quad + 2 \sum_{j' > j \geq  m_\ast} \E_{\nu_d^{Y}}\left[\<\ptf(\tau^j( \bpsi_{\ell + 2j} (y))), \ptf(\tau^{j'}( \bpsi_{\ell + 2j'} (y)))\>_\frob\right] \E_{\nu_d^{R}}\left[\beta^{(d)}_{\ell + 2j,\ell} (r) \beta^{(d)}_{\ell + 2j',\ell} (r)\right].
    \end{align*}

    Let us first analyze the diagonal terms. Since \(\bpsi_{\ell + 2j} (y)= \bW_*^{\otimes (\ell + 2j)} \bphi_{\ell + 2j} (y)\) for some \(\bphi_{\ell + 2j} (y) \in \Sym_{\ell + 2j} (\R^\s)\) and \(\ptf\) is an orthogonal projection,
    \[
        \|\ptf(\tau^j\left(\bpsi_{\ell+2j}(y)\right))\|_\frob^2\leq \|\tau^j\left(\bpsi_{\ell+2j}(y)\right)\|_\frob^2 \leq \s^{j}\|\bpsi_{\ell+2j}(y)\|_\frob^2.
    \]
    Let \(\alpha \in (0,1)\) such that, by assumption, \(\s^{j}\|\bpsi_{\ell+2j}(y)\|_{L^2(\nu_d^{Y})}^2 \leq \alpha^{j}\) for all \(j\). Then,
    \[
        \sum_{j \geq m_\ast} \E_{\nu_d^{Y}}\left[\|\ptf(\tau^j( \bpsi_{\ell + 2j} (y)))\|_{\frob}^2\right] \E_{\nu_d^{R}}\left[\beta^{(d)}_{\ell + 2j,\ell} (r)^2\right] \leq \sum_{j \geq m_\ast} \alpha^j \E_{\nu_d^{R}}\left[\beta^{(d)}_{\ell + 2j,\ell} (r)^2\right].
    \]
    Indeed, since \(\E[\beta^{(d)}_{\ell + 2j+2,\ell} (r)^2] / \E[\beta^{(d)}_{\ell + 2j,\ell} (r)^2] \leq 1\) by Lemma~\ref{lem:Hermite-to-harmonic-tensors}, the series on the right-hand side converges and
    \[
        \sum_{j \geq m_\ast} \alpha^j \E_{\nu_d^{R}}\left[\beta^{(d)}_{\ell + 2j,\ell} (r)^2\right] \leq \frac{ \alpha^{m_\ast}}{1-\alpha} \E_{\nu_d^{R}}\left[\beta^{(d)}_{\ell + 2m_\ast,\ell} (r)^2\right] \lesssim d^{-m_\ast}.
    \]
    The last inequality follows from the asymptotic equation for \(\E[\beta^{(d)}_{\ell + 2j,\ell} (r)^2]\) in Lemma~\ref{lem:Hermite-to-harmonic-tensors}. On the other hand, it is straightforward to see that the diagonal terms are lower bounded by any individual term in the sum. In particular,
    \[
        \sum_{j \geq m_\ast} \E_{\nu_d^{Y}}\left[\|\ptf(\tau^j( \bpsi_{\ell + 2j} (y)))\|_{\frob}^2\right] \E_{\nu_d^{R}}\left[\beta^{(d)}_{\ell + 2j,\ell} (r)^2\right] \gtrsim  d^{-m_\ast}\E_{\nu_d^{Y}}\left[\|\ptf(\tau^{m_\ast}( \bpsi_{\ell + 2m_\ast} (y)))\|_{\frob}^2\right],
    \]
    where we have used the fact that \(\E[\beta^{(d)}_{\ell + 2j,\ell} (r)^2] = \Theta_d(d^{-j})\) from Lemma~\ref{lem:Hermite-to-harmonic-tensors} whenever \(j\) is fixed.

    We now turn to the cross-terms. By Cauchy-Schwarz inequality, they can be bounded as
    \begin{align*}
         \left|\sum_{j' > j \geq  m_\ast} \E_{\nu_d^{Y}}\left[\<\ptf(\tau^j( \bpsi_{\ell + 2j} (y))), \ptf(\tau^{j'}( \bpsi_{\ell + 2j'} (y)))\>_\frob\right] \E_{\nu_d^{R}}\left[\beta^{(d)}_{\ell + 2j,\ell} (r) \beta^{(d)}_{\ell + 2j',\ell} (r)\right]\right|
         \leq S_{m_\ast+1} S_{ m_\ast},
    \end{align*}
    where we have defined 
    \[
        S_m = \sum_{j \geq m} \sqrt{\E_{\nu_d^{Y}}\left[\|\ptf(\tau^j( \bpsi_{\ell + 2j} (y)))\|_{\frob}^2\right] \E_{\nu_d^{R}}\left[\beta^{(d)}_{\ell + 2j,\ell} (r)^2\right]}.
    \]
    It follows from the argument above that the series defining \(S_m\) is convergent for all \(m\). Indeed, using the same notation as above, we have for all sufficiently large \(d\),
    \[
        S_m \leq \sum_{j \geq m} \sqrt{\alpha^j \E_{\nu_d^{R}}\left[\beta^{(d)}_{\ell + 2j,\ell} (r)^2\right]}.
    \]
    In particular,
    \[
         \left|\sum_{j' > j \geq  m_\ast} \E_{\nu_d^{Y}}\left[\<\ptf(\tau^j( \bpsi_{\ell + 2j} (y))), \ptf(\tau^{j'}( \bpsi_{\ell + 2j'} (y)))\>_\frob\right] \E_{\nu_d^{R}}\left[\beta^{(d)}_{\ell + 2j,\ell} (r) \beta^{(d)}_{\ell + 2j',\ell} (r)\right]\right| \lesssim d^{-m_\ast -\frac{1}{2}}.
    \]
    This is of lower order compared to the diagonal terms.
\end{proof}

\subsubsection{Directional MIMs}\label{app:directional_MIMs}

We now consider a directional (Gaussian) MIMs, which is a related observation model in which the radial component of the input is unobserved, and only the label and the direction of the input are available. Consider the Gaussian MIM defined via the link distribution \(\rho \in \cP(\cY \times \R^\s)\) as in Appendix~\ref{app:gaussian_MIMs} and let the generative exponent \(k_\ast\) be defined as before.

Suppose we only observe samples of the form \((y,\bz)\in \cY\times \S^{d-1}\), where \(\bz = \bx/\|\bx\|_2\) is the direction of the input. The conditional distribution of \(y\) given \(\bz\) is therefore obtained by marginalizing over the unobserved radial component \(r = \|\bx\|_2\). To embed this model into our general framework of spherical MIMs, we introduce an auxiliary radial variable \(r\) with deterministic distribution \(\delta_1\), independent of \((y,\bz)\). We thus define a lifted distribution \(\P_{\tilde\nu_d}\) on \(\cY\times \R_{\ge 0}\times \S^{d-1}\) by
\[
    (y,r,\bz)\sim \P_{\tilde{\nu}_d}, \quad \bz \sim \tau_d, \quad (y,r) \mid \bz \sim \E_{R\sim \chi_d}[\rho(\d y \mid R \bW_*^\sT \bz)]\delta_1(\d r).
\]
This lifted representation leaves the joint law of \((y,\bz)\) unchanged, but allows us to apply the harmonic analysis developed for spherical MIMs.

In this setting, the null distribution associated with the directional MIM coincides with the null distribution \(\tilde{\P}_{\tilde{\nu}_d,\emptyset}\) which arises from a fully decoupled model as in Appendix~\ref{app:gaussian_MIMs}, namely
\[
    \P_{\tilde{\nu}_d,\emptyset}
    =
    \nu_d^Y \otimes \delta_1 \otimes \tau_d
    =
    \tilde{\P}_{\tilde{\nu}_d,\emptyset}.
\]
By~\eqref{eq:likelihood_ratio_gaussian_MIM_harmonic_decomposition_alternate}, we have
\[
    \frac{\d \P_{\tilde{\nu}_d}}{\d \P_{\tilde{\nu}_d,\emptyset}}(y,\bz) = \E_{R\sim \chi_d}\left[ \frac{\d \P_{\nu_d}}{\d \tilde{\P}_{\nu_d,\emptyset}}(y,R\bz )\right] = \sum_{\ell = 0}^\infty \< \E_{R\sim \chi_d}[\bar \bxi_{\emptyset, \ell} (y,R)], \cH_{d,\ell} (\bz)\>_\frob,
\]
where 
\[
    \E_{R\sim\chi_d}[\bar \bxi_{\emptyset, \ell} (y,R)] = \sum_{j \geq 0} \E_{r\sim\chi_d}[\beta^{(d)}_{\ell + 2j,\ell} (r)] \ptf (\tau^j( \bpsi_{\emptyset,\ell + 2j} (y))).
\]
On the other hand, we can also directly perform a harmonic decomposition of the likelihood ratio \(\d \P_{\tilde{\nu}_d} / \d \P_{\tilde{\nu}_d,\emptyset}\), yielding
\[
    \frac{\d \P_{\tilde{\nu}_d}}{\d \P_{\tilde{\nu}_d,\emptyset}}(y,\bz) = \sum_{\ell = 0}^\infty \< \tilde{\bxi}_{\emptyset, \ell} (y), \cH_{d,\ell} (\bz)\>_\frob.
\]
Here, we use the notation \(\tilde{\bxi}_{\emptyset, \ell} (y)\) to distinguish these coefficients from those of the original Gaussian MIM. By comparing the two harmonic decompositions, we obtain the relation
\begin{equation}\label{eq:definition_xi_tilde_gaussian_MIM}
    \tilde{\bxi}_{\emptyset, \ell} (y) = \E_{R\sim \chi_d}[\bar \bxi_{\emptyset, \ell} (y,R)] = \sum_{j \geq 0} \E_{R\sim \chi_d}[\beta^{(d)}_{\ell + 2j,\ell} (R)] \ptf (\tau^j( \bpsi_{\emptyset,\ell + 2j} (y))).
\end{equation}

We use a similar argument as in Lemma~\ref{lem:Gaussian-MIM-harmonic-coefficients} to characterize the magnitude of the harmonic coefficients \(\tilde{\bxi}_{\emptyset, \ell} (\by)\) in terms of the generative exponent \(k_\ast\) and the contraction index \(j_\ast(\ell)\) defined in \eqref{eq:definition_contraction_index}.

\begin{lemma}\label{lem:Directional-Gaussian-MIM-harmonic-coefficients}
    Let \(1\leq \ell\leq k_\ast\). Suppose that there exists constants \(C',J>0\) and \(C>\sqrt{\s}\) such that \(\|\bpsi_{\emptyset,j}(y)\|_{L^{2}(\nu_d^{Y})} \leq C^{-j/2}\) for all \(j \geq J\) and \(d\geq C'\). Then,
    \[
        \|\tilde{\bxi}_{\emptyset,\ell}\|_{L^2(\nu_d^{Y})}^2 \asymp d^{- 2\lceil (k_\ast - \ell)/2 \rceil - 2j_\ast(\ell)}.
    \]
\end{lemma}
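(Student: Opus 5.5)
The plan follows the proof of Lemma~\ref{lem:Gaussian-MIM-harmonic-coefficients}, with two simplifications and one change. First, no analogue of Lemma~\ref{lem:stability_xi_bar_xi_gaussian_MIM} is needed: \eqref{eq:definition_xi_tilde_gaussian_MIM} already gives $\tilde{\bxi}_{\emptyset,\ell}$ exactly as a series in the tensors $\ptf(\tau^j(\bpsi_{\emptyset,\ell+2j}(y)))$. Second, the scalar weights are now deterministic numbers $m_j := \E_{R\sim\chi_d}[\beta^{(d)}_{\ell+2j,\ell}(R)]$ rather than random $\beta^{(d)}_{\ell+2j,\ell}(r)$. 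Expanding the square gives
\[
\|\tilde{\bxi}_{\emptyset,\ell}\|_{L^2(\nu_d^Y)}^2=\sum_{j,j'\ge 0} m_j m_{j'}\,\E_{\nu_d^Y}\big[\<\ptf(\tau^j(\bpsi_{\emptyset,\ell+2j})),\ptf(\tau^{j'}(\bpsi_{\emptyset,\ell+2j'}))\>_\frob\big].
\]
The terms with $\ell+2j<k_\ast$ vanish, since then $\bpsi_{\emptyset,\ell+2j}\equiv0$. By the definition \eqref{eq:definition_contraction_index} of the contraction index, every surviving term also has $j\ge m_\ast:=\lceil (k_\ast-\ell)/2\rceil+j_\ast(\ell)$.

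\textbf{Diagonal terms.} I will use $m_j^2=\Theta_d(d^{-2j})$ for fixed $j$, from Lemma~\ref{lem:Hermite-to-harmonic-tensors}. The $j=m_\ast$ term then gives the lower bound $\gtrsim d^{-2m_\ast}\,\|\ptf(\tau^{m_\ast}(\bpsi_{\emptyset,\ell+2m_\ast}))\|_{L^2}^2$, and this norm is a positive constant independent of $d$ by the definition of $j_\ast(\ell)$. For the upper bound, I will use $\|\ptf(\tau^j(\bpsi_{\emptyset,\ell+2j}))\|_\frob^2\le \s^j\|\bpsi_{\emptyset,\ell+2j}\|_\frob^2$, which holds because $\bpsi$ lies in the span of $\bW_*^{\otimes(\ell+2j)}$. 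Together with the decay assumption $\|\bpsi_{\emptyset,j}\|_{L^2}\le C^{-j/2}$ and $C>\sqrt{\s}$, this gives a geometric factor $\alpha^j$ with some $\alpha<1$, possibly after absorbing constants from the first $J$ terms.

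\textbf{Tail of the series.} This is the main obstacle: the bound $m_j^2=\Theta_d(d^{-2j})$ is only for fixed $j$, so it does not control the tail uniformly. I will use the monotonicity statement in Lemma~\ref{lem:Hermite-to-harmonic-tensors}, namely that $m_{j+1}^2/m_j^2\le 1$ for $d,j\ge C$. For smaller $j$, the explicit ratio $(\ell+2j+2)(\ell+2j+1)(\ell/2+j)^2/[4(d/2+\ell+j)^2(j+1)^2]$ is $O(d^{-2})$, hence also at most $1$ for large $d$. With this in hand, $\sum_{j\ge m_\ast}\alpha^j m_j^2\le \frac{\alpha^{m_\ast}}{1-\alpha}m_{m_\ast}^2\lesssim d^{-2m_\ast}$. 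In fact, splitting the sum at a large fixed $j_0$ gives the sharper bound $d^{-2m_\ast}\big(1+O(d^{-2})\big)$.

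\textbf{Cross terms.} By Cauchy--Schwarz, the cross terms are bounded by $2S_{m_\ast}S_{m_\ast+1}$, where $S_m:=\sum_{j\ge m}|m_j|\,\|\ptf(\tau^j\bpsi_{\emptyset,\ell+2j})\|_{L^2}$. The same geometric-plus-monotonicity argument gives $S_m\lesssim d^{-m}$, so the cross terms are $O(d^{-2m_\ast-1})$, which is negligible against the diagonal lower bound. Combining the three pieces yields $\|\tilde{\bxi}_{\emptyset,\ell}\|_{L^2}^2\asymp d^{-2m_\ast}$, which is exactly the claimed rate.
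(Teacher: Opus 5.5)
Your proposal is correct and follows essentially the same route as the paper. Both expand the square of the series in \eqref{eq:definition_xi_tilde_gaussian_MIM}, lower-bound by the $j=m_\ast$ diagonal term, control the diagonal tail using the geometric decay and the monotone ratios of $\E[\beta^{(d)}_{\ell+2j,\ell}(R)]^2$ from Lemma~\ref{lem:Hermite-to-harmonic-tensors}, and bound the cross terms by $S_{m_\ast}S_{m_\ast+1}$. Your check that the ratio is also at most $1$ for small $j$ (where it is $O(d^{-2})$) fills in a detail the paper leaves implicit.
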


\begin{proof}
    Let \(m_\ast = \lceil (k_\ast - \ell)/2 \rceil + j_\ast(\ell)\), where \(j_\ast(\ell)\) is defined as in \eqref{eq:definition_contraction_index}. Expanding \(\|\tilde{\bxi}_{\emptyset,\ell}\|_{L^2(\nu_d^{Y})}^2\) using the definition in \eqref{eq:definition_xi_tilde_gaussian_MIM}, we have
    \begin{align*}
        \|\tilde{\bxi}_{\emptyset,\ell}\|_{L^2(\nu_d^{Y})}^2 &  = \sum_{j \geq m_\ast} \E_{\nu_d^{Y}}\left[\|\ptf(\tau^j( \bpsi_{\ell + 2j} (y)))\|_{\frob}^2\right] \E_{\nu_d^{R}}\left[\beta^{(d)}_{\ell + 2j,\ell} (R)\right]^2
        \\ &  + 2 \sum_{j' > j \geq  m_\ast} \E_{\nu_d^{Y}}\left[\<\ptf(\tau^j( \bpsi_{\ell + 2j} (y))), \ptf(\tau^{j'}( \bpsi_{\ell + 2j'} (y)))\>_\frob\right] \E_{\nu_d^{R}}\left[\beta^{(d)}_{\ell + 2j,\ell} (R)\right] \E_{\nu_d^{R}}\left[\beta^{(d)}_{\ell + 2j',\ell} (R)\right].
    \end{align*}
    The proof then follows the same steps as in Lemma~\ref{lem:Gaussian-MIM-harmonic-coefficients}, replacing the second moment of \(\beta^{(d)}_{\ell + 2j,\ell} (r)\) by the square of its first moment and using the estimates from Lemma~\ref{lem:Hermite-to-harmonic-tensors}.
\end{proof}

\subsection{Learning polynomials on the sphere}\label{app:learning-polynomials-sphere}

In this section, we study spherically invariant MIMs in which the input \(\bx\in\R^d\) admits the decomposition \(\bx=r\bz\), with \(r=\sqrt{d}\) and \(\bz\sim\tau_d\), and where \(\bW_*=[\bw_{\ast,1},\ldots,\bw_{\ast,\s}]\in\Stf_\s(\R^d)\) denotes the planted subspace. We observe responses of the form
\[
    y\mid \bx = \nu_d(\d y\mid\bW_*^\sT \bx):=  f(\bW_*^\sT \bx) + \epsilon,
\]
where \(\epsilon\) is independent additive noise and \(f:\R^\s\to\R\) is a non-constant polynomial of degree at most \(D\). Writing \(f\) in symmetric tensor form, there exist tensors \(\{\bC_j\}_{j=1}^D\) with \(\bC_j\in\Sym_j(\R^\s)\) and a scalar \(C_0\in\R\) such that
\[
    f(\bt) = C_0 + \sum_{j=1}^D \langle \bC_j, \bt^{\otimes j} \rangle_{\frob}.
\]
The normalization \(r=\sqrt{d}\) ensures that the coordinates of \(\bx\) are order one as \(d\to\infty\).

In this appendix we show, analogously to the Gaussian case~\cite{damian2025generative}, that the computational complexity of learning polynomial MIMs is governed by the second harmonic component of the likelihood ratio. Our argument follows the strategy of~\cite{chen2020learning} and parallels~\cite[Proposition~4]{damian2025generative}. A key distinction from Gaussian MIMs is that the Hermite coefficients of the likelihood ratio are dimension-free (see Appendix~\ref{app:gaussian_directional_mims}), whereas in spherical MIMs the harmonic coefficients depend on \(d\) in a non-trivial way. Consequently, in the spherical setting it is not enough to show that a given harmonic component is non-zero, we must also control its magnitude as a function of \(d\), which requires additional work.

Since the leap complexity is defined in terms of the harmonic structure of reduced models, it suffices to analyze the likelihood ratio after conditioning on previously recovered subspaces (see Section~\ref{sec:leap_complexity}). Let \(\bU\in\Stf_{<}(\R^\s)\) be a recovered subspace of dimension \(s_\bU<\s\), identified with its image \(\bU:=\bW_* \bU\in\Stf_{s_\bU}(\R^d)\). Set \(d_\bU=d-s_\bU\) and let \(\bU_\perp\in\Stf_{d_\bU}(\R^d)\) be an orthogonal complement. As in~\eqref{eq:decomposition-input}, we decompose
\[
\bz = \bU \br_\bU + \sqrt{1-\|\br_\bU\|_2^2}\,\bU_\perp \bz_\bU,
\qquad
\br_\bU := \bU^\sT \bz,
\qquad
\bz_\bU := \frac{\bU_\perp^\sT \bz}{\sqrt{1-\|\br_\bU\|_2^2}} \in \S^{d_\bU-1}.
\]
Conditioning on \(\br_\bU\) yields the reduced spherical MIM \((\by_\bU,\bz_\bU)\sim\P_{\nu_{d},\bU}\) defined in~\eqref{eq:decomposition-input}, with augmented response \(\by_\bU=(y,\br_\bU)\). By the characterization of leap complexity in Section~\ref{sec:leap_complexity}, controlling the harmonic coefficients of this reduced model is sufficient to determine the sample and runtime complexity of the full learning procedure. For the problem to be well posed, we assume that \(\sLeap(\nu_d)<\infty\).  This means that for every strict subframe \(\bU\subsetneq\bW_*\), the reduced spherical MIM \((\by_\bU,\bz_\bU)\sim\P_{\nu_d,\bU}\) retains nontrivial dependence on the residual direction \(\bz_\bU\); otherwise the reduced model would contain no further information and no additional directions could be learned.

It will be convenient to derive a variational characterization of the magnitude of the second harmonic component of the likelihood ratio in the reduced model. This is the content of the following lemma.

\begin{lemma}\label{lem:variational-characterization-second-harmonic-reduced-model}
    In the reduced spherical MIM \((\by_\bU,\bz_\bU) \sim \P_{\nu_{d},\bU}\) defined above, let \(\bW\in \Stf_{\s - s_\bU}(\R^{d_\bU})\) be an orthonormal basis for the image of \(\bU_\perp^\sT \bW_\ast\). Then, the second harmonic component of the likelihood ratio admits the variational characterization
    \begin{align*}
        \|\bxi_{\bU,2}(\by)\|_{L^2}^2 & = \frac{(d_\bU+2)d_\bU}{2}\sup_{G:\R \times \R^{\s_\bU} \mapsto \Sym_2(\R^{\s-\s_\bU}) } \frac{\E\left[\left\<\left(\bW^\sT \bz_\bU \bz_\bU^\sT \bW  - \frac{1}{d_\bU}\bI_{\s - s_\bU}\right), \bG(\by_\bU)\right\>_\frob\right]^2}{\|G(\by_\bU)\|_{L^2}^2}
        \\ & + \frac{(d_\bU+2)d_\bU(d_\bU-\s+\s_\bU)}{2}\sup_{g:\R \times \R^{\s_\bU} \mapsto \R} \frac{\E\left[\left(\frac{1-\|\bW^\sT \bz_\bU\|_2^2}{d_\bU - (\s - s_\bU)} - \frac{1}{d_\bU}\right) g(\by_\bU)\right]^2}{\|g(\by_\bU)\|_{L^2}^2},
    \end{align*}
    where the supremums are taken over square-integrable functions that are not almost surely zero.
\end{lemma}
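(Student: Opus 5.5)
The plan is to start from the definition $\bxi_{\bU,2}(\by_\bU) = \E[\cH_{d_\bU,2}(\bz_\bU)\mid \by_\bU]$ and use the general variational identity
\[
\|\bxi_{\bU,2}\|_{L^2}^2 = \sup_{\bA(\cdot)} \frac{\E[\langle \bA(\by_\bU), \cH_{d_\bU,2}(\bz_\bU)\rangle_\frob]^2}{\|\bA\|_{L^2}^2},
\]
where the supremum runs over square-integrable maps $\bA:\cY_\bU\to \TSym_2(\R^{d_\bU})$. This is Cauchy--Schwarz in $L^2(\nu^Y_{d,\bU};\TSym_2)$, with equality at $\bA=\bxi_{\bU,2}$. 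Since $\cH_{d_\bU,2}(\bz) = \kappa_{d_\bU,2}\sqrt{N_{d_\bU,2}}\,(\bz\bz^\sT - \bI/d_\bU)$, the numerator equals $\kappa^2 N\, \E[\langle \bA, \bz_\bU\bz_\bU^\sT - \bI/d_\bU\rangle]^2$. From \eqref{eq:harmonic_tensor_definition} one computes $\kappa_{d,2}^2 N_{d,2} = \frac{4(d/2-1)(d/2)}{(d-2)(d-1)}\cdot\frac{(d+2)(d-1)}{2} = \frac{d(d+2)}{2}$. This is exactly the prefactor $(d_\bU+2)d_\bU/2$ appearing in the statement.

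Next, I would reduce $\bA$ using the $\cO_{d_\bU}^{\bW}$-invariance of the reduced model. The conditional law of $\bz_\bU$ given $\by_\bU$ is invariant under rotations fixing $\bW$, so $\bxi_{\bU,2}(\by)$ lies in the invariant subspace of $\TSym_2$. By Lemma~\ref{lem:harmonic_expansion_invariant_functions} this subspace is $\{\ptf(\bW\bB\bW^\sT) : \bB\in\Sym_2(\R^{\s-s_\bU})\}$. Concretely, with $k:=\s-s_\bU$ and $\bP=\bW\bW^\sT$, $\bP_\perp=\bI-\bP$, any invariant traceless symmetric matrix has the form $\bW\bG\bW^\sT + g\,(\bP_\perp/(d_\bU-k) - \bI/d_\bU)\cdot c$ after recentering. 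I would parametrize it as $\bA = \bW(\bG - \tfrac{\Tr\bG}{k}\cdot\ldots)\bW^\sT + \ldots$ and choose the orthogonal decomposition
\[
\bA = \Big[\bW\bG\bW^\sT - \tfrac{\Tr \bG}{d_\bU}\bI\Big] + g\Big[\tfrac{\bP_\perp}{d_\bU-k} - \tfrac{\bI}{d_\bU}\Big]\text{-type pieces}.
\]
The key point is to split the invariant space into the traceless part supported on $\Span(\bW)$ and the one-dimensional direction $\bP_\perp/(d_\bU-k) - \bP/k$, which is orthogonal to it. Projecting onto the optimal $\bA$ then decouples the supremum into two independent suprema, since orthogonal components of $\bA$ contribute orthogonally both to $\|\bA\|^2$ and to the harmonic inner product. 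The latter holds because $\bxi$ decomposes accordingly.

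Finally, I would compute each piece. Pairing $\bW\bG\bW^\sT$ with $\bz\bz^\sT-\bI/d_\bU$ gives $\langle \bW^\sT\bz\bz^\sT\bW - \bI_k/d_\bU, \bG\rangle$, which is the first term. Pairing $\bP_\perp/(d_\bU-k)$ gives $(1-\|\bW^\sT\bz\|^2)/(d_\bU-k) - 1/d_\bU$. The norm of the corresponding unit direction produces the extra factor $d_\bU-k$ after normalization, since $\|\bP_\perp/(d_\bU-k)-\bI/d_\bU\|_\frob^2$ scales like $1/(d_\bU-k)$ up to the $\bP$-part. This last normalization is the main obstacle: I expect to have to choose the split (where the trace of $\bG$ is absorbed) precisely so that the norms are $\|\bG\|_\frob^2$ and $g^2/(d_\bU-k)$ exactly, matching the stated constants. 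I would verify this by direct computation of Frobenius norms and cross terms.
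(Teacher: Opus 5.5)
Your first steps are fine: the Cauchy--Schwarz variational identity, the constant $\kappa_{d,2}^2N_{d,2}=d(d+2)/2$, and the reduction to $\cO^{\bW}_{d_\bU}$-invariant test functions. The gap is in the decoupling step, which is what gives the stated formula its shape.

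Your displayed parametrization $[\bW\bG\bW^\sT-\frac{\Tr\bG}{d_\bU}\bI_{d_\bU}]+g[\frac{\bP_\perp}{d_\bU-k}-\frac{\bI_{d_\bU}}{d_\bU}]$ is not orthogonal: the two brackets have inner product $-g\Tr\bG/d_\bU$, so the supremum does not split there. Your second split, into $\{\bW\bG\bW^\sT:\Tr\bG=0\}$ and the direction $\bE=\bP_\perp/(d_\bU-k)-\bP/k$, is orthogonal, but it proves a different identity. The first supremum then runs only over traceless $\bG$. Writing $\bC_1(\by_\bU)=\E[\bW^\sT\bz_\bU\bz_\bU^\sT\bW\mid\by_\bU]$, it therefore misses the trace contribution $\frac1k\E[(\Tr\bC_1-k/d_\bU)^2]$, up to the common prefactor. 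Also, pairing with $\bE$ gives the statistic $\frac{1-\|\bW^\sT\bz_\bU\|_2^2}{d_\bU-k}-\frac{\|\bW^\sT\bz_\bU\|_2^2}{k}$, not $\frac{1-\|\bW^\sT\bz_\bU\|_2^2}{d_\bU-k}-\frac{1}{d_\bU}$; you computed the pairing with the non-traceless $\bP_\perp/(d_\bU-k)$ instead.

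Choosing "where the trace of $\bG$ is absorbed" differently cannot repair this. Up to the prefactor, the two terms in the statement are $\E\|\bW(\bC_1-\bI_k/d_\bU)\bW^\sT\|_\frob^2$ and $\E\|(c_2-1/d_\bU)\bP_\perp\|_\frob^2$, where $c_2=\E[\frac{1-\|\bW^\sT\bz_\bU\|_2^2}{d_\bU-k}\mid\by_\bU]$. These two blocks are orthogonal in $\Sym_2$, but neither is traceless on its own; only their sum is. So no orthogonal split of the invariant subspace of $\TSym_2(\R^{d_\bU})$ reproduces them term by term.

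The missing idea is to drop the traceless constraint on the test function. Apply Cauchy--Schwarz directly to $\E[\bz_\bU\bz_\bU^\sT-\bI_{d_\bU}/d_\bU\mid\by_\bU]$. This gives the same identity with $\bA$ ranging over all square-integrable $\Sym_2(\R^{d_\bU})$-valued maps. The maximizer is invariant, so you may take $\bA=\bW\bG\bW^\sT+g\,\bP_\perp/(d_\bU-k)$ with $\bG\in\Sym_2(\R^k)$ and $g\in\R$ unrestricted. The blocks are Frobenius-orthogonal, so $\|\bA\|_\frob^2=\|\bG\|_\frob^2+g^2/(d_\bU-k)$ exactly, which are the norms you guessed. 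Moreover,
\[
\big\langle \bA,\bz_\bU\bz_\bU^\sT-\tfrac{1}{d_\bU}\bI_{d_\bU}\big\rangle_\frob=\big\langle \bW^\sT\bz_\bU\bz_\bU^\sT\bW-\tfrac{1}{d_\bU}\bI_k,\bG\big\rangle_\frob+g\Big(\frac{1-\|\bW^\sT\bz_\bU\|_2^2}{d_\bU-k}-\frac{1}{d_\bU}\Big).
\]
The supremum of the squared sum of two linear functionals over this orthogonal sum equals the first stated supremum plus $(d_\bU-k)$ times the second.

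This is the dual form of the paper's argument. The paper writes $\E[\bz_\bU\bz_\bU^\sT\mid\by_\bU]=\bW\bC_1\bW^\sT+c_2\bP_\perp$ via Lemma~\ref{lem:decomposition_invariant_tensors}, and takes the trace to get $c_2=(1-\Tr\bC_1)/(d_\bU-k)$. It then expands $\|\bxi_{\bU,2}\|_\frob^2$ over the two orthogonal, non-traceless blocks and applies the $L^2$-projection characterization of conditional expectation to each block.
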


\begin{proof}
    We start by observing that, by definition of the harmonic tensor \(\cH_{d,2}(\bz)\) (see~\eqref{eq:harmonic_tensor_definition},~\eqref{eq:coefficients_projection_traceless} and~\eqref{eq:projection_traceless_rank_one}),
    \[
        \cH_{d,2}(\bz) = \sqrt{\frac{(d+2)d}{2}}\left(\bz\bz^\sT - \frac{1}{d}\bI_d\right)
    \]
    and hence
    \[
        \bxi_{\bU,2}(\by) = \sqrt{\frac{(d_\bU+2)d_\bU}{2}}\E\left[\bz_\bU\bz_\bU^\sT - \frac{1}{d_\bU}\bI_{d_\bU} \mid \by_\bU\right].
    \]
    Note that \(\by_\bU\) only depends on \(\bz_\bU\) through \(\bW_\ast^\sT \bU_\perp \bz_\bU\). Hence, if we decompose \(\bU_\perp^\sT \bW_\ast = \bW\bV^\sT\) for some matrix \(\bV \in \R^{\s\times (\s-\s_\bU)}\) and some \(\bW \in \Stf_{\s-s_\bU}(\R^{d_\bU})\), then by rotational invariance of \(\bz_\bU\) it follows that \(\bxi_{\bU,2}(\by_\bU)\) is invariant under the action of the stabilizer subgroup \(\cO_{d_\bU}^\bW\). In particular, by Lemma~\ref{lem:decomposition_invariant_tensors}, there exists \(\bC_1(\by_\bU)\in \Sym_{2}(\R^{\s-s_\bU})\) and \(c_2(\by_\bU)\in \R\) such that
    \[
        \E[\bz_\bU \bz_\bU^\sT \mid \by_\bU] = \bW \bC_1(\by_\bU) \bW^\sT + c_2(\by_\bU)\left(\bI_{d_\bU} - \bW\bW^\sT\right).
    \]
    Taking the trace on both sides yields
    \[
        1 = \Tr(\bC_1(\by_\bU)) + c_2(\by_\bU)(d_\bU - (\s - s_\bU)),
    \]
    which gives \(c_2(\by_\bU) = \frac{1 - \Tr(\bC_1(\by_\bU))}{d_\bU - (\s - s_\bU)}\). In addition, \(\bC_1(\by_\bU) = \E[\bW^\sT \bz_\bU \bz_\bU^\sT \bW \mid \by_\bU]\), and hence
    \begin{align*}
        \bxi_{\bU,2}(\by)  & = \sqrt{\frac{(d_\bU+2)d_\bU}{2}}\bW\left(\E[\bW^\sT \bz_\bU \bz_\bU^\sT \bW \mid \by_\bU] - \frac{1}{d_\bU}\bI_{\s - s_\bU}\right)\bW^\sT
        \\ & + \sqrt{\frac{(d_\bU+2)d_\bU}{2}}\left(\frac{1-\E[\|\bW^\sT \bz_\bU\|_2^2 \mid \by_\bU]}{d_\bU - (\s - s_\bU)} - \frac{1}{d_\bU}\right)(\bI_{d_\bU} - \bW\bW^\sT).
    \end{align*}

    Using orthogonality of the two components above, we get
    \begin{align*}
        \|\bxi_{\bU,2}(\by)\|_\frob^2  & = \frac{(d_\bU+2)d_\bU}{2}\|\E[\bW^\sT \bz_\bU \bz_\bU^\sT \bW \mid \by_\bU] - \frac{1}{d_\bU}\bI_{\s - s_\bU}\|_\frob^2
        \\ & + \frac{(d_\bU+2)d_\bU (d_\bU - \s + \s_\bU)}{2}\left(\frac{1-\E[\|\bW^\sT \bz_\bU\|_2^2 \mid \by_\bU]}{d_\bU - (\s - s_\bU)} - \frac{1}{d_\bU}\right)^2.
    \end{align*}
    The result follows using the standard characterization of conditional expectations as \(L^2\)-projections.
\end{proof}

We may now state our main result on the computational complexity of learning polynomial spherical MIMs.

\begin{lemma}\label{lem:polynomial-second-harmonic}
    In the reduced spherical MIM \((\by_\bU,\bz_\bU) \sim \P_{\nu_{d},\bU}\) defined above, \(\|\bxi_{\bU,2}(\by)\|_{L^2}^2 \asymp 1\).
\end{lemma}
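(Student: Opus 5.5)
Upper bound first. We have $\|\bxi_{\bU,2}\|_{L^2}^2 = \Tr(\bGamma_{\bU,2}) \le \rank \cdot \|\bGamma_{\bU,2}\|_\op$. By Jensen and the isometry \eqref{eq:Phi-d-ell-isometry}, $\|\bGamma_{\bU,2}\|_\op\le 1$. By Lemma~\ref{lem:harmonic_expansion_invariant_functions}, $\bxi_{\bU,2}$ lives in a subspace of dimension $O(\s^2)$: the span of $\ptf(\bW\bB\bW^\sT)$ together with the single direction $\ptf(\bI-\bW\bW^\sT)$. Hence $\|\bxi_{\bU,2}\|_{L^2}^2\lesssim \s^2$.

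For the lower bound I use the first term of Lemma~\ref{lem:variational-characterization-second-harmonic-reduced-model}, with a single test function. The prefactor is $(d_\bU+2)d_\bU/2\asymp d^2$. So it suffices to find a bounded-variance $\bG(\by_\bU)$ with
\[
\Big|\E\big[\big\<\bW^\sT\bz_\bU\bz_\bU^\sT\bW-\tfrac{1}{d_\bU}\bI,\bG(\by_\bU)\big\>_\frob\big]\Big|\gtrsim d^{-1}.
\]
Rescale $\bt:=\sqrt{d}\,\bW^\sT\bz_\bU\in\R^{\s-s_\bU}$; it converges in law to $\cN(0,\bI)$, with all moments converging at rate $O(d^{-1})$. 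The condition then becomes $\E[\<\bt\bt^\sT-\bI,\bG(\by_\bU)\>]\gtrsim 1$ up to $O(1/d)$ corrections. Also $\br_\bU=\bU^\sT\bz$ rescaled by $\sqrt d$ is asymptotically Gaussian and independent of $\bt$, and $y=f(\sqrt d\,\bU^\sT\bz,\dots)+\eps$ becomes, in the limit, a polynomial in $(\sqrt d\br_\bU,\bt)$ plus noise. Concretely, $\bW_*^\sT\bx=\sqrt d\,\bW_*^\sT\bz$, and this splits into a $\bU$-part $\sqrt d\br_\bU$ and a residual $\sqrt{1-\|\br_\bU\|^2}\,\bV\bt$.

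The natural choice is $\bG(\by_\bU)=\E_\infty[\bt\bt^\sT-\bI\mid y,\sqrt d\br_\bU]$, computed in the Gaussian limit model. The Gaussian-limit analogue of the claim is exactly \cite[Proposition~4]{damian2025generative}, following \cite{chen2020learning}: for a polynomial link that still depends nontrivially on the residual directions, the conditional second Hermite moment is nonzero. This uses $\sLeap(\nu_d)<\infty$, i.e.\ $f$ is non-constant in the residual directions given the $\bU$-coordinates. The argument takes $\bG$ to be a polynomial in $y$ (e.g.\ $y^k$ or $(y-\E[y\mid\br])^2$ times matrices) and exploits that $\E[\<\bt\bt^\sT-\bI,\cdot\>\, p(y)]$ detects any residual dependence of the polynomial. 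I would pick $\bG$ polynomial in $(y,\sqrt d\br_\bU)$, which has moments bounded uniformly in $d$ given moments of $\eps$ (or truncate). Then the limiting correlation is a fixed positive constant $c_0$, independent of $d$.

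The main obstacle is transferring this from the Gaussian limit to finite $d$ with a quantitative error. Since $\bG$ is a fixed polynomial in $(\eps,\sqrt d\br_\bU,\bt)$, the expectation is a polynomial moment of $\sqrt d\,\bW_*'^\sT\bz$ for a finite frame. Such moments equal the Gaussian moments times $1+O(1/d)$ factors, by the explicit formula for moments of $\tilde\tau_{d,\s}$ (ratios of Pochhammer symbols). The resulting error is $O(d^{-1})\to0$, so the correlation is at least $c_0/2$ for $d$ large. After that, take $\|\bG\|_{L^2}\asymp1$ and divide. The $d^{-1}$ centering and the scaling combine to give the rate $d^2\cdot d^{-2}\asymp1$. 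The second term of the variational formula is nonnegative and can be dropped.
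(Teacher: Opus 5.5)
Your upper bound is correct, and it is a step the paper leaves implicit. $\|\bGamma_{\bU,2}\|_\op\le 1$ by Jensen and the isometry, and $\bxi_{\bU,2}$ takes values in a subspace of dimension $O(\s^2)$. The extra direction $\ptf(\bI-\bW\bW^\sT)=-\ptf(\bW\bW^\sT)$ is redundant but harmless.

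For the lower bound you take a different route from the paper (Gaussian limit plus transfer), and the transfer step has a gap as written.

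\emph{The test function and the transfer do not match.} Your $\bG=\E_\infty[\bt\bt^\sT-\bI\mid y,\sqrt d\,\br_\bU]$ is a Gaussian-limit conditional expectation, in general neither polynomial nor continuous. The finite-$d$ comparison you invoke (polynomial moments, Pochhammer ratios) only covers polynomial test functions.

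\emph{A polynomial test function does not close the gap.} The lemma assumes nothing about the moments of $\eps$, so $y$ need not even be in $L^2$. Moreover, a nonzero limiting conditional expectation does not by itself yield a polynomial in $(y,\sqrt d\,\br_\bU)$ with nonzero correlation; that would need density of polynomials in $L^2$ of the limit law, a moment-determinacy question. Your description of the Gaussian mechanism (``polynomial tests detect any residual dependence'') is also not the argument of \cite{chen2020learning}, which thresholds $|y|$.

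\emph{$y$ is not polynomial at finite $d$.} It carries the factor $\sqrt{1-\|\br_\bU\|_2^2}$, so it is not a polynomial in $(\eps,\sqrt d\,\br_\bU,\bt)$.

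The repair is soft. Approximate the limiting conditional expectation, in $L^2$ of the limit law, by a bounded continuous $\tilde\bG$ that keeps correlation at least $c_0/2$. Then use the following facts:
\begin{itemize}
\item $\br_\bU$ and $\bz_\bU$ are exactly independent, so $(\eps,\sqrt d\,\br_\bU,\bt)\to(\eps,\boldsymbol{g}_1,\boldsymbol{g}_2)$ in law;
\item $\|\br_\bU\|_2\to 0$ in probability and $f$ is continuous;
\item $\|\bt\|_2^2$ is uniformly integrable.
\end{itemize}
Together these give convergence of both $\E[\<\bt\bt^\sT-\bI,\tilde\bG\>_\frob]$ and $\E\|\tilde\bG\|_\frob^2$. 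The result is $\|\bxi_{\bU,2}\|_{L^2}^2\ge c$ for $d\ge d_0$, with non-explicit constants. This holds provided the cited Gaussian nondegeneracy is valid conditionally on $\bU$ and for the noise at hand.

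The paper never passes to a limit. It takes $\bG=\bI\,\ind\{|y|>\eta,\ \|\br_\bU\|_2^2\le b/d\}$ and uses the deterministic bound $|f(\bW_*^\sT\bx)|\le |C_0|+\sum_j\|\bC_j\|_\frob d^{j/2}\|\bW_*^\sT\bz\|_2^j$. On $\{|f|>\eta\}\cap\{\|\br_\bU\|_2^2\le b/d\}$ this forces $\|\bW^\sT\bz_\bU\|_2^2\ge(\rho_\eta^2-b)/((d-b)\|\bV\|_\op^2)$, which exceeds the mean $(\s-s_\bU)/d_\bU$ by a constant factor once $\eta$ is large. The probability of the event is bounded below using a direction $\bu$ along which $t\mapsto f(t\bu)$ is non-constant.

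This is the thresholding argument carried out directly on the sphere at finite $d$. It is self-contained and only probes the trace $\|\bW^\sT\bz_\bU\|_2^2$. Your route is more modular, since it reduces to the Gaussian statement of \cite{damian2025generative}, but it is purely asymptotic and inherits that result's hypotheses.
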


\begin{proof}
    An application of the triangle inequality and Cauchy-Schwarz gives the deterministic bound
    \[
        |f(\bW_*^\sT \bx)| \leq |C_0| + \sum_{j=1}^D \|\bC_j\|_{\frob} d^{j/2} \|\bW_*^\sT \bz\|_2^j.
    \]
    Fix \(\eta > |C_0|\) and define
    \[
        \rho_\eta := \min_{1 \le j \le D}\left( \frac{\eta - |C_0|}{D \|\bC_j\|_{\frob}} \right)^{1/j} > 0.
    \]
    Then \(|f(\bW_*^\sT \bx)| \leq \eta\) whenever \(\|\bW_*^\sT \bz\|_2 \leq \rho_\eta /\sqrt{d}\). Equivalently,
    \[
        \cE_f(\eta) := \{|f(\bW_*^\sT \bx)| > \eta\} \subseteq \left\{ \|\bW_*^\sT \bz\|_2^2 > \rho_\eta^2 /d \right\}.
    \]

    Let \(\bW\in \Stf_{\s - s_\bU}(\R^{d_\bU})\) be an orthonormal basis for the image of \(\bU_\perp^\sT \bW_\ast\), and consider the decomposition
    \[
        \bW_\ast^\sT \bz = \bU\br_\bU + \sqrt{1-\|\br_\bU\|_2^2}\bV \bW^\sT \bz_\bU,
    \]
    where \(\bV \in \R^{\s\times (\s-\s_\bU)}\) such that \(\bU_\perp^\sT \bW_\ast = \bW \bV^\sT\). Then,
    \[
        \|\bW_*^\sT \bz\|_2^2 \leq \|\br_\bU\|_2^2 + (1-\|\br_\bU\|_2^2)\|\bV\|_{\op}^2 \|\bW^\sT \bz_\bU\|_2^2.
    \]
    Fix \(b \in (0,\rho_\eta^2)\) and define \(\cE_{\br_\bU}(b) := \{\|\br_\bU\|_2^2 \le b /d\}\). On the event \(\cE_f(\eta) \cap \cE_{\br_\bU}(b)\) we therefore have
    \[
        \|\bW^\sT \bz_\bU\|_2^2 \geq \frac{\rho_\eta^2 - b}{(d-b)\|\bV\|_{\op}^2},
    \]
    which is bounded below by a strictly positive constant independent of \(d\).

    Since \(f(\bW_*^\sT \bx)\) is not observed directly, define \(\cE_y(\eta) := \{|y| > \eta\}\) and \(\cE_\epsilon(\eta) := \{|\epsilon| \leq \eta\}\). Then \(\cE_f(2\eta) \cap \cE_\epsilon(\eta) \subseteq \cE_y(\eta)\). Using Lemma~\ref{lem:variational-characterization-second-harmonic-reduced-model} with the test function
    \[
        G(\by_\bU) = \bI_{\s-s_\bU}\ind_{\cE_y(\eta)\cap \cE_{\br_\bU}(b)},
    \]
    we obtain
    \[
        \|\bxi_{\bU,2}(\by)\|_{L^2}^2 \geq \frac{(d_\bU+2)d_\bU}{2(\s-\s_\bU)}\frac{\E\left[\left(\|\bW^\sT \bz_\bU\|_2^2 - \frac{\s-\s_\bU}{d_\bU}\right)\ind_{\cE_y(\eta)\cap \cE_{\br_\bU}(b)}\right]^2}{\P(\cE_y(\eta)\cap \cE_{\br_\bU}(b))}.
    \]
    Using independence of \(\epsilon\) and \(\bx\), together with the lower bound above, yields
    \[
        \|\bxi_{\bU,2}(\by)\|_{L^2}^2 \geq \frac{(d_\bU+2)d_\bU}{2(\s-\s_\bU)}\left(\frac{\rho_\eta^2 - b}{(d-b)\|\bV\|_{\op}^2} - \frac{\s-\s_\bU}{d_\bU}\right)^2\P(\cE_f(2\eta)\cap \cE_{\br_\bU}(b))^2\P(\cE_\epsilon(\eta))^2.
    \]
    Here, we choose \(\eta\) large enough so that \(\P(\cE_\epsilon(\eta)) > 0\) and so that the lower bound above is positive. 

    On the other hand, to lower bound \(\P(\cE_f(2\eta)\cap \cE_{\br_\bU}(b))\), choose \(\bu\in\S^{\s-1}\) such that \(\bV^\sT \bu\neq 0\) (i.e., \(\bu\) has a nontrivial component in the unrecovered subspace) and such that \(t\mapsto f(t\bu)\) is non-constant. Such a vector exists since we assume that the response \(y\) depends on the residual direction \(\bz_\bU\). Define \(\ba:=\bW\bV^\sT \bu\in\R^{d_\bU}\), so that \(\ba\neq 0\). By rotational invariance of \(\bz_\bU\sim\tau_{d_\bU}\),
    \[
        \langle \bu,\bV\bW^\sT\bz_\bU\rangle
        =\langle \ba,\bz_\bU\rangle
        \stackrel{d}{=}\|\ba\|_2 z,
        \qquad z\sim\tilde \tau_{d_\bU,1}.
    \]
    In particular, for every \(c>0\) and all sufficiently large \(d\) (depending on \(\|\ba\|_2\) and \(c\)), the event  
    \(\cE_\bu(c):=\{\langle \bu,\bV\bW^\sT\bz_\bU\rangle \ge c\}\)  
    has probability bounded below by a constant independent of \(d\). On \(\cE_\bu(c)\cap \cE_{\br_\bU}(b)\) we have
    \[
        \langle \bu,\bW_*^\sT\bx\rangle
        \ge c(1+o(1))-\sqrt b .
    \]
    Choosing \(c\) large enough and using that \(|f(t\bu)|\to\infty\) as \(t\to\infty\), it follows that  
    \(\cE_\bu(c)\cap \cE_{\br_\bU}(b)\subseteq \cE_f(2\eta)\) for all large \(d\). Consequently,
    \[
        \P(\cE_f(2\eta)\cap \cE_{\br_\bU}(b))
        \ge \P(\cE_\bu(c)\cap \cE_{\br_\bU}(b))>0
    \]
    uniformly for all sufficiently large \(d\). The last inequality notably uses the independence of \(\br_\bU\) and \(\bz_\bU\).
\end{proof}

\clearpage

\section{Additional technical background}\label{app:tech-bg}

In this appendix, we collect technical background needed to state and prove our main results. For completeness, we provide proofs for less standard results on symmetric traceless tensors and their relations to spherical harmonics.  

% In this appendix, we collect some additional technical background on symmetric traceless tensors and their relation to spherical harmonics.

\subsection{Symmetric and traceless tensors}
\label{app:symmetric-traceless-tensor}

%This section is a self-contained introduction to symmetric and traceless tensors, with an emphasis on the explicit constructions of the projections onto these subspaces and some decompositions that are useful in the sequel.

%Due to their relation with spherical harmonics, we are interested in properties of traceless symmetric tensors. 

For \(\ell,d\in \N\), recall that we denote by \((\R^d)^{\otimes \ell}\) the space of order \(\ell\) tensors over \(\R^d\) endowed with the Frobenius inner product. \(\Sym_\ell(\R^d)\subseteq (\R^d)^{\otimes \ell}\) is the subspace of symmetric tensors, and \(\psym: (\R^d)^{\otimes \ell} \to \Sym_\ell(\R^d)\) is the orthogonal projection onto this subspace defined by~\eqref{eq:projection_symmetric_tensors}. Note that \(\psym\) is self-adjoint with respect to the Frobenius inner product.

The subspace \(\TSym_{\ell}(\R^d)\subseteq \Sym_\ell(\R^d)\) denotes the space of traceless symmetric tensors for which the partial trace (see~\eqref{eq:partial_trace_operator}) is zero. The orthogonal projection onto this subspace is explicitly given by
\begin{equation}\label{eq:projection_traceless_symmetric_tensors}
    \ptf(\bA) = \sum_{j=0}^{\lfloor \ell/2\rfloor} h^{(d)}_{\ell,j} \psym(\tau^{j}(\bA) \otimes \bI_d^{\otimes j}),
\end{equation}
where \(\bI_d\) is the identity matrix in \(\R^{d\times d}\), \(\tau^j\) is the \(j\)-th composition of \(\tau\) with itself, and the coefficients \(h^{(d)}_{\ell,j}\) are defined recursively by
\begin{equation}\label{eq:coefficients_projection_traceless_recursive}
    h^{(d)}_{\ell,0} = 1, \qquad h^{(d)}_{\ell,j} = -\frac{(\ell - 2j + 2)(\ell - 2j + 1)}{2j(d + 2\ell - 2j - 2)} h^{(d)}_{\ell,j-1}, \quad j\geq 1.
\end{equation}
Unrolling this recursion, we get the closed-form expression
\begin{equation}\label{eq:coefficients_projection_traceless}
    h^{(d)}_{\ell,j} = (-1)^j \frac{(d + 2\ell - 2j - 4)!!}{(d + 2\ell - 4)!!} \frac{\ell!}{2^j j! (\ell - 2j)!}
\end{equation}
for any \(j=0,1,\ldots, \lfloor \ell/2\rfloor\). 

%It is not obvious from the definition that \(\ptf\) is indeed a projection onto \(\TSym_\ell(\R^d)\), as well as self-adjoint with respect to the Frobenius inner product. The following lemma shows that \(\ptf\) is indeed a projection onto \(\TSym_\ell(\R^d)\).

\begin{lemma}\label{lem:projection_traceless_definition}
    For any \(\ell,d\in \N\), the operator \(\ptf: \Sym_\ell(\R^d) \to \TSym_\ell(\R^d)\) defined by~\eqref{eq:projection_traceless_symmetric_tensors} is an orthogonal projection onto \(\TSym_\ell(\R^d)\) with respect to the Frobenius inner-product.
\end{lemma}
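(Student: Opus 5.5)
To prove Lemma~\ref{lem:projection_traceless_definition}, I will check three properties of $\ptf$ as defined in \eqref{eq:projection_traceless_symmetric_tensors}. First, its range lies in $\TSym_\ell(\R^d)$. Second, it is the identity on $\TSym_\ell(\R^d)$. Third, it is self-adjoint on $\Sym_\ell(\R^d)$ with respect to $\<\cdot,\cdot\>_\frob$. Together these give an orthogonal projection onto $\TSym_\ell(\R^d)$.

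The second property is immediate. If $\tau(\bA)=0$, then $\tau^j(\bA)=0$ for all $j\ge 1$, so only the $j=0$ term survives and $\ptf(\bA)=h^{(d)}_{\ell,0}\psym(\bA)=\bA$.

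For self-adjointness, I will show that each term $\bA\mapsto \psym(\tau^j(\bA)\otimes\bI_d^{\otimes j})$ is self-adjoint up to a positive factor. The key identity is the adjoint relation
\[
\<\psym(\bB\otimes\bI_d),\bA\>_\frob=\<\bB\otimes\bI_d,\bA\>_\frob=\<\bB,\tau(\bA)\>_\frob
\]
for $\bA\in\Sym_\ell(\R^d)$ and $\bB\in\Sym_{\ell-2}(\R^d)$. The first equality uses that $\psym$ is self-adjoint and fixes $\bA$; the second uses the definition of $\tau$. Hence the adjoint of $\tau$ is $\bB\mapsto\psym(\bB\otimes\bI_d)$. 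Iterating gives $\<\psym(\tau^j(\bA)\otimes\bI_d^{\otimes j}),\bA'\>_\frob=\<\tau^j(\bA),\tau^j(\bA')\>_\frob$, since $\psym(\psym(\bC\otimes \bI_d)\otimes\bI_d)=\psym(\bC\otimes\bI_d^{\otimes 2})$. This expression is symmetric in $(\bA,\bA')$, so $\ptf$ is self-adjoint.

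The main step is the first property, $\tau(\ptf(\bA))=0$, and this is exactly where the recursion \eqref{eq:coefficients_projection_traceless_recursive} comes from. I need a contraction formula: for $\bC\in\Sym_{\ell-2j}(\R^d)$,
\[
\tau\big(\psym(\bC\otimes\bI_d^{\otimes j})\big)=\alpha_{\ell,j}\,\psym(\tau(\bC)\otimes\bI_d^{\otimes j})+\beta_{\ell,j}\,\psym(\bC\otimes\bI_d^{\otimes (j-1)}).
\]
I will derive the coefficients by counting, over the $\ell!$ permutations, how the two traced slots are assigned. If both slots fall in $\bC$, they produce $\tau(\bC)$. If both fall in the same copy of $\bI_d$, they produce a factor $d$. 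If they fall in two different copies of $\bI_d$, or in one copy of $\bI_d$ and one slot of $\bC$, the identities merge and one copy of $\bI_d$ disappears. I expect
\[
\alpha_{\ell,j}=\frac{(\ell-2j)(\ell-2j-1)}{\ell(\ell-1)},\qquad \beta_{\ell,j}=\frac{2j\,(d+2\ell-2j-2)}{\ell(\ell-1)}.
\]
Plugging in $\bC=\tau^j(\bA)$, the coefficient of $\psym(\tau^{j}(\bA)\otimes\bI_d^{\otimes (j-1)})$ in $\tau(\ptf(\bA))$ is $h_{\ell,j-1}\alpha_{\ell,j-1}+h_{\ell,j}\beta_{\ell,j}$. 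This vanishes precisely by the recursion, which confirms the constants. Getting this combinatorial count exactly right is the main obstacle; I will sanity-check it on the cases $\ell=2$ and $\ell=4$. Finally, \eqref{eq:coefficients_projection_traceless} follows by unrolling the recursion.
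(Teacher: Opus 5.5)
Your proof is correct. The self-adjointness step and the identity-on-$\TSym_\ell(\R^d)$ step match the paper's. The difference is in how you show $\tau(\ptf(\bA))=0$.

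\textbf{The paper's route.} It never writes an explicit contraction formula. Instead it passes to the polynomial $p_{\ptf(\bA)}(\bx)=\<\ptf(\bA),\bx^{\otimes\ell}\>_\frob=\sum_j h^{(d)}_{\ell,j}\,p_{\tau^j(\bA)}(\bx)\|\bx\|_2^{2j}$ and takes its Laplacian. The ingredients are the product rule, Euler's identity $\<\nabla p_{\tau^j(\bA)},\bx\>=(\ell-2j)p_{\tau^j(\bA)}$, and $\Delta p_{\tau^j(\bA)}=(\ell-2j)(\ell-2j-1)p_{\tau^{j+1}(\bA)}$. Since $\Delta p_{\ptf(\bA)}=\ell(\ell-1)p_{\tau(\ptf(\bA))}$ and rank-one tensors span symmetric tensors, vanishing of the Laplacian gives $\tau(\ptf(\bA))=0$.

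\textbf{Your route.} Your permutation count proves the same identity by hand, case by case. Write $m=\ell-2j$ and count ordered pairs of traced slots.
\begin{itemize}
\item Both slots in $\bC$: there are $m(m-1)$ such pairs. They correspond to $\|\bx\|_2^{2j}\Delta p_\bC$.
\item Both slots in the same copy of $\bI_d$ ($2j$ pairs, each weighted by $d$), or in two different copies ($4j(j-1)$ pairs). Together these give $p_\bC\,\Delta\|\bx\|_2^{2j}=2j(d+2j-2)\,p_\bC\|\bx\|_2^{2j-2}$.
\item One slot in $\bC$ and one in a copy of $\bI_d$: there are $4jm$ such pairs. They correspond to the cross term $2\<\nabla p_\bC,\nabla\|\bx\|_2^{2j}\>=4jm\,p_\bC\|\bx\|_2^{2j-2}$.
\end{itemize}
Summing gives exactly your $\alpha_{\ell,j}$ and $\beta_{\ell,j}$, so the constants you guessed are right. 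The cancellation $h^{(d)}_{\ell,j-1}\alpha_{\ell,j-1}+h^{(d)}_{\ell,j}\beta_{\ell,j}=0$ is precisely the paper's recurrence.

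\textbf{Trade-offs.} The calculus route avoids the combinatorial bookkeeping you flagged as the main risk. Your route stays inside tensor algebra, produces an explicit reusable formula for $\tau(\psym(\bC\otimes\bI_d^{\otimes j}))$, and skips the final polynomial-to-tensor identification.

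\textbf{Two points to state when you write it up.}
\begin{itemize}
\item The count works because, once you fix where the two traced positions land, the remaining $\ell-2$ positions are assigned uniformly. This is what yields $\psym$ of the reduced tensor in every case.
\item The boundary terms are harmless. At $j=\lfloor\ell/2\rfloor$ you have $\alpha_{\ell,j}=0$, so no stray $\tau^{\lfloor\ell/2\rfloor+1}$ term appears. At $j=0$ you have $\beta_{\ell,0}=0$.
\end{itemize}
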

\begin{proof} First, $\ptf$ is self-adjoint: for any $\bA,\bB \in \Sym_\ell (\R^d)$,
\[
\begin{aligned}
    \< \ptf (\bA), \bB \>_\frob  =&~ \sum_{j=0}^{\lfloor \ell/2\rfloor} h^{(d)}_{\ell,j} \< \tau^{j}(\bA) \otimes \bI_d^{\otimes j}, \bB \>_\frob  \\
    =&~ \sum_{j=0}^{\lfloor \ell/2\rfloor} h^{(d)}_{\ell,j} \< \tau^{j}(\bA), \tau^j (\bB)\>_\frob  
    = \sum_{j=0}^{\lfloor \ell/2\rfloor} h^{(d)}_{\ell,j} \< \bA, \tau^j (\bB) \otimes \bI_d^{\otimes j} \>_\frob  = \< \bA, \ptf(\bB)\>_\frob .
\end{aligned}
\]

Let us next show that $\ptf(\bA) \in \TSym_\ell (\R^d)$ for all $\bA \in \Sym_\ell (\R^d)$. For \(\ell=0,1\), we have \(\ptf(\bA) = \bA\) and the result is trivial. Suppose that \(\ell\geq 2\) and define the homogeneous polynomial
    \[
        p_{\ptf(\bA)}(\bx) = \<\ptf(\bA), \bx^{\otimes \ell}\>_\frob .
    \]
    Since \(\bx^{\otimes \ell}\) is symmetric,
    \begin{align*}
        p_{\ptf(\bA)}(\bx) &= \sum_{j=0}^{\lfloor \ell/2\rfloor} h^{(d)}_{\ell,j} \<\tau^j(\bA) \otimes \bI_d^{\otimes j}, \bx^{\otimes \ell}\>_\frob   
        = \sum_{j=0}^{\lfloor \ell/2\rfloor} h^{(d)}_{\ell,j} \<\tau^j(\bA), \bx^{\otimes (\ell - 2j)}\>_\frob  \|\bx\|_2^{2j}.
    \end{align*}
    Taking the Laplacian of this polynomial, \begin{equation}\label{eq:laplacian_projection_traceless}
        \Delta p_{\ptf(\bA)}(\bx) = \sum_{j=0}^{\lfloor \ell/2\rfloor} h^{(d)}_{\ell,j} \Delta\left(\<\tau^j(\bA), \bx^{\otimes (\ell - 2j)}\>_\frob  \|\bx\|_2^{2j}\right)
    \end{equation}
    with
    \begin{align*}
        &~\Delta\left(p_{\tau^j(\bA)}(\bx)\|\bx\|_2^{2j}\right)  \\
        =&~ \sum_{i=1}^d \partial_{x_i}^2 \left(p_{\tau^j(\bA)}(\bx)\|\bx\|_2^{2j}\right)
        \\  =&~ \Delta p_{\tau^j(\bA)}(\bx) \|\bx\|_2^{2j} + 2 \sum_{i=1}^d \partial_{x_i} p_{\tau^j(\bA)}(\bx) \partial_{x_i} \|\bx\|_2^{2j} + p_{\tau^j(\bA)}(\bx) \Delta \|\bx\|_2^{2j}
        \\  =&~ \Delta p_{\tau^j(\bA)}(\bx) \|\bx\|_2^{2j} + 4j \<\nabla p_{\tau^j(\bA)}(\bx), \bx\> \|\bx\|_2^{2(j-1)} + 2j(d+2(j-1)) p_{\tau^j(\bA)}(\bx) \|\bx\|_2^{2(j-1)},
    \end{align*}
    where we denoted \(p_{\tau^j(\bA)}(\bx) = \<\tau^j(\bA), \bx^{\otimes (\ell - 2j)}\>_\frob \). Since \(p_{\tau^j(\bA)}\) is a homogeneous polynomial of degree \(\ell - 2j\), it follows from Euler's homogeneous function theorem that
    \[
        \<\nabla p_{\tau^j(\bA)}(\bx), \bx\> = (\ell - 2j) p_{\tau^j(\bA)}(\bx).
    \]
    Substituting this into~\eqref{eq:laplacian_projection_traceless}, we get
    \begin{align*}
       &~ \Delta p_{\ptf(\bA)}(\bx) \\
       =&~ \sum_{j=0}^{\lfloor \ell/2\rfloor} h^{(d)}_{\ell,j} \left(\Delta p_{\tau^j(\bA)}(\bx) \|\bx\|_2^{2j} + 4j (\ell - 2j) p_{\tau^j(\bA)}(\bx) \|\bx\|_2^{2(j-1)} + 2j(d+2(j-1)) p_{\tau^j(\bA)}(\bx) \|\bx\|_2^{2(j-1)}\right)
        \\  =&~ \sum_{j=0}^{\lfloor \ell/2\rfloor} h^{(d)}_{\ell,j} \Delta p_{\tau^j(\bA)}(\bx) \|\bx\|_2^{2j} + \sum_{j=1}^{\lfloor \ell/2\rfloor} 2j h^{(d)}_{\ell,j} (2(\ell - 2j) + d + 2(j-1)) p_{\tau^j(\bA)}(\bx) \|\bx\|_2^{2(j-1)}.
    \end{align*}
    We can relate \(\Delta p_{\tau^j(\bA)}\) to \(p_{\tau^{j+1}(\bA)}\) by noting that
    \begin{align*}
        \frac{\partial^2}{\partial x_i^2} p_{\tau^j(\bA)}(\bx) & = \frac{\partial^2}{\partial x_i^2} \<\tau^j(\bA), \bx^{\otimes (\ell - 2j)}\>_\frob 
        \\ & = \frac{\partial}{\partial x_i} \<\tau^j(\bA), (\ell - 2j) \bx^{\otimes (\ell - 2j - 1)} \otimes \be_i\>_\frob 
        \\ & = (\ell - 2j)(\ell - 2j - 1) \<\tau^j(\bA), \bx^{\otimes (\ell - 2j - 2)} \otimes \be_i \otimes \be_i\>_\frob .
    \end{align*}
    Taking the sum over \(i\in [d]\) and noticing that \(\sum_{i=1}^d \be_i \otimes \be_i = \bI_d\), we get
    \begin{equation}\label{eq:laplacian_polynomial_contracts}
        \Delta p_{\tau^j(\bA)}(\bx) = (\ell - 2j)(\ell - 2j - 1) \<\tau^j(\bA), \bx^{\otimes (\ell - 2j - 2)} \otimes \bI_d\>_\frob  = (\ell - 2j)(\ell - 2j - 1) p_{\tau^{j+1}(\bA)}(\bx).
    \end{equation}
    Substituting this into the previous expression, we obtain
    \begin{align*}
       &~ \Delta p_{\ptf(\bA)}(\bx) \\
       & = \sum_{j=0}^{\lfloor \ell/2\rfloor} h^{(d)}_{\ell,j} (\ell - 2j)(\ell - 2j - 1) p_{\tau^{j+1}(\bA)}(\bx) \|\bx\|_2^{2j} + 2j h^{(d)}_{\ell,j} (2(\ell - 2j) + d + 2(j-1)) p_{\tau^j(\bA)}(\bx) \|\bx\|_2^{2(j-1)}
        \\ & = \sum_{j=1}^{\lfloor \ell/2\rfloor} \left(h^{(d)}_{\ell,j-1} (\ell - 2(j-1))(\ell - 2(j-1) - 1) + 2j h^{(d)}_{\ell,j} (2(\ell - 2j) + d + 2(j-1))\right) p_{\tau^j(\bA)}(\bx) \|\bx\|_2^{2(j-1)}.
    \end{align*}
    The coefficients \(h^{(d)}_{\ell,j}\) defined by~\eqref{eq:coefficients_projection_traceless_recursive} satisfy the recurrence relation
    \[
        h^{(d)}_{\ell,j-1} (\ell - 2(j-1))(\ell - 2(j-1) - 1) + 2j h^{(d)}_{\ell,j} (2(\ell - 2j) + d + 2(j-1)) = 0
    \]
    for any \(j\in \{1,\ldots,\lfloor \ell/2\rfloor\}\). Therefore, \(\Delta p_{\ptf(\bA)}(\bx) = 0\) for all \(\bx\in \R^d\). By~\eqref{eq:laplacian_polynomial_contracts}, this in particular implies that
    \[
        0 = \Delta p_{\ptf(\bA)}(\bx) = \ell (\ell - 1) \<\tau(\ptf(\bA)), \bx^{\otimes (\ell - 2)}\>_\frob 
    \]
    for all \(\bx\in \R^d\). Since rank one tensors span the space of symmetric tensors, it follows that \(\tau(\ptf(\bA)) = 0\), and hence \(\ptf(\bA)\in \TSym_\ell(\R^d)\).

    Furthermore, if \(\bA\in \TSym_\ell(\R^d)\), then \(\tau^j(\bA) = 0\) for all \(j\geq 1\), and hence \(\ptf(\bA) = h^{(d)}_{\ell,0} \bA = \bA\). This concludes the proof that \(\ptf\) is an orthogonal projection onto \(\TSym_\ell(\R^d)\).
\end{proof}

It will be useful to extend the definition of \(\ptf\) to non-symmetric tensors. In that case, we will write \(\ptf(\bA) = \ptf(\psym(\bA))\) for any \(\bA \in (\R^d)^{\otimes \ell}\). This is still an orthogonal projection onto \(\TSym_\ell(\R^d)\) as the composition of two nested orthogonal projections. 

Symmetric tensors can be decomposed into a direct sum of traceless symmetric tensors, which corresponds to the semisimple decomposition of $\Sym_\ell (\R^d)$---seen as a $\cO_d$-representation---into irreducible representations:
\begin{equation}\label{eq:decomposition-tensor-irreducibles}
    \Sym_\ell (\R^d) \cong \bigoplus_{j = 0}^{\lfloor \ell/2 \rfloor} \TSym_{\ell - 2j} (\R^d).
\end{equation}

\begin{lemma}[Fischer decomposition]\label{lem:fischer_decomposition}
    For any \(\ell,d\in \N\) and \(\bA\in \Sym_\ell(\R^d)\), we have the decomposition
    \begin{equation}
        \bA
        = \sum_{j=0}^{\lfloor \ell/2\rfloor}
            f^{(d)}_{\ell,j}\psym\left(\ptf(\tau^j(\bA)) \otimes \bI_d^{\otimes j}\right),
    \end{equation}
    where
    \begin{equation}
        f^{(d)}_{\ell,j} = \frac{\ell!}{2^{2j}j!(\ell-2j)!(d/2+\ell-2j)_j}.
    \end{equation}
\end{lemma}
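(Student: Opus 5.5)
I will prove the Fischer decomposition by induction on \(\ell\), using the projection formula \eqref{eq:projection_traceless_symmetric_tensors} for \(\ptf\) and the structure \eqref{eq:decomposition-tensor-irreducibles}. The base cases \(\ell=0,1\) are trivial, since \(\ptf=\id\) and only \(j=0\) appears with \(f^{(d)}_{\ell,0}=1\). For the inductive step, I first write \(\bA = \ptf(\bA) + (\bA - \ptf(\bA))\). By \eqref{eq:projection_traceless_symmetric_tensors} and \(h^{(d)}_{\ell,0}=1\),
\[
\bA - \ptf(\bA) = -\sum_{j\geq 1} h^{(d)}_{\ell,j}\,\psym\big(\tau^j(\bA)\otimes \bI_d^{\otimes j}\big) = \psym(\bB\otimes \bI_d)
\]
for the symmetric \((\ell-2)\)-tensor \(\bB := -\sum_{j\ge1} h^{(d)}_{\ell,j}\,\psym(\tau^{j-1}(\tau\bA)\otimes \bI_d^{\otimes (j-1)})\). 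This is a function of \(\tau(\bA)\) only. Applying the induction hypothesis to \(\tau(\bA)\in\Sym_{\ell-2}(\R^d)\) expresses everything as a linear combination of \(\psym(\ptf(\tau^j\bA)\otimes \bI_d^{\otimes j})\). Existence of \emph{some} coefficients \(f_{\ell,j}\), depending only on \((d,\ell,j)\), is therefore immediate. The content of the lemma is the explicit value of these coefficients.

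To pin down the coefficients cleanly, I would avoid tracking the induction and instead use a trace-matching argument. Assume the expansion \(\bA=\sum_j f_{\ell,j}\psym(\bT_j\otimes \bI_d^{\otimes j})\) with \(\bT_j\in\TSym_{\ell-2j}(\R^d)\). Then apply \(\tau^j\) and project with \(\ptf\). The key computation is a lemma of the form
\[
\tau\big(\psym(\bT\otimes \bI_d^{\otimes m})\big) = c_{k,m}\,\psym(\bT\otimes \bI_d^{\otimes (m-1)}), \qquad \bT\in\TSym_k(\R^d),\; c_{k,m}=\frac{2m(d+2k+2m-2)}{(k+2m)(k+2m-1)}.
\]
This is cleanest via polynomials, in the style of the proof of Lemma~\ref{lem:projection_traceless_definition}. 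With \(p(\bx)=\langle\bT,\bx^{\otimes k}\rangle\) harmonic, the polynomial associated to \(\psym(\bT\otimes \bI^{\otimes m})\) is \(p(\bx)\|\bx\|^{2m}\). Its Laplacian equals \(2m(d+2k+2m-2)\,p\|\bx\|^{2m-2}\), using Euler and \(\Delta p=0\). By \eqref{eq:laplacian_polynomial_contracts}, the Laplacian corresponds to \((k+2m)(k+2m-1)\tau\), which gives \(c_{k,m}\).

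Iterating, \(\tau^j\) kills the terms with \(i<j\), because \(\tau\) of a traceless part eventually vanishes. For \(i\ge j\) it leaves \(\psym(\bT_i\otimes \bI^{\otimes (i-j)})\) times a product of the \(c\)'s. Applying \(\ptf\) then kills all \(i>j\), since \(\ptf(\psym(\bT\otimes\bI))=0\) by orthogonality to traceless tensors. This yields \(\ptf(\tau^j\bA) = f_{\ell,j}\prod_{m=1}^{j}c_{\ell-2j,m}\,\bT_j\). Consistency with the stated normalization (\(\bT_j=\ptf(\tau^j\bA)\)) forces \(f_{\ell,j}=\prod_{m=1}^j c_{\ell-2j,m}^{-1}\). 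Evaluating this product gives
\[
\prod_{m=1}^j \frac{2m(d+2(\ell-2j)+2m-2)}{(\ell-2j+2m)(\ell-2j+2m-1)} = \frac{2^{2j}j!\,(d/2+\ell-2j)_j\,(\ell-2j)!}{\ell!},
\]
whose inverse is \(f^{(d)}_{\ell,j}\), as claimed.

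The main obstacle is the careful derivation of \(c_{k,m}\), that is, making sure \(\psym\) against \(\bI^{\otimes m}\) corresponds exactly to multiplication by \(\|\bx\|^{2m}\) with no extra combinatorial factor. This holds because both sides are determined by their polynomials, and \(\langle\psym(\bT\otimes\bI^{\otimes m}),\bx^{\otimes \ell}\rangle = p(\bx)\|\bx\|^{2m}\). The existence step, the decomposition of \(\bA\) into the claimed form with traceless \(\bT_j\), would come from the induction above.
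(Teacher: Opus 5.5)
Your proof is correct. It gets the existence of the expansion the same way the paper does, but pins down the coefficients by a different route.

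\textbf{The paper's route.} The paper uses induction for existence. It then substitutes the decomposition of each $\tau^j(\bA)$ into the projection formula \eqref{eq:projection_traceless_symmetric_tensors} and matches coefficients against $\ptf(\bA)=\psym(\ptf(\bA))$. This gives the convolution recurrence $\sum_{j=0}^{k} h^{(d)}_{\ell,j} f^{(d)}_{\ell-2j,k-j}=\delta_{k0}$. The closed form is then asserted as the solution of that recurrence, without the computation being shown. Your existence step, written out with the induction hypothesis applied to each $\tau^j(\bA)$ for $j\ge 1$ (not only to $\tau(\bA)$), produces exactly this recurrence.

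\textbf{Your route.} You isolate each component directly. The identity $\tau(\psym(\bT\otimes\bI_d^{\otimes m}))=c_{k,m}\,\psym(\bT\otimes\bI_d^{\otimes (m-1)})$ follows from $\Delta(p\|\bx\|_2^{2m})$ and \eqref{eq:laplacian_polynomial_contracts}. Combined with the fact that $\ptf$ annihilates every $\psym(\bC\otimes\bI_d)$, it gives $\ptf(\tau^j\bA)=f_{\ell,j}\prod_{m=1}^{j}c_{\ell-2j,m}\,\bT_j$. The product has numerator $2^{2j}j!\,(d/2+\ell-2j)_j$ and denominator $\ell!/(\ell-2j)!$, so it equals $1/f^{(d)}_{\ell,j}$, as you claim.

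\textbf{What each approach buys.} Your approach actually verifies the stated constants. It avoids the coefficient matching, which implicitly needs the components to be linearly independent for generic $\bA$, and it avoids solving a hypergeometric-type recurrence. The paper's version is shorter because it reuses the $h^{(d)}_{\ell,j}$, but it leaves the nontrivial algebra to the reader.

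\textbf{A simplification available to you.} Your argument needs less from the existence step than you use. Any decomposition $\bA=\sum_j\psym(\bT_j\otimes\bI_d^{\otimes j})$ with $\bT_j\in\TSym_{\ell-2j}(\R^d)$ suffices. Such a decomposition follows immediately from $\bA=\ptf(\bA)+\psym(\bB\otimes\bI_d)$ and induction on $\bB$. Your computation then forces $\bT_j=f^{(d)}_{\ell,j}\ptf(\tau^j\bA)$. Since every $c_{k,m}>0$, this holds for all $\bA$, and no nonvanishing of $\ptf(\tau^j\bA)$ is needed to fix the constant.
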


\begin{proof}
    By induction on \(\ell\), it is straightforward to show that there exist constants \(\{f^{(d)}_{\ell,j}\}_{j=0}^{\lfloor \ell/2\rfloor}\) such that
    \[
        \bA
        = \sum_{j=0}^{\lfloor \ell/2\rfloor}
            f^{(d)}_{\ell,j}\psym\left(\ptf(\tau^j(\bA)) \otimes \bI_d^{\otimes j}\right)
    \]
    holds for all \(\bA\in \Sym_\ell(\R^d)\). Plugging this into~\eqref{eq:projection_traceless_symmetric_tensors}, we obtain
    \begin{align*}
        \ptf(\bA)  = \sum_{j=0}^{\lfloor \ell/2\rfloor} h^{(d)}_{\ell,j} \psym\left(\tau^j(\bA) \otimes \bI_d^{\otimes j}\right)
        & = \sum_{j=0}^{\lfloor \ell/2\rfloor} h^{(d)}_{\ell,j} \sum_{i=0}^{\lfloor \ell/2-j\rfloor}f_{\ell-2j,i}^{(d)} \psym\left(\ptf(\tau^{j+i}(\bA)) \otimes \bI_d^{\otimes (j+i)}\right)
        \\ & = \sum_{k=0}^{\lfloor \ell/2\rfloor}\left(\sum_{j=0}^{k}h^{(d)}_{\ell,j}f_{\ell-2j,k-j}^{(d)}\right)\psym\left(\ptf(\tau^{k}(\bA)) \otimes \bI_d^{\otimes k}\right).
    \end{align*}
    Therefore, if we match the coefficients with the trivial equation \(\ptf(\bA) = h_{\ell,0}^{(d)}\psym(\ptf(\bA))\), we get that
    \[
        \sum_{j=0}^{k}h^{(d)}_{\ell,j}f_{\ell-2j,k-j}^{(d)} = 
        \begin{cases*}
            1 & if $k = 0$ \\
            0 & if $k \geq 1$.
        \end{cases*}
    \]
    This gives the recurrence relation
    \[
    f_{\ell,0}^{(d)} = 1,\qquad f_{\ell,k}^{(d)}=-\sum_{j=1}^{k}h^{(d)}_{\ell,j}f_{\ell-2j,k-j}^{(d)}
    \]
    for the coefficients \(\{f^{(d)}_{\ell,j}\}_{j=0}^{\lfloor \ell/2\rfloor}\) in terms of \(\{h^{(d)}_{\ell,j}\}_{j=0}^{\lfloor \ell/2\rfloor}\), from which we obtain the closed form expression stated in the lemma.
\end{proof}

The classical Fischer decomposition, named after~\cite{fischer1918differentiationsprozesse}, provides a decomposition of homogeneous polynomials in terms of harmonic polynomials and radial components. In view of the isomorphism between harmonic polynomials and traceless symmetric tensors (see Lemma~\ref{lem:isometry_spherical_harmonics}), Lemma~\ref{lem:fischer_decomposition} is simply the tensor analogue of the classical Fischer decomposition.

The projection \(\ptf\) simplifies on rank-one tensors $\bx^{\otimes \ell}$, where \(\bx \in \R^d\) is an arbitrary vector. From the definition of \(\ptf\) in~\eqref{eq:projection_traceless_symmetric_tensors} and using that \(\tau^j(\bx^{\otimes \ell}) = \|\bx\|_2^{2j} \bx^{\otimes (\ell - 2j)}\),
\begin{equation}\label{eq:projection_traceless_rank_one}
    \ptf(\bx^{\otimes \ell}) = \sum_{j=0}^{\lfloor \ell/2\rfloor} h^{(d)}_{\ell,j} \|\bx\|_2^{2j} \psym(\bx^{\otimes (\ell - 2j)} \otimes \bI_d^{\otimes j}).
\end{equation}
Since \(\ptf\) is an orthogonal projection, we have \( \<\ptf(\bx^{\otimes \ell}), \bx^{\otimes \ell}\>_\frob = \|\ptf(\bx^{\otimes \ell})\|_\frob^2\). Using~\eqref{eq:projection_traceless_rank_one} and \( \<\bI_d^{\otimes j}, \bx^{\otimes 2j}\>_\frob = \<\bI_d, \bx^{\otimes 2}\>^j_\frob = \|\bx\|_2^{2j}\), we can express this quantity as
\begin{equation}\label{eq:norm_projection_traceless_rank_one}
    \|\ptf(\bx^{\otimes \ell})\|_\frob^2  = \sum_{j=0}^{\lfloor \ell/2\rfloor} h^{(d)}_{\ell,j} \| \bx\|_2^{2j} \<\bx^{\otimes (\ell - 2j)} \otimes \bI_d^{\otimes j}, \bx^{\otimes \ell}\>_\frob
    =  \sum_{j=0}^{\lfloor \ell/2\rfloor} h^{(d)}_{\ell,j} \|\bx\|_2^{2\ell }.
\end{equation}
In particular, if \(\|\bx\|_2 = 1\), then
\begin{equation}\label{eq:definition_kappa}
    \|\ptf(\bx^{\otimes \ell})\|_\frob^2 = \sum_{j=0}^{\lfloor \ell/2\rfloor} h^{(d)}_{\ell,j}=1/\kappa_{d,\ell}^2
\end{equation}
Finally, the Fischer decomposition in Lemma~\ref{lem:fischer_decomposition} applied to rank-one tensors gives
\begin{equation}\label{eq:fischer_decomp_rank_one}
    \bx^{\otimes \ell}
    = \sum_{j=0}^{\lfloor \ell/2\rfloor}
        f^{(d)}_{\ell,j}\|\bx\|_2^{2j}
        \psym\left(
            \ptf(\bx^{\otimes (\ell - 2j)}) \otimes \bI_d^{\otimes j}
        \right),
\end{equation}
where the coefficients \(f^{(d)}_{\ell,j}\) are defined as in Lemma~\ref{lem:fischer_decomposition}.

\subsection{Orthogonal group action on tensors}

The projection operators \(\psym\) and \(\ptf\) defined in~\eqref{eq:projection_symmetric_tensors} and~\eqref{eq:projection_traceless_symmetric_tensors}, as well as the partial trace operator \(\tau\) defined in~\eqref{eq:partial_trace_operator}, interact nicely with the action of the orthogonal group \(\cO_d\) on tensors.

\begin{lemma}\label{lem:equivariant_operators}
    The partial trace \(\tau: \Sym_\ell(\R^d) \to \Sym_{\ell-2}(\R^d)\) and projection operators \(\psym: (\R^d)^{\otimes \ell} \to \Sym_\ell(\R^d)\) and \(\ptf: \Sym_\ell(\R^d) \to \TSym_\ell(\R^d)\) are equivariant with respect to the action of \(\cO_d\).
\end{lemma}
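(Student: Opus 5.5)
The plan is to verify equivariance directly from the definitions, treating the three operators in the order $\psym$, $\tau$, $\ptf$, since the last one is built from the first two via the explicit formula \eqref{eq:projection_traceless_symmetric_tensors}. Throughout, $g\in\cO_d$ is identified with $\bQ$, and acts by $g\cdot\bA=\bQ^{\otimes\ell}\bA$.

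\emph{Symmetrization.} For $\sigma\in\perm_\ell$, the entries of $(\bQ^{\otimes \ell}\bA)^\sigma$ are
\[
\sum_{j_1,\ldots,j_\ell} Q_{i_{\sigma(1)}j_1}\cdots Q_{i_{\sigma(\ell)}j_\ell}\, A_{j_1,\ldots,j_\ell}.
\]
Relabel the summation indices by $j_k\mapsto j_{\sigma(k)}$, so that the factor $Q_{i_{\sigma(k)}j_{\sigma(k)}}$ appears in position $k$. Since every index is multiplied by the same matrix $\bQ$, the product of $Q$-factors is unchanged up to reordering, and the expression becomes $(\bQ^{\otimes\ell}\bA^\sigma)_{i_1,\ldots,i_\ell}$. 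Hence $g\cdot\bA^\sigma=(g\cdot\bA)^\sigma$, and averaging over $\sigma$ gives $\psym(g\cdot\bA)=g\cdot\psym(\bA)$.

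\emph{Partial trace.} Set $i_0=i_{-1}=j$ to denote the two traced positions. Then
\[
\tau(\bQ^{\otimes\ell}\bA)_{i_1,\ldots,i_{\ell-2}}=\sum_{j}\sum_{a,b,\,k_1,\ldots,k_{\ell-2}} Q_{ja}Q_{jb}\,\Big(\prod_{m}Q_{i_m k_m}\Big)\, A_{a,b,k_1,\ldots,k_{\ell-2}}.
\]
Summing over $j$ first and using $\sum_j Q_{ja}Q_{jb}=(\bQ^\sT\bQ)_{ab}=\delta_{ab}$ collapses the two traced indices. What remains is exactly $(\bQ^{\otimes(\ell-2)}\tau(\bA))_{i_1,\ldots,i_{\ell-2}}$. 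Iterating gives $\tau^j(g\cdot\bA)=g\cdot\tau^j(\bA)$. This index bookkeeping is the only step requiring care, and it is where orthogonality of $\bQ$ is genuinely used.

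\emph{Traceless projection.} Two elementary facts are needed:
\begin{itemize}
\item[(i)] $g\cdot(\bB\otimes\bC)=(g\cdot\bB)\otimes(g\cdot\bC)$, which is immediate from the definition of $\bQ^{\otimes\ell}$;
\item[(ii)] $g\cdot\bI_d=\bQ\bI_d\bQ^\sT=\bI_d$, so that $g\cdot\bI_d^{\otimes j}=\bI_d^{\otimes j}$.
\end{itemize}
Applying \eqref{eq:projection_traceless_symmetric_tensors} to $g\cdot\bA$, and combining the equivariance of $\psym$ and $\tau^j$ with (i)--(ii), each term satisfies
\[
\psym\big(\tau^j(g\cdot\bA)\otimes\bI_d^{\otimes j}\big)=g\cdot\psym\big(\tau^j(\bA)\otimes\bI_d^{\otimes j}\big).
\]
Summing these terms against the coefficients $h^{(d)}_{\ell,j}$ yields $\ptf(g\cdot\bA)=g\cdot\ptf(\bA)$. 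For a non-symmetric input, where $\ptf=\ptf\circ\psym$, the same conclusion follows by composition.

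As a cross-check, one could argue abstractly instead. The map $\bQ^{\otimes\ell}$ is an orthogonal transformation of $(\R^d)^{\otimes\ell}$, and by the first two parts it preserves $\TSym_\ell(\R^d)$. An orthogonal map preserving a subspace also preserves its orthogonal complement, and therefore commutes with the orthogonal projection onto that subspace; this projection is $\ptf$ by Lemma~\ref{lem:projection_traceless_definition}.
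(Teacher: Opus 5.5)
Your proof is correct and follows the same route as the paper. You show that $\psym$ is equivariant because the action commutes with index permutations, and that $\tau$ is equivariant via the contraction identity $\bQ^\sT\bQ=\bI_d$; you then obtain equivariance of $\ptf$ from the explicit formula \eqref{eq:projection_traceless_symmetric_tensors} together with the $\cO_d$-invariance of $\bI_d$. You spell out the index-level verifications that the paper states in one line, and your closing abstract argument (an orthogonal map preserving $\TSym_\ell(\R^d)$ commutes with the orthogonal projection onto it, via Lemma~\ref{lem:projection_traceless_definition}) is also valid and avoids the explicit formula altogether.
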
 

\begin{proof}
    The action of \(\cO_d\) on tensors commutes with permutations of indices and with contractions. Hence \(\psym\), being the averaging operator over all permutations,
    satisfies \(\psym(g\cdot \bA) = g\cdot \psym(\bA)\), and \(\tau\), being a
    contraction with the invariant tensor \(\bI_d\), satisfies \(\tau(g\cdot \bA)
    = g\cdot \tau(\bA)\) for all \(g\in\cO_d\). Finally, \(\ptf\) is a finite linear combination of operators of the form \(\bA \mapsto \psym(\tau^j(\bA)\otimes \bI_d^{\otimes j})\), each of which is equivariant since \(\bI_d\) is \(\cO_d\)-invariant and \(\tau\), \(\psym\) are equivariant. Therefore \(\ptf(g\cdot \bA) = g\cdot \ptf(\bA)\) for all \(g\in\cO_d\), completing the proof.
\end{proof}

For $\bW \in \Stf_\s (\R^d)$, define \(\Sym^{\bW}_\ell(\R^d)\subseteq \Sym_\ell(\R^d)\) as the subspace of symmetric tensors that are fixed by the stabilizer subgroup \(\cO_d^\bW\) defined in~\eqref{eq:stabilizer_subgroup}, that is,
\[
    \Sym^\bW_\ell(\R^d) := \{\bA \in \Sym_\ell(\R^d) : g\cdot \bA = \bA \text{ for all } g\in \cO_d^\bW\}.
\]
The analogous subspace of traceless symmetric tensors is denoted \(\TSym^\bW_\ell(\R^d)\subseteq \TSym_\ell(\R^d)\). For \(\s=0\), \(\Sym_\ell^\bW(\R^d)\) is the the subspace of rotationally invariant symmetric tensors, which by~\cite[Theorem 5.3.3]{Goodman_Wallach_2009} or Lemma~\ref{lem:decomposition_invariant_tensors} below is either the trivial subspace $\{0\}$ (if $\ell$ is odd) or the subspace spanned by $\psym (\bI_d^{\otimes p})$ (if $\ell = 2p$ is even).

The subspace \(\Sym^\bW_\ell(\R^d)\) admits a convenient decomposition that separates the components of the tensor according to how they interact with the subspace spanned by the frame \(\bW\) and its orthogonal complement. The following lemma makes this precise.

\begin{lemma}\label{lem:decomposition_invariant_tensors}
    Let \(\s\geq 0\), \(d\geq \s\), and \(\ell\geq 0\) be integers. Let \(\bW\in \Stf_{\s}(\R^d)\) be an orthonormal \(\s\)-frame and let \(\bA\in \Sym^\bW_\ell(\R^d)\) be an arbitrary symmetric tensor invariant under the action of the stabilizer subgroup \(\cO_d^\bW\). Then, there exist tensors \(\bB_{\ell - 2j}\in \Sym_{\ell - 2j}(\R^d)\), \(j=0,\ldots,\lfloor \ell/2\rfloor\) such that
    \[
        \bA = \sum_{j=0}^{\lfloor \ell/2\rfloor} \psym(((\bW\bW^\sT)^{\otimes (\ell-2j)} \bB_{\ell-2j}) \otimes (\bI_d - \bW\bW^\sT)^{\otimes j}).
    \]
\end{lemma}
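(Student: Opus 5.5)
The plan is to decompose \(\R^d = V \oplus V^\perp\) with \(V=\Span(\bW)\), and write \(\bP := \bW\bW^\sT\), \(\bP_\perp := \bI_d - \bP\). Since \(\bP + \bP_\perp = \bI_d\), we can expand
\[
\bA = (\bP+\bP_\perp)^{\otimes \ell}\bA = \sum_{S\subseteq[\ell]} \bA_S ,
\]
where \(\bA_S\) applies \(\bP\) on the indices in \(S\) and \(\bP_\perp\) on the others. By symmetry of \(\bA\), the terms with \(|S| = \ell - m\) combine into \(\binom{\ell}{m}\psym(\bA^{(m)})\). Here \(\bA^{(m)} := (\bP^{\otimes(\ell-m)}\otimes \bP_\perp^{\otimes m})\bA\) is viewed as an element of \(\Sym_{\ell-m}(V)\otimes \Sym_m(V^\perp)\).

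The stabilizer \(\cO_d^\bW\) is exactly the group \(\{\bI_V\}\times \cO(V^\perp)\). It commutes with \(\bP\) and \(\bP_\perp\), so each \(\bA^{(m)}\) is again invariant. Fix an orthonormal basis \(\{\bE_\alpha\}\) of \(\Sym_{\ell-m}(V)\) and write \(\bA^{(m)} = \sum_\alpha \bE_\alpha \otimes \bC_\alpha\) with \(\bC_\alpha\in\Sym_m(V^\perp)\). The group acts trivially on the \(V\)-factor, so each \(\bC_\alpha\) must be an \(\cO(V^\perp)\)-invariant symmetric tensor on \(V^\perp\).

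The key input, and the main obstacle, is the classical invariant-theory fact that \(\cO_k\)-invariant symmetric tensors on \(\R^k\) vanish for odd order and are multiples of \(\psym(\bI_k^{\otimes p})\) for order \(2p\) (first fundamental theorem, e.g.\ \cite[Theorem 5.3.3]{Goodman_Wallach_2009}). I would either cite this or prove it directly. For the direct proof, let \(\bC\) be invariant and consider the polynomial \(q(\bx)=\<\bC,\bx^{\otimes m}\>_\frob\) on \(V^\perp\). Invariance makes \(q\) constant on spheres, so \(q(\bx) = c\|\bx\|_2^{m}\). Since \(q\) is a polynomial, this is impossible for odd \(m\) unless \(c=0\). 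For \(m=2p\), we get \(q(\bx) = c\<\psym(\bP_\perp^{\otimes p}),\bx^{\otimes 2p}\>_\frob\), and a symmetric tensor is determined by its polynomial, giving \(\bC = c\,\psym(\bP_\perp^{\otimes p})\). The edge cases \(\dim V^\perp\in\{0,1\}\) need a brief separate check: when \(\dim V^\perp = 1\), the reflection \(-1\) kills odd orders and even orders are multiples of \(\bP_\perp^{\otimes p}\).

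Applying this for each \(\alpha\), we get \(\bA^{(m)}=0\) for odd \(m\). For \(m=2j\) we get \(\bA^{(2j)} = \bD_{\ell-2j}\otimes \psym(\bP_\perp^{\otimes j})\), where \(\bD_{\ell-2j} := \sum_\alpha c_\alpha \bE_\alpha\) lies in \(\Sym_{\ell-2j}(V)\). Setting \(\bB_{\ell-2j} := \binom{\ell}{2j}\bD_{\ell-2j}\) makes \((\bW\bW^\sT)^{\otimes(\ell-2j)}\bB_{\ell-2j} = \bB_{\ell-2j}\). Finally, since \(\psym(\bX\otimes\psym(\bY)) = \psym(\bX\otimes\bY)\), summing over \(j\) yields the claimed decomposition.
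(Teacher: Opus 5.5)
Your proof is correct and follows essentially the same route as the paper's: split $\bA$ according to how many slots lie in $\Span(\bW)$ versus its orthogonal complement. Since $\cO_d^\bW$ acts only on the complementary slots, each block reduces to $\cO(V^\perp)$-invariant symmetric tensors, which are then described by the classical fact and reassembled with $\psym$. The differences are cosmetic. You expand via $(\bP+\bP_\perp)^{\otimes \ell}$ and a basis of $\Sym_{\ell-m}(V)$ rather than fixing vectors $\bu_1,\ldots,\bu_{\ell-k}$, you make the combinatorial constant explicit as $\binom{\ell}{2j}$ where the paper leaves it as an unspecified $c_{\ell,k}$, and you give a self-contained proof of the invariant-theory input that the paper cites from Goodman--Wallach.
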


\begin{proof}
    Let us assume without loss of generality that \(\bW = [\be_1,\ldots,\be_s]\) where \(\{\be_i\}_{i=1}^d\) is the canonical basis of \(\R^d\). Decompose \(\R^d = U \oplus V\) where \(U = \Span(\be_1,\ldots,\be_\s)\) and \(V = \Span(\be_{\s+1},\ldots,\be_d)\).

    For every \(k\in \{0,\ldots,\ell\}\), let \(\bA^{(k)}\) be the component of \(\bA\) that has exactly \(\ell-k\) indices in \(U\) and \(k\) indices in \(V\), i.e.,
    \[
        \bA^{(k)}_{i_1,\ldots,i_\ell} = \begin{cases}
            \bA_{i_1,\ldots,i_\ell} & \text{if exactly } \ell-k \text{ of the indices } i_1,\ldots,i_\ell \text{ are in } [\s], \\
            0 & \text{otherwise},
        \end{cases}
    \]
    and write \(\bA = \sum_{k=0}^{\ell}\bA^{(k)}\). Because \(\bA\in \Sym^\bW_\ell(\R^d)\), the collection of tensors \(\{\bA^{(k)}\}_{k=0}^\ell\) are supported on disjoint sets of indices and the action of \(\cO_d^\bW\) preserves the number of indices in \(U\) and \(V\), it follows that \(\bA^{(k)} \in \Sym^\bW_\ell(\R^d)\) for each \(k\in \{0,\ldots,\ell\}\).

    Let us fix \(k\in \{0,\ldots,\ell\}\) and consider the component \(\bA^{(k)}\). Fix \(\{\bu_{i}\}_{i\in [\ell-k]}\subseteq U\) and consider the multilinear form
    \[
        F_{\bu_{1:\ell-k}}: \bv_{1:k} \in V^{k} \mapsto \<\bA^{(k)}, \bu_1 \otimes \cdots \otimes \bu_{\ell-k} \otimes \bv_1 \otimes \cdots \otimes \bv_{k}\>_\frob,
    \]
    where \(\bu_{1:\ell-k} = (\bu_1,\ldots,\bu_{\ell-k})\). By construction, \( F_{\bu_{1:\ell-k}}\) is symmetric in \(\bv_{1:k}\). Furthermore, for any \(g\in O(V)\) we can extend \(g\) to an element of \(\cO_d^\bW\) by letting it act as trivially on \(U\). Since \(\bA^{(k)}\) is invariant under the action of \(\cO_d^\bW\), it follows that \( F_{\bu_{1:\ell-k}}\) is \(O(V)\)-invariant. By~\cite[Theorem 5.3.3]{Goodman_Wallach_2009}, we have the following characterization of \(F_{\bu_{1:\ell-k}}\) which depends on the parity of \(k\):
    \begin{enumerate}[label=(\roman*)]
        \item If \(k\) is odd, then \( F_{\bu_{1:\ell-k}} \equiv 0\) for all choices of \(\bu_{1:\ell-k}\). This implies that \(\bA^{(k)} \equiv 0\).
        \item If \(k\) is even, then there exists a scalar \(C\equiv C(\bu_{1:\ell-k})\) such that
        \[
            F_{\bu_{1:\ell-k}} =C\psym\left((\bI_d - \bW\bW^\sT)^{\otimes (k/2)}\right).
        \]
    \end{enumerate}
    Suppose that \(k\) is even. The map
    \[
        C:\bu_{1:\ell-k}\in U^{\ell-k}\mapsto C(\bu_{1:\ell-k})
    \]
    is a symmetric multilinear form on \(U\). Hence, there exists a unique tensor \(\bC_{\ell - k}\in \Sym_{\ell - k}(U)\) such that
    \[
        C(\bu_{1:\ell-k}) = \<\bC_{\ell - k}, \bu_1 \otimes \cdots \otimes \bu_{\ell-k}\>_\frob.
    \]
    We extend \(\bC_{\ell - k}\) to a tensor in \(\Sym_{\ell - k}(\R^d)\) supported on indices in \([\s]\) by setting
    \[
        \bC_{\ell - k} = (\bW\bW^\sT)^{\otimes (\ell - k)} \bC_{\ell - k}.
    \] 
    Now define
    \[
        \tilde{\bA}^{(k)} := \psym\left(\left((\bW\bW^\sT)^{\otimes (\ell - k)} \bC_{\ell - k}\right)\otimes (\bI_d - \bW\bW^\sT)^{\otimes (k/2)}\right).
    \]
    For any \(\{\bu_i\}_{i\in [\ell-k]}\subseteq U\) and \(\{\bv_i\}_{i\in [k]}\subseteq V\), we have
    \begin{align*}
        \<\tilde{\bA}^{(k)}, \bu_{1:\ell-k} \otimes \bv_{1:k}\>_\frob 
        & = c_{\ell,k}\<\bC_{\ell - k}, \bu_1 \otimes \cdots \otimes \bu_{\ell-k}\>_\frob \<\psym\left((\bI_d - \bW\bW^\sT)^{\otimes (k/2)}\right), \bv_{1:k}\>_\frob
        \\ & = c_{\ell,k}F_{\bu_{1:\ell-k}}(\bv_{1:k})
    \end{align*}
    for some constant \(c_{\ell,k}\) that depends only on \(\ell\) and \(k\). The constant \(c_{\ell,k}\) is a combinatorial factor that links permutations of the indices \([\ell]\) to permutations that fix the subsets of indices in \(U\) and \(V\). Since both \(\bA^{(k)}\) and \(\tilde{\bA}^{(k)}\) are symmetric tensors, the previous equation implies that \( c_{\ell,k}\bA^{(k)} = \tilde{\bA}^{(k)}\). This concludes the proof.
\end{proof}

\subsection{Isomorphism between traceless symmetric tensors and spherical harmonics}
\label{app:isomorphism}

It is convenient to work with the canonical representation of spherical harmonics in terms of traceless symmetric tensors. The next lemma shows that the evaluation map in~\eqref{eq:isometry_spherical_harmonics} defines an \(\cO_d\)-equivariant isometric isomorphism between \(\TSym_\ell(\R^d)\) equipped with the Frobenius inner product and \(\sh_{d,\ell}\) equipped with the \(L^2(\tau_d)\) inner product.

\begin{lemma}\label{lem:isometry_spherical_harmonics}
    For any \(d,\ell\in \N\), the map \(\Phi_{d,\ell}\) defined by~\eqref{eq:isometry_spherical_harmonics} is an isometric isomorphism between \(\TSym_\ell(\R^d)\) and \(\sh_{d,\ell}\). Furthermore, \(\Phi_{d,\ell}\) is equivariant under the action of \(\cO_d\), i.e., for any \(g\in \cO_d\) and \(\bA\in \TSym_\ell(\R^d)\), \(\Phi_{d,\ell}(g\cdot \bA) = g\cdot \Phi_{d,\ell}(\bA)\).
\end{lemma}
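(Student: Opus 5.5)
We will verify four properties of $\Phi_{d,\ell}$ in turn: that it maps into $\sh_{d,\ell}$, that it is $\cO_d$-equivariant, that it is an isometry, and that it is surjective.

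\textbf{Image lies in $\sh_{d,\ell}$.} For $\bA\in\TSym_\ell(\R^d)$, self-adjointness of $\ptf$ (Lemma~\ref{lem:projection_traceless_definition}) gives
\[
\Phi_{d,\ell}(\bA)(\bz)=\kappa_{d,\ell}\sqrt{N_{d,\ell}}\,\<\ptf(\bA),\bz^{\otimes\ell}\>_\frob=\kappa_{d,\ell}\sqrt{N_{d,\ell}}\,p_\bA(\bz),
\]
where $p_\bA(\bx)=\<\bA,\bx^{\otimes\ell}\>_\frob$. This polynomial is homogeneous of degree $\ell$. By \eqref{eq:laplacian_polynomial_contracts}, $\Delta p_\bA=\ell(\ell-1)p_{\tau(\bA)}=0$, so $p_\bA$ is harmonic and its restriction to the sphere lies in $\sh_{d,\ell}$.

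\textbf{Equivariance.} By Lemma~\ref{lem:equivariant_operators}, $\ptf$ commutes with the $\cO_d$-action, and $(g^{-1}\bz)^{\otimes\ell}=g^{-1}\cdot\bz^{\otimes\ell}$. Together with orthogonal invariance of the Frobenius inner product, this yields $\Phi_{d,\ell}(g\cdot\bA)(\bz)=\Phi_{d,\ell}(\bA)(g^{-1}\bz)$.

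\textbf{Injectivity.} A homogeneous polynomial vanishing on the sphere vanishes identically. Since $\bA$ is symmetric, it is determined by $p_\bA$ through polarization, so $\Phi_{d,\ell}$ is injective.

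\textbf{Isometry.} This is the main step. Define the bilinear form $B(\bA,\bB)=\<\Phi_{d,\ell}(\bA),\Phi_{d,\ell}(\bB)\>_{L^2(\tau_d)}$ on $\TSym_\ell(\R^d)$. Rotation invariance of $\tau_d$ and equivariance make $B$ an $\cO_d$-invariant form. We then use irreducibility of $\TSym_\ell(\R^d)$ under $\cO_d$, which follows from irreducibility of $\sh_{d,\ell}$ recalled in Section~\ref{sec:tech-bg} together with the injectivity just shown. By Schur's lemma, $B=c\,\<\cdot,\cdot\>_\frob$ for some scalar $c$.

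To compute $c$, we take traces over an orthonormal basis $\{\bA_i\}$ of $\TSym_\ell(\R^d)$. On one side, $\sum_i B(\bA_i,\bA_i)=c\dim\TSym_\ell(\R^d)$. On the other side, $\sum_i\<\bA_i,\cH_{d,\ell}(\bz)\>_\frob^2=\|\cH_{d,\ell}(\bz)\|_\frob^2$. By \eqref{eq:definition_kappa}, this equals $\kappa_{d,\ell}^2N_{d,\ell}\|\ptf(\bz^{\otimes\ell})\|_\frob^2=N_{d,\ell}$ pointwise on the sphere. Hence $c\dim\TSym_\ell(\R^d)=N_{d,\ell}$. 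Injectivity gives $\dim\TSym_\ell(\R^d)\le N_{d,\ell}$.

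\textbf{Surjectivity.} For the reverse inequality we show that every element of $\sh_{d,\ell}$ is of the form $p_\bA$ with $\bA$ traceless. A harmonic homogeneous polynomial can be written as $p_\bA$ for a unique symmetric $\bA$, and \eqref{eq:laplacian_polynomial_contracts} then forces $\tau(\bA)=0$. Thus $\Phi_{d,\ell}$ is onto, the dimensions are equal, and $c=1$, which proves the isometry.

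The main obstacle is pinning down the normalization constant. The trace argument above avoids computing Gegenbauer integrals explicitly, but it relies on Schur's lemma in its real form. For the real irreducible representation $\TSym_\ell(\R^d)$, we need the invariant symmetric form to be a scalar multiple of the Frobenius product. We get this by taking the symmetric operator representing $B$ and using that it commutes with the action, so its eigenspaces are invariant and, by irreducibility, equal to the whole space. Using the self-adjointness of the representing operator in this way sidesteps the complex-versus-real subtleties of Schur's lemma.
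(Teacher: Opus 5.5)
Your proof is correct. It differs from the paper's only in how you show that $B(\bA,\bB)=\<\Phi_{d,\ell}(\bA),\Phi_{d,\ell}(\bB)\>_{L^2}$ is proportional to $\<\bA,\bB\>_\frob$. The other steps match the paper:
\begin{itemize}
\item harmonicity of the image;
\item injectivity;
\item surjectivity via Taylor coefficients;
\item equivariance via Lemma~\ref{lem:equivariant_operators};
\item the trace computation fixing the constant. The paper writes this as $c\,N_{d,\ell}=\E\|\ptf(\bz^{\otimes\ell})\|_\frob^2=\kappa_{d,\ell}^{-2}$, which is your identity $\E\|\cH_{d,\ell}(\bz)\|_\frob^2=N_{d,\ell}$ after rescaling, and it likewise uses $\dim\TSym_\ell(\R^d)=N_{d,\ell}$.
\end{itemize}

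For the proportionality, the paper never appeals to irreducibility. It writes $B(\bA,\bB)=\kappa_{d,\ell}^2N_{d,\ell}\<\E_{\tau_d}[\bz^{\otimes 2\ell}],\bA\otimes\bB\>_\frob$. Rotation invariance together with Lemma~\ref{lem:decomposition_invariant_tensors} at $\s=0$ (the classification of $\cO_d$-invariant symmetric tensors) gives $\E[\bz^{\otimes 2\ell}]=c\,\psym(\bI_d^{\otimes\ell})$. It then observes that in $\<\psym(\bI_d^{\otimes\ell}),\bA\otimes\bB\>_\frob$, every pairing that contracts two indices of $\bA$ or two of $\bB$ vanishes by tracelessness. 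The remaining cross-pairings all give $\<\bA,\bB\>_\frob$ by symmetry.

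Your Schur-lemma route is shorter and more conceptual. Your handling of the real case, via the self-adjoint operator representing $B$ whose eigenspaces are invariant, is exactly what is needed. The cost is that it imports irreducibility of $\sh_{d,\ell}$ as a black box. That fact is classical and recalled in Section~\ref{sec:tech-bg}, so there is no circularity. The paper's computation, by contrast, rests only on invariant theory and elementary combinatorics.

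One small remark: once you use irreducibility of $\sh_{d,\ell}$ together with injectivity and equivariance, the image of $\Phi_{d,\ell}$ is a nonzero invariant subspace, hence all of $\sh_{d,\ell}$. Your separate surjectivity step, which matches the paper's, is therefore redundant, though harmless.
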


\begin{proof}
    Let \(\bA\in \TSym_\ell(\R^d)\) be arbitrary. Since \(\ptf\) is self-adjoint, we can write
    \[
        \Phi(\bA)(\bx) = \kappa_{d,\ell}\sqrt{N_{d,\ell}}\<\bA,\bx^{\otimes \ell}\>_\frob
    \]
    for all \(\bx\in \R^{d}\). We will first show that \(\Phi_{d,\ell}\) is an isomorphism. To show that \(\Phi_{d,\ell}\) is well-defined, note that for any \(\bx\in \R^d\),
    \begin{align*}
        \Delta \Phi_{d,\ell}(\bA)(\bx) & = \kappa_{d,\ell}\sqrt{N_{d,\ell}}\Delta\<\bA,\bx^{\otimes \ell}\>_\frob = \kappa_{d,\ell}\sqrt{N_{d,\ell}}\ell(\ell-1)\<\tau(\bA),\bx^{\otimes (\ell-2)}\>_\frob = 0
    \end{align*}
    by~\eqref{eq:laplacian_polynomial_contracts} using that \(\bA\in \TSym_\ell(\R^d)\). Therefore, \(\Phi_{d,\ell}(\bA)\) is a harmonic polynomial and hence \(\Phi_{d,\ell}(\bA)\in \sh_{d,\ell}\) when restricted to the sphere.

    Injectivity of \(\Phi_{d,\ell}\) follows directly from the fact that \(\Sym_\ell(\R^d)\) is spanned by tensors of the form \(\bx^{\otimes \ell}\). Next, we show that \(\Phi_{d,\ell}\) is surjective. To this end, let \(p\) be any \(\ell\)-homogeneous harmonic polynomial in \(\R^d\) and define \(\bA \in \Sym_\ell(\R^d)\) entry-wise by
    \[
        \bA_{i_1,\ldots,i_\ell} = \frac{1}{\ell!}\frac{\partial^\ell p}{\partial x_{i_1} \cdots \partial x_{i_\ell}}(\boldsymbol{0}).
    \]
    Note that \(\bA\) is symmetric since partial derivatives of a polynomial commute. Furthermore,
    \[
        p(\bx) = \frac{1}{\ell!} \sum_{i_1,\ldots,i_\ell=1}^d \frac{\partial^\ell p}{\partial x_{i_1} \cdots \partial x_{i_\ell}}(\boldsymbol{0}) x_{i_1} \cdots x_{i_\ell} = \<\bA, \bx^{\otimes \ell}\>_\frob
    \]
    since \(p\) is \(\ell\)-homogeneous. Finally, using the fact that \(p\) is harmonic, it follows from~\eqref{eq:laplacian_polynomial_contracts} that
    \[
        0 = \Delta p(\bx) = \ell(\ell-1)\<\tau(\bA),\bx^{\otimes (\ell-2)}\>_\frob
    \] 
    for all \(\bx\in \R^d\), and hence \(\bA\in \TSym_\ell(\R^d)\). By construction, \(\Phi_{d,\ell}(\bA) = p\) and hence \(\Phi_{d,\ell}\) is surjective.

    Finally, we show that \(\Phi_{d,\ell}\) is an isometry. To this end, let \(\bA,\bB\in \TSym_\ell(\R^d)\) be arbitrary. Then,
    \begin{align*}
        \<\Phi_{d,\ell}(\bA), \Phi_{d,\ell}(\bB)\>_{L^2(\tau_d)} & = \kappa^2_{d,\ell}N_{d,\ell} \left\<\E_{\bx\sim \tau_d}[\bx^{\otimes 2\ell}], \bA \otimes \bB\right\>_\frob.
    \end{align*}
    By the rotational invariance of \(\tau_d\), for every \(\bQ\in \cO_d\),
    \[
        \bQ\cdot \E_{\bx\sim \tau_d}[\bx^{\otimes 2\ell}] = \E_{\bx\sim \tau_d}[(\bQ\bx)^{\otimes 2\ell}] = \E_{\bx\sim \tau_d}[\bx^{\otimes 2\ell}],
    \] 
    where we used \(\bQ^{\otimes \ell}\bx^{\otimes 2\ell} = (\bQ\bx)^{\otimes \ell}\). Hence, \(\E_{\bx\sim \tau_d}[\bx^{\otimes 2\ell}]\in \Sym_{2\ell}(\R^d)\) is invariant under the action of \(\cO_d\).
    
    By Lemma~\ref{lem:decomposition_invariant_tensors}, there exists a constant \(c\) such that $\E_{\bx\sim \tau_d}[\bx^{\otimes 2\ell}] = c \psym(\bI_d^{\otimes \ell})$, and hence
    \[
       \<\Phi_{d,\ell}(\bA), \Phi_{d,\ell}(\bB)\>_{L^2(\tau_d)}   = c\kappa^2_{d,\ell}N_{d,\ell} \left\<\psym(\bI_d^{\otimes \ell}), \bA \otimes \bB\right\>_\frob.
    \]
    Expanding inner product,
    \begin{align*}
        \left\<\psym(\bI_d^{\otimes \ell}), \bA \otimes \bB\right\>_\frob &  = \frac{1}{\ell!}\sum_{\sigma \in \perm_{2\ell}}\sum_{i_1,\ldots, i_{2\ell}}\prod_{j=1}^{\ell}\delta_{i_{\sigma(j)},i_{\sigma(\ell+j)}}
        \bA_{i_1,\ldots, i_\ell}\bB_{i_{\ell+1},\ldots, i_{2\ell}}.
    \end{align*}
    If \(\sigma\in \perm_{2\ell}\) is such that there exists \(j\in [\ell]\) such that \(\{\sigma(j),\sigma(\ell+j)\}\subseteq [\ell]\) or \(\{\sigma(j),\sigma(\ell+j)\}\subseteq [2\ell]\setminus [\ell]\), then then the inner product \(\<(\bI_d^{\otimes \ell})^{\sigma}, \bA \otimes \bB\>_\frob\) will contract two indices of either \(\bA\) or \(\bB\). Since both \(\bA,\bB\in \TSym_{\ell}(\R^d)\), those terms will not contribute to the inner product. Consequently, the only permutations \(\sigma \in \perm_{2\ell}\) that contribute to the above are those that follow the natural partition of \([2\ell]\) into \([\ell]\) and \([2\ell]\setminus [\ell]\). In particular, because \(\bA,\bB\) are symmetric, this means that
    \[
        \left\<\psym(\bI_d^{\otimes \ell}), \bA \otimes \bB\right\>_\frob  = c\<\bA,\bB\>_\frob
    \]
    for a (possibly different) constant \(c>0\). Combining everything, there exists a constant \(c>0\) such that, for all \(\bA,\bB\in \TSym_\ell(\R^d)\),
    \[
         \<\Phi_{d,\ell}(\bA), \Phi_{d,\ell}(\bB)\>_{L^2(\tau_d)}   = c\kappa^2_{d,\ell}N_{d,\ell} \<\bA, \bB\>_\frob.
    \]
    This means that \(\E_{\bx\sim \tau_d}[\ptf(\bx^{\otimes \ell})\otimes \ptf(\bx^{\otimes \ell})]\in \Lop_{d,\ell}\) admits a spectral decomposition
    \[
        \E_{\bx\sim \tau_d}[\ptf(\bx^{\otimes \ell})\otimes \ptf(\bx^{\otimes \ell})] = c\sum_{j\in [N_{d,\ell}]}\bV_j\otimes \bV_j
    \]
    with \(\{\bV_j\}_{j\in [N_{d,\ell}]}\) an orthonormal basis of \(\TSym_\ell(\R^d)\). Taking a trace on both sides and using~\eqref{eq:norm_projection_traceless_rank_one},
    \[
        c N_{d,\ell} = \E_{\bx\sim \tau_d}[\|\ptf(\bx^{\otimes \ell})\|_\frob^2] = \sum_{j=0}^{\lfloor \ell/2\rfloor} h^{(d)}_{\ell,j} = \frac{1}{\kappa^2_{d,\ell}}
    \]
    where the coefficients \(h^{(d)}_{\ell,j}\) are defined in~\eqref{eq:coefficients_projection_traceless}. Thus,
    \[
        \<\Phi_{d,\ell}(\bA), \Phi_{d,\ell}(\bB)\>_{L^2(\tau_d)} =  \<\bA, \bB\>_\frob.
    \]
    This shows that \(\Phi_{d,\ell}\) is a linear bijection that preserves inner products. The equivariance property follows from the definitions of \(\Phi_{d,\ell}\) and the equivariance of \(\ptf\) (see Lemma~\ref{lem:equivariant_operators}).
\end{proof}

For any \(\s\in \N\) and orthonormal \(\s\)-frame \(\bW\in \R^{d\times \s}\), let \(\sh^{W}_{d,\ell}\subseteq \sh_{d,\ell}\) be the subspace of spherical harmonics that are invariant under the action of \(\cO_d^\bW\). Since \(\cO_d^\bW\) is a stabilizer subgroup of \(\cO_d\), it follows from Lemma~\ref{lem:isometry_spherical_harmonics} that \(\Phi_{d,\ell}\) restricts to an isometric isomorphism between \(\TSym_\ell^\bW(\R^d)\) and \(\sh_{d,\ell}^\bW\). We obtain a decomposition of \(L^2(\S^{d-1},\tau_d)\) in terms of harmonic tensors.

\begin{lemma}\label{lem:harmonic_expansion_invariant_functions}
    Let \(f\in L^2(\S^{d-1},\tau_d)\) be arbitrary. Then, we may write
    \[
        f(\bx) = \sum_{\ell\geq 0}\<\bA_{d,\ell},\cH_{d,\ell}(\bx^{\otimes \ell})\>_\frob
    \]
    for all \(\bx\in \S^{d-1}\), where the tensor coefficients \(\bA_{d,\ell}\in \TSym_\ell(\R^d)\) are uniquely determined by
    \[
        \bA_{d,\ell} = \E_{\bx\sim \tau_d}[f(\bx)\cH_{d,\ell}(\bx^{\otimes \ell})]
    \]
    for all \(\ell\geq 0\).
    
    Moreover, if \(f\) is invariant under the stabilizer subgroup \(\cO_d^\bW\) for some orthonormal \(\s\)-frame \(\bW\in \R^{d\times \s}\), then \(\bA_{d,\ell}\in \TSym_\ell^\bW(\R^d)\) for all \(\ell\geq 0\), and there exist tensors \(\bB_{\s,\ell}\in \Sym_\ell(\R^\s)\) such that \(\bA_{d,\ell} = \ptf(\bW^{\otimes \ell} \bB_{\s,\ell})\) for all \(\ell\geq 0\), where \(\bW^{\otimes \ell}\bB_{\s,\ell}\) denote the canonical embedding of \(\bB_{\s,\ell}\) into \(\Sym_\ell(\R^d)\) via \(\bW\).
\end{lemma}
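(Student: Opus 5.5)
The first part is a direct consequence of the Peter--Weyl decomposition~\eqref{eq:l2-decomposition-sphere} combined with Lemma~\ref{lem:isometry_spherical_harmonics}. I would write $f=\sum_{\ell\ge0} f_\ell$ with $f_\ell\in\sh_{d,\ell}$ the orthogonal projection of $f$, the series converging in $L^2(\tau_d)$. Since $\Phi_{d,\ell}$ is a bijective isometry onto $\sh_{d,\ell}$, each $f_\ell$ equals $\Phi_{d,\ell}(\bA_{d,\ell})=\<\bA_{d,\ell},\cH_{d,\ell}(\bx)\>_\frob$ for a unique $\bA_{d,\ell}\in\TSym_\ell(\R^d)$; this gives uniqueness. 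To identify the coefficient, I would test against an arbitrary $\bB\in\TSym_\ell(\R^d)$. Using orthogonality of the $\sh_{d,k}$ and the isometry~\eqref{eq:Phi-d-ell-isometry},
\[
\<\E[f(\bx)\cH_{d,\ell}(\bx)],\bB\>_\frob=\<f,\Phi_{d,\ell}(\bB)\>_{L^2}=\<f_\ell,\Phi_{d,\ell}(\bB)\>_{L^2}=\<\bA_{d,\ell},\bB\>_\frob .
\]
Since $\cH_{d,\ell}$ is $\TSym_\ell$-valued, the expectation also lies in $\TSym_\ell(\R^d)$, so the two tensors coincide.

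For the invariance part, take $g\in\cO_d^\bW$ and suppose $\rho(g)f=f$. By the equivariance~\eqref{eq:equiv-Phi-d-ell} and the change of variables $\bx\mapsto g\bx$ (which preserves $\tau_d$), $\cH_{d,\ell}(g\bx)=g\cdot\cH_{d,\ell}(\bx)$. Indeed, $\ptf$ commutes with the action by Lemma~\ref{lem:equivariant_operators}, and $(g\bx)^{\otimes\ell}=g\cdot\bx^{\otimes\ell}$. Therefore
\[
g\cdot\bA_{d,\ell}=\E[f(\bx)\,g\cdot\cH_{d,\ell}(\bx)]=\E[f(\bx)\cH_{d,\ell}(g\bx)]=\E[f(g^{-1}\bx)\cH_{d,\ell}(\bx)]=\bA_{d,\ell},
\]
so $\bA_{d,\ell}\in\TSym^\bW_\ell(\R^d)$.

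The main step is the representation $\bA_{d,\ell}=\ptf(\bW^{\otimes\ell}\bB_{\s,\ell})$. Applying Lemma~\ref{lem:decomposition_invariant_tensors} gives
\[
\bA_{d,\ell}=\sum_j \psym\bigl(\bW^{\otimes(\ell-2j)}\bC_{\ell-2j}\otimes\bP_\perp^{\otimes j}\bigr),\qquad \bP_\perp=\bI_d-\bW\bW^\sT,
\]
where $\bC_{\ell-2j}\in\Sym_{\ell-2j}(\R^\s)$ and $\bW^{\otimes(\ell-2j)}\bC_{\ell-2j}$ is a tensor supported on $\Span(\bW)$. I would expand $\bP_\perp^{\otimes j}=(\bI_d-\bW\bW^\sT)^{\otimes j}$ binomially. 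Every term containing at least one factor $\bI_d$ has the form $\psym(\bT\otimes\bI_d)$, and such tensors are annihilated by $\ptf$: they are orthogonal to $\TSym_\ell$, because $\<\psym(\bT\otimes\bI_d),\bS\>_\frob=\<\bT,\tau(\bS)\>_\frob=0$ for traceless $\bS$. The only surviving term is the one with all factors equal to $\bW\bW^\sT$. That term is $\bW^{\otimes\ell}\bB_{\s,\ell}$, where $\bB_{\s,\ell}:=\sum_j(-1)^j\psym(\bC_{\ell-2j}\otimes\bI_\s^{\otimes j})\in\Sym_\ell(\R^\s)$. Since $\bA_{d,\ell}$ is itself traceless, $\ptf(\bA_{d,\ell})=\bA_{d,\ell}$, and applying $\ptf$ to both sides of the decomposition concludes.

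The only care needed here is bookkeeping: symmetrization commutes with the embedding $\bW^{\otimes\ell}$, and the combinatorial constants from the binomial expansion can be absorbed into $\bB_{\s,\ell}$. I would state this absorption explicitly rather than track the constants.
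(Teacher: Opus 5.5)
Your proof is correct and follows essentially the same route as the paper: you use the projection onto $\sh_{d,\ell}$ and the isometry $\Phi_{d,\ell}$ for the coefficient formula, equivariance of $\cH_{d,\ell}$ for $\cO_d^\bW$-invariance, and Lemma~\ref{lem:decomposition_invariant_tensors} followed by expanding $(\bI_d-\bW\bW^\sT)^{\otimes j}$ and discarding every term that contains a factor $\bI_d$. The only cosmetic difference is that you remove those terms by applying $\ptf$ directly, since they are orthogonal to $\TSym_\ell(\R^d)$, whereas the paper pairs with the traceless $\cH_{d,\ell}(\bx)$; your version is slightly cleaner and also gives the explicit choice $\bB_{\s,\ell}=\sum_j(-1)^j\psym(\bC_{\ell-2j}\otimes\bI_\s^{\otimes j})$.
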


\begin{proof}
    Let \(\{\psi_{j}\}_{j=1}^{n_\ell}\) be an orthonormal basis of \(\sh_{d,\ell}\) and write the expansion in \(L^2(\S^{d-1},\tau_d)\)
    \[
        f(\bx) = \sum_{\ell\geq 0} \sum_{j=1}^{n_\ell} \<f,\psi_j\>_{L^2(\tau_d)} \psi_j(\bx)
    \]
    for all \(\bx\in \S^{d-1}\). By the isometric isomorphism of Lemma~\ref{lem:isometry_spherical_harmonics}, the \(L^2(\tau_d)\)-orthonormal projection \(f_\ell\) of \(f\) onto \(\sh_{d,\ell}\) can be identified with a unique tensor \(\bA_{d,\ell}\in \TSym_\ell(\R^d)\) such that
    \[
        f_\ell(\bx) = \<\bA_{d,\ell}, \cH_{d,\ell}(\bx^{\otimes \ell})\>_\frob.
    \]
    Furthermore, by the isometry property of \(\Phi_{d,\ell}\), we have
    \begin{align*}
        \<\bA_{d,\ell},\bB\>_\frob = \<f_\ell, \Phi_{d,\ell}(\ptf(\bB))\>_{L^2(\tau_d)} 
         = \E[f(\bx)\Phi_{d,\ell}(\bB)(\bx)] = \left\<\E[f(\bx)\cH_{d,\ell}(\bx^{\otimes \ell})],\bB\right\>_\frob
    \end{align*}
    for every \(\bB\in \Sym_\ell(\R^d)\), where the expectation is taken with respect to \(\bx\sim \tau_d\). Here, we used the fact that since \(\ptf\) is an orthogonal projection, \(\Phi(\bB) = \Phi(\ptf(\bB))\) for all \(\bB\in \Sym_\ell(\R^d)\). It follows that \(\bA_{d,\ell} = \E[f(\bx)\cH_{d,\ell}(\bx^{\otimes \ell})] \in \TSym_\ell(\R^d)\). Summing over all \(\ell\geq 0\), we get the stated \(L^2\) expansion of \(f\).

    Now assume that \(f\) is \(\cO_d^{\bW}\)-invariant for some orthonormal \(\s\)-frame \(\bW\in \R^{d\times \s}\). Using the \(\cO_d\)-equivariance of \(\cH_{d,\ell}\), we get
    \begin{align*}
        \bQ\cdot \bA_{d,\ell} = \E[f(\bx)\cH_{d,\ell}((\bQ\bx)^{\otimes \ell})]  = \E[f(\bQ^\sT \bx)\cH_{d,\ell}(\bx^{\otimes \ell})] 
         = \E[f(\bx)\cH_{d,\ell}(\bx^{\otimes \ell})]
         = \bA_{d,\ell},
    \end{align*}
    so that \(\bA_{d,\ell}\in \TSym_\ell^\bW(\R^d)\) for all \(\ell\geq 0\). By Lemma~\ref{lem:decomposition_invariant_tensors}, there exist tensors \(\bC_{d,\ell}\in \Sym_\ell(\R^d)\) such that
    \[
        \bA_{d,\ell} = \sum_{j=0}^{\lfloor \ell/2\rfloor} \psym(((\bW\bW^\sT)^{\otimes (\ell-2j)} \bC_{d,\ell-2j}) \otimes (\bI_d - \bW\bW^\sT)^{\otimes j}).
    \]
    Expanding \((\bI_d - \bW\bW^\sT)^{\otimes j}\) as the product of components of the form \(\bW\bW^\sT\) and \(\bI_d\), we can define new tensors \(\tilde{\bC}_{d,\ell - 2m}\in \Sym_{\ell - 2m}(\R^d)\), \(m=0,\ldots,\lfloor \ell/2\rfloor\) such that
    \[
        \bA_{d,\ell} = \sum_{j=0}^{\lfloor \ell/2\rfloor}\psym(((\bW\bW^\sT)^{\otimes (\ell-2j)} \tilde{\bC}_{d,\ell-2j}) \otimes \bI_d^{\otimes j}).
    \]
    Taking the inner product with \(\cH_{d,\ell}(\bx^{\otimes \ell})\) for any \(\bx\in \S^{d-1}\) and using the fact that \(\cH_{d,\ell}(\bx^{\otimes \ell})\) is traceless, we get
    \[
        \<\bA_{d,\ell},\cH_{d,\ell}(\bx^{\otimes \ell})\>_\frob = \<(\bW\bW^\sT)^{\otimes \ell}\tilde{\bC}_{\ell},\cH_{d,\ell}(\bx^{\otimes \ell})\>_\frob.
    \]
    Since this is true for all \(\ell\geq 0\),
    \[
        f(\bx) = \sum_{\ell\geq 0}\<\ptf((\bW\bW^\sT)^{\otimes \ell}\tilde{\bC}_\ell),\cH_{d,\ell}(\bx^{\otimes \ell})\>_\frob
    \]
    where, by the above, \(\ptf((\bW\bW^\sT)^{\otimes \ell}\tilde{\bC}_\ell) = \bA_{d,\ell}\). In particular, the final part of the lemma holds with \(\bB_{\s,\ell} = (\bW^\sT)^{\otimes \ell}\tilde{\bC}_{d,\ell} \in \Sym_\ell(\R^\s)\).
\end{proof}

Lemma~\ref{lem:harmonic_expansion_invariant_functions} indicates that any \(\cO_d^\bW\)-invariant function \(f\in L^2(\S^{d-1},\tau_d)\) can be expanded as
\[
    f(\bx) = \sum_{\ell\geq 0}\<\ptf(\bW^{\otimes \ell}\bB_{\s,\ell}), \cH_{d,\ell}(\bx^{\otimes \ell})\>_\frob
\]
for some \(\bB_{\s,\ell}\in \Sym_\ell(\R^\s)\). Since the tensors \(\{\bB_{\s,\ell}\}_{\ell\geq 0}\) are finite dimensional (dimension does not depend on \(d\)), this expansion is more amenable to analysis in the high-dimensional limit \(d\to \infty\). For instance, we can show that \(\ptf(\bW^{\otimes \ell}\bB_{\s,\ell})\) is well approximated by \(\bW^{\otimes \ell}\bB_{\s,\ell}\) when \(d\) is large.

\begin{lemma}\label{lem:approximation_traceless_projection}
    Let \(d,\s,\ell\in \N\) and \(\bW\in \R^{d\times \s}\) be an orthonormal \(\s\)-frame. If \(d\geq 4\ell^4\s\), then for any \(\bB_{\s,\ell}\in \Sym_\ell(\R^\s)\),
    \[
        \left\|\ptf\left(\bW^{\otimes \ell}\bB_{\s,\ell}\right) - \bW^{\otimes \ell}\bB_{\s,\ell}\right\|_\frob \leq \left\|\ptf\left(\bW^{\otimes \ell}\bB_{\s,\ell}\right)\right\|_\frob \frac{2\ell^2\sqrt{\s}}{\sqrt{d}}.
    \]
\end{lemma}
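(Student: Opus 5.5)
Set $\bA := \bW^{\otimes \ell}\bB_{\s,\ell} \in \Sym_\ell(\R^d)$. The plan is to use the explicit formula \eqref{eq:projection_traceless_symmetric_tensors} and write
\[
\ptf(\bA) - \bA = \sum_{j=1}^{\lfloor \ell/2\rfloor} h^{(d)}_{\ell,j}\, \psym\big(\tau^j(\bA)\otimes \bI_d^{\otimes j}\big).
\]
We then bound each term in Frobenius norm by a multiple of $\|\bA\|_\frob$, and convert to $\|\ptf(\bA)\|_\frob$ at the end.

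\textbf{Step 1: the traces.} Since $\bA$ is supported on $\Span(\bW)^{\otimes \ell}$, we have $\tau^j(\bA) = \bW^{\otimes(\ell-2j)}\tau^j_{\s}(\bB_{\s,\ell})$, where $\tau_\s$ denotes the trace in $\R^\s$. By Cauchy--Schwarz over the $\s$ traced indices, $\|\tau(\bC)\|_\frob \le \sqrt{\s}\,\|\bC\|_\frob$, and iterating gives $\|\tau^j(\bA)\|_\frob \le \s^{j/2}\|\bA\|_\frob$.

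\textbf{Step 2: the padding by identities.} We have $\|\tau^j(\bA)\otimes \bI_d^{\otimes j}\|_\frob = d^{j/2}\|\tau^j(\bA)\|_\frob$, and $\psym$ is a contraction. From \eqref{eq:coefficients_projection_traceless}, $|h^{(d)}_{\ell,j}| \le \big(\ell^2/(2(d+2\ell-2j-2))\big)^j/j! \le (\ell^2/d)^j$, using $(\ell-2j+2)(\ell-2j+1)\le \ell^2$ in the recursion. Hence the $j$-th term is at most $(\ell^2\sqrt{\s}/\sqrt d)^j\|\bA\|_\frob$.

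\textbf{Step 3: summing.} Set $x:=\ell^2\sqrt{\s/d}$. The hypothesis $d\ge 4\ell^4\s$ gives $x\le 1/2$, so
\[
\|\ptf(\bA)-\bA\|_\frob \le \|\bA\|_\frob \sum_{j\ge1}x^j \le 2x\,\|\bA\|_\frob .
\]

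\textbf{Step 4: converting to $\|\ptf(\bA)\|_\frob$.} By the triangle inequality, $\|\bA\|_\frob \le \|\ptf(\bA)\|_\frob + 2x\|\bA\|_\frob$, so $\|\bA\|_\frob \le \|\ptf(\bA)\|_\frob/(1-2x)$. This only yields $2x/(1-2x)$, which is worse than the claimed constant $2x$.

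\textbf{Main obstacle.} The difficulty is getting the constant exactly $2$. The first thing to try is orthogonality: $\ptf$ is an orthogonal projection, so $\bA-\ptf(\bA)\perp\ptf(\bA)$ and $\|\bA\|_\frob^2=\|\ptf(\bA)\|_\frob^2+\|\bA-\ptf(\bA)\|_\frob^2$. Combining this with $\|\bA-\ptf(\bA)\|_\frob\le y\|\bA\|_\frob$ gives $\|\bA\|_\frob\le \|\ptf(\bA)\|_\frob/\sqrt{1-y^2}$, hence
\[
\|\ptf(\bA)-\bA\|_\frob \le \frac{y}{\sqrt{1-y^2}}\,\|\ptf(\bA)\|_\frob .
\]
The constant $2$ then follows once the bound $y$ on $\|\bA-\ptf(\bA)\|_\frob/\|\bA\|_\frob$ is sharpened to $y\le c\,x$ with $c/\sqrt{1-c^2x^2}\le 2$ (for instance $c=3/2$ with $x\le 1/2$). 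The sharpening comes from the $1/j!$ and the factor $1/2$ in $h^{(d)}_{\ell,j}$, which give $|h^{(d)}_{\ell,j}|\le (\ell^2/(2d))^j/j!$ and so $y\le e^{x/2}-1\le 0.57\,x$ for $x\le 1/2$. With $c=0.57$ the final constant is about $0.6\le 2$. The remaining care is verifying the bound on the coefficients $h^{(d)}_{\ell,j}$ uniformly in $j\le \ell/2$.
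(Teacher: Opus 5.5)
Your proof is correct and follows the paper's argument: the explicit formula for $\ptf$, the identity $\tau^j(\bW^{\otimes \ell}\bB_{\s,\ell})=\bW^{\otimes(\ell-2j)}\tau^j(\bB_{\s,\ell})$ with the crude $\s^{j/2}$ trace bound, and a geometric sum; it differs only in the last step, where you use Pythagoras and the extra $1/j!$ to get a sharper constant (about $0.6$). That detour is not needed, because the obstacle in your Step~4 comes only from dropping the factor $1/2$ in $|h^{(d)}_{\ell,j}|\le (\ell^2/(2d))^j$: keeping it, as the paper does, gives $\|\ptf(\bA)-\bA\|_\frob\le x\|\bA\|_\frob$ with $x=\ell^2\sqrt{\s/d}\le 1/2$, and then the plain triangle inequality gives $\|\bA\|_\frob\le 2\|\ptf(\bA)\|_\frob$ and exactly the constant $2$.
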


\begin{proof}
    By~\eqref{eq:projection_traceless_symmetric_tensors},
    \[
        \ptf\left(\bW^{\otimes \ell}\bB_{\s,\ell}\right) - \bW^{\otimes \ell}\bB_{\s,\ell} =  \sum_{j=1}^{\lfloor \ell/2\rfloor} h^{(d)}_{\ell,j} \psym\left(\tau^{j}\left(\bW^{\otimes \ell}\bB_{\s,\ell}\right) \otimes \bI_d^{\otimes j}\right)
    \]
    where the coefficients \(h^{(d)}_{\ell,j}\) are defined explicitly in~\eqref{eq:coefficients_projection_traceless}. Taking the Frobenius norm and using the triangle inequality as well as the fact that \(\psym\) is non-expansive as an orthogonal projection,
    \begin{align*}
        \left\|\ptf\left(\bW^{\otimes \ell}\bB_{\s,\ell}\right) - \bW^{\otimes \ell}\bB_{\s,\ell}\right\|_\frob & \leq \sum_{j=1}^{\lfloor \ell/2\rfloor} |h^{(d)}_{\ell,j}| \|\tau^{j}(\bW^{\otimes \ell}\bB_{\s,\ell})\otimes \bI_d^{\otimes j}\|_\frob
         = \sum_{j=1}^{\lfloor \ell/2\rfloor} |h^{(d)}_{\ell,j}| d^{j/2} \|\tau^{j}(\bW^{\otimes \ell}\bB_{\s,\ell})\|_\frob.
    \end{align*}
    Using the definition of the partial trace operator \(\tau\) in~\eqref{eq:partial_trace_operator}, one can check that
    \[
        \tau\left(\bW^{\otimes \ell} \bB_{\s,\ell}\right) = \bW^{\otimes (\ell-2)}\tau(\bB_{\s,\ell}).
    \]
    Iterating this \(j\) times yields
    \[
        \tau^{j}\left(\bW^{\otimes \ell}\bB_{\s,\ell}\right) = \bW^{\otimes (\ell-2j)} \tau^j(\bB_{\s,\ell})
    \]
    for all \(j=1,\ldots,\lfloor \ell/2\rfloor\). Using the crude bound \(\|\tau^j(\bB_{\s,\ell})\|_\frob \leq s^{j/2} \|\bB_{\s,\ell}\|_\frob\),
    \begin{align*}
        \left\|\ptf\left(\bW^{\otimes \ell}\bB_{\s,\ell}\right) - \bW^{\otimes \ell}\bB_{\s,\ell}\right\|_\frob & \leq \|\bB_{\s,\ell}\|_\frob \sum_{j=1}^{\lfloor \ell/2\rfloor} |h^{(d)}_{\ell,j}| (ds)^{j/2}.
    \end{align*}
    Using the explicit expression for the coefficients \(h^{(d)}_{\ell,j}\) in~\eqref{eq:coefficients_projection_traceless}, we have $|h^{(d)}_{\ell,j}|  \leq \left(\ell^2/(2d)\right)^j$ for all \(j=1,\ldots,\lfloor \ell/2\rfloor\), and thus
    \begin{equation}\label{eq:bound-finite-d-W-Od}
         \left\|\ptf\left(\bW^{\otimes \ell}\bB_{\s,\ell}\right) - \bW^{\otimes \ell}\bB_{\s,\ell}\right\|_\frob \leq \|\bB_{\s,\ell}\|_\frob \sum_{j=1}^{\lfloor \ell/2\rfloor} \left(\frac{\ell^2\sqrt{\s}}{2\sqrt{d}}\right)^j
    \end{equation}
    When \(d \geq 4\ell^4 \s\), the factor on the right-hand side satisfies
    \[
        \sum_{j=1}^{\lfloor \ell/2\rfloor} \left(\frac{\ell^2\sqrt{\s}}{2\sqrt{d}}\right)^j \leq \frac{\ell^2\sqrt{\s}}{\sqrt{d}} \leq 2^{-1}.
    \]
    Therefore, from \eqref{eq:bound-finite-d-W-Od} and the triangle inequality,
    \[
        \frac{1}{2}\|\bW^{\otimes \ell}\bB_{\s,\ell}\|_\frob \leq \|\bW^{\otimes \ell}\bB_{\s,\ell}\|_\frob - \left\|\ptf\left(\bW^{\otimes \ell}\bB_{\s,\ell}\right) - \bW^{\otimes \ell}\bB_{\s,\ell}\right\|_\frob \leq \|\ptf(\bW^{\otimes \ell}\bB_{\s,\ell})\|_\frob,
    \]
    which implies that \(\|\bW^{\otimes \ell}\bB_{\s,\ell}\|_\frob \leq 2 \|\ptf(\bW^{\otimes \ell}\bB_{\s,\ell})\|_\frob\).
\end{proof}

In the regime of interest where \(d\to \infty\) with fixed \(\s,\ell\), Lemma~\ref{lem:approximation_traceless_projection} shows that \(\ptf(\bW^{\otimes \ell}\bB_{\s,\ell})\) is well approximated by \(\bW^{\otimes \ell}\bB_{\s,\ell}\) at a multiplicative rate that vanishes as \(d\to \infty\). 

\subsection{Tensor product representation}

We consider the tensor product representation \(\TSym_{p} (\R^d) \otimes \TSym_{q} (\R^d)\) under the diagonal action \(g\cdot (\bA\otimes \bB) = (g \cdot \bA)\otimes (g\cdot \bB)\), which admits the irreducible decomposition given in~\eqref{eq:semisimple-decomposition-tensor-representation}. The next lemma gives an explicit decomposition for tensors $\bA \otimes \bB$. The general case $\bC \in \TSym_{p} (\R^d) \otimes \TSym_{q} (\R^d)$ follows by linearity.

\begin{lemma}\label{lem:product-harmonic-tensors-decomposition}
    Let \(p,q \geq 0\). Then, for all \(\bA \in \TSym_{p} (\R^d)\) and \(\bB \in \TSym_{q} (\R^d)\), we have
    \[
        \<\bA \otimes \bB, \cH_{d,p}(\bz) \otimes \cH_{d,q} (\bz)\>_\frob = \sum_{j = 0}^{p\wedge q} b^{(d)}_{p,q,j}\< \bA \diamond_j \bB,\cH_{d,p+q - 2j} (\bz)\>_\frob ,
    \]
    where \(\diamond_j\) denotes the bilinear operator defined in~\eqref{eq:definition_diamond_operator}, and the scalars \(\{b^{(d)}_{p,q,j}\}_{j=0}^{p\wedge q}\) are defined as
    \[
        b^{(d)}_{p,q,j} = 
            \frac{f^{(d)}_{p+q,j}\kappa_{d,p}\kappa_{d,q}\sqrt{N_{d,p}N_{d,q}}}{\kappa_{d,p+q-2j}\sqrt{N_{d,p+q-2j}}} \frac{2^j (p!)(q!)(p+q-2j)!}{(p+q)!(p-j)!(q-j)!}
    \]
\end{lemma}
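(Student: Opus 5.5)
The idea is to reduce everything to rank-one tensors and then apply the Fischer decomposition (Lemma~\ref{lem:fischer_decomposition}). Both sides are bilinear in $(\bA,\bB)$. Each side also depends only on the $\TSym_p\otimes\TSym_q$ parts of the relevant tensors. So it suffices to work with $\bA=\ptf(\bu^{\otimes p})$ and $\bB=\ptf(\bv^{\otimes q})$, which span the two spaces. Better still, we can work directly with the function of $\bz$.

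\textbf{Step 1: rewrite the left-hand side as a polynomial.} By self-adjointness of $\ptf$ and \eqref{eq:harmonic_tensor_definition},
\[
\<\bA\otimes\bB,\cH_{d,p}(\bz)\otimes\cH_{d,q}(\bz)\>_\frob
=\kappa_{d,p}\kappa_{d,q}\sqrt{N_{d,p}N_{d,q}}\,\<\bA\otimes\bB,\bz^{\otimes(p+q)}\>_\frob
=\kappa_{d,p}\kappa_{d,q}\sqrt{N_{d,p}N_{d,q}}\,\<\bC,\bz^{\otimes(p+q)}\>_\frob ,
\]
where $\bC:=\psym(\bA\otimes\bB)\in\Sym_{p+q}(\R^d)$.

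\textbf{Step 2: Fischer-decompose $\bC$.} Lemma~\ref{lem:fischer_decomposition} gives
\[
\bC=\sum_{j} f^{(d)}_{p+q,j}\,\psym\big(\ptf(\tau^j\bC)\otimes\bI_d^{\otimes j}\big).
\]
Pairing with $\bz^{\otimes(p+q)}$ on the sphere turns each $\bI_d^{\otimes j}$ into $\|\bz\|_2^{2j}=1$. Using self-adjointness of $\ptf$ and \eqref{eq:harmonic_tensor_definition} once more, the $j$-th term becomes
\[
f^{(d)}_{p+q,j}\,\big(\kappa_{d,p+q-2j}\sqrt{N_{d,p+q-2j}}\big)^{-1}\<\ptf(\tau^j\bC),\cH_{d,p+q-2j}(\bz)\>_\frob .
\]

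\textbf{Step 3: compute $\tau^j\psym(\bA\otimes\bB)$ modulo traces.} This is the main combinatorial step. Write $\psym$ as an average over $\perm_{p+q}$ and apply $j$ contractions. Any contraction that pairs two indices of $\bA$ (or two of $\bB$) vanishes, because $\bA$ and $\bB$ are traceless. The surviving terms are exactly those where each of the $j$ contracted pairs joins one index of $\bA$ with one index of $\bB$. By symmetry of $\bA$ and $\bB$, every surviving term equals a permuted copy of $\bA\otimes_j\bB$. Hence $\tau^j\psym(\bA\otimes\bB)=c_{p,q,j}\,\psym(\bA\otimes_j\bB)$, plus terms involving $\bI_d$ that $\ptf$ annihilates. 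Here
\[
c_{p,q,j}=\frac{2^j\,p!\,q!\,(p+q-2j)!}{(p+q)!\,(p-j)!\,(q-j)!}.
\]
I plan to obtain this count most cleanly on rank-one test tensors. Take $\bA=\ptf(\bu^{\otimes p})$, so that $\<\bA,\bx^{\otimes p}\>$ is a harmonic polynomial, and similarly for $\bB$. Then $\tau^j$ corresponds to $\Delta^j/\big((p+q)(p+q-1)\cdots\big)$ via \eqref{eq:laplacian_polynomial_contracts}. Applying the Leibniz rule to $\Delta^j(PQ)$ with $\Delta P=\Delta Q=0$ leaves only the cross terms $2^j j!\binom{\cdot}{\cdot}\<\nabla^jP,\nabla^jQ\>$. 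Matching factorials gives $c_{p,q,j}$. Since $\ptf(\psym(\bA\otimes_j\bB))=\bA\diamond_j\bB$, the terms with $j>p\wedge q$ vanish.

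\textbf{Step 4: collect the constants.} Multiplying the three factors gives $b^{(d)}_{p,q,j}$ as stated. I expect the combinatorial factor in Step~3 to be the only delicate point, specifically the bookkeeping of the $2^j$ and the factorial normalizations. I will cross-check it on the case $p=q=1$, $j=1$. There the claim reduces to $\tau(\psym(\bu\otimes\bv))=\<\bu,\bv\>$, consistent with $c_{1,1,1}=2\cdot1\cdot1\cdot1/2=1$.
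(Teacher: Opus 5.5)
Your proposal is correct and follows essentially the same route as the paper. Both rewrite the left side as $\kappa_{d,p}\kappa_{d,q}\sqrt{N_{d,p}N_{d,q}}\<\bA\otimes\bB,\bz^{\otimes(p+q)}\>_\frob$ and apply the Fischer decomposition; the paper expands $\bz^{\otimes(p+q)}$ rather than $\psym(\bA\otimes\bB)$, which is equivalent by self-adjointness. Both then use tracelessness of $\bA,\bB$ so that only cross-pairings survive, and the paper counts these directly as $|G|=2^j\,p!\,q!\,(p+q-2j)!/((p-j)!\,(q-j)!)$.

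Your Laplacian route is a valid alternative way to get this count, with one correction. Since $P$, $Q$ and all their derivatives are harmonic, the identity is exactly $\Delta^j(PQ)=2^j\<\nabla^jP,\nabla^jQ\>$, with no extra $j!$. Combined with \eqref{eq:laplacian_polynomial_contracts}, this gives $\tau^j\psym(\bA\otimes\bB)=c_{p,q,j}\,\psym(\bA\otimes_j\bB)$ exactly, with no residual $\bI_d$ terms. It also gives $c_{p,q,j}=0$ for $j>p\wedge q$, by pigeonhole rather than by the $\diamond_j$ convention.
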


\begin{proof}
    By~\eqref{eq:projection_traceless_rank_one} and~\eqref{eq:fischer_decomp_rank_one},
    \begin{multline*}
        \<\cH_{d,p}(\bz) \otimes \cH_{d,q} (\bz),\bA \otimes \bB\>_\frob  = \kappa_{d,p}\kappa_{d,q}\sqrt{N_{d,p}N_{d,q}}\<\bz^{\otimes (p+q)},\bA \otimes \bB\>_\frob
        \\  = \sum_{j=0}^{\lfloor (p+q)/2\rfloor}\frac{f^{(d)}_{p+q,j}\kappa_{d,p}\kappa_{d,q}\sqrt{N_{d,p}N_{d,q}}}{\kappa_{d,p+q-2j}\sqrt{N_{d,p+q-2j}}}\left\<\psym\left(\cH_{d,p+q-2j}(\bz)\otimes \bI_d^{\otimes j}\right),\bA \otimes \bB\right\>_\frob.
    \end{multline*}
    Momentarily fix \(j\in [\lfloor (p+q)/2\rfloor]\) and consider the term \(\<\psym(\cH_{d,p+q-2j}(\bz)\otimes \bI_d^{\otimes j}),\bA \otimes \bB\>_\frob\). Since \(\psym\) is an orthogonal projection, we may transfer it to the second argument, yielding
    \[
        \<\psym(\cH_{d,p+q-2j}(\bz)\otimes \bI_d^{\otimes j}),\bA \otimes \bB\>_\frob=\<\cH_{d,p+q-2j}(\bz),\psym(\bA \otimes \bB)[\bI_d^{\otimes j}]\>_\frob
    \]
    where \(\psym(\bA \otimes \bB)[\bI_d^{\otimes j}]\in \Sym_{p+q-2j}(\R^d)\) denotes contraction along the last \(2j\) components of \(\psym(\bA \otimes \bB)\). Let \(G\subseteq \perm_{p+q}\) denote the set of cross-pairing permutations, that is the set of permutations \(\pi\in \perm_{p+q}\) such that for all \(k\in \{p+q-2j+1,\ldots, p+q-j\}\), we have \(|\{\pi(k), \pi(k+j)\} \cap [p]| = 1\). If \(\pi \notin G\), then there exists some \(k\in \{p+q-2j+1,\ldots, p+q-j\}\) such that both \(\pi(k), \pi(k+j)\leq p\) or both \(\pi(k), \pi(k+j) > p\), which when summed over the indices \(i_{p+q-2j+1},\ldots, i_{p+q}\) yields zero since both \(\bA\) and \(\bB\) are traceless. Therefore,
    \[
        \psym(\bA \otimes \bB)[\bI_d^{\otimes j}] = \frac{1}{(p+q)!}\sum_{\pi\in G} (\bA \otimes \bB)^{\pi}[\bI_d^{\otimes j}].
    \]
    Every \(\pi\in G\) induces a matching between \(j\) slots of \(\bA\) and \(j\) slots of \(\bB\). Since \(\bA,\bB\) are symmetric tensors, we can permute the slots of \(\bA\) and \(\bB\) independently without changing the value of the summand. In particular, there exists a permutation \(\sigma_\pi\in \perm_{p+q-2j}\) such that
    \[
        (\bA \otimes \bB)^{\pi}[\bI_d^{\otimes j}] = (\bA \otimes_j \bB)^{\sigma_\pi},
    \]
    and consequently,
    \[
        \psym\left(\psym(\bA \otimes \bB)[\bI_d^{\otimes j}]\right) = \frac{|G|}{(p+q)!} \psym(\bA \otimes_j \bB).
    \]
    It only remains to compute \(|G|\), which is done using a combinatorial argument to obtain
    \[
        |G| = \frac{(p!)(q!)2^j (p+q-2j)!}{(p-j)!(q-j)!}
    \]
    whenever \(j \leq \min(p,q)\), and \(|G|=0\) otherwise. Combining the above observations and the definition of the bilinear operator \(\diamond_j\) in~\eqref{eq:definition_diamond_operator} concludes the proof.
\end{proof}

\subsection{Hypercontractivity of matrix coefficients}
\label{app:hypercontractivity}

Below, we prove hypercontractivity for the subspace of degree-$k$ matrix coefficients of the orthogonal group $\cO_d$ acting on functions on the unit sphere. The proof relies on classical log-Sobolev and hypercontractivity results for the heat semigroup on the connected component $\cSO_d$, which we briefly recall for completeness.

The orthogonal group admits a semidirect product decomposition $\cO_d = \cSO_d \rtimes \cR$, where $\cR = \{e, \mathsf{R}\}$ with $\mathsf{R}$ any fixed reflection (i.e., $\mathsf{R}^2 = e$ and $\det \mathsf{R} = -1$ where $e$ is the identity element). Since \(\cSO_d\) is a normal subgroup of \(\cO_d\), the action representation of \(\cO_d\) on \(L^2(\S^{d-1})\) restricts to an action representation of \(\cSO_d\) on \(L^2(\S^{d-1})\) with the same semisimple decomposition in terms of spherical harmonic subspaces. In particular, each spherical harmonic subspace \(\sh_{d,\ell}\) is an irreducible representation of both \(\cO_d\) and \(\cSO_d\).

Fix \(\ell \in \naturals\). For \(f,h \in \sh_{d,\ell}\) and \(\rho\) the action representation of \(\cO_d\) on \(L^2(\S^{d-1})\), define the \emph{matrix coefficient}
\[
    \rho_{fh} (g) := \<\rho(g)\cdot f,h \>_{L^2}, \qquad g \in \cO_d,
\]
which we can think as elements of \(L^2 (\cSO_d)\) or \(L^2 (\cO_d)\). Let 
\[
\cM_{d,\ell} = {\rm span} \left\{ \rho_{fh} \; : \; f,h \in \sh_{d,\ell} \right\}
\]
be the isotypic subspace of $L^2 (\cSO_d)$ associated with the irreducible representation $\sh_{d,\ell}$.

Let $\Delta$ denote the Laplace-Beltrami operator on $\cSO_d$ associated with the standard bi-invariant Riemannian metric. Since $\cSO_d$ is a compact Lie group, the Laplace-Beltrami operator coincides with the Casimir operator acting in the left regular representation (see, e.g.,~\cite[Chapter 12]{faraut2008analysis}). As a consequence, $\Delta$ acts by a scalar on each isotypic component of $L^2(\cSO_d)$~\cite[Corollary 6.7.2]{faraut2008analysis},~\cite[Lemma 3.3.8]{Goodman_Wallach_2009}. In particular, every $F \in \cM_{d,\ell}$ is an eigenfunction of $\Delta$ with eigenvalue equal to minus the Casimir eigenvalue of $\sh_{d,\ell}$~\cite[Proposition 8.2.1]{faraut2008analysis}. Since the highest weight of $\sh_{d,\ell}$ is $\lambda = (\ell, 0, \dots, 0)$, it follows from standard formulas for Casimir eigenvalues~\cite[Proposition 12.1.2]{faraut2008analysis} that
\begin{equation}\label{eq:casimir_eigenvalues}
\Delta F = - \lambda_{d,\ell} F, \qquad F \in \cM_{d,\ell}, \qquad \lambda_{d,\ell} = \ell (\ell+ d -2),
\end{equation}

Let \((P_t)_{t \geq 0} = (e^{t\Delta})_{t \geq 0}\) denote the associated heat semigroup. A classical result states that \(\cSO_d\) satisfies a logarithmic Sobolev inequality (LSI) with constant \( c_{\mathrm{LS}}(\cSO_d) = \frac{4}{d-2}\) for \(d\geq 3\). This follows from the Bakry–Émery curvature–dimension criterion~\cite[Theorem 6.8.1]{bakry2013analysis}. Indeed, with respect to the metric induced by the negative of the Killing form, the Ricci curvature of \(\cSO_d\) is lower bounded by \(1/4\)~\cite[Theorem 3]{Rothaus_1986}. Since the Killing metric differs from the standard bi-invariant metric on \(\cSO_d\) by a constant scaling factor \(2(d-2)\), the claimed LSI constant follows by the scaling behavior of logarithmic Sobolev inequalities. By Gross' theorem~\cite[Theorem 5.2.3]{bakry2013analysis}, this implies hypercontractivity of \((P_t)\): for \(1 < p \leq q < \infty\) and \(t \geq 0\),
\begin{equation}\label{eq:hc-semigroup}
\|P_t F\|_{L^q(\cSO_d)} \le \|F\|_{L^p(\cSO_d)} 
\quad\text{whenever}\quad q-1 \le e^{2t/c_{\mathrm{LS}}(\cSO_d)}(p-1).
\end{equation}
Combining \eqref{eq:casimir_eigenvalues} and \eqref{eq:hc-semigroup} yields the hypercontractivity inequality on $\cSO(d)$ for each degree-$\ell$ block: 
\begin{equation}\label{eq:SOdHC}
\|F\|_{L^q(\cSO_d)} \;\le\;
\Bigl(\tfrac{q-1}{p-1}\Bigr)^{\gamma_d(\ell)}\|F\|_{L^p(\cSO_d)}, \qquad \gamma_d(\ell) = \frac{2 \ell (\ell + d - 2)}{d-2}.
\end{equation}
The hypercontractivity result for $\cO_d$ follows by writing $\cO_d = \cSO_d \sqcup \mathsf{R} \cSO_d$ and applying the above hypercontractivity to both components. 

\begin{lemma}[Hypercontractivity of matrix coefficients]\label{lem:hypercontractivity_matrix_coefficients} Let $d \geq 3$ and $\ell \geq 1$.
For all $F \in \cM_{d,\ell}$ and $1 < p \leq q < \infty$
\[
\| F \|_{L^q (\cO_d)} \leq 2^{\frac{1}{p} - \frac{1}{q}}\left( \frac{q-1}{p-1}\right)^{\gamma_{d} (\ell)} \| F\|_{L^p (\cO_d)}, \qquad \gamma_d(\ell) = \frac{2 \ell (\ell + d - 2)}{d-2}.
\]
\end{lemma}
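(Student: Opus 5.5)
The plan is to reduce to the connected component $\cSO_d$, where the hypercontractivity bound \eqref{eq:SOdHC} is already available, and then pay a factor $2^{1/p-1/q}$ for gluing the two cosets of $\cO_d = \cSO_d \sqcup \mathsf{R}\,\cSO_d$. Normalize the Haar measure so that $\pi_d$ restricted to each coset is one half of the pushforward of the Haar probability on $\cSO_d$ (under $h\mapsto h$ and $h\mapsto \mathsf{R}h$ respectively). For $F \in \cM_{d,\ell}$, define $F_0(h) := F(h)$ and $F_1(h) := F(\mathsf{R} h)$ for $h \in \cSO_d$. Then
\[
\|F\|_{L^q(\cO_d)}^q = \tfrac12\bigl(\|F_0\|_{L^q(\cSO_d)}^q + \|F_1\|_{L^q(\cSO_d)}^q\bigr).
\]

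The first step is to check that both $F_0$ and $F_1$ lie in the degree-$\ell$ isotypic block $\cM_{d,\ell}\subset L^2(\cSO_d)$. For $F_0$ this is immediate, since the restriction of $\rho_{fh}$ to $\cSO_d$ is a matrix coefficient of $\sh_{d,\ell}$ as an $\cSO_d$-representation. For $F_1$, write
\[
\rho_{fh}(\mathsf{R}h') = \<\rho(\mathsf{R})\rho(h')f, h\>_{L^2} = \<\rho(h')f, \rho(\mathsf{R})^{*}h\>_{L^2} = \rho_{f,\rho(\mathsf{R})^{-1}h}(h'),
\]
and note that $\rho(\mathsf{R})^{-1}h \in \sh_{d,\ell}$ because $\sh_{d,\ell}$ is $\cO_d$-invariant. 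Extending by linearity, $F_1 \in \cM_{d,\ell}$, so \eqref{eq:SOdHC} applies to both $F_0$ and $F_1$ with the constant $K := ((q-1)/(p-1))^{\gamma_d(\ell)}$.

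The second step combines the two cosets. From \eqref{eq:SOdHC} we get $\|F\|_{L^q(\cO_d)}^q \le \tfrac12 K^q (a^{q/p}+b^{q/p})$, where $a = \|F_0\|_{L^p}^p$ and $b = \|F_1\|_{L^p}^p$. Since $q/p\ge1$, the elementary inequality $a^{r}+b^{r} \le (a+b)^{r}$ holds with $r=q/p$. Using $a+b = 2\|F\|_{L^p(\cO_d)}^p$, this gives
\[
\|F\|_{L^q(\cO_d)} \le 2^{-1/q}\, 2^{1/p} K\, \|F\|_{L^p(\cO_d)},
\]
which is exactly the claimed bound.

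The only delicate point is the first step, verifying that translation by $\mathsf{R}$ preserves the isotypic block; the argument above handles it. The constant $\gamma_d(\ell)$ is inherited directly from the Casimir eigenvalue $\ell(\ell+d-2)$ together with the LSI constant $4/(d-2)$, both of which are taken as given in \eqref{eq:SOdHC}.
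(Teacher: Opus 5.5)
Your proposal is correct and follows essentially the same route as the paper: split $\cO_d$ into the two cosets of $\cSO_d$, apply \eqref{eq:SOdHC} to $F$ and to its translate by the reflection $\mathsf{R}$, and combine the two bounds using $a^{r}+b^{r}\le (a+b)^{r}$ with $r=q/p$. Your explicit check that $h'\mapsto F(\mathsf{R}h')$ stays in $\cM_{d,\ell}$, via $\rho_{fh}(\mathsf{R}h')=\rho_{f,\rho(\mathsf{R})^{-1}h}(h')$, fills in a step that the paper asserts without proof.
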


\begin{proof}
    Let ${\sf R} \in \cO_d$ be any fixed reflection and decompose $\cO_d$ as $\cO_d = \cSO_d \sqcup \cSO_d {\sf R}$. For $F \in \cM_{d,\ell}$, note that $F\circ {\sf R} \in \cM_{d,\ell}$ and also satisfies \eqref{eq:SOdHC}. Thus,
    \[
    \begin{aligned}
    \| F \|_{L^q (\cO_d)}^q =&~ \frac{1}{2} \| F \|_{L^q (\cSO_d)}^q + \frac{1}{2} \| F \circ {\sf  R } \|_{L^q (\cSO_d)}^q \\
    \leq&~ \frac{1}{2} \left( \frac{q-1}{p-1}\right)^{q\gamma_{d} (\ell)}  \left\{  \| F \|_{L^p (\cSO_d)}^q +\| F \circ {\sf  R } \|_{L^p (\cSO_d)}^q\right\}  \\
    \leq&~ \frac{1}{2} \left( \frac{q-1}{p-1}\right)^{q\gamma_{d} (\ell)}  \left\{  \| F \|_{L^p (\cSO_d)}^p +\| F \circ {\sf  R } \|_{L^p (\cSO_d)}^p\right\}^{q/p} =
    2^{\frac{q}{p}-1}\left( \frac{q-1}{p-1}\right)^{q\gamma_{d} (\ell)}   \| F \|_{L^p (\cO_d)}^{q},
    \end{aligned}
    \]
    where we have used the fact that \(a^r+b^r\leq (a+b)^r\) for \(a,b\geq 0\) and \(r\geq 1\). Taking $q$-th roots concludes the proof.
\end{proof}

\subsection{Matrix concentration inequality}\label{app:prob-tools}

Below, we present a matrix concentration inequality for sums of independent random matrices, that follows from the line of work~\cite{brailovskaya2024universality,bandeira2023matrix}. Similar versions have been used in analogous context, see for instance~\cite{damian2025generative,damian2024computational,joshi2025learning}. 

Given a random matrix \(\bX = \sum_{i\in [n]}\bX_i\in \R^{p \times q}\), define
\begin{equation}\label{eq:matrix_concentration_parameters}
    \begin{gathered}
        \sigma(\bX)^2 = \max\left\{\|\E[\bX\bX^\sT]\|_\op, \|\E[\bX^\sT\bX]\|_\op\right\},
    \\ \sigma_\ast(\bX)^2 = \sup_{\|\bu\|_2=\|\bv\|_2=1} \E[\<\bu,\bX\bv\>^2],\quad \bar{R}(\bX)^2 = \E\left[\max_{1\leq i \leq p} \|\bX_i\|_\op^2\right],\\
    v(\bX)^2 = \|\Cov(\bX)\|_\op.
    \end{gathered}
\end{equation}
Here, \(\|\Cov(\bX)\|_\op\) denotes the operator norm of the covariance operator of the random matrix \(\bX\) viewed as a vector in \(\R^{pq}\).
Note that
\begin{align*}
    \|\E[\bX\bX^\sT]\|_\op & = \sup_{\|\bu\|_2=1} \E[\|\bX^\sT \bu\|_2^2]
    \\ & = \sup_{\|\bu\|_2=1}\sum_{j=1}^p \E[\<\be_j,\bX^\sT \bu\>^2] \leq p\sigma_\ast(\bX)^2.
\end{align*}
Similarly, \(\|\E[\bX^\sT \bX]\|_\op \leq q \sigma_\ast(\bX)^2\) so that
\begin{equation}\label{eq:sigma_vs_sigma_ast}
    \sigma(\bX)^2 \leq (p \vee q) \sigma_\ast(\bX)^2.
\end{equation}

\begin{lemma}[Matrix concentration inequality]\label{lem:matrix_concentration}
    Let \(\{\bX_i\}_{i=1}^n \subseteq \R^{p\times q}\) be independent centered random matrices and \(\bX = \sum_{i=1}^n \bX_i\). Then, there exists a universal constant \(C>0\) such that, for all \(t>0\) and \(R\geq \bar{R}(\bX)^{1/2}\sigma(\bX)^{1/2}+2^{1/2}\bar{R}(\bX)\) such that \(\P(\max_{i\in [n]}\|\bX_i\|_\op > R)\leq \delta\),
    \begin{align*}
        \|\bX\|_\op & \leq C\left(2\sigma(\bX) + \sigma(\bX)^{1/2}v(\bX)^{1/2}\log(p+q)^{3/4} + \sigma_\ast(\bX)t^{1/2} + R^{1/3}\sigma(\bX)^{2/3}t^{2/3}+Rt\right),
    \end{align*}
    with probability at least \(1-\delta - (p+q+1)e^{-t}\).
\end{lemma}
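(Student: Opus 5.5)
We would derive Lemma~\ref{lem:matrix_concentration} from the universality and sharp matrix concentration results of \cite{bandeira2023matrix,brailovskaya2024universality}, rather than prove it from scratch. The first step is to reduce to self-adjoint matrices by Hermitian dilation. Set
\[
\bY_i = \begin{pmatrix} 0 & \bX_i \\ \bX_i^\sT & 0 \end{pmatrix} \in \R^{(p+q)\times(p+q)}, \qquad \bY = \sum_{i=1}^n \bY_i ,
\]
so that $\|\bY\|_\op = \|\bX\|_\op$. The parameters in \eqref{eq:matrix_concentration_parameters} transfer to $\bY$ up to absolute constants: $\sigma(\bY)^2 = \sigma(\bX)^2$, $\sigma_\ast(\bY)\le \sqrt2\,\sigma_\ast(\bX)$, $v(\bY)\le \sqrt2\, v(\bX)$, and $\|\bY_i\|_\op=\|\bX_i\|_\op$.

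Next we truncate so that bounded-summand results apply. Let $\cE=\{\max_i\|\bX_i\|_\op\le R\}$, which has probability at least $1-\delta$. Define $\bar\bY_i := \bY_i\ind\{\|\bY_i\|_\op\le R\} - \E[\bY_i\ind\{\|\bY_i\|_\op\le R\}]$. On $\cE$, $\bY=\sum_i\bar\bY_i + \sum_i \E[\bY_i\ind\{\|\bY_i\|_\op\le R\}]$. The summands $\bar\bY_i$ are centered and bounded by $2R$. Their parameters are controlled by those of $\bY$: truncation and centering do not increase second moments, so $\sigma,\sigma_\ast,v$ of $\sum\bar\bY_i$ are at most those of $\bY$.

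The main obstacle is the bias term $\sum_i \E[\bY_i\ind\{\|\bY_i\|_\op> R\}]$. This is exactly where the condition $R\ge \bar R^{1/2}\sigma^{1/2}+\sqrt2\,\bar R$ enters. We would follow the truncation argument in \cite[Theorem 2.7]{brailovskaya2024universality}: use $\E[\|\bY_i\|\ind\{\|\bY_i\|>R\}]\le \E[\|\bY_i\|^2]/R$ in the weak (quadratic-form) sense, and sum to get a bound of order $\sigma^2/R$. Under the stated lower bound on $R$ this is at most $\sigma$, up to a factor absorbed into the leading $2\sigma$ term. Alternatively, we could avoid separate bias control entirely by directly invoking the version of the theorem stated there for unbounded summands, which already carries the $\P(\max\|\bX_i\|>R)$ term.

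Finally, we would apply the bounded-summand concentration inequality \cite[Theorem 2.6]{brailovskaya2024universality} (see also \cite{bandeira2023matrix}). It states
\[
\|\textstyle\sum_i\bar\bY_i\|_\op \lesssim \|\bY_{\rm free}\| + \sigma^{1/2}v^{1/2}\log(p+q)^{3/4} + \sigma_\ast t^{1/2}+R^{1/3}\sigma^{2/3}t^{2/3}+Rt
\]
with probability at least $1-(p+q)e^{-t}$. The free-model norm satisfies $\|\bY_{\rm free}\|\le 2\sigma$ by the Lehner / Haagerup bound. A union bound with $\cE^c$, plus one extra $e^{-t}$ to account for the bias step, gives the claimed probability $1-\delta-(p+q+1)e^{-t}$.
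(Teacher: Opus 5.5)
Your primary truncation route has a wrong step: the bias estimate. The bound $\bigl\|\sum_i\E[\bY_i\ind\{\|\bY_i\|_\op>R\}]\bigr\|_\op\lesssim\sigma^2/R$ does not follow from $\ind\{\|\bY_i\|_\op>R\}\le\|\bY_i\|_\op/R$, even for quadratic forms. That inequality only gives $|\<\bu,\bY_i\bu\>|\,\|\bY_i\|_\op/R$. The factor $\|\bY_i\|_\op$ is not controlled by $\|\bY_i\bu\|_2$, so after summing you are left with quantities like $\sum_i\E\|\bY_i\|_\op^2$, which can exceed both $\sigma^2$ and $\bar R^2$ by large factors. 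There are examples with $n\asymp m^2$ summands, each large in its own coordinate and slightly biased along a common one, where the bias is $m$ times $\sigma^2/R$ while the hypothesis on $R$ holds. Moreover, even granting $O(\sigma^2/R)$, this is at most $\sigma$ only when $R\ge\sigma$. The hypothesis $R\ge\bar R^{1/2}\sigma^{1/2}+\sqrt2\,\bar R$ does not give that: in the typical regime $\bar R\ll\sigma$ it allows $R\approx(\bar R\sigma)^{1/2}\ll\sigma$.

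The repair is short. For unit $\bu,\bw$, Cauchy--Schwarz gives $\bigl|\sum_i\E[\<\bu,\bY_i\bw\>\ind\{\|\bY_i\|_\op>R\}]\bigr|\le\sigma_\ast(\bY)\bigl(\sum_i\P(\|\bY_i\|_\op>R)\bigr)^{1/2}$. By independence, $\sum_i\P(\|\bY_i\|_\op>R)\le-\log\bigl(1-\P(\max_i\|\bY_i\|_\op>R)\bigr)\le 2\log 2\,\bar R^2/R^2$, because Markov gives $\P(\max_i\|\bY_i\|_\op>R)\le\bar R^2/R^2\le 1/2$. So the bias, which is deterministic, is at most $\sqrt{\log 2}\,\sigma_\ast(\bY)$ and is absorbed into the $2\sigma$ term. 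In particular it does not cost an extra $e^{-t}$.

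With this fix your truncation route is a valid and more self-contained proof. It needs only the bounded-summand inequality, and it uses only the $\sqrt2\,\bar R$ part of the hypothesis. Your stated fallback is exactly what the paper does:
\begin{enumerate}
\item dilate to self-adjoint matrices;
\item apply \cite[Theorem~2.8]{brailovskaya2024universality} to compare $\|\bX\|_\op$ with the Gaussian model $\|\bG\|_\op$ on $\{\max_i\|\bX_i\|_\op\le R\}$ (the full hypothesis on $R$ is inherited from this theorem);
\item apply \cite[Corollary~2.2 and Lemma~2.5]{bandeira2023matrix} to get $\|\bG\|_\op\le 2\sigma+C\sigma^{1/2}v^{1/2}\log(p+q)^{3/4}+C\sigma_\ast t^{1/2}$ outside probability $e^{-t}$.
\end{enumerate}
The Gaussian tail in the last step is the real source of the $+1$ in $(p+q+1)e^{-t}$.
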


\begin{proof}
    For every \(i\in [n]\), define the symmetric dilation of \(\bX_i\) as
    \[
        \breve{\bX}_i = 
        \begin{bmatrix}
            \bzero & \bX_i 
            \\ \bX_i^\sT & \bzero
        \end{bmatrix} \in \R^{(p+q)\times (p+q)}.
    \]
    Indeed, \(\{\breve{\bX}_i\}_{i=1}^n\) are independent self-adjoint random matrices. By~\cite[Lemma 4.10]{bandeira2023matrix}, \( \sigma(\breve{\bX}) = \sigma(\bX)\) and \(\sigma_\ast(\breve{\bX}) = \sigma_\ast(\bX)\). Furthermore,
    \begin{align*}
        \|\breve{\bX}_i\|_\op^2 & = \|\breve{\bX}_i^2\|_\op
         = \max\{\|\bX_i\bX_i^\sT\|_\op, \|\bX_i^\sT\bX_i\|_\op\} 
         = \|\bX_i\|_\op^2,
    \end{align*}
    so that \(\bar{R}(\breve{\bX}) = \bar{R}(\bX)\).
    By~\cite[Theorem 2.8]{brailovskaya2024universality}, and the fact that \(\|\breve{\bX}\|_\op = \|\bX\|_\op\), there exists a universal constant \(C>0\) such that,
    \begin{align*}
        \P\left(\|\bX\|_\op- \|\bG\|_\op\geq C\epsilon_R(t), \max_{1\leq i \leq n} \|\bX_i\|_\op \leq R\right) & \leq (p+q)e^{-t},
    \end{align*}
    for all \(t>0\) and \(R\geq \bar{R}(\bX)^{1/2}\sigma(\bX)^{1/2}+2^{1/2}\bar{R}(\bX)\) where
    \[
        \epsilon_R(t) = \sigma_\ast(\bX)t^{1/2} + R^{1/3}\sigma(\bX)^{2/3}t^{2/3}+Rt,
    \]
    and \(\bG = \sum_{i=1}^n \bG_i\) with \(\{\bG_i\}_{i=1}^n\) independent centered Gaussian random matrices satisfying \(\E[\bG_i \otimes \bG_i] = \E[\bX_i \otimes \bX_i]\).

    Next, by~\cite[Corollary 2.2 and Lemma 2.5]{bandeira2023matrix}, there exists a universal constant \(C'>0\) such that, for all \(t>0\),
    \[
        \P\left(\|\bG\|_\op \geq 2\sigma(\bX) + C'\sigma(\bX)^{1/2}\|\Cov(\bX)\|^{1/4}\log(p+q)^{3/4}+C'\sigma_\ast(\bX)t\right)\leq e^{-t^2},
    \]
    where \(\Cov(\bX)\) is the covariance of the random matrix \(\bX\) viewed as a vector in \(\R^{pq}\). Combining the above two inequalities, we get that there exists a universal constant \(C''>0\) such that, for all \(t>0\) and \(R\geq \bar{R}(\bX)^{1/2}\sigma(\bX)^{1/2}+2^{1/2}\bar{R}(\bX)\) such that
    \[
        \P\left(\max_{i\in [n]}\|\bX_i\|_\op > R\right)\leq \delta,
    \]
    then
    \begin{align*}
        \|\bX\|_\op & \leq C''\left(\sigma_\ast(\bX)t^{1/2} + R^{1/3}\sigma(\bX)^{2/3}t^{2/3}+Rt+\sigma(\bX)^{1/2}\|\Cov(\bX)\|^{1/4}\log(p+q)^{3/4} + 2\sigma(\bX)\right),
    \end{align*}
    with probability at least \(1-(p+q+1)e^{-t}-\delta\).
\end{proof}

\end{document}